\newenvironment{smallermatrix}[1][c]
{\null\,\vcenter\bgroup
  \Let@\restore@math@cr\default@tag
  \baselineskip0pt \lineskip0.4pt \lineskiplimit0pt
  \ialign\bgroup\if#1l\else\hfil\fi$\m@th\scriptstyle##$\if#1r\else\hfil\fi&&\thickspace\hfil
  $\m@th\scriptstyle##$\hfil\crcr
}{%
  \crcr\egroup\egroup\,%
}
\NewDocumentCommand{\ts}{O{c} e{^?_}}{% ^ for superscript, ? for midscript, _ for subscript
  \begin{smallermatrix}[#1]
  \mathstrut\IfValueT{#2}{#2} \\
  \mathstrut\IfValueT{#3}{#3} \\
  \mathstrut\IfValueT{#4}{#4}
  \end{smallermatrix}%
}
\def\hlinewd#1{%
\noalign{\ifnum0=`}\fi\hrule \@height #1 \futurelet
\reserved@a\@xhline}
\theoremstyle{plain}
\newtheorem{theorem}{Theorem}[section]
\newtheorem*{theorem*}{Theorem}
\newtheorem{prop}[theorem]{Proposition}
\newtheorem{lemma}[theorem]{Lemma}
\theoremstyle{definition} 
\newtheorem{defn}[theorem]{Definition}
\theoremstyle{definition}
\newtheorem{rmk}[theorem]{Remark}
\newtheorem{eg}[theorem]{Example}
\newtheorem*{rmk*}{Remark}
\newtheorem*{rmks*}{Remarks}
\newtheorem*{notation*}{Notation}
\newtheorem*{convention*}{Convention}
\newcommand{\co}{\mathit{co}}
\newcommand{\cat}{\mathcal} 
\newcommand{\catA}{\cat{A}}
\newcommand{\catB}{\cat{B}}
\newcommand{\catC}{\cat{C}}
\newcommand{\catD}{\cat{D}}
\newcommand{\catE}{\cat{E}}
\newcommand{\ca}{\cat{A}}
\newcommand{\cb}{\cat{B}}
\newcommand{\cc}{\cat{C}}
\newcommand{\cd}{\cat{D}}
\newcommand{\ce}{\cat{E}}
\newcommand{\cv}{\cat{V}}
\newcommand{\cx}{\cat{X}}
\newcommand{\cy}{\cat{Y}}
\newcommand{\cj}{\cat{J}}
\newcommand{\mul}{\mathbb}
\newcommand{\mc}{\mul{C}}
\newcommand{\md}{\mul{D}}
\newcommand{\ck}{\mathcal{K}}
\newcommand{\cl}{\mathcal{L}}
\newcommand{\tcat}{\mathbf}
\newcommand{\Gray}{\tcat{2}\text{-}\tcat{Cat}}
\newcommand{\GCat}{\tcat{G}\text{-}\tcat{Cat}}
\newcommand{\GCatp}{\tcat{G}\text{-}\tcat{Cat}_p}
\newcommand{\GCatm}{\mathbb{L}\tcat{ax}}
\newcommand{\GCatmp}{\mathbb{P}\tcat{sd}}
\newcommand{\multab}{\mathbb{A}\tcat{b}}
\newcommand{\atwo}{{\mathbf 2}}
\newcommand{\dcat}[1]{\bat{\mathbb #1}}
\newcommand{\Surj}{\mathbb{S}\tcat{urj}}
\newcommand{\bat}[1]{\mathbf{#1}}
\newcommand{\Sq}[1]{\mathbb S\bat{q}(#1)}
\newcommand{\kmulti}{\mathbf{Mult}_k}
\newcommand{\skkmulti}{\mathbf{SkMult}_k}
\newcommand{\Cat}{\tcat{Cat}}
\newcommand{\Set}{\mathbf{Set}}
\newcommand{\lax}{\mathbf{Lax}}
\newcommand{\psd}{\mathbf{Psd}}
\newcommand{\adje}{\mathbf{AE}}
\newcommand{\oplax}{\mathbf{Oplax}}
\protected\def\xvcenter{%
  \hbox\bgroup$\everyvbox{\everyvbox{}\aftergroup\m@th\aftergroup$\aftergroup\egroup}%
  \vcenter
}
\DeclareRobustCommand{\midscript}[1]{
  \mathchoice{\mid@script\scriptstyle{#1}}
    {\mid@script\scriptstyle{#1}}
    {\mid@script\scriptscriptstyle{#1}}
    {\mid@script\scriptscriptstyle{#1}}
}
\newcommand{\mid@script}[2]{
  \vcenter{\hbox{$\m@th#1#2$}}
}
\title{A skew approach to enrichment for Gray-categories}
\author{John Bourke}
\address{Department of Mathematics and Statistics, Masaryk University, Kotl\'a\v rsk\'a 2, Brno 61137, Czech Republic}
\email{bourkej@math.muni.cz}
\author{Gabriele Lobbia}
\address{Department of Mathematics and Statistics, Masaryk University, Kotl\'a\v rsk\'a 2, Brno 61137, Czech Republic}
\email{lobbia@math.muni.cz}
\thanks{The first-named author acknowledges the support of the Grant agency of the Czech Republic under the grant 22-02964S}
\begin{document}

\begin{abstract}
It is well known that the category of Gray-categories does not admit a monoidal biclosed structure that models weak higher-dimensional transformations.  In this paper, the first of a series on the topic, we describe several skew monoidal closed structures on the category of Gray-categories, one of which captures higher lax transformations, and another which models higher pseudo-transformations.  %In a followup paper, we will show that these skew structures are homotopically well behaved, in particular inducing monoidal and symmetric monoidal closed structures on the homotopy category of Gray-categories.
\end{abstract}

\maketitle
%\tableofcontents

%Title ideas: Enrichment for Gray-categories, A skew approach to Gray-enrichment, The tensor product of Gray-categories

\section{Introduction}

The idea of approaching higher-dimensional categories via iterated enrichment is an appealingly simple one.  Indeed, if $\cv$ is a symmetric monoidal category, then so is the category $\cv$-$\Cat$ of small $\cv$-enriched categories.  Now taking $0$-$\Cat = \Set$ to be the cartesian closed category of sets and defining $(n+1)\textnormal{-}\Cat := (n\textnormal{-}\Cat)\textnormal{-}\Cat$ yields $1$-categories, $2$-categories and onwards to $n$-categories for all finite $n$.  Moreover, the categories $n$-$\Cat$ are very well behaved --- in addition to be cartesian closed, they are locally finitely presentable.% (and so complete and cocomplete).  

The downside is that the $n$-categories obtained via this process are the strict ones.  In fact, whilst every weak $2$-category, or bicategory, is equivalent to a strict $2$-category, not every weak $3$-category is equivalent to a strict one.  In particular, beyond dimension $2$, strict $n$-categories are insufficiently general.  This rules out iterative enrichment in \emph{cartesian closed categories} as a framework for (weak) higher-dimensional categories. 

%===========================================================

There is an elegant solution to this problem in dimension $2$, which allows us to move one step further.  Whilst $\Gray$ is cartesian closed, it also admits a number of non-cartesian monoidal biclosed structures, including the lax and pseudo variants of the Gray-tensor product.   The latter of these is symmetric monoidal closed, and we will refer to as the Gray-tensor product $\otimes_p$.  It has a number of good properties.

\begin{enumerate}
\item Given 2-categories $\ca$ and $\cb$ the morphisms of the internal hom $\psd(\ca,\cb)$ are \emph{pseudonatural transformations}, the most important transformations in 2-category theory.
\item The Gray tensor product equips the model category $\Gray$ with the structure of a \emph{monoidal model category} \cite{Lack2002A-quillen}; in particular, it equips the homotopy category of $\Gray$ with the structure of a symmetric monoidal closed category.
\item Each weak $3$-category is equivalent to a \emph{Gray-category} \cite{Gordon1995Coherence}: a category enriched in $(\Gray,\otimes_p)$.
\end{enumerate}

By virtue of (3), it suffices to work in a Gray-category rather than a weak $3$-category-- see, for example, \cite{Day1997Monoidal}.  Gray-categories are more manageable than weak $3$-categories, and differ primarily from strict ones only in that the \emph{middle four interchange} does not hold on the nose, but rather up to coherent isomorphism.  For these reasons, Gray-categories are often called \emph{semistrict} $3$-categories.

%===========================================================

Given this, it is natural to ask whether there exists an analogous symmetric monoidal closed structure on the category $\GCat$ of Gray-categories satisfying similar good properties to the above ones.  However, as argued by Crans in the introduction to \cite{Crans1999A-tensor}, it is impossible that there exists even a monoidal biclosed structure capturing weak transformations.  

The obstruction is as follows.  Suppose that $[\ca,\cb]$ is the internal hom for a monoidal biclosed structure, and suppose that its 1-cells $\eta\colon F \to G$ are \emph{weak transformations}.  In particular, letting $\mathbf 2 = (0 \to 1)$, the Gray-category $[\mathbf 2,\cb]$ would have arrows as its objects, and some kind of lax or pseudo-commuting squares as its $1$-cells, with pasting of these squares as composition.  

Now $\eta\colon F \to G \in [\ca,\cb]$ can be viewed as a Gray-functor $\mathbf 2 \to [\ca,\cb]$ and so, by biclosedness, as a Gray-functor $\eta\colon \ca \to [\mathbf 2 ,\cb]$.  This would send $x$ to the component $\eta_{x}\colon Fx \to Gx$ and $f\colon x \to y$ to a lax square $\eta_f$ as in the diagram below left; but now functoriality in $\ca$ would force the equation 
\begin{equation}
\label{eq:comp}
\begin{gathered}
% https://q.uiver.app/?q=WzAsNixbMCwwLCJGYSJdLFswLDEsIkZiIl0sWzEsMCwiR2EiXSxbMSwxLCJHYiJdLFsxLDIsIkdjIl0sWzAsMiwiRmMiXSxbMCwyLCJcXGV0YV9hIl0sWzEsMywiXFxldGFfYiIsMV0sWzIsMywiR2YiXSxbMyw0LCJHZyJdLFswLDEsIkZmIiwyXSxbMSw1LCJGZyIsMl0sWzUsNCwiXFxldGFfYyIsMl0sWzYsNywiXFxldGFfZiIsMCx7InNob3J0ZW4iOnsic291cmNlIjo0MCwidGFyZ2V0Ijo0MH19XSxbNywxMiwiXFxldGFfZyIsMCx7InNob3J0ZW4iOnsic291cmNlIjo0MCwidGFyZ2V0Ijo0MH19XV0=
\begin{tikzcd}[ampersand replacement=\&]
	Fx \& Gx \\
	Fy \& Gy \\
	Fz \& Gz
	\arrow[""{name=0, anchor=center, inner sep=0}, "{\eta_x}", from=1-1, to=1-2]
	\arrow[""{name=1, anchor=center, inner sep=0}, "{\eta_y}"{description}, from=2-1, to=2-2]
	\arrow["Gf", from=1-2, to=2-2]
	\arrow["Gg", from=2-2, to=3-2]
	\arrow["Ff"', from=1-1, to=2-1]
	\arrow["Fg"', from=2-1, to=3-1]
	\arrow[""{name=2, anchor=center, inner sep=0}, "{\eta_z}"', from=3-1, to=3-2]
	\arrow["{\eta_f}", shorten <=9pt, shorten >=9pt, Rightarrow, from=0, to=1]
	\arrow["{\eta_g}", shorten <=9pt, shorten >=9pt, Rightarrow, from=1, to=2]
\end{tikzcd} \hspace{0.5cm}=\hspace{0.5cm}
% https://q.uiver.app/?q=WzAsNCxbMCwwLCJGYSJdLFsxLDAsIkdhIl0sWzEsMSwiR2MiXSxbMCwxLCJGYyJdLFswLDEsIlxcZXRhX2EiXSxbMywyLCJcXGV0YV9jIiwyXSxbMCwzLCJGKGdmKSIsMl0sWzEsMiwiRyhnZikiXSxbNCw1LCJcXGV0YV97Z2Z9IiwwLHsic2hvcnRlbiI6eyJzb3VyY2UiOjQwLCJ0YXJnZXQiOjQwfX1dXQ==
\begin{tikzcd}[ampersand replacement=\&]
	Fx \& Gx \\
	Fz \& Gz
	\arrow[""{name=0, anchor=center, inner sep=0}, "{\eta_x}", from=1-1, to=1-2]
	\arrow[""{name=1, anchor=center, inner sep=0}, "{\eta_z}"', from=2-1, to=2-2]
	\arrow["{F(gf)}"', from=1-1, to=2-1]
	\arrow["{G(gf)}", from=1-2, to=2-2]
	\arrow["{\eta_{gf}}", shorten <=9pt, shorten >=9pt, Rightarrow, from=0, to=1]
\end{tikzcd}
\end{gathered}
\end{equation}
to hold.  But this equation, unnaturally strict at the $3$-dimensional level of Gray-categories, is in fact \emph{impossible} --- the failure of the middle-4-interchange in Gray-categories prevents such transformations from being composable.   

A careful treatment of the above obstruction is given in \cite{Bourke2015A-cocategorical}, which also shows that it is impossible to have a monoidal biclosed structure on $\GCat$ interacting well with Lack's model structure on $\GCat$ \cite{Lack2011A-quillen}.  These negative results indicate that if we wish to construct a homotopically well-behaved tensor product of Gray-categories, encoding an element of weakness appropriate to semistrict $4$-categories, then we should move outside the usual settings for enriched category theory --- namely, symmetric monoidal closed and monoidal biclosed categories.  But what sort of enriching structure might we look for?

%===========================================================

Looking again at the above obstruction, we see that the problem stems from biclosedness, which forces weak transformations to be Gray-functors $\ca \to [\mathbf 2,\cb]$, and so forces Equation~\eqref{eq:comp} to hold strictly.  On the other hand, it would be more natural if this equation held only up to an invertible $3$-cell $\eta_{g,f}$ 

\begin{equation}
\label{eq:comp2}
\begin{gathered}
% https://q.uiver.app/?q=WzAsNixbMCwwLCJGYSJdLFswLDEsIkZiIl0sWzEsMCwiR2EiXSxbMSwxLCJHYiJdLFsxLDIsIkdjIl0sWzAsMiwiRmMiXSxbMCwyLCJcXGV0YV9hIl0sWzEsMywiXFxldGFfYiIsMV0sWzIsMywiR2YiXSxbMyw0LCJHZyJdLFswLDEsIkZmIiwyXSxbMSw1LCJGZyIsMl0sWzUsNCwiXFxldGFfYyIsMl0sWzYsNywiXFxldGFfZiIsMCx7InNob3J0ZW4iOnsic291cmNlIjo0MCwidGFyZ2V0Ijo0MH19XSxbNywxMiwiXFxldGFfZyIsMCx7InNob3J0ZW4iOnsic291cmNlIjo0MCwidGFyZ2V0Ijo0MH19XV0=
\begin{tikzcd}[ampersand replacement=\&]
         Fx \& Gx \\
	Fy \& Gy \\
	Fz \& Gz
        \arrow[""{name=0, anchor=center, inner sep=0}, "{\eta_x}", from=1-1, to=1-2]
	\arrow[""{name=1, anchor=center, inner sep=0}, "{\eta_z}"{description}, from=2-1, to=2-2]
	\arrow["Gf", from=1-2, to=2-2]
	\arrow["Gg", from=2-2, to=3-2]
	\arrow["Ff"', from=1-1, to=2-1]
	\arrow["Fg"', from=2-1, to=3-1]
	\arrow[""{name=2, anchor=center, inner sep=0}, "{\eta_z}"', from=3-1, to=3-2]
	\arrow["{\eta_f}", shorten <=9pt, shorten >=9pt, Rightarrow, from=0, to=1]
	\arrow["{\eta_g}", shorten <=9pt, shorten >=9pt, Rightarrow, from=1, to=2]
\end{tikzcd} \hspace{0.5cm}\overset{\eta_{g,f}}{\Rrightarrow}\hspace{0.5cm}
% https://q.uiver.app/?q=WzAsNCxbMCwwLCJGYSJdLFsxLDAsIkdhIl0sWzEsMSwiR2MiXSxbMCwxLCJGYyJdLFswLDEsIlxcZXRhX2EiXSxbMywyLCJcXGV0YV9jIiwyXSxbMCwzLCJGKGdmKSIsMl0sWzEsMiwiRyhnZikiXSxbNCw1LCJcXGV0YV97Z2Z9IiwwLHsic2hvcnRlbiI6eyJzb3VyY2UiOjQwLCJ0YXJnZXQiOjQwfX1dXQ==
\begin{tikzcd}[ampersand replacement=\&]
	Fx \& Gx \\
	Fz \& Gz.
	\arrow[""{name=0, anchor=center, inner sep=0}, "{\eta_x}", from=1-1, to=1-2]
	\arrow[""{name=1, anchor=center, inner sep=0}, "{\eta_z}"', from=2-1, to=2-2]
	\arrow["{F(gf)}"', from=1-1, to=2-1]
	\arrow["{G(gf)}", from=1-2, to=2-2]
	\arrow["{\eta_{gf}}", shorten <=9pt, shorten >=9pt, Rightarrow, from=0, to=1]
\end{tikzcd}
\end{gathered}
\end{equation}

and such a $3$-cell could be encoded by some sort of \emph{weak map} $\ca \rightsquigarrow [\mathbf 2,\cb]$.   Given this, we might try abandoning strict Gray-functors and work with a category $\GCat_p$ of Gray-categories and weak maps.  However, this leads to new problems.  Firstly, categories of weak maps are typically poorly behaved, lacking general limits and colimits.  Consequently, even if we had a suitable internal hom $[\ca,\cb]$ on such a category, it is unlikely that we would be able to find a representing tensor product for it.  

Therefore what we ultimately want is
\begin{itemize}
\item a structure on $\GCat$ encoding a notion of weak map $\ca \rightsquigarrow \cb$, in addition to a tensor product $\ca \otimes \cb$ and internal hom $[\ca,\cb]$, all interacting nicely.
\end{itemize}

%==========================================================

Though not immediately apparent, the categorical structure that encodes these various features is that of a \emph{closed skew monoidal category}.  Skew monoidal categories were introduced by Szlachanyi in 2012 \cite{Szlachanyi2012Skew} in the study of bialgebroids over rings and have since found applications in diverse areas, including $2$-category theory \cite{Bourke2016Skew}, operad theory \cite{Lack2018Operadic}, dg-categories \cite{Shoikhet2018}, cartesian differential categories \cite{Garner2021Cartesian} and theoretical computer science \cite{Altenkirch}.  As with monoidal categories, they are categories equipped with a tensor product $\otimes\colon\cc^2 \to \cc$ and unit object $i$ but now the coherence constraints are non-invertible and have the form
$$\alpha\colon (a \otimes b) \otimes c \to a \otimes (b \otimes c),\, l\colon i \otimes a \to a,\, r\colon a \to a \otimes i.$$
 Whilst this choice seems a little arbitrary at first, it has a number of interesting consequences.  For instance, the endofunctor $i \otimes -$ and $- \otimes i$ underlie a comonad and monad respectively, related by a distributive law.  It follows that that each skew monoidal category encodes a notion of weak morphism $a \rightsquigarrow b$, defined as a morphism $i \otimes a \to b$ in the Kleisli category for the comonad.  
 
Similarly to the monoidal case, a skew monoidal category is closed if each $- \otimes a$ has a right adjoint $[a,-]$.  Much as in the classical context of Eilenberg-Kelly \cite{EilenbergS:cloc}, this yields a \emph{skew closed category} in the sense of Street \cite{Street2013Skew}, which involves a function
$$\cc(a,b) \to \cc(i,[a,b]).$$
Unlike in the classical setting, however, this is not necessarily invertible --- indeed, as emphasised in \cite{Bourke2016Skew}, the elements $i \to [a,b]$ are the weak morphisms $i \otimes a \to b$, rather than just the \emph{strict} morphisms $a \to b$ of $\cc$.

Here, we find the first hint of a connection between skew structures and $\GCat$.  In 2012, Gohla \cite{GohlaB:mapp} described a notion of pseudomap $F\colon \ca \rightsquigarrow \cb$ of Gray-categories and a so-called \emph{mapping space} $[\ca,\cb]$, here called $\lax(\ca,\cb)$.  %(We will use the notation $Lax(\ca,\cb)$ for $[\ca,\cb]$ in this paper, since we will also have reason to consider oplax and pseudo-transformations of Gray-categories.)  
These pseudomaps $F\colon \ca \to \cb$ are rather strict --- as with normal pseudofunctors in $2$-category theory, the only weakening is an invertible coherence $2$-cell $F^2_{g,f}\colon F(g)F(f) \cong F(gf)$ parametrised by a composable pair of $1$-cells $(f\colon x \to y,g\colon y \to z)$.  In fact, this is just the coherence data that we need to introduce an invertible $3$-cell as in \eqref{eq:comp2}.

With regards the mapping-space $\lax(\ca,\cb)$, its objects are pseudomaps whilst the $1$-cells are \emph{lax transformations}, which involve data $(\eta_a,\eta_f,\eta_{g,f})$ as specified above, amongst further coherence data. We note that the lax transformations of \cite{GohlaB:mapp} are \emph{normal lax}, in the sense that they satisfy the normality condition that $\eta_f$ is an identity if $f$ is an identity.

Our first main goal is to prove the following result.

\newtheorem*{restate:laxskewmonoidal}{Theorem~\ref{thm:laxskewmonoidal}}
\begin{restate:laxskewmonoidal}
There is a closed skew monoidal structure $(\GCat, \otimes_l, 1)$ whose internal hom is $\lax(\ca,\cb)$.
\end{restate:laxskewmonoidal}

Our second goal is to prove an analogue of this result, involving a mapping space $\psd(\ca,\cb)$ of \emph{pseudo-transformations} introduced here.  In these, the $2$-cells $\eta_f$ form part of specified adjoint equivalences satisfying further coherence equations.  Our second main result is then:

\newtheorem*{restate:pseudoskewmonoidal}{Theorem~\ref{thm:pseudoskewmonoidal}}
\begin{restate:pseudoskewmonoidal}
There is a closed skew monoidal structure $(\GCat, \otimes_p, 1)$ whose internal hom is $\psd(\ca,\cb)$.
\end{restate:pseudoskewmonoidal}

%In a followup paper \cite{BourkeLobbia:InPrep} we will further show that the second of these is symmetric skew monoidal, in the sense of \cite{BourkeLack:symmetric}.  Furthermore, we will show that both interact well with Lack's model structure \cite{Lack2011A-quillen} on $\GCat$ --- in particular, we will show that they induce genuine monoidal biclosed and symmetric monoidal closed structures on the homotopy category of Gray-categories respectively.  In addition, we will investigate categories enriched in these skew structures as semistrict 4-categories.

Before diving into the paper, let us now give an outline of our approach to proving our main theorems.  Given that they involve highly complex structures --- see the appendix for the full axiomatics --- a key challenge is not just to prove them, but to prove them clearly and efficiently.  With this in mind, let us consider the various possible approaches to skew structures, and identify which will work best for our purpose.  There are several essentially interchangable approaches, as drawn in the following schematic diagram.

\begin{equation*}
\xymatrix{\textnormal{Skew monoidal categories} \ar@{<->}[rr] && \textnormal{Skew closed categories}\\\\
& \textnormal{Skew multicategories} \ar@{<->}[uul] \ar@{<->}[uur]\\
& \textnormal{$4$-ary skew multicategories} \ar@{<->}[u]}
\end{equation*}

The first approach would be to work with skew monoidal categories directly, constructing the skew monoidal structure on $\GCat$ and showing that it is closed afterwards.  Whilst the skew monoidal axioms are reasonably straightforward, the disadvantage is that tensor products are typically describable only via generators and relations, and not easy to work with directly.  

The second approach would be to use skew closed categories $(\cc,[-,-],i)$, which involve coherence constraints $L\colon[b,c] \to [[a,b],[a,c]]$, $i\colon[i,a] \to a$ and $j\colon i \to [a,a]$ \cite{Street2013Skew}.  Skew closed categories are said to be monoidal if each endofunctor $[a,-]$ has a left adjoint and the resulting \emph{monoidal skew closed categories} then amount to the same as closed skew monoidal categories.  The advantage of focusing on the skew closed structure is that the internal hom, as in our present example, can often be described directly.  The disadvantage is that the axiomatics, especially those involving $L$, are relatively unintuitive, and this can make constructing such a skew structure an ordeal.  Following an explanation of how this should go by the first-named-author, in 2016 Gohla \cite{GohlaB:incomplete} made a start in following this approach in a more restrictive setting.\begin{footnote}{Gohla considered a version where $[A,B]$ contains strict Gray-functors and a variant of pseudo-transformations involving invertible $2$-cells rather than adjoint equivalences.}\end{footnote}  He defined a $3$-graph map $L\colon[b,c] \to [[a,b],[a,c]]$ between Gray-categories but verified its functoriality only in dimension $1$.  Moreover, he did not verify naturality of $L$, nor describe the unit or unit maps, nor consider any of the axioms for a skew closed category.  

%Now closed skew monoidal categories are the same as monoidal skew closed categories --- a skew closed category $(\cc,[-,-],I)$ involves coherence constraints such as $L:[B,C] \to [[A,B],[A,C]]$ and is said to be \emph{monoidal} if each endofunctor $[A,-]$ has a left adjoint.  The advantage of the skew closed approach is that the internal hom can often, as in our present example, be described directly.  The disadvantage is that the definition and axiomatics are relatively unintuitive, and this can make constructing such a structure in detail an ordeal.

A third approach to skew structures was introduced by the first-named-author and Lack in 2018 \cite{BourkeLack:skew}, namely \emph{skew multicategories}.  Skew multicategories are generalised multicategories with two kinds of multimap $A_1,\ldots,A_n \to B$, one called ``tight" and the other ``loose".  Amongst these, the left-representable closed skew multicategories again amount to the same as closed skew monoidal categories.  %(In most examples, such as the ones that concern us, they are just multicategories equipped with a subcollection of tight multimaps satisfying some closure properties). 
The advantage of the multicategorical approach is that one can define multimaps in a direct way and the axioms are relatively straightforward.  The disadvantage is that, unlike the other two concepts, which have finite definitions, here one has $n$-ary multimaps for all finite $n$, and infinitely many axioms to check (albeit, a finite axiom schema thereof).   This is potentially problematic for us, since in our complex example, we wish to do as few calculations as possible.

Fortunately, this is not a fundamental problem.  Indeed, since monoidal categories and representable multicategories are equivalent concepts \cite{Hermida2000Representable} and since the former only require considering tensor products of four objects at a time, it is natural to expect that representable multicategories can be described in a way that involves only multimaps of at most dimension $4$.  We call the resulting structures \emph{$4$-ary multicategories} and there is also a skew version.  Again, each left-representable closed $4$-ary skew multicategory gives rise to a closed skew monoidal category and we will employ this result in proving our main theorems.

Having settled on our approach, let us now outline the construction of the $4$-ary skew multicategory $\GCatm$ which is the technical heart of the paper.
\begin{itemize}
\item Objects will be Gray-categories $\ca$ and nullary maps $a\colon \diamond \to \ca$ are objects of $\ca$;
\item Tight and loose unary maps $\ca \to \cb$ are Gray-functors and Gohla's pseudomaps;
\item The \emph{definitions} of tight and loose binary maps $F\colon \ca,\cb \to \cc$ are chosen so that they bijectively correspond to tight and loose unary maps $\lambda_2^{-1}(F)\colon \ca \to \lax(\cb,\cc)$ into the mapping space;
\item Similarly, the $3$-ary and $4$-ary maps $F$ will be defined so that they correspond to $2$-ary and $3$-ary maps $\lambda_3^{-1} F$ and $\lambda_4^{-1} F$ into the mapping space;
\end{itemize}

Building on work of Gohla \cite{GohlaB:mapp}, we will show that his mapping space $\lax(\ca,\cb)$ extends to a bifunctor $\lax(-,-)$ defined on pseudomaps.  Transporting across the bijections $\lambda_i$, this enables us to define substitutions of unary maps with multimaps of all dimensions.\\
Of the remaining substitutions, the most important are the two substitutions of binary into binary.   The first of these, $G \circ_1 F$, is depicted below left.
\begin{center}
\begin{tikzpicture}[triangle/.style = {fill=yellow!50, regular polygon, regular polygon sides=3,rounded corners}]

\path
	(6.5,1.5) node [triangle,draw,shape border rotate=-90,label=135:$\ca$,label=230:$\cb$,inner sep=2pt] (b') {$F$}
	(8.5,1) node [triangle,draw,shape border rotate=-90,label=135:$\cc$,label=230:$\cd$,inner sep=2pt] (c') {$G$};

\draw [-] (7.5,0.65) .. controls +(right:0.25cm) and +(left:0.25cm).. (c'.223);
\draw [] (b') .. controls (7.5,1.5) and +(left:0.8cm).. (c'.137);
\draw [] (c') to node[fill=white] {$\ce$} (10,1);
\draw [-] (5.5,1.15) .. controls +(right:0.25cm) and +(left:0.25cm).. (b'.223);
\draw [-] (5.5,1.85) .. controls +(right:0.25cm) and +(left:0.25cm).. (b'.137);
\end{tikzpicture}
\hspace{1cm}
\begin{tikzpicture}
[triangle/.style = {fill=yellow!50, regular polygon, regular polygon sides=3,rounded corners}]

\path
%	(6.5,1.4) node [triangle,draw,shape border rotate=-90,inner sep=1pt] (b') {x} 
%        (6.5,0.5) node [triangle,draw,shape border rotate=-90,inner sep=0pt] (q) {y} 
	(8.5,1) node [triangle,draw,shape border rotate=-90,label=135:$\ca$,label=230:$\cb$,inner sep=2pt] (c') {${F}$}
(11,1) node [triangle,draw,shape border rotate=-90,inner sep=0pt] (c'') {$\lambda_2^{-1} G$};

\draw [-] (7.5,0.65) .. controls +(right:0.5cm) and +(left:0.25cm).. (c'.223);
\draw [] (7.5,1.3) .. controls +(right:0.5cm) and +(left:0.25cm).. (c'.137);
\draw [] (c') to node[fill=white] {$\cc$} (c'');
\draw [-] (c'') to node[fill=white] {$\lax(\cd,\ce)$} (14.5,1);
\end{tikzpicture}
\end{center}

We define it by first transposing $G$ as above right, then substituting binary into unary, before transposing back and so defining $G \circ_1 F:=\lambda_3(\lambda_2^{-1} G \circ_1 F)$.  

Substitution $\circ_2$ in the second variable is depicted below left.  This case is crucial, since it encodes the same information as the associator $\alpha$ in a skew monoidal category or the map $L$ in a skew closed one.

\begin{center}
\begin{tikzpicture}[triangle/.style = {fill=yellow!50, regular polygon, regular polygon sides=3,rounded corners}]

\path
        (6.5,0.5) node [triangle,draw,shape border rotate=-90,label=135:$\cb$,label=230:$\cc$,inner sep=2pt] (q) {$F$} 
	(8.5,1) node [triangle,draw,shape border rotate=-90,label=135:$\ca$,label=230:$\cd$,inner sep=2pt] (c') {$G$};

\draw [-] (q) .. controls (7.5,0.5) and +(left:0.8cm).. (c'.223);
\draw [] (7.5,1.35) .. controls +(right:0.25cm) and +(left:0.25cm).. (c'.137);
\draw [] (c') to node[fill=white] {$\ce$} (10,1);
\draw [-] (5.5,0.15) .. controls +(right:0.25cm) and +(left:0.25cm).. (q.223);
\draw [-] (5.5,0.85) .. controls +(right:0.25cm) and +(left:0.25cm).. (q.137);
\end{tikzpicture}
\hspace{1cm}
\begin{tikzpicture}[triangle/.style = {fill=yellow!50, regular polygon, regular polygon sides=3,rounded corners}]

\path
	(6.5,1.5) node [triangle,draw,shape border rotate=-90,label=135:$\cc^{co}$,label=230:$\cb^{co}$,inner sep=-1.5pt] (b') {$d^2F$}
	(8.5,1) node [triangle,draw,shape border rotate=-90,label=135:$\cd^{co}$,label=230:$\ca^{co}$,inner sep=-1.5pt] (c') {$d^2G$};

\draw [-] (7.5,0.6) .. controls +(right:0.25cm) and +(left:0.25cm).. (c'.223);
\draw [] (b') .. controls (7.5,1.5) and +(left:0.8cm).. (c'.137);
\draw [] (c') to node[fill=white] {$\ce^{co}$} (10.5,1);
\draw [-] (5.5,1.1) .. controls +(right:0.25cm) and +(left:0.25cm).. (b'.223);
\draw [-] (5.5,1.9) .. controls +(right:0.25cm) and +(left:0.25cm).. (b'.137);
\end{tikzpicture}
\end{center}
We obtain  $\circ_2$ using the \emph{duality} $(-)^{co}$ which reverses the orientation of $2$-cells in a Gray-category.  In particular, using this, we show that loose $2$-ary maps $F\colon \ca,\cb \to \cc$ correspond to loose $2$-ary maps $d^2 F\colon \cb^{co},\ca^{co} \to \cc^{co}$; this allows then to form the composite above right, substitute those using the already defined $\circ_1$, and then apply the inverse of the higher duality $d^3$ to obtain
$G \circ_2 F := d^3(d^2 G \circ_1 d^2 F)\colon\ca,\cb,\cc \to \ce$.  

Following this idea, the meat of our approach boils down to establishing the \emph{closedness} and \emph{duality bijections} $\lambda_i$ and $d^i$ for $i \leq 4$.  The remaining details, including the verification of the axioms are then completely straightforward.  %In this way, we reduce the computations to carefully describing these few natural bijections.

Having outlined the construction of the $4$-ary skew multicategory, let us now give a more detailed overview of the paper.  %We begin in Section~\ref{sect:structures} by introducing the categorical structures that we will use in the paper --- namely, skew monoidal categories, various flavours of multicategory, as well as the construction of a skew monoidal closed category from a $4$-ary skew multicategory satisfying the appropriate properties.  
In Section~\ref{sect:structures} we introduce skew monoidal categories and the various flavours of multicategory that we will need, before describing the construction of a skew monoidal closed category from a $4$-ary skew multicategory satisfying the appropriate properties.  

In Section~\ref{sect:mappingspace}, we give some background on Gray-categories, before discussing Gohla's pseudomaps and his lax mapping space $\lax(\ca,\cb)$, as well as an oplax variant $\oplax(\ca,\cb)$.  The technical heart of the paper is Section~\ref{sect:shortmult}, in which we construct the $4$-ary skew multicategory $\GCatm$.  Studying its properties in Section~\ref{sect:skewmonlax}, we are able to prove, in Theorem~\ref{thm:laxskewmonoidal}, our first main theorem --- namely, the existence of the skew monoidal closed structure $(\GCat,\otimes_l,1)$ with internal hom $\lax(\ca,\cb)$.

In Section~\ref{sect:pseudo} we establish analogous results to those above but replacing lax transformations by \emph{pseudo-transformations}.  In particular, in Theorem~\ref{thm:pseudoskewmonoidal} we describe a skew monoidal closed structure $(\GCat,\otimes_p,1)$ with internal hom $\psd(\ca,\cb)$ containing pseudo-transformations as $1$-cells.  

Finally in Section~\ref{sect:sharp}, we use the notion of \emph{sharp multimap} to show to refine the two aforementioned skew monoidal structures to \emph{sharp} variations $(\GCat,\otimes^{\sharp}_l,1)$ and $(\GCat,\otimes^{\sharp}_p,1)$ whose associated internal homs $\lax(\ca,\cb)_s$ and $\psd(\ca,\cb)_s$ are the full sub-Gray-categories of $\lax(\ca,\cb)$ and $\psd(\ca,\cb)$ containing the strict Gray-functors.
 
\subsection*{Further background and related work}

In the 1990's there began a significant stream of research on braided monoidal bicategories and semistrict braided monoidal 2-categories  \cite{Kapranov, Baez1996, Day1997Monoidal,Crans1998Generalized,Gurski2011}, and it was recognised (see e.g. \cite{Baez1996}) that the latter should be doubly degenerate \emph{semistrict $4$-categories}.  

A key player in this development was Crans, who began investigating the theory of semistrict $4$-categories.  In \cite{Crans1999A-tensor} he described a monoidal structure on $\GCat$, intended as an analogue of the lax Gray-tensor product, which encodes certain lax transformations satisfying the problematic equation \eqref{eq:comp}.  Later in \cite{Crans2000On} he discussed a second monoidal structure, a pseudo variant of the aforementioned one, and defined \emph{$4$-teisi} (his semistrict $4$-categories) as categories enriched in it.  However doubly degenerate $4$-teisi are stricter than semistrict braided monoidal 2-categories.  Moreover,  neither monoidal structure is closed (on either side) because of the problems concerning the equation \eqref{eq:comp} already discussed.  This has the undesirable consequence that $\GCat$ is not self-enriched with respect to either monoidal structure.

A more recent approach to semistrict $4$-categories is that of Bar and Vicary \cite{Bar2017} who define \emph{quasistrict 4-categories}, but no connection with enrichment has been made.  Let us also mention the work of Batanin, Cisinski and Weber \cite{Batanin} which describes a tight relationship between normalised globular operads and lax monoidal structures, although this has not led to a definition of semistrict $4$-category.  %However, they did use their technology to recover, in an abstract sense, Crans' tensor product of Gray-categories.

\subsection*{Overview and future work} 

In the present paper, we move away from classical monoidal categories, as used by Crans \cite{Crans1999A-tensor}, to the skew monoidal categories of Szlachanyi \cite{Szlachanyi2012Skew}.  Our main goal is to establish closed skew monoidal categories as the natural setting for enrichment on the category $\GCat$ of Gray-categories, one capable of capturing weak higher dimensional transformations.  

 In a followup paper, we will focus primarily on the skew monoidal structures capturing pseudo-transformations, since these are the ones which are relevant to semistrict $4$-categories.  We will investigate their interaction with Lack's model structure on $\GCat$ \cite{Lack2011A-quillen}, showing in particular that the sharp skew monoidal structure $(\GCat,\otimes^{\sharp}_p,1)$ induces a symmetric monoidal closed structure on the homotopy category of Gray-categories.  Moreover, we will investigate categories enriched in these skew structures as semistrict 4-categories.

\section{Skew monoidal and multicategorical structures}
\label{sect:structures}

In this section, we introduce the various categorical structures that we will use in the paper, beginning with skew monoidal categories.  Following this, we discuss multicategories and skew multicategories, giving \emph{$k$-ary} --- meaning, $k$-truncated --- versions of both.  Finally, we explain how $4$-ary skew multicategories satisfying certain properties give rise to closed skew monoidal categories.  %In Section~\ref{sect:shortmult} we will construct a $4$-ary skew multicategory of Gray-categories satisfying these conditions, and so obtain a closed skew monoidal category of Gray-categories --- therefore, the structures introduced below are central to our approach.  %Further details on these ``short" mulitcategorical structures are contained in our companion paper \cite{BourkeLobbia:short}.

\subsection{Skew monoidal categories}

Skew monoidal categories were introduced by Szlach\'{a}nyi \cite{Szlachanyi2012Skew} in the study of bialgebroids over rings.  %There are left and right versions (depending upon the orientation of the associativity and unit maps)
%and it is the left version that is of interest to us.  
A (left) skew monoidal category is a category $\cc$ together with a functor $\otimes\colon\cc\times \cc \to \cc$, a unit object $i \in \cc$, and natural families $\alpha \colon(a \otimes b) \otimes c \to a \otimes (b \otimes c)$, $l \colon i \otimes a \to a$ and $r \colon a \to a \otimes i$ satisfying five axioms described in \cite{Szlachanyi2012Skew}.  We will not need the axioms here, but remark that they neatly correspond to the five words 
\begin{center}
$abcd$\\
$iab$ $aib$ $abi$ \\
$ii$
\end{center}
of which the first corresponds to Mac Lane's pentagon.  

If the left unit map $l\colon i \otimes a \to a$ is invertible for each $a$, then $(\cc,\otimes,i)$ is said to be \emph{left normal}.  Dually, it is \emph{right normal} if $r\colon a \to a \otimes i$ is invertible for each $a$.
%Henceforth the term skew monoidal is taken to mean left skew monoidal.  
A monoidal category is precisely a skew monoidal category in which of the constraints $\alpha, l$ and $r$ are invertible.

Finally, a skew monoidal category $(\cc,\otimes,i)$ is said to be \emph{closed} if each endofunctor $- \otimes a$ has a right adjoint $[a,-]$.

\subsection{Classical and skew multicategories}

There are two equivalent approaches to multicategories.  In perhaps the better known of the two, one considers substitution in all variables simultaneously --- see, for instance, \cite{Hermida2000Representable} and \cite{LeinsterBook}.  %This situation is depicted in the diagram below.
%\begin{center}
%\begin{tikzpicture}[triangle/.style = {fill=yellow!50, regular polygon, regular polygon sides=3,rounded corners}]
%
%\path
%	(6.5,1.5) node [triangle,draw,shape border rotate=-90,label=135:$\ca$,label=230:$\cb$,inner sep=2pt] (b') {$F$}
%	(8.5,1) node [triangle,draw,shape border rotate=-90,label=135:$\cc$,label=230:$\cd$,inner sep=2pt] (c') {$G$};
%
%
%\draw [-] (7.5,0.65) .. controls +(right:0.25cm) and +(left:0.25cm).. (c'.223);
%\draw [] (b') .. controls (7.5,1.5) and +(left:0.8cm).. (c'.137);
%\draw [] (c') to node[fill=white] {$\ce$} (10,1);
%\draw [-] (5.5,1.15) .. controls +(right:0.25cm) and +(left:0.25cm).. (b'.223);
%\draw [-] (5.5,1.85) .. controls +(right:0.25cm) and +(left:0.25cm).. (b'.137);
%\end{tikzpicture}
%\end{center}
On the other hand, the first approach to multicategories, due to Lambek \cite{Lambek:multicategories}, focuses upon substitution in a single variable, as depicted below.

\begin{center}
\begin{tikzpicture}[triangle/.style = {fill=yellow!50, regular polygon, regular polygon sides=3,rounded corners}]

\path
	(6.5,1.5) node [triangle,draw,shape border rotate=-90,label=135:$a$,label=230:$b$,inner sep=2pt] (b') {$f$}
	(8.5,1) node [triangle,draw,shape border rotate=-90,label=135:$c$,label=230:$d$,inner sep=2pt] (c') {$g$};

\draw [-] (7.5,0.65) .. controls +(right:0.25cm) and +(left:0.25cm).. (c'.223);
\draw [] (b') .. controls (7.5,1.5) and +(left:0.8cm).. (c'.137);
\draw [] (c') to node[fill=white] {$e$} (10,1);
\draw [-] (5.5,1.15) .. controls +(right:0.25cm) and +(left:0.25cm).. (b'.223);
\draw [-] (5.5,1.85) .. controls +(right:0.25cm) and +(left:0.25cm).. (b'.137);
\end{tikzpicture}
\end{center}

It is this approach that we will follow. In addition, we will make a refinement by defining the notion of a \emph{$k$-ary multicategory}, for $k \in \mathbb N_{\geq 1} \cup \{\omega\}$, which has multimaps of arity $n$ only for natural numbers $n \leq k$.  In particular, $\omega$-multicategories are just multicategories.

%One reason that we make this choice is that many of the key constructions we will need depend upon certain functors on this category being representable, and so it is convenient to have it defined from the outset.

\begin{defn}
Let $k \in \mathbb N_{\geq 1} \cup \{\omega\}$.  A $k$-ary \emph{multicategory} $\mc$ consists of 
\begin{itemize}
\item a collection of objects;
\item for each natural number $n \leq k$ and list $a_1,\ldots, a_n,b$ of objects a set $\mc_n(a_1,\ldots,a_n;b)$ of \emph{$n$-ary multimaps}; %(sometimes we write $\overline{a}$ for the list, and then $\mc(\overline{a};b)$ for the set)
\item for each object $a$, an element $1_a \in \mc_1(a;a)$; 
\item for natural numbers $n,m \leq k$ with $n+m-1 \leq k$ and $i \in \{1,\ldots,n\}$ a substitution function
\begin{equation}\label{eq:substitution}
\circ_i\colon \mc_n(\overline{b};c) \times \mc_m(\overline{a};b_i) \to \mc_{n+m-1}(\overline{b}_{<i},\overline{a},\overline{b}_{>i};c)
\end{equation}
where $\overline{b}_{<i}$ and $\overline{b}_{>i}$ denote the sublists of $\overline{b}$ in indices less than and greater than $i$, respectively. 
\end{itemize}

For each $h \in \mc_n(b_1,\ldots,b;c)$ and $i \in \{1,\ldots,m\}$ with $m \leq k$, these are required to satisfy the unit equations 
\begin{equation}
h \circ_i 1_{b_i} = h \hspace{0.5cm} \textnormal{and} \hspace{0.5cm} 1_c \circ h = h
\end{equation}
and the associativity equations 
\begin{align}
h \circ_i(g\circ_j f) = (h\circ_i g)\circ_{j+i-1}f  \hspace{0.5cm} &\textnormal{for} \hspace{0.5cm} 1 \leq i \leq m, 1 \leq j \leq n   \label{eq:classic-ass-line}\\
(h\circ_i g)\circ_{n+j-1}f=(h \circ_j f) \circ_{i} g \hspace{0.5cm} &\textnormal{for} \hspace{0.5cm} 1 \leq i<j \leq m \label{eq:classic-ass-not-line} 
\end{align}
for composable $f,g,h$ of arity $l,n,m \leq k$ respectively whenever these are defined (that is, when the four composites involved in each of these two equations have arity $\leq k$.)\begin{footnote}{ We note that the equation \eqref{eq:classic-ass-not-line} was called a \emph{commutativity equation} by Lambek \cite{Lambek:multicategories} (see also \cite{Levy}).  Here we refer to both \eqref{eq:classic-ass-line} and  \eqref{eq:classic-ass-not-line} as associativity equations since they both become such when considering the corresponding multicategories in the many-variable-substitution sense of \cite{Hermida2000Representable, LeinsterBook}, and since we have no need to separate \eqref{eq:classic-ass-line} and  \eqref{eq:classic-ass-not-line} into two groups in what follows.}\end{footnote}
\end{defn}

The evident structure preserving morphisms between $k$-ary multicategories are called multifunctors and these form the morphisms of the category $\kmulti$ of $k$-ary multicategories.

%
%\begin{rmk}
%The functor $\mc_n(-;-)\colon(\catC^{n})^{op} \times \catC \to \Set$ in \eqref{eq:profunctor} says that we have sets $\mc_n(x_1,\ldots,x_n;y)$ of $n$-ary multimaps (where the unary morphisms are those of $\catC$) and $n$-ary multimaps can be precomposed and postcomposed by unary ones in a compatible manner.  For an $n$-ary map $f\colon x_1,\ldots,x_n \to y$ and unary map $g\colon y \to z$, we write $g \circ_1 f\colon x_1,\ldots,x_n \to y$ for the composite, obtained from the functor action;  similarly for a unary map $h\colon{x_i}^{'} \to x_i$, we write $f \circ_i h\colon x_1,\ldots, {x_i}^{'}, \ldots x_n \to y$ for the composite.   Note that the unit axioms for the $k$-ary multicategory are encoded by the fact that $\mc_n(-;-)$ sends identities to identities.
%\end{rmk}

The associativity equations are best understood using diagrams.  For instance, the well-definedness of the following diagram represents an instance of \eqref{eq:classic-ass-line} 
\begin{center}
\begin{tikzpicture}[triangle/.style = {fill=yellow!50, regular polygon, regular polygon sides=3,rounded corners}]
%Multimaps
\path 
	(0,0.5) node [triangle,draw,shape border rotate=-90,inner sep=1pt,label=135:$a_1$,label=230:$a_2$] (b) {$f$} 
	(2,1) node [triangle,draw,shape border rotate=-90,inner sep=1pt,label=135:$b_1$,label=230:$b_2$] (a) {$g$}
	(4,0.5) node [triangle,draw,shape border rotate=-90,inner sep=1pt, label=135:$c_1$,label=230:$c_2$] (c) {$h$};

%Nodes
\draw [-] (-0.8,0.2) to (b.228);
\draw [-] (-0.8,0.75) to (b.139);
\draw [-] (3.1,0.1) to (c.228);
\draw [-] (a) .. controls +(right:1cm) and +(left:1cm).. (c.139);
\draw [-] (b) .. controls +(right:1cm) and +(left:1cm).. (a.225);
\draw [-] (c) to node [above] {$c$} (5,0.5);
\draw [-] (1.2,1.25) to (a.135);
\end{tikzpicture}  
\end{center}

whilst the following equality represents an instance of \eqref{eq:classic-ass-not-line}.
\begin{equation*}%\tag{A'}
\label{ax:fin-A}
\begin{gathered}
\begin{tikzpicture}[triangle/.style = {fill=yellow!50, regular polygon, regular polygon sides=3,rounded corners}]
%Multimaps
	%LHS
\path (0,0) node [triangle,draw,shape border rotate=-90,inner sep=1.1pt] (a) {$f$}
	(2,2) node [triangle,draw,shape border rotate=-90,inner sep=1.1pt] (b) {$g$} 
	(4,1) node [triangle,draw,shape border rotate=-90,inner sep=1.1pt,label=135:$b_1$,label=230:$b_2$] (c) {$h$} 
	%RHS
	(9,0) node [triangle,draw,shape border rotate=-90,inner sep=1.1pt] (a') {$f$}
	(7,2) node [triangle,draw,shape border rotate=-90,inner sep=1.1pt] (b') {$g$} 
	(11,1) node [triangle,draw,shape border rotate=-90,inner sep=1.1pt, label=135:$b_1$,label=230:$b_2$] (c') {$h$};
	
%Connecting
	%LHS
\draw [-] (a) .. controls +(right:2cm) and +(left:1cm).. (c.220);
\draw [-] (b) .. controls +(right:1cm) and +(left:1cm).. (c.140);
	%RHS
\draw [-] (a') .. controls +(right:1cm) and +(left:1cm).. (c'.220);
\draw [-] (b') .. controls +(right:2cm) and +(left:1cm).. (c'.140);
	
%Nodes
	%LHS
\draw [-] (c) to node [above] {$c$} (5,1);
\draw [-] (1.15,2.2) to node [above] {$a_1$} (b.139);
\draw [-] (1.15,1.75) to node [below] {$a_2$} (b.228);
\draw [-] (-.85,0.25) to node [above] {$a_3$} (a.139);
\draw [-] (-.85,-0.3) to node [below] {$a_4$} (a.228);

\node () at (5.5,1) {$=$};

	%RHS
	\draw [-] (c') to node [above] {$c$} (12,1);
\draw [-] (6.15,2.2) to node [above] {$a_1$} (b'.139);
\draw [-] (6.15,1.75) to node [below] {$a_2$} (b'.228);
\draw [-] (8.15,0.25) to node [above] {$a_3$} (a'.139);
\draw [-] (8.15,-0.3) to node [below] {$a_4$} (a'.228);
\end{tikzpicture}
\end{gathered}
\end{equation*}

\begin{rmk}
Each $k$-ary multicategory has an underlying category $\mc_1$ with the same objects as $\mc$ and morphisms the unary ones.  In particular, one can think of a $k$-ary multicategory as a category equipped with extra structure.  

Substitution of unary and $n$-ary multimaps equips the sets $\mc_n(a_1,\ldots,a_n;b)$ of $n$-ary multimaps with the structure of a functor 
\begin{equation*}
\mc_n(-;-)\colon(\mc_1^{n})^{op} \times \mc_1 \to \Set.
\end{equation*} 
with respect to which the substitution maps $\circ_i\colon \mc_n(\overline{b};c) \times \mc_m(\overline{a};b_i) \to \mc_{n+m-1}(\overline{b}_{<i},\overline{a},\overline{b}_{>i};c)$
are natural in each variable.  

Observe that the category $\mc_1$ and functor $\mc_n(-;-)$ already encode all substitutions involving $n$-ary and unary morphisms, plus any associativity axioms involving two unary maps, as well as the unit axioms.  The remaining associativity axioms involving a unary map are captured by naturality of the substitutions.

When constructing the $4$-ary multicategory $\GCatm^l$ in the next section, we will begin with a category $\GCat_p$, then construct the functors $\GCatm^l_n(-;-)$ of $n$-ary maps and the substitutions $\circ_i$, verifying that these are natural as we construct them, in this way verifying all associativity equations involving unary maps as well as the unit equations.
\end{rmk}

Skew multicategories were introduced in \cite{BourkeLack:skew}.  In general, these involve two types of multimorphisms, tight and loose, together with a function viewing tight multimaps as loose ones and satisfying some axioms.  The concept simplifies significantly when these functions are subset inclusions and we will abuse the terminology by giving the definition under this restriction.  Again, we give a $k$-ary version, with $\omega$-ary skew multicategories being the usual skew multicategories.

\begin{defn}
Let $k \in \mathbb N_{\geq 1} \cup \{\omega\}$.  A $k$-ary skew multicategory $\mc$ consists of a $k$-ary multicategory $\mc^l$ as above, whose multimaps we call \emph{loose}, together with a subset $\mc^t_n(a_1,\ldots,a_n;b) \subseteq \mc^l_n(a_1,\ldots,a_n;b)$ for $1 \leq n \leq k$ of \emph{tight} $n$-ary multimaps.  
\item Furthermore, we require that:
\begin{itemize}
\item the unary identities in $\mc^l$ are tight;
\item the composite multimap $g \circ_1 f$ is tight whenever $g$ and $f$ are tight, and for $i \neq 1$, we require that $g \circ_i f$ is tight whenever $g$ is.
\end{itemize}
\end{defn}

A multifunctor $F\colon\mc \to \md$ between $k$-ary skew multicategories is a multifunctor $F\colon\mc^l \to \md^l$ between the underlying $k$-ary multicategories of loose multimaps which preserves tightness of multimaps.  These form the morphisms of the category $\skkmulti$ of $k$-ary skew multicategories.

In a $k$-ary skew multicategory $\mc$, the objects and tight morphisms form a subcategory $j\colon\mc^t_1 \to \mc^l_1$ of the category of loose unary morphisms.  Note that a skew multicategory in which all loose multimaps are tight is precisely a multicategory.

%Each skew multicategory in the sense of \cite{BourkeLack:skew} has an underlying $4$-ary skew multicategory, and conversely, each $4$-ary multicategory (satisfying some representability conditions) arises in this way.

\begin{eg}
%In practice, the tight multimaps should be thought of as multimaps which are \emph{special} in the first variable.  
There is a multicategory $\multab$ of abelian groups, in which multimaps $F\colon A_1,\ldots,A_n \to B$ are arbitrary functions $F\colon A_1 \times \ldots \times A_n \to B$.  It becomes a skew multicategory when, for $n \geq 1$, we say $F$ is tight just when each $F(-,a_2,\ldots,a_n)\colon A_1 \to B$ is a homomorphism of abelian groups --- that is, it is a homomorphism in the first variable.
%In 2-category theory, there are naturally occuring analogues of this example --- for instance, there is a 
\end{eg}

\subsection{Closedness}

Let $k \geq 2$.  A $k$-ary multicategory $\mc$ is said to be \emph{closed}, if for all $b,c$, there exists an object $[b,c]$ and binary map 

\begin{displaymath}
\begin{tikzpicture}[triangle/.style = {fill=yellow!50, regular polygon, regular polygon sides=3,rounded corners}]
%Multimaps
\path (2,2) node [triangle,draw,shape border rotate=-90, inner sep=0pt] (m) {$e_{b,c}$};

%Nodes
\draw [-] (0.5,2.5) to node [above] {$[b,c]$} (m.130);
\draw [-] (0.5,1.5) to node [below] {$b$} (m.230);
\draw [-] (m) to node [above] {$c$} (4,2);
\end{tikzpicture}
\end{displaymath}

for which the induced functions
$e_{b,c}\circ_1-\colon\mc_n(\overline{a};[b,c]) \to \mc_{n+1}(\overline{a},b;c)$  are bijections for $0 \leq n < k$. It is said to be \emph{biclosed} if, moreover, there there exists an object $[b,c]'$ and binary map $e'_{b,c}\colon b,[b,c]'\to c$ for which the induced functions $e'_{b,c}\circ_2-\colon\mc_n(\overline{a};[b,c]) \to \mc_{n+1}(b,\overline{a};c)$  are bijections for $0 \leq n < k$.

For a $k$-ary skew multicategory $\mc$ to be \emph{closed}, we firstly require that its $k$-ary multicategory of loose multimaps $\mc^l$ is closed.  Furthermore, we require that $e_{b,c}\colon [b,c],b\to c$ is tight and that the induced function $e_{b,c}\circ_1-\colon\mc^t_n(\overline{a};[b,c]) \to \mc^t_{n+1}(\overline{a},b;c)$ is also a bijection for $1 \leq n < k$.  We will not consider biclosedness in the skew context since it does not feature in our examples.

\begin{eg}
The skew multicategory $\multab$ is closed --- the internal hom $[A,B]$ is the abelian group of \emph{functions} from $A$ to $B$ with addition and unit defined pointwise as in $B$.
\end{eg}

\subsection{Left representability}

Let $\mc$ be a $k$-ary skew multicategory where $k \geq 2$.  A \emph{tight binary map classifier} for $a$ and $b$ consists of a representation of $\mc^t_2(a,b;-)\colon\mc_1^t \to \Set$ -- in other words, a tight binary map 

\begin{displaymath}
\begin{tikzpicture}[triangle/.style = {fill=yellow!50, regular polygon, regular polygon sides=3,rounded corners}]
%Multimaps
\path (2,2) node [triangle,draw,shape border rotate=-90, inner sep=0pt] (m) {$\theta_{a,b}$};

%Nodes
\draw [-] (0.5,2.5) to node [above] {$a$} (m.130);
\draw [-] (0.5,1.5) to node [below] {$b$} (m.230);
\draw [-] (m) to node [above] {$ab$} (4,2);
\end{tikzpicture}
\end{displaymath}

for which the induced function $-\circ \theta_{a,b}\colon\mc^{t}_1(ab;c) \to \mc^{t}_2(a,b;c)$
is a bijection for all $c$. It is \emph{left universal} if, moreover, the induced function 
$$-\circ_1 \theta_{a,b}\colon\mc^{t}_n(ab,\overline{c};d) \to \mc^{t}_{n+1}(a,b,\overline{c};d)$$
is a bijection for all $n < k$ and $\overline{c}$ a tuple of the appropriate length.  

Similarly, a \emph\emph{nullary map classifier} is a representation of $\mc^{l}_2(-;-)\colon\mc^t_1 \to \Set$ --- thus, a certain nullary map 
\begin{displaymath}
\begin{tikzpicture}[triangle/.style = {fill=yellow!50, regular polygon, regular polygon sides=3,rounded corners}]
%Multimaps
\path (2,2) node [triangle,draw,shape border rotate=-90, inner sep=1pt] (m) {$u$};
%Nodes
\draw [-] (m) to node [above] {$i$} (3.5,2);
\end{tikzpicture}
\end{displaymath}

in $\mc$.  It is \emph{left universal} if, moreover, the induced function 
$$-\circ_1 u\colon \mc^{t}_{n+1}(i,\overline{c};d) \to \mc^{l}_{n}(\overline{c};d)$$
is a bijection for each $n <  k$ and $\overline{c}$ an $n$-tuple. 

A $k$-ary skew multicategory $\mc$ is said to be \emph{left representable} if it admits left universal nullary and tight binary map classifiers.  

\subsection{The closed left representable case}\label{sect:closedrep}
In the case that $\mc$ is closed, any nullary or tight binary map classifiers are \emph{automatically left universal}.  (Indeed, closedness allows us to remove the extra variable $\overline{x}$ defining left universality by shifting it from the inputs of the multimap to the output.)  Hence, a closed $k$-ary skew multicategory $\mc$ is left representable just when it admits nullary and tight binary map classifiers.  

Moreover, the internal homs assemble into an endofunctor $[b,-]\colon\mc^t_1 \to \mc^t_1$ and the natural isomorphisms $\mc^t_2(a,b;c) \cong \mc^t_1(a;[b,c])$ ensure that $\catC$ admits tight binary map classifiers just when each $[b,-]$ has a left adjoint.  Summing up, 
\begin{itemize}
\item a closed $k$-ary skew multicategory is left representable just when it admits a nullary map classifier and each endofunctor $[b,-]$ has a left adjoint. 
\end{itemize} 
We will use this result in verifying left representability in our main examples.  See Proposition 4.7 of \cite{BourkeLack:skew} for a detailed proof in the case of skew multicategories, and note that the proof therein applies equally to the $k$-ary setting.

\subsection{$4$-ary skew multicategories versus skew monoidal categories}\label{sect:From}

In Section 6.2 of \cite{BourkeLack:skew}, it was shown that left representable skew multicategories give rise to skew monoidal categories.  Here we describe a truncated version of this result: namely, each left representable $4$-ary skew multicategory $\mc$ gives rise to a skew monoidal structure on its underlying category $\mc^t_1$ of tight unary maps.  Let us now describe the associated skew monoidal structure --- for a detailed verification of the axioms, we refer the reader to Lemma 4.5.8 of the second-named author's Phd thesis \cite{LobbiaThesis}.

\begin{enumerate}
\item The tensor product of two objects $a,b \in \mc^t_1$ is the tight binary map classifier $ab$ whilst the unit $i$ is the nullary map classifier.
\item Left representability ensures that the composite tight multimap below left 
\begin{equation*}
\begin{tikzpicture}[triangle/.style = {fill=yellow!50, regular polygon, regular polygon sides=3,rounded corners}]

\path 
	(7,1.35) node [triangle,draw,shape border rotate=-90,inner sep=0pt] (b') {$\theta_{a,b}$} 
	(9,1) node [triangle,draw,shape border rotate=-90,inner sep=-1.5pt,label=135:$ab$,label=230:$c$] (c') {$\theta_{ab, c}$};

\draw [-] (6.15,1.7) to node [above] {$a$} (b'.140);
\draw [-] (6.15,1) to node [below] {$b$} (b'.220);
\draw [-] (b') to (c'.140);
\draw [-] (7.9,0.65) to node [below] {$$} (c'.220);
\draw [-] (c') to node [above] {$(ab)c$} (11,1);
\end{tikzpicture}
\hspace{1cm}
\begin{tikzpicture}[triangle/.style = {fill=yellow!50, regular polygon, regular polygon sides=3,rounded corners}]
%Multimaps
	%LHS
\path (0.25,0.5) node [triangle,draw,shape border rotate=-90,inner sep=0pt] (a) {$\theta_{b,c}$} 
	(2.5,1) node [triangle,draw,shape border rotate=-90,inner sep=-1.5pt,label=135:$a$,label=230:$bc$] (c) {$\theta_{a,bc}$} 
	%RHS
;
	
%Connecting
	%LHS
\draw [-] (a) to (c.230);
\draw [-] (1.25,1.45) to (c.135);
	%RHS
	
%Nodes
	%LHS
\draw [-] (c) to node [above] {$a(bc)$} (3.75,1);
\draw [-] (-.6,0.825) to node [above] {$b$} (a.140);
\draw [-] (-.6,0.175) to node [below] {$c$} (a.220);

\end{tikzpicture}
\end{equation*}
is a \emph{tight $3$-ary map classifier}.  By its universal property, the tight ternary map above right induces a unique tight unary morphism $\alpha\colon (ab)c\to a(bc)$ which yields the rhs above under pre-composition with the lhs.  This is the associator.
\item 
Left universality also implies that 
\begin{equation*}
\begin{gathered}
\begin{tikzpicture}[triangle/.style = {fill=yellow!50, regular polygon, regular polygon sides=3,rounded corners}]
%Multimaps
	%LHS
\path
	(2,1.5) node [triangle,draw,shape border rotate=-90,inner sep=1pt] (b) {$u$} 
	(4,1) node [triangle,draw,shape border rotate=-90,label=135:$i$,label=230:$a$] (ab) {$\theta$};

%Connecting
	%LHS
\draw [-] (3.2,0.6) to (ab.227);
\draw [-] (b) .. controls +(right:1cm) and +(left:1cm).. (ab.140);
	%RHS
	
%Nodes
	%LHS
\draw [-] (ab) to node [above] {$ia$} (5.5,1);

	%RHS
\end{tikzpicture}
\end{gathered}
\end{equation*}
classifies loose unary maps with domain $a$; it follows that there exists a unique tight morphism $l\colon ia\to a$ which yields the identity on $a$ when precomposed with the above multimap.  This is the left unit map for the skew monoidal structure.  

Let us note that the fact that the aforementioned classification property of $ia$ amounts to the statement that $ia$ is the value of the left adjoint to the subcategory inclusion $j\colon \mc^t_1 \to \mc^l_1$ with unit depicted above, and with counit $l\colon ia \to a$.
\item 
Finally, the right unit map $r\colon a\to ai$ is defined as the composite
\begin{equation*}
\label{eq:univ-rho}
\begin{gathered}
\begin{tikzpicture}[triangle/.style = {fill=yellow!50, regular polygon, regular polygon sides=3,rounded corners}]
%Multimaps
	%LHS
\path (0.5,0.55) node [triangle,draw,shape border rotate=-90,inner sep=1pt] (a) {$u$} 
	(2.5,1) node [triangle,draw,shape border rotate=-90,inner sep=0pt,label=135:$a$,label=230:$i$] (c) {$\theta_{a,i}$};

%Nodes
	%LHS
\draw [-] (a) to (c.230);
\draw [-] (1.2,1.4) to (c.135);
\draw [-] (c) to node [above] {$ai$} (4,1);

\end{tikzpicture}
\end{gathered}
\end{equation*}
which is tight since $\theta_{a,i}$ is.
\end{enumerate}
If $\mc$ is, moreover, closed, then the composite natural isomorphism $$\mc^t_1(ab;c) \cong \mc^t_2(a,b;c) \cong \mc^t_1(a;[b,c])$$ show that the skew monoidal structure on $\mc_1^t$ is closed too, again with internal hom $[b,c]$.

\begin{rmk}
It appears that there is much more to say about $k$-ary multicategories --- in particular, it seems that it is possible to complete each $k$-ary multicategory to a multicategory in a universal way (both in the ordinary and skew settings).  Given that in the present paper we only need to consider $k$-ary skew multicategories for $k \leq 4$, we leave this to future developments.
%%Given the results of Section~\ref{sect:From} above, in the present paper we will need to consider $k$-ary skew multicategories only for $k \leq 4$.  But there is a lot more to say about the general $k$-ary setting.  Indeed, though the forgetful functor $\multi_{\omega} \to \kmulti$ does not have a right adjoint in general, it does so if we restrict to $0$-fully faithful morphisms (defined in Section~\ref{sect:sharp}) on either side.  This right adjoint admits an elegant description and allows us to identify $k$-ary multicategories with multicategories in its essential image, which we might call \emph{$k$-coskeletal}.  An example of a $1$-coskeletal multicategory is $\multab$ since its $k$-ary multimaps for $k \geq 2$ amount to compatible systems of unary maps.  Similar results hold in the skew setting and we hope to treat all of these results properly in future work.
\end{rmk}

\section{Pseudomorphisms and mapping spaces for Gray-categories}
\label{sect:mappingspace}

We begin the present section by reminding the reader about Gray-categories.  Following Gohla \cite{GohlaB:mapp}, we then define pseudomaps of Gray-categories and the mapping space $\lax(\catA,\catB)$ of lax transformations, establishing functoriality of $\lax(-,-)$ on pseudomaps in Proposition~\ref{prop:asd}.  We also introduce the Gray-category $\oplax(\catA,\catB)$ of oplax transformations.  %In Section~\ref{sect:shortmult}, $\lax(\catA,\catB)$ will be essential in our construction of the $4$-ary skew multicategory $\GCatm$ and, moreover, is the internal hom exhibiting $\GCatm$ as closed.

\subsection{Gray-categories}
A Gray-category $\ca$ is a $(\Gray,\otimes_p,1)$-enriched category, where $\otimes_p$ denotes the Gray-tensor product, as described in Section 3.2 of \cite{Gurskibook} or Section 4 of \cite{Gordon1995Coherence}.   In particular, it amounts to:
\begin{enumerate}[(i)]
\item A category $\ca_1$ of $0$ and $1$-cells with composition of $f\colon x \to y$ and $g\colon y \to z$ denoted by $gf\colon x \to z$, and with identities denoted by $1_x\colon x \to x$.
\item\label{ax:strict-hom} For $0$-cells $x,y$, a $2$-category $\ca(x,y)$ whose objects are the $1$-cells of $\ca_1$, and whose $n$-cells are referred to as the $(n+1)$-cells of $\ca$.
\item Given $1$-cells $f\colon w \to x$ and $g\colon y \to z$, there are \emph{whiskering} $2$-functors $- \circ f\colon \ca(x,y) \to \ca(w,y)$ and $g \circ -\colon \ca(x,y) \to \ca(x,z)$ whose action on objects is composition in $\ca_1$.  These whiskering operations are functorial in $\ca_1$ and compatible, in the sense that $(g \circ -) \circ f = g \circ (- \circ f)$.
%\item A set of objects $\ca_0$, whose elements are called the 0-cells of $\ca$;
%\item Bet
\item Given a pair of $2$-cells 
% https://q.uiver.app/?q=WzAsMyxbMCwwLCJ4Il0sWzEsMCwieSJdLFsyLDAsInoiXSxbMCwxLCJmXzEiLDAseyJjdXJ2ZSI6LTJ9XSxbMSwyLCJmXzIiLDAseyJjdXJ2ZSI6LTJ9XSxbMSwyLCJmXzInIiwyLHsiY3VydmUiOjJ9XSxbMCwxLCJmXzEnIiwyLHsiY3VydmUiOjJ9XSxbNCw1LCJcXHBzaSIsMCx7InNob3J0ZW4iOnsic291cmNlIjoyMCwidGFyZ2V0IjoyMH19XSxbMyw2LCJcXHBoaSIsMCx7InNob3J0ZW4iOnsic291cmNlIjoyMCwidGFyZ2V0IjoyMH19XV0=
\[\begin{tikzcd}[ampersand replacement=\&]
	x \& y \& z
	\arrow[""{name=0, anchor=center, inner sep=0}, "{f_1}", curve={height=-12pt}, from=1-1, to=1-2]
	\arrow[""{name=1, anchor=center, inner sep=0}, "{f_2}", curve={height=-12pt}, from=1-2, to=1-3]
	\arrow[""{name=2, anchor=center, inner sep=0}, "{f_2'}"', curve={height=12pt}, from=1-2, to=1-3]
	\arrow[""{name=3, anchor=center, inner sep=0}, "{f_1'}"', curve={height=12pt}, from=1-1, to=1-2]
	\arrow["\psi"{xshift=0.1cm}, shorten <=3pt, shorten >=3pt, Rightarrow, from=1, to=2]
	\arrow["\phi"{xshift=0.1cm}, shorten <=3pt, shorten >=3pt, Rightarrow, from=0, to=3]
\end{tikzcd}\]
there is an invertible $3$-cell
% https://q.uiver.app/?q=WzAsNCxbMCwwLCJmXzJmXzEiXSxbMSwwLCJmXzJmJ18xIl0sWzAsMSwiZidfMmZfMSJdLFsxLDEsImYnXzJmJ18xLiJdLFswLDEsIjFcXGNpcmNcXHBoaSJdLFsxLDMsIlxccHNpXFxjaXJjMSJdLFswLDIsIlxccHNpXFxjaXJjMSIsMl0sWzIsMywiMVxcY2lyY1xccGhpIiwyXSxbNCw3LCJcXHBzaTpcXHBoaSIsMCx7InNob3J0ZW4iOnsic291cmNlIjo0MCwidGFyZ2V0Ijo0MH19XV0=
\[\begin{tikzcd}
	{f_2f_1} & {f_2f'_1} \\
	{f'_2f_1} & {f'_2f'_1}
	\arrow[""{name=0, anchor=center, inner sep=0}, "f_2\circ\phi", from=1-1, to=1-2]
	\arrow["\psi\circ f_1'", from=1-2, to=2-2]
	\arrow["\psi\circ f_1"', from=1-1, to=2-1]
	\arrow[""{name=1, anchor=center, inner sep=0}, "f_2'\circ\phi"', from=2-1, to=2-2]
	\arrow["{\psi:\phi}"{xshift=0.1cm}, shorten <=9pt, shorten >=9pt, Rightarrow, from=0, to=1]
\end{tikzcd}\]	
called the interchange map.  The interchange maps satisfy several equations, for which we refer the reader to \cite[Definition~1.1]{GambLob:psm}. 
\end{enumerate}

In this section, for a Gray-category $\ca$, we use $x, y, z, \ldots$ to denote its 0-cells, 
$f_1 \colon x \to y$, $f_2 \colon x \to z$, \ldots for its 1-cells, $\phi \colon f \to f' ,\psi \colon g \to g' \, \ldots$ for 2-cells and $\Lambda \colon \alpha \to \alpha' \, , \Theta \colon \beta \to \beta' \, \ldots$ for 3-cells.  Compositions in the hom $2$-categories $\ca(x,y)$ will be denoted by 
$$-\cdot-\colon\ca(x,y)[g,h]\times\ca(x,y)[f,g]\to\ca(x,y)[f,h].$$
and we will use $\Lambda'\ast\Lambda$ for vertical composition of 3-cells in $\ca(x,y)$.  

We will rarely need any such notation, since composites in the hom-2-categories will almost always be given using diagrams.  Our diagram style follows that of Gurski \cite{Gurskibook}.  In diagrams, we will abuse notation for whiskering $2/3$-cells by a $1$-cell and only write $1\circ-$ and $-\circ1$, as in the diagram below. Finally, in some larger diagrams, where the context will make it clear, we will avoid labelling the interchange and only indicate it with a dashed double arrow again as in the diagram below. 

% https://q.uiver.app/?q=WzAsNCxbMCwwLCJmXzJmXzEiXSxbMSwwLCJmXzJmJ18xIl0sWzAsMSwiZidfMmZfMSJdLFsxLDEsImYnXzJmJ18xLiJdLFswLDEsIjFcXGNpcmNcXHBoaSJdLFsxLDMsIlxccHNpXFxjaXJjMSJdLFswLDIsIlxccHNpXFxjaXJjMSIsMl0sWzIsMywiMVxcY2lyY1xccGhpIiwyXSxbNCw3LCIiLDAseyJzaG9ydGVuIjp7InNvdXJjZSI6NDAsInRhcmdldCI6NDB9LCJzdHlsZSI6eyJib2R5Ijp7Im5hbWUiOiJkYXNoZWQifX19XV0=
\[\begin{tikzcd}[ampersand replacement=\&]
	{f_2f_1} \& {f_2f'_1} \\
	{f'_2f_1} \& {f'_2f'_1}
	\arrow[""{name=0, anchor=center, inner sep=0}, "1\circ\phi", from=1-1, to=1-2]
	\arrow["\psi\circ1", from=1-2, to=2-2]
	\arrow["\psi\circ1"', from=1-1, to=2-1]
	\arrow[""{name=1, anchor=center, inner sep=0}, "1\circ\phi"', from=2-1, to=2-2]
	\arrow[shorten <=9pt, shorten >=9pt, Rightarrow, dashed, from=0, to=1]
\end{tikzcd}\]	

%\begin{rmk} \label{gray-notation}
%
%
%Regarding the compositions in the local 2-categories, we write
%$$-\cdot-\colon \ca(X,Y)[f',f'']\times \ca(X,Y)[f,f']\to\ca(X,Y)[f,f'']$$
%for horizontal composition and $\Lambda' \ast \Lambda$ for vertical composition of 3-cells $\Lambda,\Lambda'\in\ca(X,Y)[f,f']$. 
%%We also sometimes write $f_2 \circ f_1$ instead of $f_2 f_1$ for 
%%cubical composition of 1-cells. For 2-cells, we write 
%%$\phi' \cdot \phi$ (or $\phi' \phi$)
%%for the vertical composition and $f_2\circ \phi$ and $\psi \circ f_1$ for left and right whiskering of a 2-cell by a 1-cell. 
%For 3-cells, we write $(-)\circ\Lambda$ and $\Theta \circ (-)$ for left and right whiskering by 1- or 2-cells.
%%, $\Lambda' \ast \Lambda$ for  vertical composition 
%%in $\ca(X,\,Y)$ and $\Theta\cdot\Lambda$ for horizontal composition in $\ca(X,\,Y)$, where $\alpha' \colon f'\rightarrow f''$ and $\bar{\alpha}\in\catK(X,Y)[f',f'']$. 
%%\end{rmk}

\subsection{Homomorphisms of Gray-categories}

Given Gray-categories $\ca$ and $\cb$, a Gray-functor $F\colon\ca \to \cb$ is a morphism of underlying $3$-graphs which preserves the Gray-category structure on the nose.  We often refer to these Gray-functors as \emph{strict maps}, and they form the morphisms of the category $\GCat$.  

In \cite{GohlaB:mapp}, Gohla defined the more general notion of a pseudo-Gray-functor $F\colon\ca \to \cb$ which we will call a \emph{pseudomap}. A pseudomap $F\colon\ca\to\cb$ consists of a $3$-graph map together with a cocycle $F^2$, that is,
	\begin{itemize}
%	\item For any pair of objects $x,Y\in\ca$, a 2-functor $F_{x,Y}\colon\ca(x,Y)\to\cb(Fx,FY)$.
	\item  for any pair of 1-cells $f_1\colon x\to y$ and $f_2\colon y\to z$ in $\ca$, an invertible 2-cell 
	$$F^2_{f_2,f_1}\colon Ff_2\circ Ff_1\to F(f_2\circ f_1)$$
	 in $\cb$ satisfying the nine axioms listed in \Cref{subsec:ax-pseudofunct}. 
	\end{itemize}  
We point out that pseudomaps preserve identity $1$-cells strictly and are very close to the notion of normal pseudofunctor in $2$-category theory.

Pseudomaps $F\colon\ca \to \cb$ and $G \colon \cb \to \cc$ can themselves be composed by composing the underlying functions on $3$-graphs and defining the invertible $2$-cell $(GF)^2_{f_2,f_1}$ to be the composite
\begin{equation*}
\xymatrix{
GFf_2\circ GFf_1 \ar[rr]^{G^2_{Ff_2,Ff_1}} && G(Ff_2\circ Ff_1) \ar[rr]^{G(F^2_{f_2,f_1})} && GF(f_2\circ f_1) .
}
\end{equation*}
In this way, we obtain a category $\GCat_p$ of Gray-categories and pseudomaps, which contains $\GCat$ as a subcategory via an identity on objects inclusion $j\colon\GCat \to \GCat_p$.

\begin{rmk}\label{rmk:comonad}
As explained in Section 2 of \cite{GohlaB:mapp}, the category $\GCat_p$ is, in fact, the Kleisli category for a comonad $Q$ on $\GCat$, with $j\colon\GCat \to \GCat_p$ the canonical inclusion to the Kleisli category.  We now describe the comonad $Q$.

%Although we will only rarely use this fact, and only towards the end of the paper, let us give the details now since the comonad $Q$ is easily described.  

Given a Gray-category $\ca$, consider its underlying category $\ca_1$, and let $\epsilon\colon FU\ca_1 \to \ca_1$ denote the counit of the adjunction $F \dashv U$ between categories and reflexive graphs.  Here $FU\ca_1$ has objects as in $\ca_1$ and morphisms are of two kinds: (1) identities $1 \colon x \to x \in \ca_1$ and (2) sequences of composable \emph{non-identity} morphisms $\overline{f} = [f_1,\ldots,f_n]:x \to y$ in $\ca_1$ where the domain of $f_1$ is $x$ and codomain of $f_n$ is $y$.  The functor $\epsilon \colon FU\ca_1 \to \ca_1$ sends composable sequences of morphisms to their composite.  The counit $p\colon Q\ca \to \ca$ of the comonad is the cartesian lifting of $\epsilon$ along the forgetful functor $(-)_0\colon \GCat \to \Cat$: in other words, $Q\ca$ has underlying category $FU\ca_1$ but has $Q\ca(x,y)[\overline{f},\overline{g}] = \ca(x,y)[\epsilon \overline{f}, \epsilon \overline{g}]$ --- in particular, $p\colon Q\ca \to \ca$ is fully faithful on 2-cells and 3-cells.  Note that since $p$ is identity on objects, full on $1$-cells and fully faithful both on $2$-cells and $3$-cells, it is a \emph{triequivalence} of Gray-categories.

In referring to $Q$, we will call it the \emph{pseudomap classifier comonad}.  Indeed, its universal property from the Kleisli adjunction is that there is a universal pseudomap $q \colon \ca \to Q\ca$ through which each pseudomap $F \colon \ca \to \cb$ factors uniquely as a strict map $F' \colon Q\ca \to \cb$.  The universal pseudomap $q \colon \ca \to Q\ca$ is described as follows: it is the identity on objects, sends identity $1$-cells to identity $1$-cells, and sends non-identities $f \colon x \to y$ to words $[f] \colon x \to y$.  It is also the identity on $2$-cells and $3$-cells.  The cocycle $q^2_{g,f}$ is given by $1_{gf} \colon [g] \circ [f] \to [gf]$.  We leave to the interested reader the construction of $F'$, who can also refer for further details to Section 2 of \cite{GohlaB:mapp}.
%The reader will have no trouble describing the strict map $F' \colon Q\ca \to \cb$, 
%Given a pseudomap $F \colon \ca \to \cb$, the strict map $F' \colon Q\ca \to \cb$ acts as $F$ on objects, by evaluation on $1$-cells
\end{rmk}

%In the present section, we describe Gohla's mapping space $\lax(\catA,\catB)$ of lax transformations, and a dual version $\oplax(\catA,\catB)$ of oplax transformations.  In the following section, $\lax(\catA,\catB)$ will be essential to our construction of the $4$-ary skew multicategory $\GCatm$ --- it is in fact the internal hom exhibiting $\GCatm$ as closed.  Later on, we will also see that $\oplax(\catA,\catB)$ makes the underlying multicategory of loose maps of $\GCatm$ biclosed.

\subsection{Gohla's mapping space of lax transformations}

In \cite{GohlaB:mapp}, Gohla defined a ``mapping space" Gray-category $\lax(\ca,\cb)$ between Gray-categories $\ca$ and $\cb$. Its cells in increasing dimension are pseudomaps, lax transformations, modifications and perturbations.  We now spell out the data of the cells of each dimension, leaving the axioms to \Cref{app:ax-gohla-mapp}. 

\begin{itemize}
%Given pseudomaps $F,G\colon\ca\to\cb$, a \emph{lax transformation} $\alpha\colon F\to G$ consists of:
\item \emph{1-cells:} A \emph{lax transformation} $\alpha\colon F\to G$ consists of:
	\begin{itemize}
	\item For any object $x\in\ca$, a 1-cell $\alpha_x\colon Fx\to Gx$ in $\cb$.
	\item For any 1-cell $f\colon x\to y$ in $\ca$, a 2-cell  in $\cb$ of the form
\[\begin{tikzcd}
	{Fx} & {Gx} \\
	{Fy} & {Gy.}
	\arrow[""{name=0, anchor=center, inner sep=0}, "{\alpha_x}", from=1-1, to=1-2]
	\arrow[""{name=1, anchor=center, inner sep=0}, "{\alpha_Y}"', from=2-1, to=2-2]
	\arrow["Ff"', from=1-1, to=2-1]
	\arrow["{Gf}", from=1-2, to=2-2]
	\arrow["{\alpha_f}"{xshift=0.1cm}, shorten <=10pt, shorten >=10pt, Rightarrow, from=0, to=1]
\end{tikzcd}\]

	\item For any 2-cell $\phi\colon f\to f'$ in $\ca$, a 3-cell $\alpha_\phi$ in $\cb$ of the form
	% https://q.uiver.app/?q=WzAsNCxbMCwwLCJHZlxcYWxwaGFfWCJdLFsxLDAsIlxcYWxwaGFfWSBGZiJdLFsxLDEsIlxcYWxwaGFfWSBGZiJdLFswLDEsIkdmXFxhbHBoYV9YIl0sWzAsMSwiXFxhbHBoYV9mIl0sWzMsMiwiXFxhbHBoYV97Zid9IiwyXSxbMCwzLCJHXFxwaGlcXGNpcmMxIiwyXSxbMSwyLCIxXFxjaXJjIEZcXHBoaSJdLFs0LDUsIlxcYWxwaGFfXFxwaGkiLDAseyJzaG9ydGVuIjp7InNvdXJjZSI6MzAsInRhcmdldCI6MzB9fV1d
\[\begin{tikzcd}
	{Gf\alpha_x} & {\alpha_y Ff} \\
	{Gf'\alpha_x} & {\alpha_y Ff'.}
	\arrow[""{name=0, anchor=center, inner sep=0}, "{\alpha_f}", from=1-1, to=1-2]
	\arrow[""{name=1, anchor=center, inner sep=0}, "{\alpha_{f'}}"', from=2-1, to=2-2]
	\arrow["G\phi\circ1"', from=1-1, to=2-1]
	\arrow["{1\circ F\phi}", from=1-2, to=2-2]
	\arrow["{\alpha_\phi}"{xshift=0.1cm}, shorten <=10pt, shorten >=10pt, Rightarrow, from=0, to=1]
\end{tikzcd}\]
	\item For any pair of 1-cells $f_1\colon x\to y$ and $f_2\colon y\to z$ in $\ca$, an invertible 3-cell $\alpha^2_{f_2,f_1}$ in $\cb$ of the form
	% https://q.uiver.app/?q=WzAsNSxbMCwwLCJHZl8yR2ZfMVxcYWxwaGFfWCJdLFsxLDAsIkdmXzJcXGFscGhhX1kgRmZfMSJdLFsyLDAsIlxcYWxwaGFfWiBGZl8yRmZfMSJdLFswLDEsIkcoZl8yZl8xKSBcXGFscGhhX1giXSxbMiwxLCJcXGFscGhhX1pGKGZfMmZfMSkiXSxbMCwxLCIxXFxjaXJjXFxhbHBoYV9mIl0sWzAsMywiR14yX3tnLGZ9XFxjaXJjMSIsMl0sWzEsMiwiXFxhbHBoYV9nXFxjaXJjMSJdLFsyLDQsIjFcXGNpcmMgRl4yX3tnLGZ9Il0sWzMsNCwiXFxhbHBoYV97Z2Z9IiwyXSxbMSw5LCJcXGFscGhhXjJfe2csZn0iLDAseyJzaG9ydGVuIjp7InNvdXJjZSI6MzAsInRhcmdldCI6MzB9fV1d
\begin{equation}\label{eq:laxnat1}
\begin{tikzcd}
	{Gf_2Gf_1\alpha_x} & {Gf_2\alpha_y Ff_1} & {\alpha_z Ff_2Ff_1} \\
	{G(f_2f_1) \alpha_x} && {\alpha_zF(f_2f_1).}
	\arrow["{1\circ\alpha_{f_1}}", from=1-1, to=1-2]
	\arrow["{G^2_{f_2,f_1}\circ1}"', from=1-1, to=2-1]
	\arrow["{\alpha_{f_2}\circ1}", from=1-2, to=1-3]
	\arrow["{1\circ F^2_{f_2,f_1}}", from=1-3, to=2-3]
	\arrow[""{name=0, anchor=center, inner sep=0}, "{\alpha_{f_2f_1}}"', from=2-1, to=2-3]
	\arrow["{\alpha^2_{f_2,f_1}}"{xshift=0.1cm}, shorten <=7pt, shorten >=7pt, Rightarrow, from=1-2, to=0]
\end{tikzcd}
\end{equation}
	\end{itemize}
This data has to satisfy the seven axioms listed in \Cref{subsec:ax-lax-transf}. 

\item \emph{2-cells:} A \emph{modification} $\Lambda\colon\alpha\to\beta$ consists of:
	\begin{itemize}
	\item For any object $x\in\ca$, a 2-cell $\Lambda_x\colon \alpha_x\to \beta_x$ in $\cb$.
	\item For any 1-cell $f\colon x\to y$ in $\ca$, a 3-cell $\Lambda_f$ in $\cb$ of the form
	% https://q.uiver.app/?q=WzAsNCxbMCwwLCJHZlxcYWxwaGFfWCJdLFswLDEsIlxcYWxwaGFfWSBGZiJdLFsxLDEsIlxcYmV0YV9ZIEZmIl0sWzEsMCwiR2ZcXGJldGFfWCJdLFswLDEsIlxcYWxwaGFfZiIsMl0sWzMsMiwiXFxiZXRhX3tmfSJdLFswLDMsIjFcXGNpcmNcXExhbWJkYV9YIl0sWzEsMiwiXFxMYW1iZGFfWVxcY2lyYzEiLDJdLFs2LDcsIlxcTGFtYmRhX2YiLDAseyJzaG9ydGVuIjp7InNvdXJjZSI6MzAsInRhcmdldCI6MzB9fV1d
\[\begin{tikzcd}
	{Gf\alpha_x} & {Gf\beta_x} \\
	{\alpha_y Ff} & {\beta_y Ff.}
	\arrow["{\alpha_f}"', from=1-1, to=2-1]
	\arrow["{\beta_{f}}", from=1-2, to=2-2]
	\arrow[""{name=0, anchor=center, inner sep=0}, "{1\circ\Lambda_x}", from=1-1, to=1-2]
	\arrow[""{name=1, anchor=center, inner sep=0}, "{\Lambda_y\circ1}"', from=2-1, to=2-2]
	\arrow["{\Lambda_f}"{xshift=0.1cm}, shorten <=8pt, shorten >=8pt, Rightarrow, from=0, to=1]
\end{tikzcd}\]
	\end{itemize}
This data has to satisfy the three axioms listed in \Cref{subsec:ax-mod}.

\item \emph{3-cells:} A \emph{perturbation} $\theta\colon\Lambda\to\Gamma$ consists of, for any object $x\in\ca$, a 3-cell $\theta_x\colon \Lambda_x\to\Gamma_x$ in $\cb$ subject to the axiom in \Cref{subsec:ax-pert}.

\end{itemize}
The Gray-category structure on $\lax(\ca,\cb)$ was constructed in \cite{GohlaB:mapp} and is described explicitly in \Cref{app:lax-gray-cat-str}.  

It follows immediately from this description that there are evaluation Gray-functors $ev_x\colon \lax(\ca,\cb) \to \cb$ where $x$ is an object of $\ca$: here, $ev_x$ sends a pseudomap $F$ to its value $Fx$, a lax transformation $\alpha$ to its component $\alpha_x$, a modification $\Gamma$ to the $2$-cell $\Gamma_x$ and a perturbation to the $3$-cell $\theta_x$.  Since perturbations are merely indexed families of $3$-cells, it is immediate that the evaluation maps $ev_x\colon \lax(\ca,\cb) \to \cb$ are jointly faithful on $3$-cells.

\begin{rmk}\label{rmk:PathSpace1}
Whilst we have given a direct description of $\lax(\ca,\cb)$ above, we note that Gohla takes a more abstract approach beginning by constructing a \emph{path space} functor $P\colon\GCat \to \GCat$. He then employs iterated powers of $P$ to obtain the general mapping spaces $\lax(\ca,\cb)$.  We describe the construction in more detail in Remark~\ref{rmk:PathSpace2}.
\end{rmk}

\subsection{The mapping space bifunctor $\lax(-,-)$}

In Section 6 of \cite{GohlaB:mapp}, Gohla describes functoriality properties of $\lax(\ca,\cb)$ in $\ca$ and $\cb$, which we now review and then extend.

\begin{itemize}
\item Firstly, given a \emph{pseudomap} $F\colon\ca\to\ca'$ there is, by Theorem 6.2 of \cite{GohlaB:mapp}, a \emph{strict} map $\lax(F,\cb)\colon\lax(\ca',\cb)\to\lax(\ca,\cb)$.  
On $0$-cells this is given by pre-composition by $F$ and on higher cells it is given by left whiskering.  Full details are given in \Cref{app:lax.F.B}.
\item Secondly, given a \emph{strict map} $G \colon \cb \to \cb'$, there is, by Theorem 6.4 of  \cite{GohlaB:mapp}, a strict map $\lax(\ca,G)\colon\lax(\ca,\cb)\to\lax(\ca,\cb')$.  On $0$-cells this is given by post-composition by $G$ and on $1/2/3$-cells it is given by right whiskering.  Right whiskering by $G$ simply applies the strict map $G$ to the components of the relevant lax transformation/modification or perturbation.  
\end{itemize}
These operations yield functors $\lax(-,\cb)$ and $\lax(\ca,-)$ and using the description of $\lax(F,\cb)$ in \Cref{app:lax.F.B} together with the action of $\lax(\ca,G)$ by application, it is trivial to verify the compatibility $\lax(\ca,G)\lax(F,\cb) = \lax(F,\cb')\lax(\ca',G)$ so producing a bifunctor $\lax(-,-)\colon\GCatp^{op} \times \GCat \to \GCat$.  

%Let us now turn to the bifunctor $\lax(-,-)$.  Following \cite{GohlaB:mapp}, we have:
%\begin{itemize}
%\item given a \emph{pseudomap} $F\colon\ca\to\ca'$ there is, by Theorem 6.2 of \emph{loc.cit.} a strict map $\lax(F,\cb)\colon\lax(\ca',\cb)\to\lax(\ca,\cb)$ given by left whiskering by $F$ (details in \Cref{sect:operations});
%\item given a \emph{strict map} $G \colon \cb \to \cb'$, there is, by Theorem 6.4 of  \emph{loc.cit.} a strict map $\lax(\ca,G)\colon\lax(\ca,\cb)\to\lax(\ca,\cb')$ which is given by right whiskering by $G$ (details in \Cref{sect:operations})
%\end{itemize}

%and these give a bifunctor $\lax(-,-)\colon\GCatp^{op} \times \GCat \to \GCatp$.  

For our purposes, we need to extend this to a bifunctor $\lax(-,-)\colon\GCatp^{op} \times \GCatp \to \GCatp$, which requires constructing a pseudomap $\lax(\ca,G)\colon\lax(\ca,\cb)\to\lax(\ca,\cb')$ for each pseudomap $G \colon \cb \to \cb'$.  Again, this is given by right whiskering, but this is now more subtle, involving conjugation by the invertible cocyle $G^2$.  We give full explicit details of these right whiskering operations in \Cref{app:lax.A.G}, for instance, describing the data of the whiskered lax transformation $G\alpha$.  

However, it would be rather tedious to give a direct verification using these formulae that $\lax(\ca,G)\colon\lax(\ca,\cb)\to\lax(\ca,\cb')$ is a pseudomap --- for instance, we would need to verify that $G\alpha$, so defined, satisfies the axioms for a lax transformation.  Therefore, we now provide a conceptual construction of the pseudomap $\lax(\ca,G)\colon\lax(\ca,\cb)\to\lax(\ca,\cb')$ which avoids any such calculations.  (This approach is guided by the theory of weak maps for algebraic weak factorisation systems \cite{Bourke2016Weak}, but we will keep our treatment essentially elementary, leaving a full treatment of the algebraic aspects as an option for the reader in the proof of Proposition~\ref{prop:asd}.)

To this end, let us define an \emph{algebraic surjection} to be a strict map $F \colon \ca \to \cb$ of Gray-categories which is surjective on $0$-cells, full on $1$-cells, fully faithful on both $2$-cells and $3$-cells and, moreover, equipped with a \emph{lifting function} $\phi$ giving:
\begin{itemize}
\item for each $x \in \cb$, an object $\phi x \in \ca$ such that $G( \phi x) = x$;
\item for each $x,y \in \ca$ and $1$-cell $f \colon Gx \to Gy \in \cb$, a $1$-cell $\phi(x,f,y) \colon x \to y$ such that $G \phi(x,f,y) = f$ and satisfying the normality condition $\phi(x,1_{Gx},x) = 1_x$.
\end{itemize}
We denote an algebraic surjection as above by $(F,\phi) \colon \ca \to \cb$ and sometimes abuse notation by writing $\phi(f) \colon x \to y$ instead of $\phi(x,f,y)$ when the context is clear.  

We remark that algebraic surjections are trivial fibrations for Lack's model structure on Gray-categories \cite{Lack2011A-quillen}, but have the stronger property of being faithful on $2$-cells.  %As trivial fibrations they are, in particular, triequivalences of Gray-categories.

\begin{eg}\label{eg:surj}
The counit $p \colon Q\ca \to \ca$ is an algebraic surjection, when equipped with the structure $\phi x = x$ on objects and, on maps $f$, $\phi(x,f,y) = [f]$ if $f$ is non-identity and $\phi(x,f,y) = 1_x$ if it is the identity.  
\end{eg}

\begin{lemma}\label{lem:ss1}
Each algebraic surjection $(F,\phi) \colon \ca \to \cb$ has a canonical section $F_{\phi} \colon \cb \to \ca$ in $\GCat_p$.
\end{lemma}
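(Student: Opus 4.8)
The plan is to build $F_\phi$ by lifting along $F$: using the lifting function $\phi$ in dimensions $0$ and $1$, and the fully faithful structure of $F$ in dimensions $2$ and $3$, and then to verify every requisite equation by the single observation that $F$ is strict and faithful, so that any equation of $2$- or $3$-cells in $\ca$ may be checked after applying $F$. First I would define the underlying $3$-graph map. On a $0$-cell $x \in \cb$ set $F_\phi x := \phi x$, so that $F F_\phi x = x$. On a $1$-cell $f \colon x \to y$ set $F_\phi f := \phi(\phi x, f, \phi y) \colon \phi x \to \phi y$, which satisfies $F F_\phi f = f$; by the normality condition $\phi(\phi x, 1_x, \phi x) = 1_{\phi x}$, so that $F_\phi$ preserves identity $1$-cells strictly. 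For a $2$-cell $\alpha \colon f \to f'$ the $1$-cells $F_\phi f, F_\phi f'$ are parallel and map under $F$ to $f, f'$; since $F$ is fully faithful on $2$-cells there is a unique $2$-cell $F_\phi \alpha \colon F_\phi f \to F_\phi f'$ with $F(F_\phi \alpha) = \alpha$, and likewise for $3$-cells using full faithfulness there. This yields a $3$-graph map satisfying $F F_\phi = \Id_\cb$ on all cells.

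Next I would supply the cocycle. Given composable $f \colon x \to y$ and $g \colon y \to z$ in $\cb$, the $1$-cells $F_\phi g \cdot F_\phi f$ and $F_\phi(gf)$ are parallel in $\ca$ and both sent by $F$ to $gf$; by full faithfulness on $2$-cells there is a unique $2$-cell $(F_\phi)^2_{g,f} \colon F_\phi g \cdot F_\phi f \to F_\phi(gf)$ with $F\bigl((F_\phi)^2_{g,f}\bigr) = 1_{gf}$. Lifting $1_{gf}$ in the other direction and invoking uniqueness of lifts shows that $(F_\phi)^2_{g,f}$ is invertible, as a cocycle must be.

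The key step is the verification that $(F_\phi, (F_\phi)^2)$ satisfies the seven pseudomap axioms of \Cref{subsec:ax-pseudofunct}, together with the strict preservation of the hom-$2$-category structure, which will follow in the same way. Each such axiom is an equation between a parallel pair of $2$-cells or $3$-cells in $\ca$, all of whose constituents are either structural cells of $\ca$ or the lifted data of $F_\phi$. Since $F$ is a strict Gray-functor it commutes on the nose with every composition, whiskering and interchange, so applying $F$ to either side of such an axiom produces the corresponding side of the same axiom for the pseudomap $F F_\phi$. But $F F_\phi = \Id_\cb$ is the identity pseudomap, whose cocycle is the identity, and for which every axiom holds trivially; hence the two sides agree after applying $F$. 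As $F$ is faithful on both $2$-cells and $3$-cells and the two sides are parallel, they already agree in $\ca$. This reflection argument disposes of all the axioms uniformly, and the identical argument shows that $F_\phi$ preserves vertical composites and identities of higher cells.

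Finally, $F F_\phi = \Id_\cb$ also at the level of cocycles, since $F$ is strict and so the composite cocycle is $F\bigl((F_\phi)^2_{g,f}\bigr) = 1_{gf}$, matching the identity pseudomap; thus $F_\phi$ is a section of $F$ in $\GCat_p$, and it is canonical in that every piece of its data is forced by $\phi$ and by uniqueness of lifts. The main obstacle is not any single computation but rather confirming that each of the seven axioms is genuinely an equation of \emph{parallel} $2$- or $3$-cells, so that the faithfulness reflection legitimately applies; once this is checked, no explicit manipulation of the axioms is required.
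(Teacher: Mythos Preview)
Your proposal is correct and follows essentially the same approach as the paper: define $F_\phi$ on $0$- and $1$-cells via the lifting function $\phi$, extend to $2$- and $3$-cells and to the cocycle by full faithfulness of $F$, and verify all pseudomap axioms by reflecting along the faithful strict map $F$ to the trivially-satisfied axioms for $FF_\phi = \Id_\cb$. The paper's proof is slightly terser but identical in substance.
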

\begin{proof}
On objects, we set $F_{\phi} x = \phi (x)$ and, at $f \colon x \to y$, we define $F_{\phi} f = \phi(f) \colon F_{\phi}x \to F_{\phi}y$ so that $F(F_{\phi}x) = x$ and $F(F_{\phi}f) = f$.  By the normality condition for $\phi$, $F_{\phi}$ preserves identity $1$-cells.  Since $F$ is fully faithful on $2$-cells and $3$-cells, there exists a unique way in which to complete $F_{\phi}$ to a morphism of $3$-graphs such that $FF_{\phi} = 1$.  In fact, there is a unique way to complete $F_{\phi}$ to a pseudomap such that $FF_{\phi} = 1$.  Indeed, given $1$-cells $f \colon x \to y$ and $g \colon y \to z$ we obtain the parallel $1$-cells $F_{\phi}g \circ F_{\phi}f, F_{\phi}(g \circ f) \colon F_{\phi}x \rightrightarrows F_{\phi}z$ in $\ca$ both of which are sent by $F$ to $g \circ f$; by $2$-fully faithfulness of $F$ on $2$-cells, there exists a unique invertible $2$-cell $(F_{\phi})^2_{g,f} \colon F_{\phi}g \circ F_{\phi}f \Rightarrow F_{\phi}(g \circ f)$ such that $F(F_{\phi})^2_{g,f} = 1$.

It remains to verify the pseudomap equations for $F_{\phi}$.  Having verified that $F_{\phi}$ preserves identity $1$-cells, all of the remaining equations concern equalities between parallel $2$-cells or $3$-cells in $\ca$.  Since $F$ is faithful on $2$-cells and $3$-cells, it suffices to show that each of these equations hold after application of $F$, whereupon they are sent to the corresponding equation for $FF_{\phi}$ to be a pseudomap.  And these hold since $FF_{\phi}$ is the identity on $\cb$.
\end{proof}

\begin{lemma}\label{lem:ss2}
Given an algebraic surjection $(F,\phi) \colon \ca \to \cb$, we obtain an algebraic surjection $(\lax(\cc,F),\phi^{\cc}) \colon \lax(\cc,\ca) \to \lax(\cc,\cb)$.
\end{lemma}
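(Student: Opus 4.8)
The plan is to exploit the fact that $\lax(\cc,F)$ acts by applying the strict map $F$ cellwise --- to the components of pseudomaps, lax transformations, modifications and perturbations --- so that every required lifting and every axiom verification reduces to the corresponding property of $F$ one dimension at a time. First I would record that $\lax(\cc,F)$ is itself a strict Gray-functor, being $\lax(\cc,-)$ applied to the strict map $F$. It then remains to equip it with a lifting function $\phi^{\cc}$ and to check surjectivity on $0$-cells, fullness on $1$-cells, and full faithfulness on $2$- and $3$-cells.

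For the lifting function on $0$-cells I would set $\phi^{\cc}G := F_{\phi}\circ G$ for a pseudomap $G\colon\cc\to\cb$, using the section $F_{\phi}$ of Lemma~\ref{lem:ss1}; since $FF_{\phi}=1_{\cb}$ in $\GCat_p$ we get $\lax(\cc,F)(\phi^{\cc}G)=F\circ F_{\phi}\circ G = G$, which simultaneously yields surjectivity on $0$-cells (concretely, $\phi^{\cc}G$ is just the cellwise $\phi$-lift of $G$). The lifting function on $1$-cells is the heart of fullness on $1$-cells: given $0$-cells $P,P'$ of $\lax(\cc,\ca)$ and a lax transformation $\beta\colon \lax(\cc,F)P \to \lax(\cc,F)P'$, I would construct a lax transformation $\alpha\colon P\to P'$ with $\lax(\cc,F)\alpha=\beta$ by lifting componentwise. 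The $1$-cell components $\beta_x\colon F(Px)\to F(P'x)$ are lifted by $\phi$ to $\alpha_x := \phi(Px,\beta_x,P'x)$; the $2$-cell components $\beta_f$ are lifted using fullness of $F$ on $2$-cells; and the $3$-cells $\beta_{\psi}$ together with the invertible cocycle $\beta^2$ are lifted uniquely using full faithfulness of $F$ on $3$-cells, with $\alpha^2$ invertible because $F$, being fully faithful on $3$-cells, reflects invertibility. At each stage the source and target of the cell to be lifted are automatically $F$-images of the intended source and target, since $F$ is strict and $F\alpha_x=\beta_x$. The five lax transformation axioms for $\alpha$ are equations between $3$-cells, hence follow from the corresponding axioms for $\beta$ by faithfulness of $F$ on $3$-cells. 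Setting $\phi^{\cc}(P,\beta,P'):=\alpha$ defines the lifting function, and normality $\phi^{\cc}(P,1_{\lax(\cc,F)P},P)=1_P$ follows from normality of $\phi$ (giving $\alpha_x=1_{Px}$) together with uniqueness of the higher lifts.

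Full faithfulness on $2$- and $3$-cells I would handle by the same cellwise template, using that modifications and perturbations are determined by their components (the evaluation maps being jointly faithful on $3$-cells). For $3$-cells, a perturbation is a family $\{\theta_x\}$, so faithfulness is immediate from faithfulness of $F$ on $3$-cells, while fullness lifts each component of a perturbation between the images via fullness of $F$ on $3$-cells, the perturbation axiom descending by faithfulness. For $2$-cells, I would lift the $2$-cell components $\Lambda_x$ of a modification via full faithfulness of $F$ on $2$-cells and the $3$-cell components $\Lambda_f$ via full faithfulness on $3$-cells; the three modification axioms again transport down from $\cb$ to $\ca$ by faithfulness of $F$ on $3$-cells.

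I do not expect a genuine obstacle of principle here: the single real idea, already present in Lemma~\ref{lem:ss1}, is that faithfulness of $F$ on $2$- and $3$-cells lets one transport every axiom down from $\cb$, so that only the $0$- and $1$-cell data require actual choices, supplied by $\phi$. The main thing to be careful about is purely bookkeeping --- confirming at each stage that the cell one wishes to lift has source and target that are $F$-images of the intended ones, and keeping track of the orientations appearing in the lax-transformation and modification data. I would therefore present the argument compactly, emphasising the cellwise lifting and this \emph{``axioms for free''} principle rather than spelling out each of the five plus three axioms in turn.
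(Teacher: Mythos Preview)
Your proposal is correct and follows essentially the same route as the paper's proof: define $\phi^{\cc}$ by composing with $F_{\phi}$ on $0$-cells and by componentwise lifting on $1$-cells, then use full faithfulness of $F$ on $2$- and $3$-cells both to obtain the higher data uniquely and to transport every axiom back from $\cb$. One small slip worth flagging: not every lax-transformation axiom is an equality of $3$-cells --- the degeneracy $\alpha_{1_x}=1_{\alpha_x}$ is a $2$-cell equation --- but since $F$ is also faithful on $2$-cells your \emph{axioms for free} principle still applies.
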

\begin{proof}
Given a pseudomap $G \colon \cc \to \cb$ we set $\phi^{\cc}(G) = F_{\phi} \circ G \colon \cc \to \cb \to \ca$, noting that $\lax(\cc,F)F_{\phi} \circ G = F \circ F_{\phi} \circ G = G$ as required.

At a lax transformation $\alpha \colon F G \to F H$, we must construct a lax transformation $\phi^{\cc}\alpha \colon G \to H$.  At $x \in \cc$, we define $\phi^{\cc}\alpha_x = \phi(\alpha_x) \colon Gx \to Hx$.  At $f \colon x \to y \in \ca$, we use $2$-fully faithfulness of $F$ to define $\phi^{\cc}\alpha_f \colon Gf \circ \phi^{\cc}\alpha_x \Rightarrow \phi^{\cc}\alpha_y \circ Ff$ as the unique $2$-cell such that $F \phi^{\cc}\alpha_f = \alpha_f$; similarly, we use $3$-fully faithfulness to define the invertible $3$-cells $\phi^{\cc}\alpha_{\phi}$ and $(\phi^{\cc}\alpha)^2_{f_2,f_1}$ as the unique $3$-cells with the appropriate domain and codomain such that $F(\phi^{\cc}\alpha_{\phi}) = \alpha_{\phi}$ and $F((\phi^{\cc}\alpha)^2_{f_2,f_1}) = \alpha^2_{f_2,f_1}$.  

All of the equations for $\phi^{\cc}\alpha$ involve equalities of parallel $2$-cells and $3$-cells; since $F$ is fully faithful on $2$-cells and $3$-cells, it suffices to show that these equations hold under application of $F$, under which, they become the corresponding equations for the lax transformation $\alpha$.  

We must also verify that if $\alpha$ is the identity, then so is $\phi^{\cc}\alpha$, but this follows from normality of $\phi$ and fully faithfulness of $F$ on $2$-cells and $3$-cells.

Since $F$ is fully faithful on $2$-cells and $3$-cells, and since modifications and perturbations are defined using only $2$-cells and $3$-cells, $\lax(\cc,F)$ is also fully faithful on $2$-cells and $3$-cells, completing the proof that it is an algebraic surjection.

\end{proof}

Now consider a pseudomap $G \colon \cb \to \cb'$.  By the universal property of the pseudomap classifier $Q\cb$, there exists a unique strict map $G' \colon Q\cb \to \cb'$ such that $G' \circ q =G$.  Now by Example~\ref{eg:surj}, $(p,\phi):Q\cb \to \cb$ is an algebraic surjection; hence by \Cref{lem:ss2} so is $(\lax(\cc,p),\phi^{\cc})$; therefore, by \Cref{lem:ss1}, we obtain a pseudomap $\lax(\cc,p)_{\phi^{\cc}} \colon \lax(\ca,\cb) \to \lax(\ca,Q\cb)$.  Therefore, we define $\lax(\ca,G)$ as the composite pseudomap
\begin{equation}\label{eq:compositepseudo}
\xymatrix{
\lax(\ca,\cb) \ar[rr]^{\lax(\cc,p)_{\phi^{\cc}}} && \lax(\ca,Q\cb) \ar[rr]^{\lax(\ca,G')} && \lax(\ca,\cb').
}
\end{equation}

Unpacking this definition yields the explicit description of $\lax(\ca,G)$ given in \Cref{app:lax.A.G}.  Note that if $G$ is strict, $\lax(\ca,G)$ is just the strict map given by direct application of $G$. With this in place, let us state the main result of this section.

\begin{prop}\label{prop:asd}
We obtain a functor $\lax(-,-)\colon\GCatp^{op} \times \GCatp \to \GCatp$ such that $\lax(F,\cb)$ is always strict and $\lax(\ca,G)$ is strict whenever $G$ is.
%
%
%which restricts to strict maps as indicated in the following diagram:
%\begin{equation}\label{eq:asd}
%\xymatrix{
%\GCat_p^{op} \times \GCat \ar[d]_{1 \times j} \ar[rr]^-{\lax(-,-)} && \GCat \ar[d]^{j} \\
%\GCatp^{op} \times \GCatp \ar[rr]_-{\lax(-,-)} && \GCatp} 
%\end{equation}
%commutes.
\end{prop}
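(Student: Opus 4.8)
The plan is to regard $\lax(-,-)$ as a bifunctor by checking functoriality in each variable separately together with the interchange law, taking advantage of the fact that almost everything has already been assembled. In the first variable, $\lax(-,\cb)\colon\GCatp^{op}\to\GCat$ is already a functor valued in strict maps (Gohla, Theorem 6.2), so the claim that $\lax(F,\cb)$ is always strict is immediate, as is its contravariant functoriality. Likewise, restricted to strict second arguments, $\lax(\ca,-)$ is the earlier functor valued in strict maps, and formula \eqref{eq:compositepseudo} reduces to direct application of $G$ when $G$ is strict; this gives the ``strict whenever $G$ is'' clause and shows the new $\lax(\ca,-)$ extends the old one along $j\colon\GCat\to\GCatp$. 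Thus the genuine content is (i) functoriality of $\lax(\ca,-)$ on the pseudomaps of $\GCatp$, and (ii) the interchange $\lax(F,\cb')\circ\lax(\ca',G)=\lax(\ca,G)\circ\lax(F,\cb)$ when both $F$ and $G$ are pseudomaps.

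The technical engine for both (i) and (ii) is a cancellation principle for algebraic surjections, which I would isolate first. Since an algebraic surjection $(F,\phi)\colon\ca\to\cb$ is faithful on $2$- and $3$-cells, two pseudomaps $u,v\colon\cx\to\ca$ that agree on objects and $1$-cells and satisfy $Fu=Fv$ must already coincide: agreement on objects and $1$-cells matches the sources and targets of all higher cells and of the cocycle, after which faithfulness forces $u$ and $v$ to agree on $2$-cells, $3$-cells, and the invertible cocycle $F^2$. Combined with \Cref{lem:ss2}, which makes $\lax(\ca,p)\colon\lax(\ca,Q\cb)\to\lax(\ca,\cb)$ an algebraic surjection, this lets me verify equalities of the section-built pseudomaps of \eqref{eq:compositepseudo} by post-composing with $\lax(\ca,p)$ and then separately checking the strictly functorial behaviour on objects and $1$-cells.

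For (i), I would write $\lax(\ca,G)=\lax(\ca,G')\circ s_{\cb}$ with $s_{\cb}:=\lax(\ca,p_{\cb})_{\phi^{\ca}}$ the canonical section of \Cref{lem:ss1} and $G'\colon Q\cb\to\cb'$ the strict classifier of $G$. The identity law is immediate: the identity pseudomap has classifier $p_{\cb}$, so $\lax(\ca,1_{\cb})=\lax(\ca,p_{\cb})\circ\lax(\ca,p_{\cb})_{\phi^{\ca}}=\mathrm{id}$ by the defining property $F\circ F_{\phi}=\mathrm{id}$ of the section. For composition, using that $\GCatp$ is the Kleisli category of the comonad $Q$ (\Cref{rmk:comonad}), the classifier of $G_2\circ G_1$ is $G_2'\circ Q(G_1')\circ\delta_{\cb}$; expanding both $\lax(\ca,G_2\circ G_1)$ and $\lax(\ca,G_2)\circ\lax(\ca,G_1)$ via strict functoriality of $\lax(\ca,-)$ and noting the common left factor $\lax(\ca,G_2')$, it suffices to prove the single identity
\[
\lax(\ca,Q G_1')\circ\lax(\ca,\delta_{\cb})\circ s_{\cb}=s_{\cb'}\circ\lax(\ca,G_1')\circ s_{\cb}.
\]
Post-composing with the algebraic surjection $\lax(\ca,p_{\cb'})$ and using naturality of the counit $p$ together with the comonad identity $p\circ\delta=\mathrm{id}$ and $F\circ F_{\phi}=\mathrm{id}$, both sides collapse to $\lax(\ca,G_1')\circ s_{\cb}$; since the sections act on objects and $1$-cells by honest composition with the universal pseudomap $q$ and with strict maps, both sides also agree there by the same identities, and the cancellation principle finishes it.

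For the interchange (ii), the only new ingredient beyond the strict-$G$ case (already contained in the $\GCatp^{op}\times\GCat$ bifunctor) is that the whiskering map $\lax(F,-)$ commutes with the sections, i.e.\ $\lax(F,Q\cb)\circ s_{\cb}^{\ca'}=s_{\cb}^{\ca}\circ\lax(F,\cb)$; granting this, substituting \eqref{eq:compositepseudo} and using naturality of $\lax(F,-)$ with respect to the strict maps $p_{\cb}$ and $G'$ yields the result. I would prove this section-naturality by the same device: post-compose with $\lax(\ca,p_{\cb})$, invoke naturality of $\lax(F,-)$ against the strict $p_{\cb}$ and $F\circ F_{\phi}=\mathrm{id}$, and then check objects and $1$-cells directly. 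The main obstacle throughout is precisely the part that faithfulness of algebraic surjections does \emph{not} reach --- the behaviour on objects and $1$-cells --- but there the sections are given by literal composition with $q$ and the strict classifiers, so these checks reduce to the comonad triangle and counit-naturality identities for $Q$; the real effort is organizational bookkeeping rather than any hard computation.
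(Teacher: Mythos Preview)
Your approach is correct and takes a genuinely different route from the paper. The paper offers two proofs: a one-line gesture at direct computation using the explicit formulas in Appendix~\ref{app:lax.A.G}, and a conceptual proof invoking the universal property of $\GCat_p$ as the category of weak maps for an algebraic weak factorisation system (Theorem~10 of \cite{Bourke2016Weak}). In that second proof, functoriality of $\lax(\ca,-)$ and the interchange law are obtained by packaging Lemma~\ref{lem:ss2} into a concrete double functor $\Surj\to\Surj$ and then applying the 1- and 2-dimensional parts of the universal property; one must still check that the assignment of Lemma~\ref{lem:ss2} respects composition and morphisms of algebraic surjections, which the paper does by a short argument using faithfulness on higher cells.

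Your proof sits between these two: it avoids both the explicit appendix formulas and the external awfs machinery, replacing the universal property by your cancellation principle together with direct Kleisli bookkeeping. What you gain is self-containment---no appeal to \cite{Bourke2016Weak}---and a transparent reduction of every equation to the comonad identities for $Q$. What the paper's second proof gains is uniformity: once the double-functor data are in place, functoriality and interchange come for free from the universal property without further case analysis. One point to be careful about in your write-up: your claim that the sections ``act on $1$-cells by honest composition with $q$'' deserves a sentence of justification, since $s_\cb$ on lax transformations is defined via the lifting $\phi^{\ca}$ of Lemma~\ref{lem:ss2} rather than literally as a whisker; the identification $\phi^{\ca}(\alpha)_x = q(\alpha_x)$ together with faithfulness of $p_\cb$ on $2$- and $3$-cells makes this work, but it is not quite definitional.
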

\begin{proof}
It remains to verify that $\lax(\ca,-)$ preserves composition and satisfy the compatibility $\lax(\ca,G)\lax(F,\cb) = \lax(F,\cb')\lax(\ca',G)$.  There are two approaches.  

The first is elementary --- these equations follow from a few straightforward calculations using the explicit description of $\lax(\ca,G)$ in \Cref{app:lax.A.G}.  We leave this approach to the interested reader.

The second approach builds upon the conceptual definition of $\lax(\ca,G)$ using algebraic surjections, and makes serious use of the theory of algebraic weak factorisation systems (awfs), and in particular results from \cite{Bourke2016Accessible} and \cite{ Bourke2016Weak}.  For the reader with a good knowledge of awfs, we describe this proof now.  We will freely use notation and terminology from \cite{Bourke2016Accessible} and \cite{Bourke2016Weak}. 

To this end, the starting observation is that there is a small subcategory $\cj \hookrightarrow \GCat^{\atwo}$ of the category of morphisms such that algebraic surjections are morphisms equipped with a lifting operation against $\cj$ in the sense of \cite{Garner2011}.  Though the details are not needed, we note that $\cj$ contains as objects the generating cofibrations for Lack's model structure on $\GCat$ \cite{Lack2011A-quillen}, plus a further one encoding that algebraic surjections are faithful on $2$-cells.  Finally, in order to encode the normality condition, we add $id_1 \colon 1 \to 1$ to $\cj$ together with the unique morphism $j \to id_1$ where $j$ is the generating cofibration encoding fullness on $1$-cells.  

It follows that algebraic surjections form a concrete double category $\Surj \to \Sq{\GCat}$ in the sense of \cite{Bourke2016Accessible}, where $\Sq{\GCat}$ is the double category of commutative squares in $\GCat$ and $\Surj \to \Sq{\GCat}$ the forgetful double functor.  It is easy to describe it directly.  The objects and horizontal morphisms of $\Surj$ are those of $\GCat$. The vertical morphisms are the algebraic surjections, whilst the squares are commuting squares as below

$$\xy
(0,0)*+{\ca}="b0"; (15,0)*+{\cc}="c0"; (0,-15)*+{\cb}="d0"; (15,-15)*+{\cd}="e0";
{\ar ^{R} "b0";"c0"};
{\ar _{(F,\phi)} "b0";"d0"};
{\ar ^{(G,\theta)}"c0";"e0"};
{\ar_{S} "d0";"e0"};
\endxy$$

which commute with the lifting functions $\phi$ and $\theta$.  Given composable algebraic surjections $(F,\phi) \colon \ca \to \cb$ and $(G,\theta) \colon \cb \to \cc$, the composite algebraic surjection $GF$ has lifting function $\theta*\phi$ with formulae $\theta*\phi(x) = \phi(\theta(x))$ and $(\theta*\phi)(x,f,y) = \phi(x,\theta(Fx,f,Fy),y)$.  

Since $\GCat$ is locally presentable and $\cj$ is small, by Proposition 23 of \cite{Bourke2016Accessible}, Garner's algebraic small object argument generates an awfs $(L,R)$ with concrete double category of right maps $\Surj \to \Sq{\GCat}$.  Furthermore, since $\GCat$ has an initial object $\varnothing$, the awfs gives rise to a \emph{cofibrant replacement comonad} $Q$ on $\GCat$, whose value at $\ca \in \GCat$ is obtained by taking the $(L,R)$-factorisation $R! \circ L! \colon \varnothing \to Q\ca \to \ca$ of the unique map $! \colon \varnothing \to \ca$.  

In fact, this $Q$ is precisely the pseudomorphism classifier comonad of Remark~\ref{rmk:comonad}.  To see this requires exhibiting $p \colon Q\ca \to \ca$ as the free algebraic surjection on $! \colon \varnothing \to \ca$: that is, given an algebraic surjection $(G, \theta) \colon \cb \to \cc$ and morphism $F \colon \ca \to \cc$, we must show that there exists a unique morphism of algebraic surjections as depicted below
$$\xy
(0,0)*+{Q\ca}="b0"; (15,0)*+{\cb}="c0"; (0,-15)*+{\ca}="d0"; (15,-15)*+{\cc}="e0";
{\ar ^{K} "b0";"c0"};
{\ar _{(p,\phi)} "b0";"d0"};
{\ar ^{(G,\theta)}"c0";"e0"};
{\ar_{F} "d0";"e0"};
\endxy$$
where $(p,\phi)$ is as described in Example~\ref{eg:surj}.  This is straightforward.  Indeed, in order that $(K,F)$ commutes with the lifting functions $\phi$ and $\theta$ on objects, we are forced to define $Kx = \theta(Fx)$.  On morphisms, consider a generating non-identity $[f]\colon x \to y \in Q\ca$.  Since this is the lifting $\phi(f) \colon x \to y$, we are forced to define $K([f]) = \theta(Ff) \colon Kx \to Ky$.  Since $(Q\ca)_1$ is the free category on the underlying reflexive graph of $\ca_1$, this extends uniquely to a functor $K \colon (Q\ca)_1 \to \cb$ commuting with the lifting functions and making the square commute, insofar as it is defined.  Finally, the definition of $K$ on $2$-cells and $3$-cells is forced on us by the fact that $G$ is fully faithful on $2$-cells and $3$-cells, and the requirement that the square commute.

Having established that the pseudomorphism classifier $Q$ is the cofibrant replacement comonad for the awfs, we see that the Kleisli category $\GCat_Q = \GCat_p$ is the category of \emph{weak maps} for the awfs $(L,R)$.  The core of our proof concerns applying the universal property of the category of weak maps $\GCat_p$, established in Theorem 10 of \cite{Bourke2016Weak}, which asserts that the inclusion $j \colon \GCat \to \GCat_p$ freely adds a section to each algebraic surjection in a way that respects vertical composition and morphisms of algebraic surjections.  We describe this universal property precisely now.  The universal morphism which adds the sections is specified by a concrete double functor as depicted below

\begin{equation*}
\xymatrix{
\Surj \ar[d]_{} \ar[rr]^{} && \dcat{SplEpi}(\GCat_p) \ar[d]^{} \\
\Sq{\GCat} \ar[rr]_{\Sq{j}} && \Sq{\GCat_p}
}
\end{equation*}

where $\dcat{SplEpi}(\GCat_p) \to \Sq{\GCat}$ is the concrete double category of split epimorphisms in $\GCat_p$.  In fact, this concrete double functor assigns to a vertical morphism $(F,\phi)$ the split epimorphism $(F,F_{\phi})$ constructed in Lemma~\ref{lem:ss1}.  

The universal property is that for any category $\cd$, precomposition with the above concrete double functor induces a bijection between extensions of $F \colon \GCat \to \cd$ along $j$ as depicted on the left below

\begin{equation}
  \xymatrix{
    & \GCat_p \ar@{.>}[dr]^{} & & & \Surj \ar[d] \ar@{.>}[r]^-{} & \dcat{SplEpi}(\cd) \ar[d] \\ %& & & \Kl{Q} \ar[dr]  
    \GCat \ar[ur]^{\text{\textnormal{j}}} \ar[rr]_{F} & & \cd & & \Sq{\cc}
    \ar[r]_-{\Sq{F}} & \Sq{\cd} %& &  \C \ar[rr]_{F} \ar[ur] & & \D
  }
\end{equation}
and extensions of $\Sq{F}$ to a concrete double functor as above right.

To apply this universal property, observe that Lemma~\ref{lem:ss2} provides us with the data of a concrete double functor as in the left square below.

\begin{equation*}
\xymatrix{
\Surj \ar[d]_{} \ar[rr]^{} && \Surj \ar[d]^{} \ar[r] & \dcat{SplEpi}(\GCat_p) \ar[d] \\
\Sq{\GCat} \ar[rr]_{\Sq{\lax(\ca,-})} && \Sq{\GCat} \ar[r]_{\Sq{j}} & \Sq{\GCat_p}
}
\end{equation*}

To verify that this is a concrete double functor it remains to verify that the assignment of Lemma~\ref{lem:ss2} respects composition of and morphisms of algebraic surjections.  %This follows easily from the description of the algebraic surjection $(\lax(\cc,F),\phi^{\cc}) \colon \lax(\cc,\ca) \to \lax(\cc,\cb)$ in Lemma~\ref{lem:ss2}.  
To this end, given a morphism $(R,S) \colon (F,\phi) \to (G,\theta)$ of algebraic surjections we must show that
\begin{equation*}
\xymatrix{
\lax(\ca,\cb) \ar[d]_{(\lax(\ca,F),\phi^{\ca})} \ar[rr]^{\lax(\ca,R)} && \lax(\ca,\cd) \ar[d]^{(\lax(\ca,G),\theta^{\ca})} \\
\lax(\ca,\cc) \ar[rr]_{\lax(\ca,S)} && \lax(\ca,\ce)
}
\end{equation*} 
is a morphism of algebraic surjections.  At $Z \in \lax(\ca,\cb)$, this says that $RF_{\phi} Z = G_{\theta} S Z$.  In fact $RF_{\phi} = G_{\theta} S$ since, as described above, each morphism of algebraic surjections gives rise to a morphism of split epis in $\GCat_p$.  At $\alpha \colon FY \to FZ \in \lax(\ca,\cb)$, we must prove that $R\phi^{\ca}(\alpha) \colon RY \to RZ$ equals $\theta^{\ca}(S\alpha)$.  Now postcomposing both by $G$ yields $S\alpha$ and since $G$ is faithful on $2$-cells and $3$-cells,  it therefore suffices to show that both lax transformations have the same component at $x \in \ca$.  But the former has component $R(\phi(\alpha_x))$ whilst the latter has component $\theta(S\alpha_x)$ and these coincide since $(R,S)$ is a morphism of algebraic surjections.  The argument concerning preservation of composition of algebraic surjections is equally straightforward.

Therefore, by the universal property of the composite concrete double functor, we obtain a unique extension of $\lax(\ca,-)\colon\GCat \to \GCat \to \GCat_p$ along $j$ to $\lax(\ca,-)$, and following Theorem 10 of \cite{Bourke2016Weak}, this gives precisely the formula for $\lax(\ca,G)$ given in \eqref{eq:compositepseudo} above.  

Finally, consider $\lax(\cc,-), \lax(\cd,-) \colon \GCat \to \GCat$ and the natural transformation $\lax(G,-) \colon \lax(\cc,-) \to \lax(\cd,-)$ induced by a pseudomap $G \colon \cd \to \cc$.  We will show that given an algebraic surjection $(F,\phi) \colon \ca \to \cb$, the commutative square
\begin{equation*}
\xymatrix{
\lax(\cc,\ca) \ar[d]_{(\lax(\cc,F),\phi^{\cc})} \ar[rr]^{\lax(G,\ca)} && \lax(\cd,\ca) \ar[d]^{(\lax(\cd,F),\phi^{\cd})} \\
\lax(\cc,\cb) \ar[rr]_{\lax(G,\cb)} && \lax(\cd,\cb)
}
\end{equation*}
in $\GCat$ is a morphism of algebraic surjections, where the vertical morphisms have lifting functions as described in Lemma~\ref{lem:ss2}.  It then follows from the $2$-dimensional aspect of the universal property of $\GCat_p$, as described in Theorem 10 of \cite{Bourke2016Weak}, that $\lax(G,-) \colon \lax(\cc,-) \to \lax(\cd,-)$ is natural with respect to the extended functors $\lax(\cc,-), \lax(\cd,-) \colon \GCat_p \to \GCat_p$, which is precisely what is required to obtain a bifunctor.

At $X \in \lax(\cc,\cb)$ the lifting $\phi^{\cc}(X)$ is, by definition, the composite $F_{\phi} X \colon \cc \to \cb \to \ca$.  Likewise the lifting $\phi^{\cd}(XG) = F_{\phi}(XG)$.  As such, associativity $F_{\phi}(XG) = (F_{\phi}X)G$ in $\GCat_p$ ensures that the square commutes with lifting functions on objects.  At a lax transformation $\alpha \colon FX \to FY \in \lax(\cc,\cb)$, we have the lifting $\phi^{\cc}\alpha \colon X \to Y \in \lax(\cc,\ca)$.  We must prove that we have an equality of lax transformations $(\phi^{\cc}\alpha )G = \phi^{\cc}(\alpha G) :XG \to YG$.  Since both are sent by the strict morphism $\lax(\cd,F)$ to $\alpha G$, and since the algebraic surjection $\lax(\cd,F)$ is faithful both on $2$-cells and $3$-cells, it suffices to show that $(\phi^{\cc}\alpha)_{Gx} = \phi^{\cc}(\alpha G)_x$ at each $0$-cell $x$ of $\cc$.  By definition, both have value $\phi(\alpha_{Gx}) \colon XGx \to YGx$, as required.

\end{proof}

\subsection{Oplax transformations}

In order to define the Gray-category $\oplax(\ca,\cb)$ of oplax transformations, it will be convenient to use duality.  To this end, given a Gray-category $\ca$, we write $\ca^\co$ for the Gray-category whose $2$-cells are reversed, but whose $0,1$ and $3$-cells have the same orientation.  
%\red We will indicate with $\nrightarrow$ 2-cells in $\ca^\co$ to underline the difference with the ones in $\ca$. \black 

There is an isomorphism of categories $(-)^{co}\colon\GCat_p \to \GCat_p$ sending $\ca$ to $\ca^\co$ and sending a pseudomap $F\colon\ca \to \cb$ to the pseudomap $F^\co\colon\ca^\co \to \cb^\co$ in which:
\begin{itemize}
\item $F^{co}$ has the same action on $0,1,2$ and $3$-cells.
\item At 1-cells $f\colon x\to y$ and $g\colon y\to z$ in $\ca^\co$, the invertible 2-cell $(F^\co)^2_{f_2,f_1}\colon F^\co f_2\circ F^\co f_1\to F^\co(f_2f_1)$ in $\cb^\co$ is defined as the 2-cell 
\begin{center}
$(F^2_{f_2,f_1})^{-1}\colon F(f_2f_1)=F^\co(f_2f_1)\to F^\co f_2\circ F^\co f_1=Ff_2\circ Ff_1$ in $\cb$. 
\end{center}
\end{itemize}

With this in place, let us define $\oplax(\ca,\cb)$.  The $0$-cells are pseudomaps $F\colon\ca \to \cb$ as before.

\begin{itemize}
\item \emph{1-cells:} An oplax transformation $\alpha\colon F\to G$ consists of:
\begin{itemize}
\item For any object $x\in\ca$, a 1-cell $\alpha_x\colon Fx\to Gx \in\cb$.

\item For any 1-cell $f\colon x\to y$ in $\ca$, a 2-cell  in $\cb$ 
\[\begin{tikzcd}
	{Fx} & {Gx} \\
	{Fy} & {Gy.}
	\arrow[""{name=0, anchor=center, inner sep=0}, "{\alpha_x}", from=1-1, to=1-2]
	\arrow[""{name=1, anchor=center, inner sep=0}, "{\alpha_y}"', from=2-1, to=2-2]
	\arrow["Ff"', from=1-1, to=2-1]
	\arrow["{Gf}", from=1-2, to=2-2]
	\arrow["{\alpha_f}"{xshift=-0.1cm}, shorten <=10pt, shorten >=10pt, Rightarrow, from=1, to=0]
\end{tikzcd}\]

%(\emph{Note that this $2$-cell points in the opposite direction to that for a lax transformation.})

\item For any 2-cell $\phi\colon f\to f'$ in $\ca$ a 3-cell in $\cb$ of the form
% https://q.uiver.app/?q=WzAsNCxbMCwwLCJcXGV0YV9ZIEZmIl0sWzAsMSwiR2ZcXGV0YV94Il0sWzIsMSwiR2YnXFxldGFfeCJdLFsyLDAsIlxcZXRhX1kgRmYnIl0sWzAsMywiMVxcY2lyYyBGXFxwaGkiXSxbMywyLCJcXGV0YV97Zid9Il0sWzAsMSwiXFxldGFfe2Z9IiwyXSxbMSwyLCJHXFxwaGlcXGNpcmMxIiwyXSxbNCw3LCJcXGV0YV9cXHBoaSIsMCx7InNob3J0ZW4iOnsic291cmNlIjozMCwidGFyZ2V0IjozMH19XV0=
\[\begin{tikzcd}
	{\alpha_y Ff} && {\alpha_y Ff'} \\
	{Gf\alpha_x} && {Gf'\alpha_x.}
	\arrow[""{name=0, anchor=center, inner sep=0}, "{1\circ F\phi}", from=1-1, to=1-3]
	\arrow["{\alpha_{f'}}", from=1-3, to=2-3]
	\arrow["{\alpha_{f}}"', from=1-1, to=2-1]
	\arrow[""{name=1, anchor=center, inner sep=0}, "G\phi\circ1"', from=2-1, to=2-3]
	\arrow["{\alpha_\phi}"{xshift=0.1cm}, shorten <=8pt, shorten >=8pt, Rightarrow, from=0, to=1]
\end{tikzcd}\]

\item For 1-cells $f_1\colon x\to y$ and $f_2\colon y\to z$ in $\ca$, an invertible 3-cell $\alpha_2^{f_2,f_1}$ in $\cb$ as below left.

\[\begin{tikzcd}
	{\alpha_zFf_2Ff_1} && {\alpha_zF(f_2f_1)} \\
	{Gf_2\alpha_yFf_1} \\
	{Gf_2Gf_1\alpha_x} && {G(f_2f_1)\alpha_x}
	\arrow[""{name=0, anchor=center, inner sep=0}, "{1\circ F_{f_2,f_1}^2}", from=1-1, to=1-3]
	\arrow["{\alpha_{f_2}\circ1}"', from=1-1, to=2-1]
	\arrow[""{name=1, anchor=center, inner sep=0}, "{G_{f_2,f_1}^2\circ1}"', from=3-1, to=3-3]
	\arrow["{1\circ\alpha_{f_1}}"', from=2-1, to=3-1]
	\arrow["{\alpha_{f_2f_1}}", from=1-3, to=3-3]
	\arrow["{\alpha^{f_2,f_1}_2}"{xshift=0.1cm}, shorten <=25pt, shorten >=25pt, Rightarrow, from=0, to=1]
\end{tikzcd}
\hspace{0.5cm}
% https://q.uiver.app/?q=WzAsNSxbMCwwLCJcXGV0YV9aRihmXzJmXzEpIl0sWzIsMCwiXFxldGFfWkZmXzJGZl8xIl0sWzIsMSwiR2ZfMlxcZXRhX1lGZl8xIl0sWzIsMiwiR2ZfMkdmXzFcXGV0YV9YIl0sWzAsMiwiRyhmXzJmXzEpXFxldGFfWCJdLFswLDEsIjFcXGNpcmMgKEZfe2ZfMixmXzF9XjIpXnstMX0iXSxbMSwyLCJcXGV0YV97Zl8yfVxcY2lyYzEiXSxbNCwzLCIoR197Zl8yLGZfMX1eMileey0xfVxcY2lyYzEiLDJdLFsyLDMsIjFcXGNpcmNcXGV0YV97Zl8xfSJdLFswLDQsIlxcZXRhX3tmXzJmXzF9IiwyXSxbNSw3LCJcXGV0YV57Zl8yLGZfMX1fMiIsMix7InNob3J0ZW4iOnsic291cmNlIjoyMCwidGFyZ2V0IjoyMH19XV0=
\begin{tikzcd}
	{\alpha_zF(f_2f_1)} && {\alpha_zFf_2Ff_1} \\
	&& {Gf_2\alpha_yFf_1} \\
	{G(f_2f_1)\alpha_x} && {Gf_2Gf_1\alpha_x}
	\arrow[""{name=0, anchor=center, inner sep=0}, "{1\circ (F_{f_2,f_1}^2)^{-1}}", from=1-1, to=1-3]
	\arrow["{\alpha_{f_2}\circ1}", from=1-3, to=2-3]
	\arrow[""{name=1, anchor=center, inner sep=0}, "{(G_{f_2,f_1}^2)^{-1}\circ1}"', from=3-1, to=3-3]
	\arrow["{1\circ\alpha_{f_1}}", from=2-3, to=3-3]
	\arrow["{\alpha_{f_2f_1}}"', from=1-1, to=3-1]
	\arrow["{({\alpha^{f_2,f_1}_2})_\sharp}"'{xshift=-0.1cm}, shorten <=25pt, shorten >=25pt, Rightarrow, from=0, to=1]
\end{tikzcd}\]

Rather than giving the axioms for an oplax transformation directly, we will obtain them via duality.  To this end, observe that pre- and post-composing the inverse $(\alpha^{f_2,f_1}_2)^{-1}$ of the $3$-cell above left by the invertible $2$-cells $\alpha_z\circ (F_{f_2,f_1}^2)^{-1}$ and $(G_{f_2,f_1}^2)^{-1}\circ \alpha_x$, we obtain an invertible $3$-cell as above right.

 Then $\alpha\colon F \to G$ is said to be an oplax transformation if $(\alpha_x,\alpha_f,\alpha_\phi,({\alpha^{f_2,f_1}_2})_\sharp)$ defines a lax transformation $\alpha^{+}\colon F^{co} \to G^{co} \in \lax(\ca^{co},\cb^{co})$.

\end{itemize}

\item \emph{2-cells:}
A modification $\Gamma\colon\alpha\to\beta$ consists of:
\begin{itemize}
\item For any object $x\in\ca$, a 2-cell $\Gamma_x\colon\alpha_x\to\beta_x$ in $\cb$.
\item For any 1-cell $f\colon x\to y$ in $\ca$, a 3-cell in $\cb$
% https://q.uiver.app/?q=WzAsNCxbMCwwLCJcXGV0YV9ZRmYiXSxbMSwwLCJHZlxcZXRhX1giXSxbMSwxLCJHZlxcemV0YV9YIl0sWzAsMSwiXFx6ZXRhX1lGZiJdLFswLDMsIlxcR2FtbWFfWVxcY2lyYzEiLDJdLFswLDEsIlxcZXRhX2YiXSxbMywyLCJcXHpldGFfZiIsMl0sWzEsMiwiMVxcY2lyY1xcR2FtbWFfWCJdLFs1LDYsIlxcR2FtbWFfZiIsMCx7InNob3J0ZW4iOnsic291cmNlIjozMCwidGFyZ2V0IjozMH19XV0=
\[\begin{tikzcd}
	{\alpha_yFf} & {Gf\alpha_x} \\
	{\beta_yFf} & {Gf\beta_x.}
	\arrow["{\Gamma_y\circ1}"', from=1-1, to=2-1]
	\arrow[""{name=0, anchor=center, inner sep=0}, "{\alpha_f}", from=1-1, to=1-2]
	\arrow[""{name=1, anchor=center, inner sep=0}, "{\beta_f}"', from=2-1, to=2-2]
	\arrow["{1\circ\Gamma_x}", from=1-2, to=2-2]
	\arrow["{\Gamma_f}"{xshift=0.1cm}, shorten <=8pt, shorten >=8pt, Rightarrow, from=0, to=1]
\end{tikzcd}\]
such that the data $(\Gamma_x, \Gamma_f)$ defines a modification $\Gamma^{+}\colon\beta^{+} \to \alpha^{+} \in \lax(\ca^{co},\cb^{co})$. 
\end{itemize}

\item \emph{3-cells:} A perturbation $\theta\colon\Gamma\to\Delta$ involves, for any object $x$ in $\ca$, a 3-cell $\theta_x\colon\Gamma_x\to\Delta_x \in \cb$ such that the data $\theta_x$ defines a perturbation $\theta^+\colon\Gamma^+ \to \Delta^+ \in \lax(\ca^{co},\cb^{co})$.
\end{itemize}

In this way, we obtain an isomorphism of $3$-graphs $(-)^{+}\colon\oplax(\ca,\cb) \cong \lax(\ca^{co},\cb^{co})^{co}$ acting as $(-)^{co}$ on $0$-cells and $(-)^{+}$ as described for higher cells.  By transport of structure, this induces a unique Gray-category structure on $\oplax(\ca,\cb)$ such that $(-)^+$ is an isomorphism of Gray-categories.  We write $(-)^{-}$ for the inverse.
Transporting again, there exists a unique structure of functor $\oplax(-,-)\colon\GCat_p^{op} \times \GCat_p \to \GCat_p$ such that the isomorphisms $(-)_{\ca,\cb}^{+}\colon\oplax(\ca,\cb) \cong \lax(\ca^{co},\cb^{co})^{co}$ are natural in $\ca$ and $\cb$.  At the same time, the inverse maps assemble into natural isomorphisms $(-)_{\ca,\cb}^{-}\colon\lax(\ca,\cb) \cong \oplax(\ca^{co},\cb^{co})^{co}$.  

It is straightforward to calculate the action of $(-)_{\ca,\cb}^{-}$ but we will not need an explicit description of it, except we note that given a lax transformation $\alpha\colon F \to G$, the invertible $3$-cell $\alpha^2_{f_2,f_1}$ of Diagram~\ref{eq:laxnat1} bijectively corresponds to a $3$-cell $({\alpha^2_{f_2,f_1}})^\flat$ as below %\yellow
% https://q.uiver.app/?q=WzAsNSxbMCwxLCJHZl8yR2ZfMVxcYWxwaGFfWCJdLFsxLDEsIkdmXzJcXGFscGhhX1kgRmZfMSJdLFsyLDEsIlxcYWxwaGFfWiBGZl8yRmZfMSJdLFswLDAsIkcoZl8yZl8xKSBcXGFscGhhX1giXSxbMiwwLCJcXGFscGhhX1pGKGZfMmZfMSkiXSxbMCwxLCIxXFxjaXJjXFxhbHBoYV97Zl8xfSIsMl0sWzMsMCwieyhHXjJfe2ZfMixmXzF9KX1eey0xfVxcY2lyYzEiLDJdLFsxLDIsIlxcYWxwaGFfe2ZfMn1cXGNpcmMxIiwyXSxbNCwyLCIxXFxjaXJjIHsoRl4yX3tmXzIsZl8xfSl9XnstMX0iXSxbMyw0LCJcXGFscGhhX3tmXzJmXzF9Il0sWzksMSwiKFxcYWxwaGFeMl97Zl8yLGZfMX0pXFxmbGF0IiwwLHsic2hvcnRlbiI6eyJzb3VyY2UiOjMwLCJ0YXJnZXQiOjMwfX1dXQ==
\[\begin{tikzcd}[ampersand replacement=\&]
	{G(f_2f_1) \alpha_x} \&\& {\alpha_zF(f_2f_1)} \\
	{Gf_2Gf_1\alpha_x} \& {Gf_2\alpha_y Ff_1} \& {\alpha_z Ff_2Ff_1}
	\arrow["{1\circ\alpha_{f_1}}"', from=2-1, to=2-2]
	\arrow["{{(G^2_{f_2,f_1})}^{-1}\circ1}"', from=1-1, to=2-1]
	\arrow["{\alpha_{f_2}\circ1}"', from=2-2, to=2-3]
	\arrow["{1\circ {(F^2_{f_2,f_1})}^{-1}}", from=1-3, to=2-3]
	\arrow[""{name=0, anchor=center, inner sep=0}, "{\alpha_{f_2f_1}}", from=1-1, to=1-3]
	\arrow["{(\alpha^2_{f_2,f_1})\flat}"{xshift=0.1cm}, shorten <=5pt, shorten >=5pt, Rightarrow, from=0, to=2-2]
\end{tikzcd}\]
obtained by pre- and post-composing $(\alpha^2_{f_2,f_1})^{-1}$ by the invertible $2$-cells ${(G^2_{f_2,f_1})}^{-1}\circ \alpha_x$ and $\alpha_z \circ {(F^2_{f_2,f_1})}^{-1}$. %\black

\section{The $4$-ary skew multicategory of Gray-categories and lax multimaps}\label{sect:shortmult}

In this section, we construct a $4$-ary skew multicategory $\GCatm$ whose category of loose unary maps is $\GCat_p$, and whose category of tight unary maps is $\GCat$.  %Later, we will show it is closed with internal hom $\lax(\cb,\cc)$.  
Our approach is to build it from the bottom up, first building a $2$-ary skew multicategory, then extending it to a $3$-ary skew multicategory before completing it to a $4$-ary skew multicategory.

%Our approach will be to go through multimaps in increasing dimension $n$, defining all substitutions and verifying all equations of dimension $\leq n$ before continuing to dimension $n+1$. 

\begin{notation*}
Since we will work with multiple Gray-categories at a time, let us fix some notation. We will use $\ca,\cb,\cc...$ for Gray-categories, the table below shows how we will denote the various $n$-cells in these Gray-categories. 

\begin{center}
\begin{tabular}{ccccc}
Gray-category & 0-cells 
& 1-cells & 2-cells & 3-cells \\
$\ca$ & $a,a',a''...$ 
& $A,A',A_1...$ & $\alpha,\alpha',\alpha_1...$ & $\Lambda$ \\
$\cb$ & $b,b',b''...$ 
& $B,B',B_1...$ & $\beta,\beta',\beta_1...$ & $\Theta$ \\
$\cc$ & $c,c',c''...$ 
& $C,C',C_1...$ & $\gamma,\gamma',\gamma_1...$ & $\Gamma$ \\
$\cd$ & $d,d',d''...$ 
& $D,D',D_1...$ & $\delta,\delta',\delta_1...$ & $\Delta$ 
\end{tabular}
\end{center}
\end{notation*}

\subsection{Nullary maps}

The set $\GCatm^l_0(\diamond;\ca)$ of nullary maps is defined to be the set of objects of $\ca$.  Given $a\in\GCatm^l_0(\diamond;\ca)$ and a pseudomap $F\colon\ca \to \cb$ we define $F \circ a :=Fa$.  
This equips the set of nullary maps with the action of a functor $\GCatm^l_0(\diamond;-)\colon\GCat_p \to \Set$.

\subsection{Unary maps}

We set $\GCatm^l_1(-;-):= \GCat_p(-,-)$ --- the loose unary maps are the pseudomaps, with their substitution given by composition in $\GCat_p$.  The set $\GCatm^t_1(\ca;\cb)$ of tight unary maps consists of the Gray-functors. The identities are tight and composite $G \circ_1 F$ of two tight maps is tight, since $\GCat$ is a subcategory of $\GCat_p$.

%We set $\GCat^l_1(\ca;\cb) = \GCat_p$ of loose unary maps consists of the pseudomaps (aka loose maps), whereas the set $\GCat^t_1(\ca;\cb)$ of tight unary maps consists of Gray-functors. %Given $a\in\GCat^l_0(-;\ca)$ and $F\colon\ca \to \cb$ we define $F \circ a :=Fa$. Substitution of tight/loose unary maps into tight/loose unary maps is just composition in $\GCat$/$\GCatp$.  This is associative and also satisfies, for any nullary map $A$, $$(G \circ F) \circ a = G \circ (F \circ a).$$   

%Substitution of nullary into unary as above, makes $\GCat^l_0(-;\ca)$ functorial in $\ca$, which is required for the following trivial lemma to make sense.  This is the first indication that the $4$-ary skew multicategory we are building up will be closed.

Let us note that we have the following trivial result, relating nullary and unary maps, which is our first indication of closedness.

\begin{lemma}\label{lemma:nmaps}
There is a bijection $\lambda_1\colon\GCatm^l_0(\diamond;\lax(\ca,\cb)) \cong \GCatm^l_1(\ca,\cb)$, natural in each variable.
\end{lemma}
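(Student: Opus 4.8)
The plan is to observe that the asserted bijection is essentially definitional, so that the only real content lies in checking naturality; among the two variables, the only step requiring an argument is naturality in the second.

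First I would recall that, by Gohla's definition, the $0$-cells of the mapping space $\lax(\ca,\cb)$ are exactly the pseudomaps $F\colon\ca\to\cb$. Since a nullary map into a Gray-category is by definition one of its objects, and since the loose unary maps are by definition the pseudomaps, we have
$$\GCatm^l_0(-;\lax(\ca,\cb)) = \mathrm{ob}\,\lax(\ca,\cb) = \GCat_p(\ca,\cb) = \GCatm^l_1(\ca,\cb).$$
Thus $\lambda_1$ may be taken to be the identity function; in particular it is a bijection, and no calculation is needed for bijectivity.

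For naturality in $\ca$, I would invoke the fact (Gohla's Theorem 6.2, recalled above) that for a pseudomap $F\colon\ca\to\ca'$ the strict map $\lax(F,\cb)$ acts on $0$-cells by precomposition, sending $G\colon\ca'\to\cb$ to $G\circ F$. This is precisely the action of the hom-functor $\GCat_p(F,\cb)$, so $\lambda_1$ intertwines the two actions and naturality in $\ca$ is immediate.

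The one step needing work is naturality in $\cb$. Given a pseudomap $G\colon\cb\to\cb'$, I must show that the action of $\lax(\ca,G)$ on $0$-cells is postcomposition by $G$, matching $\GCat_p(\ca,G)$. Unwinding the composite definition \eqref{eq:compositepseudo}, the action of $\lax(\ca,G)$ on an object $F$ is $G'\circ p_\phi\circ F$, where $G'\colon Q\cb\to\cb'$ is the strict map with $G'\circ q=G$ and $p_\phi\colon\cb\to Q\cb$ is the section of the counit $p$ supplied by Lemma~\ref{lem:ss1} (using Lemma~\ref{lem:ss2} to compute the lifting function $\phi^{\ca}$). The key observation is that, comparing the section $p_\phi$ described in Example~\ref{eg:surj} with the universal pseudomap $q$ of Remark~\ref{rmk:comonad}, the two agree on objects, on $1$-, $2$- and $3$-cells, and on cocycle data; hence $p_\phi=q$, and therefore $G'\circ p_\phi=G'\circ q=G$. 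Thus $\lax(\ca,G)$ sends $F$ to $G\circ F$, as required. The main (and essentially only) obstacle is this identification $p_\phi=q$, after which naturality in $\cb$ follows at once; everything else is a matter of matching definitions.
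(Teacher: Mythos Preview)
Your proof is correct and takes the same approach as the paper: $\lambda_1$ is literally the identity in each component. The paper's proof is that single sentence, since the fact that $\lax(\ca,G)$ acts on $0$-cells by postcomposition has already been recorded (in Section~\ref{sect:mappingspace} and Appendix~\ref{app:lax.A.G}) before the lemma is stated; your explicit verification via the identification $p_\phi=q$ is correct but duplicates that earlier unpacking of \eqref{eq:compositepseudo}.
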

\begin{proof}
$\lambda_1$ is in fact the identity in each component.
\end{proof}

\subsection{Binary maps}\label{binary-maps}

%We start by defining the set $\GCatm^l_2(\ca,\cb;\cc)$ of loose binary maps for three Gray-categories $\catA$, $\catB$ and $\catC$.  

A \emph{loose binary map} $F\colon\catA,\catB\rightarrow\catC$ consists of:
\begin{enumerate}[(1)]
\item at $a\in\catA$ a pseudomap $F^a\colon \catB\rightarrow\catC$, and at $b\in\catB$ a pseudomap $F_b\colon\catA\rightarrow\catC$ satisfying $F^a(b)=F_b(a)=:F(a,b)$ and with cocycles denoted by 
%\item For any pair of composable 1-cells $a\xrightarrow{f}a'\xrightarrow{h}a''$ in $\catA$, an \textcolor{red}{invertible} 2-cell in $\catC$
\begin{center}
\begin{tikzpicture}
%\label{cubical comp 1}
% \node at (-2,1.5) {(CC1)};
%0-cells
\node (a) at (0,2) {$F(a,b)$};
\node (b) at (2.5,2) {$F(a',b)$};
\node (d) at (2.5,0) {$F(a'',b)$};
%1-cells
\draw[->] (a) to node[scale=.7] (f) [above] {$F_b(A_1)$} (b);
\draw[->] (b) to node[scale=.7] (G'f) [right]{$F_b(A_2)$} (d);
\draw[->] (a) to [bend right] node[scale=.7] (g'F) [left, xshift=-0.1cm]{$F_b(A_2A_1)$} (d);
%2-cells
\node (x) at (1.25,2) {};
\node (y) at (1.25,0) {};
\draw[-{Implies},double distance=1.5pt,shorten >=22pt,shorten <=14pt] (x) to node[scale=.7] [right, xshift=0.1cm, yshift=0.3cm] {$F^{A_2,A_1}_b$} (y);
\end{tikzpicture} \hspace{1cm}
%\end{center}
%\item For any pair of composable 1-cells $b\xrightarrow{g}b'\xrightarrow{k}b''$ in $\catB$, an \textcolor{red}{invertible} 2-cell in $\catC$
%\begin{center}
\begin{tikzpicture}
%\label{cubical comp 1}
% \node at (-2,1.5) {(CC1)};
%0-cells
\node (a) at (0,2) {$F(a,b)$};
\node (b) at (2.5,2) {$F(a,b')$};
\node (d) at (2.5,0) {$F(a,b'')$.};
%1-cells
\draw[->] (a) to node[scale=.7] (f) [above] {$F^a(B_1)$} (b);
\draw[->] (b) to node[scale=.7] (G'f) [right]{$F^a(B_2)$} (d);
\draw[->] (a) to [bend right] node[scale=.7] (g'F) [left, xshift=-0.1cm]{$F^a(B_2B_1)$} (d);
%2-cells
\node (x) at (1.25,2) {};
\node (y) at (1.25,0) {};
\draw[-{Implies},double distance=1.5pt,shorten >=22pt,shorten <=14pt] (x) to node[scale=.7] [right, xshift=0.1cm, yshift=0.3cm] {$F^{a}_{B_2,B_1}$} (y);
\end{tikzpicture} 
\end{center}
\item at 1-cells $A\colon a\rightarrow a'$ in $\ca$ and $B\colon b\rightarrow b'$ in $\cb$, a 2-cell in $\cc$
\begin{center}
\begin{tikzpicture}
%\label{cubical comp 1}
% \node at (-2,1.5) {(CC1)};
%0-cells
\node (a) at (0,2) {$F(a,b)$};
\node (b) at (3,2) {$F(a',b)$};
\node (c) at (0,0) {$F(a,b')$};
\node (d) at (3,0) {$F(a',b')$.};
%1-cells
\draw[->] (a) to node[scale=.7] (f) [above] {$F_b(A)$} (b);
\draw[->] (b) to node[scale=.7] (G'f) [right]{$F^{a'}(B)$} (d);
\draw[->] (a) to node[scale=.7] (g'F) [left]{$F^a(B)$} (c);
\draw[->] (c) to node[scale=.7] (g'F) [below]{$F_{b'}(A)$} (d);
%2-cells
\node (x) at (1.5,2) {};
\node (y) at (1.5,0) {};
\draw[-{Implies},double distance=1.5pt,shorten >=18pt,shorten <=18pt] (x) to node[scale=.7] [right, xshift=0.1cm] {$A_B$} (y);
\end{tikzpicture} 
\end{center}

\item at
\[
\begin{tikzcd}
a \arrow[r, bend left, "A", ""{name=U, inner sep=1pt,below}] \arrow[r, bend right, "A'"{below}, ""{name=D, inner sep=1pt, above}] & a'
\arrow[Rightarrow, from=U, to=D, "\alpha"{xshift=0.1cm}]         
\end{tikzcd} \qquad 
\begin{tikzcd}
b \arrow[r, "B", ""{name=U, inner sep=1pt,below}]& b'
\end{tikzcd}
\]
a 3-cell in $\catC$ 
\begin{center}
\begin{tikzpicture}
%\label{cubical comp 1}
% \node at (-2,1.5) {(CC1)};
%0-cells
\node (a) at (0,2) {$F^{a'}(B)F_b(A)$};
\node (b) at (4.5,2) {$F^{a'}(B)F_b(A')$};
\node (c) at (0,0) {$F_{b'}(A)F^{a}(B)$};
\node (d) at (4.5,0) {$F_{b'}(A')F^{a}(B)$.};
%1-cells
\draw[->] (a) to node[scale=.7] (x) [above] {$1\circ F_b(\alpha)$} (b);
\draw[->] (b) to node[scale=.7]  [right]{$A'_B$} (d);
\draw[->] (a) to node[scale=.7]  [left]{$A_B$} (c);
\draw[->] (c) to node[scale=.7] (y) [below]{$F_{b'}(\alpha)\circ 1$} (d);
%2-cells
\draw[-{Implies},double distance=1.5pt,shorten >=18pt,shorten <=18pt] (x) to node[scale=.7] [right, xshift=0.1cm] {$\alpha_B$} (y);
\end{tikzpicture} 
\end{center}

\item at
\[
\begin{tikzcd}
a \arrow[r, "A", ""{name=U, inner sep=1pt,below}] & a'
\end{tikzcd} \qquad 
\begin{tikzcd}
b \arrow[r, bend left, "B", ""{name=U, inner sep=1pt,below}] \arrow[r, bend right, "B'"{below}, ""{name=D, inner sep=1pt, above}] & b'
\arrow[Rightarrow, from=U, to=D, "\beta"{xshift=0.1cm}]         
\end{tikzcd}
\]
a 3-cell in $\catC$ 
\begin{center}
\begin{tikzpicture}
%\label{cubical comp 1}
% \node at (-2,1.5) {(CC1)};
%0-cells
\node (a) at (0,2) {$F^{a'}(B)F_b(A)$};
\node (c) at (4.5,2) {$F_{b'}(A)F^{a}(B)$};
\node (b) at (0,0) {$F^{a'}(B')F_b(A)$};
\node (d) at (4.5,0) {$F_{b'}(A)F^{a}(B')$.};
%1-cells
\draw[->] (a) to node[scale=.7] [left] {$F^{a'}(\beta)\circ1$} (b);
\draw[->] (b) to node[scale=.7] (y) [below]{$A_{B'}$} (d);
\draw[->] (a) to node[scale=.7] (x) [above]{$A_B$} (c);
\draw[->] (c) to node[scale=.7]  [right]{$1\circ F^a(\beta)$} (d);
%2-cells
\draw[-{Implies},double distance=1.5pt,shorten >=18pt,shorten <=18pt] (x) to node[scale=.7] [right, xshift=0.1cm] {$A_\beta$} (y);
\end{tikzpicture} 
\end{center}

\item at 1-cells $a\xrightarrow{A_1}a'\xrightarrow{A_2}a''$ in $\catA$ and a 1-cell $B\colon b\rightarrow b'$ in $\catB$, an invertible 3-cell in $\catC$ 
\begin{center}
\begin{tikzpicture}
%0-cells
\node (T) at (0,4) {$F^{a''}(B)F_{b}(A_2)F_{b}(A_1)$};
\node (T') at (5,4) {$F^{a''}(B)F_{b}(A_2A_1)$};
\node (TS0X) at (0,2) {$F_{b'}(A_2)F^{a'}(B)F_{b}(A_1)$};
\node (ST') at (0,0) {$F_{b'}(A_2)F_{b'}(A_1)F^{a}(B)$};
\node (TS0') at (5,0) {$F_{b'}(A_2A_1)F^{a}(B)$.};
%1-cells
\path[->] 
(ST') edge node[scale=.7] (y) [below] {$F_{b'}^{A_2,A_1}\circ1$} (TS0')
(T) edge node[scale=.7] [left] {${(A_2)}_B\circ 1$} (TS0X)
	edge node[scale=.7] (x) [above] {$1\circ F^{A_2,A_1}_b$} (T')
(TS0X) edge node[scale=.7] [left] {$1\circ {(A_1)}_B$} (ST')
(T') edge node[scale=.7] [right] {$(A_2A_1)_B$} (TS0');
%2-cells
\draw[-{Implies},double distance=1.5pt,shorten >=45pt,shorten <=45pt] (x) to node[scale=.7] [right, xshift=0.1cm] {$F^{A_2,A_1}_B$} (y);
\end{tikzpicture} 
\end{center}

\item at a 1-cell $A\colon a\rightarrow a'$ in $\ca$ and 1-cells $b\xrightarrow{B_1}b'\xrightarrow{B_2}b''$ in $\cb$, an invertible 3-cell in $\cc$ 
\begin{center}
\begin{tikzpicture}
%0-cells
\node (T) at (0,2) {$F^{a'}(B_2)F^{a'}(B_1)F_{b}(A)$};
\node (T') at (5,2) {$F^{a'}(B_2)F_{b'}(A)F^{a}(B_1)$};
\node (TS0Y) at (10,2) {$F_{b''}(A)F^{a}(B_2)F^{a}(B_1)$};
\node (ST') at (0,0) {$F^{a'}(B_2B_1)F_{b}(A)$};
\node (TS0') at (10,0) {$F_{b''}(A)F^{a}(B_2B_1)$.};
%1-cells
\path[->] 
(ST') edge node[scale=.7] (y) [below] {$A_{(B_2B_1)}$} (TS0')
(T) edge node[scale=.7] [left] {$F^{a'}_{B_2,B_1}\circ1$} (ST')
	edge node[scale=.7] (x) [above] {$1\circ A_{B_1}$} (T')
(T') edge node[scale=.7] [above] {$A_{B_2}\circ1$} (TS0Y)
(TS0Y) edge node[scale=.7] [right] {$1\circ F^a_{B_2,B_1}$} (TS0');
\draw[-{Implies},double distance=1.5pt,shorten >=15
pt,shorten <=15pt] (T') to node[scale=.7] [left, xshift=-0.1cm] {$F^A_{B_2,B_1}$} (y);
\end{tikzpicture}
\end{center}
\end{enumerate}
satisfying the axioms described in \Cref{app:ax_bin}.
%\begin{enumerate}[(I)]
%\item Compatibility of $A_\beta$ with 3-cells $\Theta\colon \beta\rightarrow\beta'$ in $\cb$ and of $\alpha_B$ with 3-cells $\Lambda\colon \alpha\rightarrow\alpha'$ in $\ca$.
%
%\item Compatibility of $A_\beta$ with composition of 2-cells $B\xrightarrow{\beta_1}B'\xrightarrow{\beta_2}B''$ in $\cb$ and of $\alpha_B$ with composition of 2-cells $A\xrightarrow{\alpha_1}A'\xrightarrow{\alpha_2}A''$ in $\ca$.
%
%\item Compatibility of $A_\beta$ with composition of 1-cells $a\xrightarrow{A_1}a'\xrightarrow{A_2}a''$ in $\ca$ and of $\alpha_B$ with composition of 1-cells $b\xrightarrow{B_1}b'\xrightarrow{B_2}b''$ in $\cb$.
%
%\item Compatibility of $A_\beta$ and $\alpha_B$.
%
%\item Associativity for $F^A_{B_2,B_1}$ and $F^{A_2,A_1}_B$.
%
%\item Left/right whiskering for $F^A_{B_2,B_1}$ and $F^{A_2,A_1}_B$;
%
%\item Compability of $F^{A_2,A_1}_{B_1}$, $F^{A_2,A_1}_{B_2}$, $F^{A_1}_{B_2,B_1}$ and $F^{A_2}_{B_2,B_1}$. 
%\item Degeneracy equations, i.e. whenever $A,B,\alpha$ or $\beta$ are identities, then the corresponding data are the identity as well. For instance:
%\begin{center}
%$F_b^{A_2,A_1}= 
%\begin{cases}
%    1_{F_b(A_2)},& \text{if } A_1=1_{a'}\\
%    1_{F_b(A_1)},& \text{if } A_2=1_{a'}.
%\end{cases}$ and 
%$F^a_{B_2,B_1}= 
%\begin{cases}
%    1_{F^a(B_2)},& \text{if } B_1=1_{b'}\\
%    1_{F^a(B_1)},& \text{if } B_2=1_{b'}.
%\end{cases}$ \\
%
%$(1_a)_B=1_{F^a(B)}$, \\
%\red Not sure if it's worth to write all of them, we could spell them out in the appendix together with the other axioms. \black 
%\end{center}
%\end{enumerate}  

We say that $F$ is \emph{tight} if each $F_b$ is a Gray-functor (that is, the $2$-cells $F^{A_2,A_1}_b$ are identities) and if, moreover, the $3$-cells $F^{A_2,A_1}_{B}$ are identities.

\subsection{The correspondence between binary and unary maps}\label{sect:2closed}

In order to equip the sets $\GCatm_2^l(\ca,\cb;\cc)$ of loose binary maps with the structure of a functor  $$\GCatm_2^l(-,-;-)\colon(\GCatp^2)^{op} \times \GCatp \to \Set$$ we will use the following result, which again forms part of closedness.

\begin{prop}
\label{prop:binary maps}
There is a bijection $\lambda_2\colon\GCatm_1^l(\ca;\lax(\cb,\cc)) \cong \GCatm_2^l(\ca,\cb;\cc)$ which restricts to tight maps on both sides.
\end{prop}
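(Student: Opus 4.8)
The plan is to obtain $\lambda_2$ by unwinding the definition of a pseudomap $H\colon\ca\to\lax(\cb,\cc)$ into its components and recognising these, term by term, as exactly the data of a loose binary map. Given such an $H$, I would set $\lambda_2(H)=:F$ with $F^a := Ha\colon\cb\to\cc$ the value of $H$ at the object $a$ (a $0$-cell of $\lax(\cb,\cc)$, hence a pseudomap) and $F_b := ev_b\circ H\colon\ca\to\cc$, which is again a pseudomap since the evaluation map $ev_b$ is strict. The compatibility $F_a(b)=F_b(a)=Ha(b)$ is then immediate, and the cocycles come out as $F^a_{B_2,B_1}=(Ha)^2_{B_2,B_1}$ and $F^{A_2,A_1}_b=(H^2_{A_2,A_1})_b$, the $b$-component of the cocycle modification. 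The remaining binary data I would read off as the components of the higher cells in the image of $H$: the lax transformation $HA$ attached to a $1$-cell $A\colon a\to a'$ supplies the $1$-cells $(HA)_b = F_b(A)$, the squares $A_B := (HA)_B$, the $3$-cells $A_\beta := (HA)_\beta$ and the invertible $3$-cells $F^A_{B_2,B_1} := (HA)^2_{B_2,B_1}$; the modification $H\alpha$ attached to a $2$-cell $\alpha$ supplies $F_b(\alpha)$ and $\alpha_B := (H\alpha)_B$; the cocycle modification $H^2_{A_2,A_1}$ supplies $F^{A_2,A_1}_B := (H^2_{A_2,A_1})_B$; and the perturbation $H\Lambda$ attached to a $3$-cell $\Lambda$ supplies the $F_b(\Lambda)$. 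At the level of raw data this assignment is a pure re-indexing, and its inverse simply reassembles the binary data of a given $F$ into cells of $\lax(\cb,\cc)$; so the two constructions are manifestly mutually inverse once both are seen to be well defined.

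The substance of the proof is therefore to check that the axioms correspond under this re-indexing. Here I would exploit the fact, recorded in \Cref{app:lax-gray-cat-str}, that the entire Gray-category structure on $\lax(\cb,\cc)$ --- vertical and whiskered composition, interchange, and in particular the composition of lax transformations appearing in $H^2$ --- is defined \emph{componentwise} over the cells of $\cb$. Consequently each condition on the unary side decomposes into a family of equations in $\cc$ indexed by the cells of $\cb$: the five lax-transformation axioms for each $HA$, the three modification axioms for each $H\alpha$ and for each $H^2_{A_2,A_1}$, the perturbation axiom for each $H\Lambda$, the functoriality of $H$ on the hom-$2$-categories, and the seven pseudomap axioms for $H$ all unwind, component by component, into precisely the axioms for a loose binary map listed in \Cref{app:ax_bin}. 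Since the binary-map data and axioms were set up with exactly this correspondence in mind, the verification is systematic: one matches each displayed axiom of \Cref{app:ax_bin} against the $\cb$-component of the corresponding unary axiom. I expect this bookkeeping --- tracking which component of which unary axiom yields which binary axiom, and in the correct variance --- to be the one genuinely laborious, and hence the main, step; there is no conceptual obstacle, only the scale of the matching.

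Finally, for the restriction to tight maps I would argue as follows. A pseudomap $H$ is tight, i.e.\ a strict Gray-functor, exactly when its cocycle modification $H^2_{A_2,A_1}$ is an identity for every composable pair $A_1,A_2$. A modification in $\lax(\cb,\cc)$ is an identity precisely when all of its components are identities, that is, when the $2$-cells $(H^2_{A_2,A_1})_b = F^{A_2,A_1}_b$ and the $3$-cells $(H^2_{A_2,A_1})_B = F^{A_2,A_1}_B$ are all identities. The former condition says exactly that each $F_b$ is a Gray-functor and the latter that the $3$-cells $F^{A_2,A_1}_B$ are identities, which is precisely the definition of tightness for the binary map $\lambda_2(H)=F$. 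Hence $\lambda_2$ restricts to a bijection between tight unary maps and tight binary maps, completing the argument.
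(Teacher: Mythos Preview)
Your proposal is correct and follows essentially the same approach as the paper: unwind a pseudomap $H\colon\ca\to\lax(\cb,\cc)$ into its components, match each piece of data and each axiom against the binary-map definition, and observe that tightness on both sides amounts to the cocycle modification $H^2_{A_2,A_1}$ being the identity. The paper's only additional contribution is an explicit table recording precisely which unary axiom (lax-transformation, modification, perturbation, or pseudomap axiom for $H$) corresponds to which binary-map axiom in \Cref{app:ax_bin}, together with a short analysis of which pseudomap axioms for $H$ are already absorbed by the fact that each $F_b=ev_b\circ H$ is a pseudomap and which contribute genuinely new binary-map equations; your sketch anticipates this bookkeeping without carrying it out.
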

 
\begin{proof}
The data for a pseudomap $\overline{F}\colon\ca \to \lax(\cb,\cc)$ is a $3$-graph map together with invertible $2$-cells $\overline{F}^2_{A_2,A_1}\colon\overline{F}(A_2)\overline{F}(A_1)\cong \overline{F}(A_2A_1)$. Let us firstly check that this data is in correspondence with the data for a binary map.
\begin{enumerate}
\item On objects, it assigns to $a \in \ca$ a pseudomap $\overline{F}(a)=F^a$.  This involves $5$ pieces of data: $F^a(b)$, $F^a(B)$, $F^a(\beta)$, $F^a(\Theta)$ and $F^a_{B_2,B_1}$ (here we are using notation consistent with the diagrams in \Cref{binary-maps}).
\item At $A\colon a \to a'$ we obtain a lax transformation $\overline{F}(A)\colon F^a \to F^{a'}$.  This involves four pieces of data: a $1$-cell $F^A_b\colon F^a_b \to F^{a'}_b$, at $B\colon b \to b'$ a $2$-cell $A_B$; at $\beta\colon B \to B'$ a $3$-cell $A_\beta$; at a composable pair $B_1,B_2$ an invertible $3$-cell $F^A_{B_2,B_1}$.
\item At $\alpha\colon A \to A'$ we obtain a modification $\overline{F}(\alpha)\colon F^A \to F^{A'}$.  This involves two pieces of data: at $b \in \cb$ a $2$-cell $\alpha_b\colon F^A_{b} \to F^{A'}_b$; at $B\colon b \to b'$ a $2$-cell $\alpha_B$.
\item At $\Lambda\colon\alpha \to \alpha'$ we obtain a perturbation $\overline{F}({\Lambda})\colon\overline{F}({\alpha}) \to \overline{F}(\alpha')$.  This involves a single piece of data: at $b \in \cb$ a $3$-cell $\Lambda_b$.
\item At $A_1\colon a\to a',A_2\colon a'\to a''$ the $2$-cell $\overline{F}^2_{A_2,A_1}\colon\overline{F}(A_2)\overline{F}(A_1)\cong \overline{F}(A_2A_1)$ is an invertible modification. This is specified by two pieces of data: an invertible $2$-cell $F^{A_2,A_1}_b \colon F^{A_2}_b F^{A_1}_b \cong F^{A_2A_1}_b$ and, at $B\colon b \to b'$, an invertible $3$-cell $F^{A_2,A_1}_B$.
\end{enumerate}
These are precisely the same pieces of data described in the definition of a binary map.  

Next we turn to the correspondence between axioms. These are displayed in \Cref{tab:cld-bij-2}. 

\begin{table}[ht]
\caption{Correspondence of axioms for $\lambda_2$}
\renewcommand\arraystretch{1.5} 
\begin{tabular}{|c | c|}
\hline
$\overline{F}\colon\ca\rightarrow\lax(\cb,\cc)$ pseudomap & $F\colon\ca,\cb\rightarrow\cc$ multimap \\
\hline
$\overline{F}(a) \in\lax(\cb,\cc)$ & $F^a:=\overline{F}(a) \colon\catB\rightarrow\catC$ pseudomap \\
\hdashline

$\overline{F}(A)$ lax transformation (\ref{ax:lax-tr.deg-2-cell}) & \eqref{ax:bin_D_A:1b} \\
%\hdashline

" (\ref{ax:lax-tr.3-cell}) & \eqref{ax:bin_A:Theta} \\
%\hdashline

" (\ref{ax:lax-tr.comp-2-cell}) & \eqref{ax:bin_A:bet1bet2} \\
%\hdashline

" (\ref{ax:lax-tr.deg-3-cell}) & \eqref{ax:bin_D_A:1B}\\
%\hdashline

" (\ref{ax:lax-tr.comp-1-cell}) & \eqref{ax:bin_A:B1B2B3}\\
%\hdashline
" (\ref{ax:lax-tr.deg-alpha2}) & \eqref{ax:bin_D_A:B1B2} \\
%\hdashline

" (\ref{ax:lax-tr.whisk-alpha2}) & \eqref{ax:bin_A:B1bet}, \eqref{ax:bin_A:betB2} \\
\hdashline

$\overline{F}(\alpha)$ modification (\ref{ax:mod.deg-3}) & \eqref{ax:bin_D_alph:1b} \\
%\hdashline

" (\ref{ax:mod.comp-1}) & \eqref{ax:bin_alp:B1B2} \\
%\hdashline

" (\ref{ax:mod.2-cell}) & \eqref{ax:bin_alp:bet} \\
\hdashline

$\overline{F}(\Lambda)$ perturbation & \eqref{ax:bin_Lamd:B} \\
\hline

$\overline{F}^2_{A_2,A_1}$ modification (\ref{ax:mod.deg-3}) & \eqref{ax:bin_D_A1A2:1b}  \\

%$\overline{F}^2_{A_2,A_1}\colon \overline{F}(A_2)\overline{F}(A_1)\rightarrow \overline{F}(A_2A_1)$ & $F^{A_2,A_1}_{1_b}=1_{F^{A_2,A_1}_b}$  \\
%modification (\ref{ax:mod.deg-3})  & \\
%\hdashline

" (\ref{ax:mod.comp-1}) & \eqref{ax:bin_A1A2:B1B2}   \\
%\hdashline

" (\ref{ax:mod.2-cell}) & \eqref{ax:bin_A1A2:bet}  \\
\hline

$\overline{F}$ pseudomap & $F_b:=ev_b \circ \overline{F}$ pseudomap  \\
 " (\ref{ax:psfct.id-1cells}) & \eqref{ax:bin_D_1a:B}, \eqref{ax:bin_D_1a:bet}, \eqref{ax:bin_D_1a:B1B2}\\
" $(\ref{ax:psfct.loc-sesq})_{\leq 2}$ & \eqref{ax:bin_alp1alph2:B}, \eqref{ax:bin_D_1A:B} \\
" (\ref{ax:psfct.F2-1-cells}) & \eqref{ax:bin_A1A2A3:B}\\
"  (\ref{ax:psfct.deg-F2}) & \eqref{ax:bin_D_A1A2:B} \\
" (\ref{ax:psfct.F2-whisk-2}) & \eqref{ax:bin_A1alp:B}, \eqref{ax:bin_alpA2:B} \\
\hline 
\end{tabular}
\label{tab:cld-bij-2}
\end{table}

On the left hand side are the axioms for $\overline{F}\colon\ca \to \lax(\cb,\cc)$ to be a pseudomap; on the right are the corresponding axioms for the loose binary map $F\colon\ca,\cb \to \cc$. The axioms are split into three groups. 

The first group of axioms on the left capture that the $3$-graph map take values in pseudomaps, lax transformations, modifications and perturbations. 

The second group consist of the modification axioms for $\overline{F}^2_{A_2,A_1}$.  

The third group of axioms capture that $\overline{F}$, so defined, is actually a pseudomap, which requires a little more explanation. To say that $\overline{F}$ is a pseudomap involves 7 axioms, the final of two of which involve equalities of $3$-cells --- that is, perturbations in $\lax(\cb,\cc)$.  Since the evaluation maps $ev_b \colon \lax(\cb,\cc) \to \cc$ are jointly faithful on $3$-cells, these equations are already encoded by the corresponding axioms for $F_b = ev_b \circ F$ to be a pseudomap.  Axiom~(\ref{ax:psfct.loc-sesq}) says that $\overline{F}$ is locally a $2$-functor.  The parts of this axiom referring to equalities of $3$-cells are also encoded by $F_b$, so let us refer to Axiom~$(\ref{ax:psfct.loc-sesq})_{\leq 2}$ for the remaining equations involving equalities of dimension $\leq 2$.

 Thus $\overline{F}$ is a pseudomap just when each $F_b$ is a pseudomap and the axioms (\ref{ax:psfct.id-1cells}), (\ref{ax:psfct.loc-sesq})$_{\leq 2}$ and (\ref{ax:psfct.F2-1-cells}), (\ref{ax:psfct.deg-F2}) and (\ref{ax:psfct.F2-whisk-2}) hold.

%
%
%Here, we use Lemma~\ref{lem:sesqui} which shows that these axioms reduce to asking that each $F_b$ is a pseudomap plus the axioms  (\ref{ax:psfct.id-1cells}) to (\ref{ax:psfct.F2-whisk-2})  for the $2$-dimensional truncation of $\overline{F}$ to define a pseudomap of sesquicategories.
%
%
%  For this, we require several equations.
\begin{itemize}
\item Axiom~(\ref{ax:psfct.id-1cells}) says that at $1_a\colon a \to a$ we have $\overline{F}(1_a) = 1_{\overline{F}(a)}$; that is, $F^{1_{a}} = 1_{F^{a}}$.  As an equality of lax transformations, this specifies the equality of four pieces of data.  Namely,
\begin{itemize}
\item $(F^{1_a})_b = 1_{F^a_b}$ which is part of the definition of $F_b$ being a pseudomap. %(\ref{ax:psfct.loc-sesq}) 
\item At $B\colon b \to b'$ we have $({1_a)}_B = 1_{F^a_B}$, which is \Cref{ax:bin_D_1a:B}. 
\item At $\beta\colon B \to B'$ we have $({1_a)}_\beta = 1_{F^a_\beta}$,  which is \Cref{ax:bin_D_1a:bet}.
\item $F^{1_a}_{B_2,B_1} = 1_{F^a_{B_2,B_1}}$,  which is \Cref{ax:bin_D_1a:B1B2}. 
\end{itemize}
\end{itemize}
The remaining equations all involve the equality of modifications.  A modification involves two pieces of data --- one parametrised by an object $b$ and one by a morphism $B\colon b \to b'$.  In each case, the equation parametrised by the object $b$ is encoded by the fact that $F_b$ is a pseudomap.  Therefore, in each case we are left with the single equation parametrised by $B\colon b \to b'$.
\begin{itemize}
\item Axiom~(\ref{ax:psfct.loc-sesq})$_{\leq 2}$ says that $\overline{F}$ preserves vertical composition of $2$-cells --- this is \Cref{ax:bin_alp1alph2:B} -- and identity $2$-cells, which is \Cref{ax:bin_D_1A:B}.
\item Axiom~(\ref{ax:psfct.F2-1-cells}) is the cocycle equation for $\overline{F}^2$ concerning a composable triple in $\ca$, which is \Cref{ax:bin_A1A2A3:B}.
\item Axiom~(\ref{ax:psfct.deg-F2}) gives the two degeneracy equations for the cocycle $\overline{F}^2$, which are $F^{1_{a'},A}_B = 1_{A_B}$ and $F^{A,1_a}_B = 1_{A_{B}}$, and so  \Cref{ax:bin_D_A1A2:B}. 
\item Axiom~(\ref{ax:psfct.F2-whisk-2}) concerns naturality of $\overline{F}^{A_2,A_1}$ in $2$-cells $A_1 \to A_1'$ and in 2-cells $A_2 \to A_2'$ and yields Equations'\eqref{ax:bin_A1alp:B} and \eqref{ax:bin_alpA2:B}.

\end{itemize}

To see that the bijection restricts to tight maps, consider $\overline{F}\colon\ca \to \lax(\cb,\cc)$ tight --- that is, a strict map. This means simply that the modification $\overline{F}^2_{A_2,A_1}$ is an identity. To say that its component at $b \in \cb$ is an identity amounts to say that $F_b\colon\ca \to \cc$ is a strict map, whilst to say that its component on morphisms $B\colon b \to b'$ is an identity is to say precisely that each $F^{A_2,A_1}_B$ is an identity -- together, these say precisely that $F\colon\ca,\cb \to \cc$ is tight.
\end{proof}

\subsection{Compact definition for binary maps}\label{sect:comp-def-binary}

We can give an equivalent description of loose binary maps using just pseudomaps, lax and oplax transformations and modifications.  In these terms, a loose binary map $F\colon\catA,\catB\rightarrow\catC$ consists of:
%1-data
\begin{enumerate}
\item[(1.1)] at an object $a\in\catA$, a pseudomap $F^a\colon\catB\to\catC$,
\item[(1.2)] at an object $b\in\catB$, a pseudomap $F_b\colon\catA\to\catC$,
\end{enumerate}
such that $F^a(b)=F_b(a)$, %1-axiom
together with:
%2-data
\begin{enumerate}
\item[(2.1)] at a 1-cell $A\colon a\to a'\in\catA$, a lax transformation $F^A\colon F^a\to F^{a'}$,
\item[(2.2)] at a 1-cell $B\colon b\to b'\in\catB$, an oplax transformation $F_B\colon F_b\to F_{b'}$, %(i.e. a 1-cell in $[\catA^\co,\catC^\co]^\co$ between the associated where $F^\co_b$ and $F^\co_{b'}$ are the pseudomaps $\catA^\co\to\catC^\co$ associated to $F_b$ and $F_{b'}$ respectively), 
\end{enumerate}
such that %2-axioms
\begin{center}
$F^A(b)=F_b(A)$, $F_B(a)=F^a(B)$, \\
$(F^A)_B=(F_B)_A=:A_B$.
\end{center}
Moreover we require the data
\begin{center}
$(F^\alpha)_b:=F_b(\alpha)$, $(F^\alpha)_B:=F_B(\alpha)$, \\
 $(F_\beta)^a:=F^a(\beta)$, $(F_\beta)^A:=(F^A)_\beta$, \\
\vspace{0.1cm}
$(F^{A_2,A_1})^b:=(F_b)^{A_2,A_1}$, $(F^{A_2,A_1})^B:={(F_B)^{A_2,A_1}}$, \\
$(F_{B_2,B_1})^a:=(F^a)_{B_2,B_1}$, ${(F_{B_2,B_1})^A}:=(F^A)_{B_2,B_1}$,
\end{center}
to form two modifications 
\begin{center}
$F^\alpha\colon F^A\to F^{A'}$ in $\lax(\cb,\cc)$, \\
$F_\beta\colon F_B\to F_{B'}$ in $\oplax(\ca,\cc)$, 
\end{center}
and two invertible modifications
\begin{center}
$F^{A_2,A_1}\colon F^{A_2}\cdot F^{A_1}\to F^{A_2A_1}$ in $\lax(\cb,\cc)$, \\
$F_{B_2,B_1}\colon F_{B_2}\cdot F_{B_1}\to F_{B_2B_1}$ in $\oplax(\ca,\cc)$.
\end{center} 

We refer to this as the \emph{compact definition} for loose binary maps.  Let us compare it with the earlier definition.  Both begin with pseudomaps $F^a$ and $F_b$.  Clearly the axioms for $F^A$, $F^\alpha$ and $F^{A_2,A_1}$ correspond to the same axioms as $\overline{F}(A)$, $\overline{F}(\alpha)$ and $\overline{F}^2_{A_2,A_1}$ in \Cref{tab:cld-bij-2}, respectively. In \Cref{tab:compact-def} we show the correspondence between the remaining axioms, where the axioms for the cells in $\oplax(\ca,\cc)$ refer to the axioms for the corresponding cells in $\lax(\ca^\co,\cc^\co)^\co$ under application of the isomorphism $(-)^+$.   

\begin{table}[ht]
\caption{Correspondence of axioms for compact definition}
\renewcommand\arraystretch{1.5} 
\begin{tabular}{|c | c|}
\hline
$F_B$ oplax transformation (\ref{ax:lax-tr.deg-2-cell}) & \eqref{ax:bin_D_1a:B} \\
" (\ref{ax:lax-tr.3-cell}) & \eqref{ax:bin_Lamd:B} \\
" (\ref{ax:lax-tr.comp-2-cell}) & \eqref{ax:bin_alp1alph2:B} \\
" (\ref{ax:lax-tr.deg-3-cell}) & \eqref{ax:bin_D_1A:B} \\
" (\ref{ax:lax-tr.comp-1-cell}) & \eqref{ax:bin_A1A2A3:B} \\
" (\ref{ax:lax-tr.deg-alpha2}) & \eqref{ax:bin_D_A1A2:B} \\
" (\ref{ax:lax-tr.whisk-alpha2}) & \eqref{ax:bin_A1alp:B}, \eqref{ax:bin_alpA2:B}\\
\hdashline
%$\overline{F}$ pseudomap & $F_b$ pseudomap  \\
$F_\beta$ modification (\ref{ax:mod.deg-3}) & , \eqref{ax:bin_D_1a:bet}\\
\hdashline
$F_{B_2,B_1}$ modification (\ref{ax:mod.deg-3})& \eqref{ax:bin_D_1a:B1B2}\\
\hline
\end{tabular}
\label{tab:compact-def}
\end{table}

We have not mentioned axioms~' \eqref{ax:mod.comp-1} or \eqref{ax:mod.2-cell} for $F_\beta$ or $F_{B_2,B_1}$.  These are redundant, corresponding to Axioms~ \eqref{ax:bin_A1A2:bet},  \eqref{ax:bin_alp:bet}, \eqref{ax:bin_A1A2:B1B2} and \eqref{ax:bin_alp:B1B2} which are already encoded in \Cref{tab:cld-bij-2} by the axioms of the modifications $F^\alpha$ and $F^{A_2,A_1}$.  Thus the compact definition is overdetermined, containing some redundant information.

The useful feature of the compact definition is that it enables us to specify binary maps in a compact way --- it is enough to specify their associated pseudomaps, lax and oplax transformations.  We will use this later --- for instance, in \Cref{Tab:Compactsub} below.

\begin{rmk}
The compact definition also implies that we obtain two perturbations 
\begin{center}
for any 3-cell $\Lambda\colon \alpha\to \alpha'$ in $\ca$, $F^\Lambda\colon F^\alpha\to F^{\alpha'}$ in $\lax(\cb,\cc)$, \\
for any 3-cell $\Theta\colon \beta\to \beta'$ in $\cb$, $F_\Theta\colon F_\beta\to F_{\beta'}$ in $\oplax(\ca,\cc)$,
\end{center}
with components defined as
\begin{center}
$(F^\Lambda)_b:=F_b(\Lambda)$ and $(F_\Theta)_a:=F^a(\Theta)$.
\end{center}
The equation for $F^\Lambda$ to be a perturbation corresponds to Axiom (\ref{ax:lax-tr.3-cell}) for the oplax transformation $F_B$ and the equation for $F_\Theta$ corresponds to Axiom (\ref{ax:lax-tr.3-cell}) for the lax transformation $F^A$. 
\end{rmk}

\subsection{The functor of binary maps}\label{sect:2fun}

Consider again the bijection $$\lambda_2\colon\GCatm_1^l(\,\ca;\lax(\cb,\cc)\,) \cong \GCatm_2^l(\ca,\cb;\cc)$$ of Proposition~\ref{prop:binary maps}, which restricts to tight maps on either side.  The left hand side of the above bijection is functorial in $\ca,\cb$ and $\cc$ --- hence the sets $\GCatm_2^l(\ca,\cb;\cc)$ on the right hand side extend uniquely to a functor $$\GCatm_2^l(-,-;-)\colon(\GCatp^2)^{op} \times \GCatp \to \Set$$ such that the bijection $\lambda_2$ is natural in each variable.

This immediately defines the substitutions of loose unary and binary.   Using that $\lambda_2$ restricts to tight maps, plus the behaviour of $\lax(-,-)$ with respect to tight morphisms, as per Proposition~\ref{prop:asd}, we obtain that $F\circ_1 P$ is tight if $F$ and $P$ are, $F \circ_2 Q$ is tight if $F$ is, and $R\circ_1 F$ is tight if both $G$ and $F$ are, where $F$ is a binary map and $R,P$ and $Q$ unary.

We spell out the details of these three substitutions in \Cref{Tab:Compactsub} using the compact definition of binary maps described in \Cref{sect:comp-def-binary}. 

\begin{table}[ht]
\caption{Substitutions in compact form}
\begin{center}
\begin{tabular}{c:c c}
$F\circ_1P$ &
$(F\circ_1P)^a=F^{Pa}$ & $(F\circ_1P)_b=F_b\circ P$ \\
& $(F\circ_1P)^A=F^{PA}$ & $(F\circ_1P)_B=F_B\circ P$ \vspace{0.25cm} \\
$F\circ_2Q$ &
$(F\circ_2Q)^a=F^a\circ Q$ & $(F\circ_2Q)_b=F_{Qb}$ \\
& $(F\circ_2Q)^A=F^A\circ Q$ & $(F\circ_2Q)_B=F_{QB}$ \vspace{0.25cm}\\
$G\circ_1 F$ &
$(G\circ_1 F)^a=G\circ F^a$ & $(G\circ F)_b=G\circ F_b$ \\
& $(G\circ_1 F)^A=G\circ F^A$ & $(G\circ F)_B=G \circ F_B$ \\
\end{tabular}
\end{center}
  \label{Tab:Compactsub}
\end{table}

\subsection{The duality isomorphism in dimension $2$}

\begin{prop}
\label{prop:duality-bin}
There is a bijection 
$$d^2_{\ca,\cb,\cc}\colon\GCatm^l_2(\ca,\cb;\cc)\to\GCatm^l_2(\cb^\co,\ca^\co;\cc^\co),$$
natural in each variable.
\end{prop}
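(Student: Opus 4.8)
The plan is to exploit the \emph{compact definition} of loose binary maps from \Cref{sect:comp-def-binary}, under which a binary map $F\colon\ca,\cb\to\cc$ is packaged as: pseudomaps $F^a\colon\cb\to\cc$ and $F_b\colon\ca\to\cc$; a lax transformation $F^A$, a modification $F^\alpha$ and an invertible modification $F^{A_2,A_1}$, all living in $\lax(\cb,\cc)$; and an oplax transformation $F_B$, a modification $F_\beta$ and an invertible modification $F_{B_2,B_1}$, all living in $\oplax(\ca,\cc)$, subject to the compatibilities recorded there and the axioms of \Cref{app:ax_bin}. The essential tool is the pair of structure-preserving dualities defined above: the isomorphism of Gray-categories $(-)^{+}\colon\oplax(\ca,\cc)\cong\lax(\ca^\co,\cc^\co)^\co$ and the natural isomorphism $(-)^{-}\colon\lax(\cb,\cc)\cong\oplax(\cb^\co,\cc^\co)^\co$, which interchange lax and oplax data while reversing the direction of $2$-cells.

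First I would \emph{define} $d^2F\colon\cb^\co,\ca^\co\to\cc^\co$ on compact data by swapping the two variables and dualising. On pseudomaps, set $(d^2F)^b:=(F_b)^\co$ and $(d^2F)_a:=(F^a)^\co$. On transformations, set $(d^2F)^B:=(F_B)^{+}$, a lax transformation in $\lax(\ca^\co,\cc^\co)$ now indexed by the first variable $\cb^\co$, and $(d^2F)_A:=(F^A)^{-}$, an oplax transformation in $\oplax(\cb^\co,\cc^\co)$ indexed by the second variable $\ca^\co$. The modifications dualise likewise, as $(d^2F)^\beta:=(F_\beta)^{+}$ and $(d^2F)_\alpha:=(F^\alpha)^{-}$, while the cocycle-type invertible modifications require an inverse, $(d^2F)^{B_2,B_1}:=((F_{B_2,B_1})^{+})^{-1}$ and $(d^2F)_{A_2,A_1}:=((F^{A_2,A_1})^{-})^{-1}$, since here the direction-reversal induced by the dualities is not compensated by a reversal of the indexing $2$-cell (as it is for $F^\alpha,F_\beta$). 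The nullary compatibility $(d^2F)^b(a)=(d^2F)_a(b)$ and the shared $2$-cell $((d^2F)^B)_A=((d^2F)_A)_B$ hold on the nose, because $(-)^{\pm}$ preserve components and the analogous identities $F^a(b)=F_b(a)$ and $(F^A)_B=(F_B)_A$ hold for $F$.

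To verify that this data is a genuine loose binary map, I would observe that the single-variable axioms transport for free: since $(-)^{+}$ and $(-)^{-}$ are isomorphisms of Gray-categories, the axioms making $(d^2F)^B$ a lax transformation together with its modifications are exactly the images of those making $F_B$ an oplax transformation, and dually. The remaining work is to check that the genuinely two-variable axioms of \Cref{app:ax_bin} --- those governing the mixed cells $A_B$, $A_\beta$, $\alpha_B$, $F^A_{B_2,B_1}$ and $F^{A_2,A_1}_B$ --- match up after the swap $\ca\leftrightarrow\cb$, once the orientations of the relevant $2$-cells are reversed by passing to $\cc^\co$. This is where the main effort lies; however, it is a formal bookkeeping exercise, as each mixed axiom for $F$ is carried to the corresponding mixed axiom for $d^2F$ by the structure-preserving dualities, with the exchange of lax and oplax roles matching the exchange of the two variables.

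Finally, bijectivity and naturality are immediate. Applying the construction a second time, $d^2_{\cb^\co,\ca^\co,\cc^\co}$ lands in $\GCatm^l_2(\ca,\cb;\cc)$ since $(\ca^\co)^\co=\ca$, and because $(-)^{+}$ and $(-)^{-}$ are mutually inverse (the two inversions on the cocycle-type data cancelling), we recover $F$; thus $d^2_{\cb^\co,\ca^\co,\cc^\co}$ is a two-sided inverse of $d^2_{\ca,\cb,\cc}$, so $d^2$ is an involutive bijection. Naturality in each of $\ca,\cb,\cc$ then follows from the functoriality of $(-)^\co$ on $\GCat_p$ together with the naturality of the isomorphisms $(-)^{+}$ and $(-)^{-}$ established above.
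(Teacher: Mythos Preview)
Your approach is essentially the same as the paper's: both use the compact definition of binary maps from \Cref{sect:comp-def-binary}, define $d^2F$ by swapping the roles of the two variables and applying $(-)^{co}$ to pseudomaps and $(-)^{\pm}$ to the lax/oplax transformation data, verify that the compatibility equations and axioms transport, and conclude bijectivity by observing that $d^2_{\cb^\co,\ca^\co,\cc^\co}$ is inverse to $d^2_{\ca,\cb,\cc}$, with naturality following from functoriality of $(-)^{co}$ and naturality of $(-)^{\pm}$.

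The one point where you are more explicit than the paper is your treatment of the cocycle-type invertible modifications: you correctly note that $(d^2F)^{B_2,B_1}$ must go $(d^2F)^{B_2}\cdot(d^2F)^{B_1}\to(d^2F)^{B_2B_1}$ in $\lax(\ca^\co,\cc^\co)$, whereas $(F_{B_2,B_1})^{+}$ --- regarded as a $2$-cell in $\lax(\ca^\co,\cc^\co)$ rather than in $\lax(\ca^\co,\cc^\co)^{\co}$ --- points the other way, and so an inverse is needed; and you give the right explanation, namely that the reversal of the indexing $2$-cell $\alpha$ in $\ca^{\co}$ compensates for $F^\alpha$ but there is no such compensation for a composable pair $(A_1,A_2)$. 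The paper handles this by implicitly reading $(F_{B_2,B_1})^{+}$ as the image in $\lax(\ca^\co,\cc^\co)^{\co}$ (where the direction is preserved), so there is no real disagreement; since these modifications are invertible the distinction is harmless, but your formulation is cleaner. For naturality, the paper additionally points to the compact description of the substitutions in Table~\ref{Tab:Compactsub}, which makes the verification entirely mechanical --- you might mention this as well, since it is what makes ``naturality of $(-)^{\pm}$'' actually do the work.
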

\begin{proof}
Given $F\colon\ca,\cb \to \cc$, we start by constructing $d^2F\colon\cb^{co},\ca^{co} \to \cc^{co}$ and, to this end, will use the compact definition of binary map.
\begin{itemize}
\item At $b \in \cb^{co}$, we define $d^2F^{b} = (F_b)^{co}\colon\ca^{co} \to \cc^{co}$;
\item at $a \in \ca^{co}$, we define $d^2F_{a} = (F^a)^{co}\colon\cb^{co} \to \cc^{co}$;
\end{itemize}
and we observe that $d^2F^{b}(a) = F_b(a) = F^a(b) = d^2F_{a}(b)$ as required.  Next, 
\begin{itemize}
\item at $B\colon b \to b' \in \cb^{co}$, we define $d^2F^{B}\colon d^2F^{b} \to d^2F^{b'}$ as the lax transformation $(F_B)^{+}\colon(F_b)^{co} \to (F_{b'})^{co}$;
\item at $A\colon a \to a' \in \ca^{co}$, we define $d^2F_{A}\colon d^2F_{a} \to d^2F_{a'}$ as the oplax transformation $({F^A})^{-}\colon(F^a)^{co} \to (F^{a'})^{co}$.
\end{itemize}
This describes the data for $d^2F$.  The equation $d^2F^{B}(a) = d^2F_{a}(B)$ holds since we have $(F_B)^{+}(a) = F_{B}(a) = F^{a}(B) = (F^{a})^{-}(B)$.  Similarly $d^2F_{A}(b) = (d^2F^{b})(A)$ and $(d^2F^{B})_A = (d^2F^{B})_A$ reduce to the corresponding equations $F^{A}(b) = F_{b}(A)$ and $(F_{B})_{A} = (F^{A})_{B}$ for $F$.
%.  Likewise, the equation $(d^2F^{B})_A = (d^2F^{B})_A$ holds since we have $(F_B)^{+}_{A} = (F_{B})_{A} = (F^{A})_{B} =  (F^A)^{-}_{B}$ where the first and last equalities are definitional, and the middle one is the corresponding equality for the binary map $F$.
Lastly, we must prove that the data
$d^2F^\beta$, $d^2F_\alpha $, $d^2F^{B_2,B_1}$ and $d^2F_{A_2,A_1}$ as defined in Section~\ref{sect:comp-def-binary} yield modifications; but these are simply the modifications $(F_{\beta})^{+}$, $(F^{\alpha})^{-}$, $(F_{B_2,B_1})^{+}$ and $(F^{A_2,A_1})^-$ respectively, proving that $d^2F$ is a binary map.  

Naturality of $d^2_{\ca,\cb,\cc}$ in each variable follows formally using only the compact definitions of substitution, as per \Cref{Tab:Compactsub}, plus functoriality of $(-)^{co}\colon\GCat_p \to \GCat_p$ together with naturality of ${(-)^{+}}$ and $(-)^{-}$.

Observe that since $(-)^{+}$ and $(-)^{-}$ are inverse operations, it follows directly that $d^2_{\ca,\cb,\cc}\colon\GCatm^l_2(\ca,\cb;\cc)\to\GCatm^l_2(\cb^\co,\ca^\co;\cc^\co)$ has inverse 
$$d^2_{\ca^{co},\cb^{co},\cc^{co}}\colon\GCatm^l_2(\cb^\co,\ca^\co;\cc^\co) \to \GCatm^l_2(\ca,\cb;\cc).$$
In particular, $d^2$ is a natural bijection.
\end{proof}

\subsection{Substitution of nullary into binary}
At $F\colon\ca, \cb \to \cc$ and $a\colon \diamond \to \ca$ we define $F \circ_1 a :=F^a\colon\cb \to \cc$; similarly, at $b\colon \diamond \to \cb$, we define $F \circ_2 b = F_b\colon\ca \to \cc$.  By construction $F \circ_2 b$ is tight if $F$ is.   
 Naturality of $\circ_1$ and $\circ_2$ follow from the description of substitution of loose unary and binary in \Cref{Tab:Compactsub}. All of the cases are similar. For instance, consider $b\colon\diamond \to \cb'$ together with $Q$ and $F$ as  in \Cref{Tab:Compactsub}.  Then
$(F \circ_2 Q) \circ_2 b = (F \circ_2 Q)_b = F_{Qb}$ whilst $F \circ_{2} (Q \circ_1 b)  = F \circ_2 Qb =F_{Qb}$ using only definitional equalities and the description of $F \circ_2 Q$ in the table.

In Proposition~\ref{prop:ternary-maps}, we will also make of the following simple lemma.

\begin{lemma}\label{lemma:lambda12}
Given a nullary map $a\colon\diamond \to \ca$ and unary map $F\colon\ca \to \lax(\cb,\cc)$ the equality $\lambda_1(F \circ_1 a) = \lambda_2(F) \circ_1 a \in \lax(\cb,\cc)$ holds.
\end{lemma}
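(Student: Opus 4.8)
The plan is to prove the equality by unwinding the definitions of the two bijections $\lambda_1,\lambda_2$ and the two substitution operations involved; no genuine calculation is required, since both sides will be identified with the single pseudomap $F(a)\colon\cb\to\cc$.

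For the left-hand side, I would first observe that $F\circ_1 a$ is the substitution of the nullary map $a$ into the unary map (pseudomap) $F\colon\ca\to\lax(\cb,\cc)$, which by the definition of such substitution is simply the value $F(a)$, an object of $\lax(\cb,\cc)$ and hence a nullary map into $\lax(\cb,\cc)$. Applying $\lambda_1$, which by Lemma~\ref{lemma:nmaps} is the identity in each component, leaves this datum unchanged, now viewed as a loose unary map $\cb\to\cc$; thus $\lambda_1(F\circ_1 a)=F(a)$.

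For the right-hand side, I would appeal to the construction of $\lambda_2$ in the proof of Proposition~\ref{prop:binary maps}: on objects, the binary map $\lambda_2(F)$ assigns to $a\in\ca$ precisely the pseudomap $(\lambda_2 F)^a=F(a)$ obtained by evaluating $F$ at $a$. Since substitution of the nullary map $a$ into the first variable of a binary map returns its associated pseudomap $(\lambda_2 F)^a$, we obtain $\lambda_2(F)\circ_1 a=F(a)$ as well, and comparing the two computations yields the claim. There is no substantive obstacle here; the only point requiring attention is type-bookkeeping, namely confirming that the labels ``object of $\lax(\cb,\cc)$'', ``nullary map into $\lax(\cb,\cc)$'', and ``loose unary map $\cb\to\cc$'' all name the same datum, so that the two essentially-identity assignments (the bijection $\lambda_1$ and the zeroth component of $\lambda_2$) are literally the same assignment.
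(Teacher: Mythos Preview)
Your proof is correct and follows essentially the same approach as the paper's: both sides are unwound via the definitions of $\lambda_1$, $\lambda_2$, and the nullary substitutions to the single pseudomap $F(a)$, with no further calculation required. The paper's version is terser but traces through exactly the same chain of definitional equalities.
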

\begin{proof}
By definition of $\lambda_1$ and of substitution of nullary into unary, we have $\lambda_1(F \circ_1 a) = F(a)$.  We also have $\lambda_2(F) \circ_1 a = \lambda_2(F)^a = F(a)$ where the first equation holds by definition of substitution of nullary into binary and the second equation holds by definition of $\lambda_2(F)\colon\ca,\cb \to \cc$ as per \Cref{prop:binary maps}.
%By definition of substitution of nullary into binary $\lambda_2(F) \circ_1 a = \lambda_2(F)_a$ and by definition of $\lambda_2(F)\colon\ca,\cb \to \cc$, as per \Cref{prop:binary maps}, we have $\lambda_2(F)_a = F_a$.  By definition of substitution of nullary into unary, this is $F \circ_1 a$, or equally $\lambda_1(F \circ_1 a)$. 
\end{proof}

%Using the 
%
%
%
%above descriptions, it is now straightforward to show that substitution of nullary maps into binary maps is natural, as required.  All of the cases are similar, simply following through the definitions of substitution we have been given so far.  For example, consider $b\colon(-) \to \cb'$ together with $Q$ and $F$ as above.  Then
%$$(F \circ_2 Q) \circ_2 b = (F \circ_2 Q)^b = F^{Qb} = F^{(Q \circ_1 b)} = F \circ_{2} (Q \circ_1 b).$$
%
%
%%\red I removed the $(-)^{co}$ stuff in the above, as no longer needed.  Also $a$ subscripts to superscripts, reverse for $b$.\black
%
%Using the above descriptions, it is now straightforward to show that substitution of nullary maps into binary maps is natural, as required.  All of the cases are similar, simply following through the definitions of substitution we have been given so far.  For example, consider $b\colon(-) \to \cb'$ together with $Q$ and $F$ as above.  Then
%$$(F \circ_2 Q) \circ_2 b = (F \circ_2 Q)^b = F^{Qb} = F^{(Q \circ_1 b)} = F \circ_{2} (Q \circ_1 b).$$

\subsection{A $2$-ary skew multicategory}
We have defined all of the structure for the $2$-ary skew multicategory $\GCatm$ and verified all associativity equations involving unary maps.  The only case left to consider is the associativity equation $(F \circ_1 a) \circ_1 b = (F \circ_2 b) \circ_1 a$ for a binary map and two nullary maps.  This is simply the binary map equation $F^a(b)=F_b(a)$.

\subsection{Ternary maps}
%We start by defining the set $\GCatm^l_3(\ca,\cb,\cc;\cd)$ of loose ternary maps for four Gray-categories $\ca$, $\cb$, $\cc$ and $\cd$. 
%\begin{defn}
A \emph{loose ternary map} $F\colon\catA,\catB,\catC\rightarrow\catD$ consists of:
\begin{enumerate}[(1)]
\item at an object $a\in\catA$ a loose binary map $F^a\colon\catB,\catC\to\catD$, at $b\in\catB$ a loose binary map $F\midscript{b}\colon\catA,\catC\to\catD$ and at $c\in\catC$ a loose binary map $F_c\colon\catA,\catB\to\catD$ such that
\begin{center} %F\ts^a?{b}_{-}
$F^a(b,-)=F\ts?{b}(a,-)=:F^a_b\quad F^a(-,c)=F_c(a,-)=:F^a_c$ \\ 
$F\midscript{b}(-,c)=F_c(-,b)=:F^b_c$,
\end{center}

\item at 1-cells $A\colon a\to a'$ in $\ca$, $B\colon b\to b'$ in $\cb$ and $C\colon c\to c'$ in $\cc$, a 3-cell 
\begin{center}
% https://q.uiver.app/?q=WzAsNixbMSwwLCJGXnthJ31fe2InfShDKUZee2EnfV97Y30oQilGXntifV97Y30oQSkiXSxbNCwwLCJGXnthJ31fe2InfShDKUZee2InfV97Y30oQSlGXnthfV97Y30oQikiXSxbNSwyLCJGXntiJ31fe2MnfShBKUZee2F9X3tiJ30oQylGXnthfV97Y30oQikiXSxbMCwyLCJGXnthJ31fe2MnfShCKUZee2EnfV97Yn0oQylGXntifV97Y30oQSkiXSxbMSw0LCJGXnthJ31fe2MnfShCKUZee2J9X3tjJ30oQSlGXnthfV97Yn0oQykiXSxbNCw0LCJGXntiJ31fe2MnfShBKUZee2F9X3tjJ30oQilGXnthfV97Yn0oQykiXSxbMCwxLCIxXFxjaXJjIHtBX0J9XmMiXSxbMSwyLCJ7QV9DfV57Yid9Il0sWzIsNSwiMVxcY2lyYyB7Ql9DfV57YX0iXSxbMyw0LCIxXFxjaXJjIHtBX0N9XmIiLDJdLFs0LDUsIntBX0J9XntjJ30iLDJdLFswLDMsIntCX0N9XnthJ31cXGNpcmMxIiwyXSxbNiwxMCwiKEFcXG1pZCBCXFxtaWQgQykiLDAseyJzaG9ydGVuIjp7InNvdXJjZSI6NDAsInRhcmdldCI6NDB9fV1d
\begin{tikzpicture}
%0-cells
\node (T) at (0,4) {$F^{a'}_{b'}(C)F^{a'}_{c}(B)F^{b}_{c}(A)$};
\node (N) at (-2,2) {$F^{a'}_{c'}(B)F^{a'}_{b}(C)F^{b}_{c}(A)$};
\node (T') at (5,4) {$F^{a'}_{b'}(C)F^{b'}_{c}(A)F^{a}_{c}(B)$};
\node (TS0Y) at (7,2) {$F^{b'}_{c'}(A)F^{a}_{b'}(C)F^{a}_{c}(B)$};
\node (ST') at (0,0) {$F^{a'}_{c'}(B)F^{b}_{c'}(A)F^{a}_{b}(C)$};
\node (TS0') at (5,0) {$F^{b'}_{c'}(A)F^{a}_{c'}(B)F^{a}_{b}(C)$,};

%1-cells
\path[->] 
(ST') edge node[scale=.7] (y) [below] {${A_B}^{c'}\circ 1$} (TS0')
(T) edge node[scale=.7] [left, xshift=-0.2cm] {${B_C}^{a'}\circ 1$} (N)
	edge node[scale=.7] (x) [above] {$1\circ {A_B}^{c}$} (T')
(N) edge node[scale=.7] [left, xshift=-0.2cm] {$1\circ {A_C}^{b}$} (ST')
(T') edge node[scale=.7] [right, xshift=0.2cm] {${A_C}^{b'}\circ1$} (TS0Y)
(TS0Y) edge node[scale=.7] [right, xshift=0.2cm] {$1\circ {B_C}^{a}$} (TS0');
\draw[-{Implies},double distance=1.5pt,shorten >=45pt,shorten <=45pt] (x) to node[scale=.7] [right, xshift=0.1cm] {$(A\mid B\mid C)$} (y);
\end{tikzpicture}
\end{center}
\end{enumerate}
which we call the \emph{incubator}, satisfying the nine axioms described in \Cref{app:ax_tern}.
%\end{defn}
%\begin{defn}

We say that the loose ternary map $F\colon\ca,\cb,\cc \to \cd$ is tight if $F\midscript{b}\colon\ca,\cb \to \cd$ and $F_c\colon\ca,\cc \to \cd$ are tight.
%\end{defn}

\subsection{The correspondence between ternary and binary maps}

\begin{prop}
\label{prop:ternary-maps}
There is a bijection $\lambda_3\colon\GCatm^l_2(\ca,\cb;\lax(\cc,\cd)) \cong \GCatm^l_3(\ca,\cb,\cc;\cd)$, which restricts to tight maps on both sides.
\end{prop}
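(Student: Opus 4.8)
The plan is to establish $\lambda_3$ by mirroring the proof of Proposition~\ref{prop:binary maps} one dimension up, using $\lambda_2$ to absorb the two inner variables $\cb,\cc$ and the evaluation maps $ev_c\colon\lax(\cc,\cd)\to\cd$ to extract the third. Throughout I would work with the compact definition of binary maps from \Cref{sect:comp-def-binary}, so that a loose binary map $\overline F\colon\ca,\cb\to\lax(\cc,\cd)$ is presented by its component pseudomaps $\overline F^a\colon\cb\to\lax(\cc,\cd)$ and $\overline F_b\colon\ca\to\lax(\cc,\cd)$, its lax transformations $\overline F^A$, its oplax transformations $\overline F_B$, and the (invertible) modifications $\overline F^\alpha,\overline F_\beta,\overline F^{A_2,A_1},\overline F_{B_2,B_1}$, together with the $2$-cell data $A_B$ in $\lax(\cc,\cd)$.

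First I would define the ternary map $F=\lambda_3(\overline F)$ on generating data. Its three families of binary maps are $F^a:=\lambda_2(\overline F^a)\colon\cb,\cc\to\cd$, $F\midscript{b}:=\lambda_2(\overline F_b)\colon\ca,\cc\to\cd$, and $F_c:=ev_c\circ_1\overline F\colon\ca,\cb\to\cd$, the latter being the substitution of the binary map $\overline F$ into the strict evaluation $ev_c$. The incubator $(A\mid B\mid C)$ is taken to be the component $(A_B)_C$ at the $1$-cell $C$ of the modification $A_B$ in $\lax(\cc,\cd)$, which is a $3$-cell of $\cd$ with exactly the prescribed boundary once $\lambda_2$ is unwound. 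The compatibility conditions among the three families then hold essentially by construction: using the substitution formulas of \Cref{Tab:Compactsub} and the identity $\lambda_2(G)_c=ev_c\circ G$ from Proposition~\ref{prop:binary maps}, one reads off $(F^a)_c=(F_c)^a=ev_c\circ\overline F^a$ and $(F\midscript{b})_c=(F_c)_b=ev_c\circ\overline F_b$, while the equalities $F^a(b,-)=F\midscript{b}(a,-)$ and the analogous ones reduce to the compatibilities already built into $\lambda_2(\overline F^a)$, $\lambda_2(\overline F_b)$ and $\overline F$ itself.

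Next I would match the axioms, in a table analogous to \Cref{tab:cld-bij-2}. The key reduction is that each of the nine incubator axioms is an equation of $3$-cells in $\cd$, so by joint faithfulness of the evaluation maps $ev_c$ on $3$-cells such an equation is detected componentwise; this is what lets each axiom be read off as (a component of) a structural equation for $\overline F$. Concretely, the modification axioms for the $2$-cell $A_B$ in $\lax(\cc,\cd)$ supply the incubator axioms governing the third variable (its naturality, and its compatibility with composition and units in $C$), while the lax-transformation axioms for $\overline F^A$, the oplax-transformation axioms for $\overline F_B$, and the modification axioms for $\overline F^\alpha,\overline F_\beta,\overline F^{A_2,A_1},\overline F_{B_2,B_1}$ supply the remaining incubator axioms together with the binary-map axioms of $F^a$, $F\midscript{b}$ and $F_c$. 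The inverse assignment is obtained by reversing these identifications, and naturality of $\lambda_3$ in each variable follows from naturality of $\lambda_2$ and of the evaluation maps.

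Finally, for the restriction to tight maps: since $\lambda_2$ restricts to tight maps, $F\midscript{b}=\lambda_2(\overline F_b)$ is tight precisely when $\overline F_b$ is a Gray-functor, and $F_c=ev_c\circ_1\overline F$ is tight for every $c$ precisely when each cocycle perturbation $\overline F^{A_2,A_1}_B$ vanishes (again using joint faithfulness of the $ev_c$); together these say exactly that $\overline F$ is tight. The main obstacle is the bookkeeping of the axiom table: one must verify that the source and target of $(A_B)_C$ in $\lax(\cc,\cd)$ genuinely match the prescribed boundary of the incubator after unwinding $\lambda_2$, and that all nine incubator axioms land correctly against the compact-definition axioms of $\overline F$. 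The conceptual content is routine given $\lambda_2$ and the joint faithfulness of the evaluations, but this verification is where essentially all the labour lies.
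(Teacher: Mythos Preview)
Your proposal is correct and follows essentially the same approach as the paper: define the three component binary maps as $F^a=\lambda_2(\overline F^a)$, $F\midscript{b}=\lambda_2(\overline F_b)$, $F_c=ev_c\circ_1\overline F$, identify the incubator as the $1$-cell component $(A_B)_C$ of the modification $A_B$, and then match the nine incubator axioms against the structural equations of $\overline F$ via an axiom table, with tightness handled via $\lambda_2$ and the vanishing of $\overline F^{A_2,A_1}_B$. Two small points of presentation: the paper works directly from the explicit definition of binary maps rather than the compact one, and its use of joint faithfulness runs the other way from your phrasing --- it is the binary-map axioms for $\overline F$ that are (mostly) perturbation equations in $\lax(\cc,\cd)$ and hence reduce componentwise to the axioms for $F_c$, leaving exactly nine residual equations which become the incubator axioms; also, your claim that ``$F_c$ tight for every $c$ precisely when each $\overline F^{A_2,A_1}_B$ vanishes'' is only correct once you have already assumed $\overline F_b$ strict, since tightness of $F_c$ also requires $(F_c)_b$ to be a Gray-functor.
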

\begin{proof}
Consider a binary map $\overline{F}\colon\catA,\catB\rightarrow\lax(\cc,\cd)$.  This is specified by the following data:
\begin{enumerate}[(1)]
\item Pseudomaps $\overline{F}^a\colon\catB\rightarrow\lax(\cc,\cd)$ and $\overline{F}_b\colon\catA\rightarrow\lax(\cc,\cd)$;
\item a modification $A_B\colon \overline{F}^{a'}(B)\cdot \overline{F}_{b}(A)\to \overline{F}_{b'}(A)\cdot \overline{F}^{a}(B)$ in $\lax(\cc,\cd)$;
\item four perturbations $\alpha_B$,  $A_\beta$, $\overline{F}^{A_2,A_1}_B$, $\overline{F}^A_{B_2,B_1} \in \lax(\cc,\cd)$, the latter two invertible;
\end{enumerate}
all satisfying the binary map equations. Now, we will describe the associated ternary map $F\colon\ca,\cb,\cc \to \cd$. It is convenient, for fixed objects $a,b,c$ to consider the diagram
\begin{equation}\label{eq:ternaryass}
\begin{gathered}
\begin{tikzpicture}[triangle/.style = {fill=yellow!50, regular polygon, regular polygon sides=3,rounded corners}]

\path
	(6.5,1.4) node [triangle,draw,shape border rotate=-90,inner sep=1pt] (b') {$a$} 
        (6.5,0.5) node [triangle,draw,shape border rotate=-90,inner sep=0pt] (q) {$b$} 
	(8.5,1) node [triangle,draw,shape border rotate=-90,label=135:$\ca$,label=230:$\cb$,inner sep=2pt] (c') {$\overline{F}$}
(12,1) node [triangle,draw,shape border rotate=-90,inner sep=0pt] (c'') {$ev_c$};

\draw [-] (q) .. controls +(right:1cm) and +(left:1cm).. (c'.223);
\draw [] (b') .. controls +(right:1cm) and +(left:1cm).. (c'.137);
\draw [] (c') to node[fill=white] {$\lax(\cc,\cd)$} (c'');
\draw [-] (c'') to node[fill=white] {$\cd$} (13.5,1);
\end{tikzpicture}
\end{gathered}
\end{equation}
since it encodes the three binary maps we wish to construct and the three equations relating them. Firstly, the two pseudomaps $\overline{F}^a = \overline{F} \circ_1 a$ and $\overline{F}_b=\overline{F} \circ_2 b$ correspond under the bijection $\lambda_2$ of Proposition~\ref{prop:binary maps} to binary maps $F^a  \colon\catB,\catC\to\catD, F\midscript{b}\colon\catA,\catC\to\catD$.  Also, for each $c \in \cc$ we  have the composite binary map $F_c:=ev_{c} \circ_1 F\colon\ca,\cb \to \lax(\cc,\cd) \to \cd$.

Next, we must verify that these satisfy the three compatibility equations for a ternary map.  To this end, observe that the associativity equation $(ev_c \circ_1 \overline{F}) \circ_1 a = ev_c \circ_1 (\overline{F} \circ_1 a)$ in the $2$-multicategory $\GCatm$ is precisely the equality of pseudomaps $F^a(-,c)=F_c(a,-)$.  Likewise the associativity $(ev_c \circ_1 \overline{F}) \circ_2 b = ev_c \circ_1 (\overline{F} \circ_2 b)$ yields $F\midscript{b}(-,c)=F_c(-,b)$.  Finally, consider the associativity equation $(\overline{F} \circ_1 a) \circ_1 b = (\overline{F} \circ_2 b) \circ_1 a$.  Applying $\lambda_1$ to both sides and using Lemma~\ref{lemma:lambda12}, this equation becomes the required equality of pseudomaps $F^a(b,-)=F\midscript{b}(a,-)$.

%Similarly pre-composing by $b\colon(-) \to \cb$ and associativity gives $F\midscript{b}(-,c)=F_c(-,b)$.  For $F^a(b,-)=F\midscript{b}(a,-)$, observe that the binary map equality $\overline{F}^a(b) = \overline{F}_b(a)$ equally says that $(\overline{F} \circ_1 a) \circ_1 b = (\overline{F} \circ_2 b) \circ_1 a$; now applying $\lambda_1$ to both sides and applying Lemma~\ref{lemma:lambda12}, this equation becomes the equality of pseudomaps $F^a(b,-)=F\midscript{b}(a,-)$.

Consider the modification $A_B$ in (2) above.  Its component at $c \in \cc$ is encoded in the binary map $F_c$.  On the other hand, the value of $A_B$ at a $1$-cell $C\colon c \to c' \in \cc$ is a $3$-cell not encoded by any of the three binary maps --- this is the extra piece of data, the incubator $(A|B|C)$.  

The four perturbations in (3) are specified by their components for each $c \in \cc$ and so encoded by the binary maps $F_c$.

Finally we turn to the axioms, detailed in \Cref{tab:cld-bij-3}. Firstly, there are the three equations for $A_B$ to be a modification.    Next, there is the single perturbation equation for each of the four perturbations in (3) above.  Then we are left with the binary map equations for $\overline{F}$, excepting $(\overline{F}^a)(b) = (\overline{F}_b)(a)$, which we have dealt with already.  Observe that all of these, except Axioms' \eqref{ax:bin_D_A:1b} and \eqref{ax:bin_D_1a:B}, concern the equality of 3-cells in $\lax(\cc,\cd)$, that is --- perturbations.  Since perturbations are specified by their components for each $c \in \cc$, these equations all reduce to the corresponding equation for $F_c$ to be a binary map.  Therefore we are left with just the two equalities Axioms' \eqref{ax:bin_D_A:1b} and \eqref{ax:bin_D_1a:B} of modifications, both of which contribute a single equation each.   Combining these two equations with the three modification equations and four perturbation equations, we obtain nine equations --- these are the nine ternary map equations concerning the incubator $(A|B|C)$.  

\begin{table}[ht]
\caption{Correspondence of axioms for $\lambda_3$ concerning incubator}
\begin{center}
\renewcommand\arraystretch{1.7} 
\begin{tabular}{|c | c|}
\hline
$\overline{F}\colon\catA,\catB\rightarrow\lax(\cc,\cd)$ multimap & $F\colon\catA,\catB,\catC\rightarrow\catD$ multimap \\
\hline
$A_B$ modification (i) & \eqref{ax:tern_D_A:B:1c} \\
\hdashline

$A_B$ modification (ii) & \eqref{ax:tern_A:B:C1C2} \\
\hdashline

$A_B$ modification (iii) & \eqref{ax:tern_A:B:gam} \\
\hdashline

\eqref{ax:bin_D_A:1b} & \eqref{ax:tern_D_A:1b:C}  \\
\hdashline

$\alpha_B$ perturbation & \eqref{ax:tern_alp:B:C} \\
\hdashline

$A_\beta$ perturbation & \eqref{ax:tern_A:bet:C} \\
\hdashline

\eqref{ax:bin_D_1a:B}  & \eqref{ax:tern_D_1a:B:C} \\
\hdashline

$\overline{F}^{A_2,A_1}_B$ perturbation & \eqref{ax:tern_A1A2:B:C} \\
\hdashline

$\overline{F}^A_{B_2,B_1}$ perturbation & \eqref{ax:tern_A:B1B2:C} \\
\hline
\end{tabular}
\end{center}
\label{tab:cld-bij-3}
\end{table}

Now to say that $\overline{F}\colon\catA,\catB\rightarrow\lax(\cc,\cd)$ is tight is, by definition, to say that $\overline{F}_b\colon\catA \to\lax(\cc,\cd)$ is tight and that the perturbation $\overline{F}^{A_2,A_1}_{B}$ is an identity.  Since $\lambda_2$ respects tightness, the first statement says that $F\midscript{b}\colon\catA,\catC \to \catD$ is tight.  Note that this implies that $(F_c)^b \colon\catA \to \catD$ is tight.  That the perturbation $\overline{F}^{A_2,A_1}_{B}$ is an identity just says that it is so at each $c \in \cc$ --- combined with tightness of $(F_c)^b$, this says precisely that $F_c\colon\catA,\catB \to \catD$ is tight, as required.
\end{proof}

\subsection{The functor of ternary maps}
Consider again the bijection $$\lambda_3\colon\GCatm_3^l(\,\ca,\cb;\lax(\cc,\cd)\,) \cong \GCatm_3^l(\ca,\cb,\cc;\cd)$$ of Proposition~\ref{prop:binary maps}, which restricts to tight maps on either side.  Much as before, the sets $\GCatm_3^l(\ca,\cb,\cc;\cd)$ on the right hand side extend uniquely to a functor 
$$\GCatm_3^l(-,-,-;-)\colon(\GCatp^3)^{op} \times \GCatp \to \Set$$ 
such that the bijection $\lambda_3$ is natural in each variable.  

This immediately defines the substitution of loose ternary and loose unary maps. Furthermore, the behaviour of $\lax(-,-)$ with respect to strict morphisms, as per Proposition~\ref{prop:asd}, ensures that $F\circ_1 P$ is tight if $F$ and $P$ are,  $F\circ_2Q$ and $F\circ_3 R$ are tight if $F$ is, and $S\circ_1 F$ is tight if both $S$ and $F$ are, where $F$ is a ternary map and $P, Q, R$ and $S$ unary. 

Below, in \Cref{Tab:ternaryunary}, we spell out the details of substitution of ternary and unary maps. 

%in the case that the unary maps are tight, which is all we will require. 

\begin{table}[ht]
\caption{Explicit substitutions of ternary and loose unary maps}
\begin{center}
\begin{tabular}{cc:c c c c c c c c}
$F\circ_1P$ &&
$F^{Pa}$ && $F\midscript{b} \circ_1 P$ && $F_{c} \circ_1 P$ && $(PA|B|C)$ \\
$F\circ_2Q$ &&
$F^{a} \circ_1 Q$ && $F\midscript{Qb}$ && $F_{c} \circ_2 Q$ && $(A|QB|C)$ \\
$F\circ_3R$ &&
$F^{a} \circ_2 R$ && $F\midscript{b}\circ_2 R$ && $F_{Qc}$ && $(A|B|RC)$ \\
$S \circ_1 F$ &&
$S \circ_1 F^{a}$ && $S \circ_1 F\midscript{b}$ && $S \circ_1 F_{c}$ && Diagram \eqref{eq:ternaryunary} below \\
\end{tabular}
\end{center}
  \label{Tab:ternaryunary}
\end{table}

In the case of post-composition by a loose map $S$, the incubator for $S\circ_1F$ is given by the $3$-cell

\begin{equation}\label{eq:ternaryunary}
\adjustbox{scale=.725}{$
[\,(SF^{b'}_{c'}(A)\circ {S^2_{F^{a}_{c'}(B),F^{a}_{b}(C)}}^{-1})\cdot {S^2_{F^{b'}_{c'}(A),F^{a}_{c'}(B)F^{a}_{b}(C)}}^{-1}\,]\cdot S(A\mid B\mid C)\cdot [\, S^2_{F^{a'}_{b'}(C)F^{a'}_{c}(B)F^{b}_{c}(A)}\cdot(SF^{a'}_{b'}(C)\circ S^2_{F^{a'}_{c}(B),F^{b}_{c}(A)})
\,]$}
\end{equation}

%which can be seen to have the right domain and codomain using six instances of Axiom (\ref{ax:psfct.F2-1-cells}) and six instances of Axiom (\ref{ax:psfct.F2-whisk-2}) for $S$ as a pseudomap.  
Ignoring subscripts for $S^2$, it is a $3$-cell whiskered on both sides as below.
% https://q.uiver.app/?q=WzAsNixbMCwwLCJcXGJ1bGxldCJdLFsyLDAsIlxcYnVsbGV0Il0sWzQsMCwiXFxidWxsZXQiXSxbNiwwLCJcXGJ1bGxldCJdLFs4LDAsIlxcYnVsbGV0Il0sWzEwLDAsIlxcYnVsbGV0Il0sWzAsMSwiU0Zee2EnfV97Yid9KEMpXFxjaXJjIFNeMl97LSwtfSJdLFsxLDIsIlNeMl97LSwtLX0iXSxbMiwzLCIiLDAseyJjdXJ2ZSI6LTR9XSxbMiwzLCIiLDAseyJjdXJ2ZSI6NH1dLFszLDQsIntTXjJfey0sLS19fV57LTF9Il0sWzQsNSwiU0Zee2InfV97Yyd9KEEpXFxjaXJjIHtTXjJfey0sLX19XnstMX0iXSxbOCw5LCJTKEFcXG1pZCBCXFxtaWQgQykiLDEseyJzaG9ydGVuIjp7InNvdXJjZSI6MjAsInRhcmdldCI6MjB9fV1d
\[\begin{tikzcd}[ampersand replacement=\&]
	\bullet \&\& \bullet \&\& \bullet \&\& \bullet \&\& \bullet \&\& \bullet
	\arrow["{SF^{a'}_{b'}(C)\circ S^2_{-,-}}", from=1-1, to=1-3]
	\arrow["{S^2_{-,--}}", from=1-3, to=1-5]
	\arrow[""{name=0, anchor=center, inner sep=0}, curve={height=-24pt}, from=1-5, to=1-7]
	\arrow[""{name=1, anchor=center, inner sep=0}, curve={height=24pt}, from=1-5, to=1-7]
	\arrow["{{S^2_{-,--}}^{-1}}", from=1-7, to=1-9]
	\arrow["{SF^{b'}_{c'}(A)\circ {S^2_{-,-}}^{-1}}", from=1-9, to=1-11]
	\arrow["{S(A\mid B\mid C)}"{description}, shorten <=6pt, shorten >=6pt, Rightarrow, from=0, to=1]
\end{tikzcd}\]

\subsection{The duality isomorphism in dimension $3$}

\begin{prop}
\label{prop:duality-tern}
There is a bijection $d^3\colon\GCatm^l_3(\ca,\cb,\cc;\cd) \to \GCatm^l_3(\cc^\co,\cb^\co,\ca^\co;\cd^\co)$, natural in each variable.
\end{prop}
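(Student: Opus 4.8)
The plan is to mirror the proof of Proposition~\ref{prop:duality-bin}, reducing the ternary duality to the already-established binary duality $d^2$ on the three binary slices of a ternary map, and treating the incubator separately by inversion and conjugation. Recall that a loose ternary map $F\colon\ca,\cb,\cc\to\cd$ is determined by its three binary slices $F^a\colon\cb,\cc\to\cd$, $F\midscript{b}\colon\ca,\cc\to\cd$ and $F_c\colon\ca,\cb\to\cd$, subject to the three compatibility equations, together with the incubator $3$-cell $(A\mid B\mid C)$, all satisfying the nine ternary axioms.

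First I would define $d^3F\colon\cc^\co,\cb^\co,\ca^\co\to\cd^\co$ on its binary slices by applying $d^2$ of Proposition~\ref{prop:duality-bin} to the slices of $F$:
\[
(d^3F)^c := d^2(F_c),\qquad (d^3F)\midscript{b} := d^2(F\midscript{b}),\qquad (d^3F)_a := d^2(F^a).
\]
A check of sources and targets confirms these have the correct types: for instance $F_c\colon\ca,\cb\to\cd$ yields $d^2(F_c)\colon\cb^\co,\ca^\co\to\cd^\co$, which is exactly the $c$-slice required of a ternary map with inputs $\cc^\co,\cb^\co,\ca^\co$, and similarly for the other two. The three compatibility equations for the slices of $d^3F$ then reduce, via the definition of $d^2$ in terms of the compact presentation and the behaviour of $(-)^\co$, $(-)^+$ and $(-)^-$, to the three compatibility equations for the slices of $F$, exactly as the analogous reductions were carried out in the proof of Proposition~\ref{prop:duality-bin}.

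Then I would define the incubator of $d^3F$. Since $(-)^\co$ reverses the orientation of $2$-cells while preserving that of $3$-cells, the boundary $2$-cells of the incubator $(A\mid B\mid C)$ are reversed in $\cd^\co$; accordingly the incubator $(C\mid B\mid A)$ of $d^3F$ is obtained from $(A\mid B\mid C)$ by taking its inverse and pre- and post-composing with the invertible $2$-cells built from the cocycles of the relevant pseudomaps, in precisely the manner of the $\flat$ and $\sharp$ operations used earlier for $(\alpha^2_{f_2,f_1})^\flat$ and $(\alpha^{f_2,f_1}_2)_\sharp$. With these definitions in hand, each of the nine ternary axioms for $d^3F$ is the image under this duality of one of the nine ternary axioms for $F$, and so follows from it; as in the binary case, this is a formal consequence of the functoriality of $(-)^\co\colon\GCat_p\to\GCat_p$ together with the naturality of $(-)^+$ and $(-)^-$.

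Naturality of $d^3$ in each variable follows formally from the compact descriptions of substitution, as in \Cref{Tab:ternaryunary}, together with the naturality already established for $d^2$, $(-)^+$ and $(-)^-$. For invertibility I would observe, exactly as for $d^2$, that applying the construction twice (with the inputs dualised and reversed back) returns the original map: on slices this is the involutivity of $d^2$ recorded at the end of the proof of Proposition~\ref{prop:duality-bin}, and on the incubator the double inversion and conjugation cancel, so that the doubly-dualised $d^3$ is a two-sided inverse to $d^3_{\ca,\cb,\cc,\cd}$ and hence $d^3$ is a natural bijection. The main obstacle is the incubator: one must pin down exactly which invertible $2$-cells to conjugate by so that the inverted $3$-cell acquires the correct boundary in $\cd^\co$, and then confirm that the nine reoriented axioms really do match the nine axioms for $F$ under this reindexing. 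This is the one place where the bookkeeping of the $2$-cell reversal is delicate, the remaining verifications being routine.
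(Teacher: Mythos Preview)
Your overall architecture matches the paper's proof exactly: define $d^3F$ on its three binary slices via $d^2$, verify the slice compatibilities, handle the incubator separately, then check naturality and involutivity. The discrepancy is in your treatment of the incubator, and it is a genuine error rather than a cosmetic one.

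You propose to obtain the incubator of $d^3F$ from $(A\mid B\mid C)$ ``by taking its inverse and pre- and post-composing with the invertible $2$-cells built from the cocycles of the relevant pseudomaps, in precisely the manner of the $\flat$ and $\sharp$ operations.'' This fails for two reasons. First, the incubator $(A\mid B\mid C)$ is \emph{not} required to be invertible in a loose ternary map: under the bijection $\lambda_3$ of Proposition~\ref{prop:ternary-maps} it is the $1$-cell component of a modification $A_B$ in $\lax(\cc,\cd)$, and such components are arbitrary $3$-cells. So inversion is not available. Second, the boundary of the incubator contains no pseudomap cocycles---its six edges are whiskerings of the $2$-cells $A_B$, $A_C$, $B_C$---so there is nothing to conjugate by. The $\flat$ and $\sharp$ operations were needed for $\alpha^2_{f_2,f_1}$ precisely because that $3$-cell \emph{is} invertible and its boundary \emph{does} involve $F^2$ and $G^2$; neither feature is present here.

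The paper's treatment is simpler and is forced by the above: since $(-)^{\co}$ reverses $2$-cells but preserves the orientation of $3$-cells, the \emph{same} $3$-cell $(A\mid B\mid C)$ viewed in $\cd^{\co}$ already has a hexagonal boundary of reversed $2$-cells, and rotating that hexagon by $180^\circ$ exhibits it as precisely the incubator required for $d^3F$ at $(C,B,A)$. No inversion, no conjugation. With this correction, your remaining steps (the axiom correspondence, naturality via Table~\ref{Tab:ternaryunary}, and involutivity) go through as stated.
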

\begin{proof}
Given $F \in \GCatm^l_3(\ca,\cb,\cc;\cd)$, we will define $d^3F = \overline{F} \in \GCatm^l_3(\cc^\co,\cb^\co,\ca^\co;\cd^\co)$.

To get started, we set
\begin{equation*}
\overline{F}^c := d^2(F_c), \overline{F}\midscript{b} := d^2(F\midscript{b}) \textnormal{ and } \overline{F}_a := d^2(F^a)
\end{equation*}

The three equations concerning the interaction of these binary maps follow from the corresponding equations for $F$.  For instance,
\begin{equation*}
(\overline{F}^c)_a :=  d^2(F_c)_a := ((F_c)^{a})^{co} \hspace{0.5cm} \textnormal{whilst} \hspace{0.5cm} (\overline{F}_a)^c :=  d^2(F^a)^c := ((F^a)_c)^{co}
\end{equation*}
%\begin{equation*}
%(\overline{F}_a)^c :=  d^2(F^a)^c := ((F^a)_c)^{co}
%\end{equation*}
and now the two right hand sides agree since $(F_c)^{a} = (F^a)_c$.  The incubator $(A|B|C)$ for $F$ is a $3$-cell in $\cd$, or equivalently a $3$-cell in $\cd^{co}$  
\begin{equation*}
\begin{tikzpicture}
%0-cells
\node (T) at (0,4) {$F^{b'}_{c'}(A)F^{a}_{c'}(B)F^{a}_{b}(C)$};
\node (N) at (-2,2) {$F^{a'}_{c'}(B)F^{b}_{c'}(A)F^{a}_{b}(C)$};
\node (T') at (5,4) {$F^{b'}_{c'}(A)F^{a}_{b'}(C)F^{a}_{c}(B)$}; 
\node (TS0Y) at (7,2) {$F^{a'}_{b'}(C)F^{b'}_{c}(A)F^{a}_{c}(B)$};
\node (ST') at (0,0) {$F^{a'}_{c'}(B)F^{a'}_{b}(C)F^{b}_{c}(A)$};
\node (TS0') at (5,0) {$F^{a'}_{b'}(C)F^{a'}_{c}(B)F^{b}_{c}(A)$,};

%1-cells
\path[->] 
(ST') edge node[scale=.7] (y) [below] {${B_C}^{a'}\circ 1$} (TS0')
(T) edge node[scale=.7] [left, xshift=-0.2cm] {${A_B}^{c'}\circ 1$} (N)
	edge node[scale=.7] (x) [above] {$1\circ {B_C}^{a}$} (T')
(N) edge node[scale=.7] [left, xshift=-0.2cm] {$1\circ {A_C}^{b}$} (ST')
(T') edge node[scale=.7] [right, xshift=0.2cm] {${A_C}^{b'}\circ1$} (TS0Y)
(TS0Y) edge node[scale=.7] [right, xshift=0.2cm] {$1\circ {A_B}^{c}$} (TS0');
\draw[-{Implies},double distance=1.5pt,shorten >=45pt,shorten <=45pt] (x) to node[scale=.7] [right, xshift=0.1cm] {$(A\mid B\mid C)$} (y);
\end{tikzpicture}
\end{equation*}
where we have reversed the $2$-cells in $\cd$ (that is, the $1$-cells in the diagram) and rotated the diagram by 180 degrees to move from top left to lower right as before.  In terms of the notation for $\overline{F}^c$, $\overline{F}\midscript{b}$ and $\overline{F}_a$, this is a $3$-cell

\begin{equation*}
\begin{tikzpicture}
%0-cells
\node (T) at (0,4) {$\overline{F}^{c'}_{b'}(A)\overline{F}_{a}^{c'}(B)\overline{F}_{a}^{b}(C)$};
\node (N) at (-2,2) {$\overline{F}_{a'}^{c'}(B)\overline{F}_{b}^{c'}(A)\overline{F}_{a}^{b}(C)$};
\node (T') at (5,4) {$\overline{F}_{b'}^{c'}(A)\overline{F}_{a}^{b'}(C)\overline{F}_{a}^{c}(B)$};
\node (TS0Y) at (7,2) {$\overline{F}_{a'}^{b'}(C)\overline{F}_{b'}^{c}(A)\overline{F}_{a}^{c}(B)$};
\node (ST') at (0,0) {$\overline{F}_{a'}^{c'}(B)\overline{F}_{a'}^{b}(C)\overline{F}_{b}^{c}(A)$};
\node (TS0') at (5,0) {$\overline{F}_{a'}^{b'}(C)\overline{F}_{a'}^{c}(B)\overline{F}_{b}^{c}(A)$,};

%1-cells
\path[->] 
(ST') edge node[scale=.7] (y) [below] {${\overline{C_B}}^{a'}\circ 1$} (TS0')
(T) edge node[scale=.7] [left, xshift=-0.2cm] {${\overline{B_A}}^{c'}\circ 1$} (N)
	edge node[scale=.7] (x) [above] {$1\circ {\overline{C_B}}^{a}$} (T')
(N) edge node[scale=.7] [left, xshift=-0.2cm] {$1\circ {\overline{C_A}}^{b}$} (ST')
(T') edge node[scale=.7] [right, xshift=0.2cm] {$\overline{{C_A}}^{b'}\circ1$} (TS0Y)
(TS0Y) edge node[scale=.7] [right, xshift=0.2cm] {$1\circ \overline{B_A}^{c}$} (TS0');
\draw[-{Implies},double distance=1.5pt,shorten >=45pt,shorten <=45pt] (x) to node[scale=.7] [right, xshift=0.1cm] {$(A \mid B \mid C)$} (y);
\end{tikzpicture}
\end{equation*}
which we define to be the incubator $(C \mid A \mid B)$ for $\overline{F}$.  The axioms for the incubator for $\overline{F}$ correspond to those for $F$ in the obvious way.

Naturality of $d^3$ follows from the descriptions of substitution of ternary and unary maps in Table~\ref{Tab:ternaryunary} and naturality of $d^2$.  For instance, let us verify naturality in the most complicated case of post-composition 
of a loose ternary map $F\colon\ca,\cb,\cc\to\cd$ by a loose unary map $S\colon\cd\to\cd'$.  We must show that $d^3(S \circ_1 F) = S^\co \circ_1 d^3 F$.  Naturality of $d^2$ ensures that they have the same underlying binary maps, so it remains to check that they have the same incubator.  Now the incubator of $S\circ_1 F$ is given by the 3-cell
\begin{center}
% https://q.uiver.app/?q=WzAsNixbMCwwLCJcXGJ1bGxldCJdLFsyLDAsIlxcYnVsbGV0Il0sWzQsMCwiXFxidWxsZXQiXSxbNiwwLCJcXGJ1bGxldCJdLFs4LDAsIlxcYnVsbGV0Il0sWzEwLDAsIlxcYnVsbGV0Il0sWzAsMSwiU0Zee2EnfV97Yid9KEMpXFxjaXJjIFNeMl97LSwtfSJdLFsxLDIsIlNeMl97LSwtLX0iXSxbMiwzLCIiLDAseyJjdXJ2ZSI6LTR9XSxbMiwzLCIiLDAseyJjdXJ2ZSI6NH1dLFszLDQsIntTXjJfey0sLS19fV57LTF9Il0sWzQsNSwiU0Zee2InfV97Yyd9KEEpXFxjaXJjIHtTXjJfey0sLX19XnstMX0iXSxbOCw5LCJTKEFcXG1pZCBCXFxtaWQgQykiLDEseyJzaG9ydGVuIjp7InNvdXJjZSI6MjAsInRhcmdldCI6MjB9fV1d
\begin{tikzcd}[ampersand replacement=\&]
	\bullet \&\& \bullet \&\& \bullet \&\& \bullet \&\& \bullet \&\& \bullet
	\arrow["{SF^{a'}_{b'}(C)\circ S^2_{-,-}}", from=1-1, to=1-3]
	\arrow["{S^2_{-,--}}", from=1-3, to=1-5]
	\arrow[""{name=0, anchor=center, inner sep=0}, curve={height=-24pt}, from=1-5, to=1-7]
	\arrow[""{name=1, anchor=center, inner sep=0}, curve={height=24pt}, from=1-5, to=1-7]
	\arrow["{{S^2_{-,--}}^{-1}}", from=1-7, to=1-9]
	\arrow["{SF^{b'}_{c'}(A)\circ {S^2_{-,-}}^{-1}}", from=1-9, to=1-11]
	\arrow["{S(A\mid B\mid C)}"{description}, shorten <=6pt, shorten >=6pt, Rightarrow, from=0, to=1]
\end{tikzcd} \hspace{0.5cm} in $\cd'$. 
\end{center}
Therefore the incubator of $d^3(S\circ_1F)$ is the 3-cell 
\begin{center}
% https://q.uiver.app/?q=WzAsNixbMCwwLCJcXGJ1bGxldCJdLFsyLDAsIlxcYnVsbGV0Il0sWzQsMCwiXFxidWxsZXQiXSxbNiwwLCJcXGJ1bGxldCJdLFs4LDAsIlxcYnVsbGV0Il0sWzEwLDAsIlxcYnVsbGV0Il0sWzAsMSwiU0Zee2InfV97Yyd9KEEpXFxjaXJjIHtTXjJfey0sLX19XnstMX0iXSxbMSwyLCJ7U14yX3stLC0tfX1eey0xfSJdLFsyLDMsIiIsMCx7ImN1cnZlIjotNH1dLFsyLDMsIiIsMCx7ImN1cnZlIjo0fV0sWzMsNCwiU14yX3stLC0tfSJdLFs0LDUsIlNGXnthJ31fe2InfShDKVxcY2lyYyBTXjJfey0sLX0iXSxbOCw5LCJTKEFcXG1pZCBCXFxtaWQgQykiLDEseyJzaG9ydGVuIjp7InNvdXJjZSI6MjAsInRhcmdldCI6MjB9fV1d
\begin{tikzcd}[ampersand replacement=\&]
	\bullet \&\& \bullet \&\& \bullet \&\& \bullet \&\& \bullet \&\& \bullet
	\arrow["{SF^{b'}_{c'}(A)\circ {S^2_{-,-}}^{-1}}", from=1-1, to=1-3]
	\arrow["{{S^2_{-,--}}^{-1}}", from=1-3, to=1-5]
	\arrow[""{name=0, anchor=center, inner sep=0}, curve={height=-24pt}, from=1-5, to=1-7]
	\arrow[""{name=1, anchor=center, inner sep=0}, curve={height=24pt}, from=1-5, to=1-7]
	\arrow["{S^2_{-,--}}", from=1-7, to=1-9]
	\arrow["{SF^{a'}_{b'}(C)\circ S^2_{-,-}}", from=1-9, to=1-11]
	\arrow["{S(A\mid B\mid C)}"{description}, shorten <=6pt, shorten >=6pt, Rightarrow, from=0, to=1]
\end{tikzcd} \hspace{0.5cm} in ${\cd'}^\co$. 
\end{center}
%%%%%% DIAGRAM WITH NODES AT THIS LINK %%%%%%%
% https://q.uiver.app/?q=WzAsNixbMCwwLCJTRl57Yid9X3tjJ30oQSlTRl57YX1fe2MnfShCKVNGXnthfV97Yn0oQykiXSxbMSwwLCJTRl57Yid9X3tjJ30oQSlTKFxcLEZee2F9X3tjJ30oQilGXnthfV97Yn0oQylcXCwpIl0sWzIsMCwiUyhcXCxGXntiJ31fe2MnfShBKUZee2F9X3tjJ30oQilGXnthfV97Yn0oQylcXCwpIl0sWzIsMiwiUyhcXCxGXnthJ31fe2InfShDKUZee2EnfV97Y30oQilGXntifV97Y30oQSlcXCwpIl0sWzMsMiwiU0Zee2EnfV97Yid9KEMpUyhcXCxGXnthJ31fe2N9KEIpRl57Yn1fe2N9KEEpXFwsKSJdLFs0LDIsIlNGXnthJ31fe2InfShDKVNGXnthJ31fe2N9KEIpU0Zee2J9X3tjfShBKSJdLFswLDEsIjFcXGNpcmMge1NeMl97LSwtfX1eey0xfSJdLFsxLDIsIntTXjJfey0sLS19fV57LTF9Il0sWzIsMywiIiwwLHsiY3VydmUiOi00fV0sWzIsMywiIiwwLHsiY3VydmUiOjR9XSxbNCw1LCIxXFxjaXJjIFNeMl97LSwtfSJdLFszLDQsIlNeMl97LSwtLX0iXSxbOCw5LCJTKEFcXG1pZCBCXFxtaWQgQykiLDEseyJzaG9ydGVuIjp7InNvdXJjZSI6MjAsInRhcmdldCI6MjB9fV1d
%%%%%%%%%%%%%%%%%%%%%%%%%%%%%%%%%%%%%%%%%%%%%%
This is exactly the incubator of $S^\co\circ_1d^3(F)$ since $(S^\co)^2=({S^2})^{-1}$. To see that $d^3$ is a natural bijection, simply observe that 
$$d^3_{\ca,\cb,\cc,\cd}\colon\GCatm^l_3(\ca,\cb,\cc;\cd) \to \GCatm^l_3(\cc^\co,\cb^\co,\ca^\co;\cd^\co)$$
has $d^3_{\cc^{co},\cb^{co},\ca^{co},\cd^{co}}$ as its inverse.

\end{proof}

 \subsection{Substitution of nullary into ternary:}
Consider a loose ternary map $F\colon\ca, \cb, \cc \to \cd$ and three nullary maps $a\colon\diamond \to \ca$, $b\colon\diamond \to \cb$ and $c\colon\diamond \to \cc$.   We define $F \circ_1 a :=F^a\colon\cb,\cc \to \cd$, $F\circ_2 b := F\midscript{b}\colon\catA,\catC\to\catD$ and $F \circ_3 c = F_c\colon\catA,\catB\to\catD$.
 
 By definition of tight ternary maps, if $F$ is tight, so are $F \circ_2 b$ and $F \circ_3 c$, as required.   
 
% The associativity equation $(F \circ_1 a) \circ_1 b = (F \circ_2 b) \circ_1 a$ for a ternary map and two nullary maps is the ternary map equation $F^a(b,-)=F\midscript{b}(a,-)$.  Similarly, the other two cases $(F \circ_1 a) \circ_2 c = (F \circ_3 c) \circ_1 a$ and $(F \circ_2 b) \circ_2 c = (F \circ_3 c) \circ_2 b$.  
%As before, before treating naturalilty of the above substitution we will require the closedness bijection for ternary maps.

Naturality of the above three substitutions follow from the description of substitution of ternary and unary in Table~\ref{Tab:ternaryunary}.
%, it is straightforward to verify naturality of substitution of nullary maps into ternary maps, as required. %, and that the necessary associativity equations involving a nullary map, tight unary map and ternary map hold.  
All of the cases are similar.  For example, consider $b\colon\diamond \to \cb'$ together with $Q$ and $F$ as in Table~\ref{Tab:ternaryunary}.  Then
$(F \circ_2 Q) \circ_2 b := (F \circ_2 Q)\midscript{b} := F\midscript{Qb}$ whilst similarly $F \circ_{2} (Q \circ_1 b) := F \circ_2 Qb := F\midscript{Qb}$.

By analogy with Lemma~\ref{lemma:lambda12}, we have the following trivial result relating $\lambda_3$ with $\lambda_2$ in the context of a nullary map.  As in Lemma~\ref{lemma:lambda12}, the proof is simply a matter of tracing through the definitions. 

\begin{lemma}\label{lemma:lambda23}
Given a nullary map $a_i\colon\diamond \to \ca_i$ and binary map $F\colon\ca_1,\ca_2 \to \lax(\cb,\cc)$ we have an equality of binary maps $\lambda_2(F \circ_i a) = \lambda_3(F) \circ_i a$.
\end{lemma}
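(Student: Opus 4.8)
The plan is to mimic the proof of Lemma~\ref{lemma:lambda12}: the claimed identity holds on the nose and is established simply by unfolding the relevant definitions, treating the two cases $i=1$ and $i=2$ in parallel. Writing $j$ for the index other than $i$, I first observe that both $\lambda_2(F \circ_i a)$ and $\lambda_3(F) \circ_i a$ are loose binary maps $\ca_j,\cb \to \cc$, so it suffices to check that they are determined by exactly the same data.

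First I would unfold the left-hand side. By the definition of substitution of a nullary map into a binary map, $F \circ_1 a = F^{a}\colon\ca_2 \to \lax(\cb,\cc)$ when $i=1$, and $F \circ_2 a = F_{a}\colon\ca_1 \to \lax(\cb,\cc)$ when $i=2$; applying $\lambda_2$ then produces the binary maps $\lambda_2(F^{a})$ and $\lambda_2(F_{a})$ respectively.

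Next I would unfold the right-hand side. By the definition of substitution of a nullary map into a ternary map, $\lambda_3(F) \circ_1 a = (\lambda_3(F))^{a}$ and $\lambda_3(F) \circ_2 a = (\lambda_3(F))\midscript{a}$, where $\lambda_3(F)\colon\ca_1,\ca_2,\cb \to \cc$ is the ternary map built in the proof of Proposition~\ref{prop:ternary-maps}. The key point is that the construction of $\lambda_3$ \emph{defines} precisely these binary-map components of the ternary map by $(\lambda_3(F))^{a} := \lambda_2(F \circ_1 a) = \lambda_2(F^{a})$ and $(\lambda_3(F))\midscript{a} := \lambda_2(F \circ_2 a) = \lambda_2(F_{a})$. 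Comparing with the previous paragraph, the two sides agree in each case, which proves the equality.

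I expect no genuine obstacle here — the argument is a pure trace through the definitions, exactly as in Lemma~\ref{lemma:lambda12}. The only thing requiring care is bookkeeping: matching the slot conventions for nullary substitution into binary versus ternary maps, and recognising that the binary-map component of $\lambda_3(F)$ obtained by fixing an object is, by construction, the $\lambda_2$-image of the corresponding nullary substitution into $F$, with the incubator of $\lambda_3(F)$ playing no role since it is separate data not seen by these components.
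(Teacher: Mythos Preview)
Your proof is correct and follows exactly the approach indicated in the paper, which says the result is obtained ``simply [as] a matter of tracing through the definitions'' in analogy with Lemma~\ref{lemma:lambda12}. In particular, your key observation that the binary-map components $(\lambda_3(F))^{a}$ and $(\lambda_3(F))\midscript{a}$ are \emph{defined} in the proof of Proposition~\ref{prop:ternary-maps} to be $\lambda_2(F \circ_1 a)$ and $\lambda_2(F \circ_2 a)$ respectively is precisely the point.
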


\subsection{Substitution of binary into binary}
\subsection*{First variable}

Given $F\colon\ca,\cb \to \cx$ and $G\colon\cx,\cc \to \cd$ we define the substitution $$G \circ_1 F := \lambda_3({\lambda_2}^{-1}(G) \circ_1 F)$$ where ${\lambda_2}^{-1}(G)\colon\cx \to \lax(\cc,\cd)$ is the adjoint map.  Naturality of this substitution follows from the naturality of the components $\lambda_3$, $\lambda_2$ and $\circ_1$ (for binary and unary maps) involved in its construction.

Observe also that if $F$ and $G$ are tight, then since $\lambda_2$ respects tightness, the binary map $\lambda_2^{-1}(G)$ is tight.  Therefore so is the composite $\lambda_2^{-1}(G) \circ_1 F$, and since $\lambda_3$ also respects tightness, we have that $G \circ_1 F$ is tight, as required.
%
%Tracing through the details of these bijections one sees that $G \circ_1 F\colon\ca,\cb,\cc \to \cd$ is the multimap with
%
%\begin{equation}\label{eq:binbin1}
%(G \circ_1 F)^{a} = G \circ_1 F^a, (G \circ_1 F)\midscript{b} = G \circ_1 F_b, (G \circ_1 F)_{c} = G_c \circ_1 F \;\textnormal{and incubator as \red in Appendix \black}
%\end{equation}

%From the diagram above, we can see that if $G_c$ is a Gray-functor, then the incubator is just $(A_B)_C$.  
%\black

%It follows routinely from the description in Equation~\ref{eq:binbin1} that the three associativity equations relating the above substitution $\circ_1$ with nullary maps hold.

\subsection*{Second variable}
%In order to construct the substitution $G \circ_2 F$ of binary maps, we will use duality and $\circ_1$ for binary maps as just defined.  
Consider $F\colon\cb,\cc \to \cx$ and $G\colon\ca,\cx \to \cd$.  Firstly, we use the duality bijection $d^2$ to form $$d^2G\colon\cx^{co},\ca^{co} \to \cd^{co} \hspace{0.5cm} \textnormal{and} \hspace{0.5cm} d^2F\colon\cc^{co},\cb^{co} \to \cx^{co}.$$
Then, we form $d^2G \circ_1 d^2F\colon\cc^{co},\cb^{co},\ca^{co} \to \cd^{co}$ before dualising again to define $$G\circ_2 F:= d^3(d^2G \circ_1 d^2F)\colon\ca,\cb,\cc \to \ce.$$

It follows from naturality of $d^2$, $d^3$ and $\circ_1$ for binary maps that $\circ_2$ is natural in each variable.

%Tracing through the details of these bijections one sees that $G \circ_2 F\colon\ca,\cb,\cc \to \ce$ is the multimap with
%
%\begin{equation}\label{eq:binbin2}
%(G \circ_2 F)^{a} = G^a \circ_1 F, (G \circ_2 F)\midscript{b} = G \circ_2 F^b, (G \circ_2 F)_{c} = G \circ_2 F_c \;\textnormal{and incubator as \red in Appendix ??} 
%\end{equation}

\Cref{Tab:binbin} below describes the substitutions of binary into binary.

\begin{table}[ht]
\caption{Explicit substitutions of binary into binary}
\begin{center}
\begin{tabular}{cc:c c c c c c c c}
$G \circ_1 F$ &&
$G \circ_1 F^a$ && $G \circ_1 F_b$ && $G_c \circ_1 F$ && $\textnormal{Incubator in Appendix}~\ref{app:inc-sub-b-in-b-1st}$ \\
$G \circ_2 F$ &&
$G^a \circ_1 F$ && $G \circ_2 F^b$ && $G \circ_2 F_c$ && $\textnormal{Incubator in Appendix}~\ref{app:inc-sub-b-in-b-2nd}$ \\
\end{tabular}
\end{center}
  \label{Tab:binbin}
\end{table}

Since we will not require any explicit information about the incubators, we put them in the appendix.  

Suppose that $G$ is tight.  We must show that $G \circ_2 F$ is tight, which is to say that $(G \circ_2 F) \midscript{b}$ and $(G \circ_2 F)_{c}$ are tight binary maps.  We cannot argue by analogy with $\circ_1$ since neither $d^2$ nor $d^3$ respect tightness --- instead, we use the description in Table~\ref{Tab:binbin}.  In that case, $G \circ_2 F^b$ and $G \circ_2 F_c$ are always tight since $G$ is a tight binary map.

\subsection{A $3$-ary skew multicategory}
We have defined all of the structure for the $3$-ary skew multicategory $\GCatm$ and verified all associativity equations, except those relating nullary maps with the two above substitutions $\circ_1$ and $\circ_2$ of binary into binary.  In each case there are three equations, and all six follow directly from the description in Table~\ref{Tab:binbin}.

\subsection{4-ary maps}\label{sect:4ary}
%We start by defining the set $\GCatm^l_4(\ca,\cb,\cc,\cd;\ce)$ of loose $4$-ary maps.
%\begin{defn}
A \emph{loose 4-ary map} $F\colon\catA,\catB,\catC\rightarrow\catD$ consists of:
\begin{itemize}
\item at $a\in\catA$, a loose ternary map $F^a\colon\catB,\catC,\catD\to\catE$;
\item at $b\in\catB$, a loose ternary map $F\midscript{b}\colon\catA,\catC,\catD\to\catE$;
\item at $c\in\catC$, a loose ternary map $F\midscript{c}\colon\catA,\catB,\catD\to\catE$;
\item at $d\in\catD$, a loose ternary map $F_d\colon\catA,\catB,\catC\to\catE$;
\end{itemize}
such that the following six equations between loose binary maps hold
\begin{center}
$F^a(b,-,-)=F\midscript{b}(a,-,-)=:F^a_{b}\quad F^a(-,c,-)=F\midscript{c}(a,-,-)=:F^a_{c}$ \\ \vspace{0.1cm}
$F^a(-,-,d)=F_{d}(a,-,-)=:F^a_{d} \quad F\midscript{b}(-,c,-)=F\midscript{c}(-,b,-)=:F^b_c$ \\ \vspace{0.1cm}
$F\midscript{b}(-,-,d)=F_{d}(-,b,-)=:F^{b}_{d} \quad F\midscript{c}(-,-,d)=F_{d}(-,-,c)=:F^{c}_d$
\end{center}
and such that the \emph{mecon axiom} is satisfied (see Equation~\eqref{ax:4-ary_mec} in \Cref{app:ax_4-ary}).
%\end{defn}

%\begin{defn}
A loose $4$-ary map $F\colon\ca,\cb,\cc,\cd \to \ce$ is tight if the ternary maps $F\midscript{b}$, $F\midscript{c}$ and $F_d$ are tight for all $b \in \cb, c \in \cc$ and $d \in \cd$.
%\end{defn}

\begin{rmk}\label{rmk:redundant}
There is some redundancy in the above definition of tight $4$-ary maps. In fact, any two of $F\midscript{b}$, $F\midscript{c}$ and $F_d$ being tight implies the third is tight. Indeed if $F\midscript{b}$ are $F\midscript{c}$ tight, consider $F_d$: we must show that $F_d(b,-,-)$ and $F_d(-,c,-)$ are tight binary maps, but these equal $F\midscript{b}(-,-,d)$ and $F\midscript{c}(-,-,d)$ which are tight by assumption.
\end{rmk}

\subsection{The correspondence between $4$-ary and $3$-ary maps}

\begin{prop}
\label{lemma:4-ary-maps}
There is a bijection $$\lambda_4\colon \GCatm^l_3(\,\ca,\cb,\cc;\lax(\cd,\ce)\,) \to \GCatm^l_4(\ca,\cb,\cc,\cd;\ce),$$
which restricts to tight maps on either side.
\end{prop}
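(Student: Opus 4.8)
The plan is to follow the template of Propositions~\ref{prop:binary maps} and~\ref{prop:ternary-maps}, transposing the $\cd$-variable through the bijection $\lambda_3$ together with the evaluation maps $ev_d\colon\lax(\cd,\ce)\to\ce$. First I would start from a loose ternary map $\overline{F}\colon\ca,\cb,\cc\to\lax(\cd,\ce)$ and record its data: the three binary maps $\overline{F}^a=\overline{F}\circ_1 a$, $\overline{F}\midscript{b}=\overline{F}\circ_2 b$ and $\overline{F}_c=\overline{F}\circ_3 c$ into $\lax(\cd,\ce)$, together with its incubator $(A\mid B\mid C)$, which is a $3$-cell of $\lax(\cd,\ce)$, that is, a perturbation. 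From these I would define the four ternary maps of $F=\lambda_4(\overline{F})$ by $F^a:=\lambda_3(\overline{F}^a)$, $F\midscript{b}:=\lambda_3(\overline{F}\midscript{b})$, $F\midscript{c}:=\lambda_3(\overline{F}_c)$ and $F_d:=ev_d\circ_1\overline{F}\colon\ca,\cb,\cc\to\ce$.

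Next I would verify the six compatibility equations. The three not involving $F_d$ follow from the three compatibility equations of the ternary map $\overline{F}$, transported through $\lambda_3$ by means of Lemma~\ref{lemma:lambda23}, exactly as the pseudomap equations were derived in Proposition~\ref{prop:ternary-maps}; for example restricting $F^a=\lambda_3(\overline{F}^a)$ and $F\midscript{b}=\lambda_3(\overline{F}\midscript{b})$ at their first variables gives $\lambda_2(\overline{F}^a(b,-))$ and $\lambda_2(\overline{F}\midscript{b}(a,-))$, which agree by the equation $\overline{F}^a(b,-)=\overline{F}\midscript{b}(a,-)$ for $\overline{F}$. The three involving $F_d$ follow from associativity in $\GCatm$: restricting $F^a$ at its third variable yields $ev_d\circ_1\overline{F}^a$, while $F_d(a,-,-)=(ev_d\circ_1\overline{F})\circ_1 a=ev_d\circ_1(\overline{F}\circ_1 a)=ev_d\circ_1\overline{F}^a$, and similarly for the remaining two.

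The conceptual heart is the observation that, since the cells of $\cd$ stop at dimension $3$, the incubator of $\overline{F}$ contributes no new data to $F$: being a perturbation in $\lax(\cd,\ce)$, the incubator $(A\mid B\mid C)$ is determined by its object-components $(A\mid B\mid C)_d$, and these are precisely the incubators of $F_d=ev_d\circ_1\overline{F}$. Hence the four ternary maps encode all of $\overline{F}$, matching the fact that a $4$-ary map carries no data beyond its four ternary maps. For the axioms I would argue that validity of $F^a$, $F\midscript{b}$, $F\midscript{c}$ is handled by the bijection $\lambda_3$, while $F_d$ is automatically a valid ternary map as a composite; each of the nine incubator axioms of $\overline{F}$ is an equality of $3$-cells of $\lax(\cd,\ce)$ and so, by joint faithfulness of the $ev_d$ on $3$-cells, reduces to its $d$-components, which are the nine incubator axioms of $F_d$. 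The only surviving constraint is the perturbation axiom asserting that the components $(A\mid B\mid C)_d$ do assemble into a genuine $3$-cell of $\lax(\cd,\ce)$; spelled out in $\ce$ and indexed by a $1$-cell $D\colon d\to d'$, this is exactly the mecon axiom of Equation~\eqref{ax:4-ary_mec}. I would record this axiom correspondence in a table, as in the earlier propositions. Since the construction is manifestly reversible, $\lambda_4$ is a bijection. For tightness I would use that $\lambda_3$ respects tightness, so that $\overline{F}\midscript{b},\overline{F}_c$ are tight iff $F\midscript{b},F\midscript{c}$ are, and then invoke Remark~\ref{rmk:redundant} to deduce tightness of $F_d$; thus $\overline{F}$ is tight iff $F$ is.

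The main obstacle I anticipate is bookkeeping rather than conceptual: keeping straight the orientations and indices of the four ternary maps and their restricted binary maps $F^a_b$, $F^a_c$, $F^b_c$, and so on, against the data of $\overline{F}$, and confirming term by term that the perturbation axiom for the incubator coincides with the mecon axiom. Everything else is a direct transcription of the arguments already given for $\lambda_2$ and $\lambda_3$.
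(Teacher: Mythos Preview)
Your proposal is correct and follows essentially the same approach as the paper: define $F^a,F\midscript{b},F\midscript{c}$ via $\lambda_3$ and $F_d$ via $ev_d\circ_1\overline{F}$, derive the six compatibility equations from associativity and Lemma~\ref{lemma:lambda23}, identify the perturbation axiom for the incubator with the mecon axiom, and handle tightness using Remark~\ref{rmk:redundant}. The paper's proof is slightly terser but structurally identical.
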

\begin{proof}
Consider a ternary map $\overline{F}\colon\catA,\catB,\catC \rightarrow\lax(\cd,\ce)$. This is specified by:
\begin{enumerate}[(1)]
\item three binary maps $\overline{F}^a\colon \catB,\catC \rightarrow \lax(\cd,\ce)$, $\overline{F}\midscript{b}\colon\catA,\catC \rightarrow\lax(\cd,\ce)$ and $\overline{F}_{c}\colon\ca,\cb\rightarrow\lax(\cd,\ce)$;
\item a perturbation $(A|B|C)$ in $\lax(\cd,\ce)$;
\end{enumerate}
all satisfying the ternary map equations. Now, we will describe the associated $4$-ary map $F\colon\ca,\cb,\cc \to \cd$. It is convenient, for fixed objects $a,b,c,d$ to consider the diagram below.

\begin{equation}\label{eq:ternaryass}
\begin{gathered}
\begin{tikzpicture}[triangle/.style = {fill=yellow!50, regular polygon, regular polygon sides=3,rounded corners}]

\path
	(6.5,2) node [triangle,draw,shape border rotate=-90,inner sep=1pt] (b') {$a$} 
	(6.5,1) node [triangle,draw,shape border rotate=-90,inner sep=0pt] (b) {$b$} 
        (6.5,-0) node [triangle,draw,shape border rotate=-90,inner sep=1pt] (q) {$c$} 
	(8.5,1) node [triangle,draw,shape border rotate=-90,label=135:$\ca$,label=230:$\cc$,inner sep=2pt] (c') {$\overline{F}$}
	(12,1) node [triangle,draw,shape border rotate=-90,inner sep=0pt] (c'') {$ev_d$};

\draw [-] (b) to node[fill=white,near end] {$\cb$} (c');
\draw [-] (q) .. controls +(right:1cm) and +(left:1cm).. (c'.223);
\draw [] (b') .. controls +(right:1cm) and +(left:1cm).. (c'.137);
\draw [] (c') to node [fill=white] {$\lax(\cd,\ce)$} (c'');
\draw [-] (c'') to node[fill=white] {$\ce$} (13.5,1);
\end{tikzpicture}
\end{gathered}
\end{equation}
Firstly, the three ternary maps $\overline{F}^a = \overline{F} \circ_1 a$, $\overline{F}\midscript{b}=\overline{F} \circ_2 b$ and $\overline{F}_{c}=\overline{F} \circ_3 c$ correspond under the bijection $\lambda_3$ of Proposition~\ref{prop:ternary-maps} to $F^a  \colon\catB,\catC,\catD \to\catE, F\midscript{b}\colon\catA,\catC,\catD \to\catE$ and $F_c\colon \catA,\catB,\catD \to \catE$.  Also, for each $d \in \cd$ we have the composite ternary map $F_d:=ev_{d} \circ_1 F\colon\ca,\cb,\cc \to \lax(\cd,\ce) \to \ce$.

We must establish the six equations relating the four ternary maps, which are parametrised by a choice of two elements from $\{a,b,c,d\}$.  Each of these corresponds to one of the six associativity equations visible in the above diagram, involving the ternary map and any two of the other four multimaps (nullary or unary). The argument here is identical in form to the corresponding part of the proof of Proposition~\ref{prop:ternary-maps}, but using Lemma~\ref{lemma:lambda23} instead of Lemma~\ref{lemma:lambda12}, and we leave the details to the reader.

The perturbation axiom for $(A|B|C)$ is precisely the mecon axiom.

Finally, $\overline{F}$ is tight just when $\overline{F}\midscript{b}$ and $\overline{F}\midscript{c}$ are tight, but since $\lambda_3$ respects tightness, this is precisely to say that $F\midscript{b}$ and $F\midscript{c}$ are tight.  By Remark~\ref{rmk:redundant}, these two conditions fully capture tightness of $4$-maps --- that is, tightness of $F_d$ follows automatically.
 \end{proof}
 
 \subsection{The functor of $4$-ary maps}
 Consider again the bijection $$\lambda_4\colon \GCatm^l_3(\,\ca,\cb,\cc;\lax(\cd,\ce)\,) \to \GCatm^l_4(\ca,\cb,\cc,\cd;\ce).$$
 As in the cases of $\lambda_2$ and $\lambda_3$, the right hand side admits the unique structure of a functor
 $$\GCatm^l_4(-;-)\colon(\GCat_{p}^{op})^4 \times \GCat_p \to \Set$$
  such that $\lambda_4$ becomes natural in each variable.  This defines the substitution of $4$-ary and unary maps.  % , and we use this functor to define all substitutions involving $4$-ary and unary maps.  Functoriality implies all of the associativity equations involving the $4$-ary map and two of the remaining four.  
  
 Furthermore, the behaviour of $\lax(-,-)$, as per Proposition~\ref{prop:asd}, ensures that $F\circ_1 P$ is tight if $F$ and $P$ are, $F \circ_i P$ is always tight if $F$ is and $i > 1$, and $Q \circ_1 F$ is tight if both $Q$ and $F$ are, where $F$ is a 4-ary map and $P$ and $Q$ unary. 

In \Cref{Tab:4-aryunary} below, we spell out the details of substitution of $4$-ary and unary. 

\begin{table}[ht]
\caption{Explicit substitutions of $4$-ary and unary maps}
\begin{center}
\begin{tabular}{cc:c c c c c c c c}
$F\circ_1P$ &&
$F^{Pa}$ && $F\midscript{b} \circ_1 P$ && $F\midscript{c} \circ_1 P$ && $F_d \circ_1 P$ \\
$F\circ_2P$ &&
$F^{a} \circ_1 P$ && $F\midscript{Pb}$ && $F\midscript{c} \circ_2 P$ && $F_d \circ_2 P$ \\
$F\circ_3P$ &&
$F^{a} \circ_2 P$ && $F\midscript{b} \circ_2 P$ && $F\midscript{Pc}$ && $F_d \circ_3 P$ \\
$F\circ_4P$ &&
$F^{a} \circ_3 P$ && $F\midscript{b} \circ_3 P$ && $F\midscript{c} \circ_3 P$ && $F_{Pd}$ \\
$Q \circ_1 F$ &&
$Q\circ_1 F^{a}$ && $Q \circ_1 F\midscript{b}P$ && $Q\circ_1 F\midscript{c}P$ && $Q \circ_1 F_{d}$ \\
\end{tabular}
\end{center}
  \label{Tab:4-aryunary}
\end{table}

%\subsection{Remaining details on substitution of nullary into $4$-ary}
%
%Using the above descriptions, it is now straightforward to show that substitution of nullary maps into $4$-ary maps is natural in tight maps, as required, and that the necessary associativity equations involving a nullary map, tight unary map and $4$-ary map hold.  All of the cases are similar, simply following through the descriptions of substitution we have been given so far.

\subsection{The duality isomorphism in dimension $4$}
\begin{prop}\label{prop:duality4}
There is a bijection
$$d^3\colon\GCatm^l_4(\ca,\cb,\cc,\cd;\ce) \to \GCatm^l_4(\cd^\co,\cc^\co,\cb^\co,\ca^\co;\ce^\co),$$
natural in each variable.
 \end{prop}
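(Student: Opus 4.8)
The plan is to mimic the proof of Proposition~\ref{prop:duality-tern} one dimension up, defining the dual of a $4$-ary map directly on its four constituent ternary maps by means of the already-established ternary duality $d^3$, and then checking that the two defining constraints---the six binary compatibility equations and the mecon axiom---transport across the duality. We write the map of the statement as $d^4$, to distinguish it from the ternary $d^3$ it is built from. Concretely, given $F\colon\ca,\cb,\cc,\cd \to \ce$ with constituent ternary maps $F^a$, $F\midscript{b}$, $F\midscript{c}$ and $F_d$, I would set $\overline{F} = d^4 F\colon\cd^\co,\cc^\co,\cb^\co,\ca^\co \to \ce^\co$ by
\begin{equation*}
\overline{F}^d := d^3(F_d),\quad \overline{F}\midscript{c} := d^3(F\midscript{c}),\quad \overline{F}\midscript{b} := d^3(F\midscript{b}),\quad \overline{F}_a := d^3(F^a),
\end{equation*}
where the order reversal built into $d^3$ guarantees that each right-hand side is a ternary map with the correct domain; for instance $d^3(F_d)\colon\cc^\co,\cb^\co,\ca^\co\to\ce^\co$, exactly as required of $\overline{F}^d$.

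First I would verify the six binary compatibility equations for $\overline{F}$. By the definition of $d^3$ on constituents (which proceeds via $d^2$), each binary map extracted from $\overline{F}$ is the $d^2$-dual of the corresponding binary map of $F$; since $(-)^\co$ is involutive and $d^2$ is a bijection, each of the six equations for $\overline{F}$ collapses to the corresponding one for $F$, just as at the start of the proof of Proposition~\ref{prop:duality-tern}. For example, using $(d^3 G)_x = d^2(G^x)$ and $(d^3 G)^z = d^2(G_z)$, one computes $(\overline{F}^d)_a = d^2\bigl((F_d)^a\bigr)$ and $(\overline{F}_a)^d = d^2\bigl((F^a)_d\bigr)$, so that the required equation $(\overline{F}^d)_a = (\overline{F}_a)^d$ unwinds to the known equation $(F_d)^a = (F^a)_d$ for $F$. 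The other five cases are identical in form.

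The heart of the matter, and the step I expect to be the main obstacle, is the mecon axiom. A $4$-ary map carries no cell data beyond that of its four ternary constituents, and the four incubators of $\overline{F}$ are by construction the $d^3$-duals of those of $F$; hence the natural candidate for the mecon equation of $\overline{F}$ is the image, under the isomorphism $(-)^\co$, of the mecon equation for $F$. This is legitimate at the level of $3$-cells, since $(-)^\co$ reverses only $2$-cells and acts as a bijection on $3$-cells, carrying an equation of $3$-cells in $\ce$ to one in $\ce^\co$. The genuine content is to check that the \emph{shape} of the mecon pasting diagram is self-dual under the combined operation of reversing $2$-cells and reversing the four-fold input order---precisely as, at the ternary level in Proposition~\ref{prop:duality-tern}, the incubator hexagon is carried to the incubator hexagon with its edges reoriented and the diagram rotated by $180^\circ$. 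I would confirm this by inspecting the explicit form of the mecon axiom and matching its reoriented faces against the incubators of $d^3(F_d)$, $d^3(F\midscript{c})$, $d^3(F\midscript{b})$ and $d^3(F^a)$; recalling that (Proposition~\ref{lemma:4-ary-maps}) the mecon axiom is exactly the perturbation axiom for $(A\mid B\mid C)$, this is a direct but bookkeeping-heavy verification rather than a conceptual difficulty.

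Finally, naturality in each variable follows from naturality of $d^3$ together with the description of substitution of $4$-ary and unary maps in Table~\ref{Tab:4-aryunary}, in the style of the naturality argument of Proposition~\ref{prop:duality-tern}: in every case the constituents of $d^4(P \circ_i F)$ (respectively $d^4(Q \circ_1 F)$) are the $d^3$-duals of the constituents listed in the table, which by naturality of $d^3$ agree with the constituents of the appropriate substitution into $d^4 F$ over the dualised categories. That $d^4$ is a bijection is then immediate, since $(-)^\co$ is an involution and, as established for $d^3$, the same construction applied over $\cd^\co,\cc^\co,\cb^\co,\ca^\co,\ce^\co$ provides a two-sided inverse
\begin{equation*}
d^4_{\,\cd^\co,\cc^\co,\cb^\co,\ca^\co,\ce^\co}\colon \GCatm^l_4(\cd^\co,\cc^\co,\cb^\co,\ca^\co;\ce^\co) \to \GCatm^l_4(\ca,\cb,\cc,\cd;\ce).
\end{equation*}
As with $d^2$ and $d^3$, I would not expect $d^4$ to respect tightness, and the statement claims nothing of the sort.
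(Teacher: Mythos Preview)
Your proposal is correct and follows essentially the same approach as the paper: define $d^4F$ componentwise via $d^3$ on the four constituent ternary maps, reduce the six compatibility equations to those of $F$, observe that the mecon axiom is self-dual under reversal of $2$-cells, and deduce naturality and bijectivity from the corresponding facts for $d^3$. If anything, you are more explicit than the paper about the self-duality of the mecon diagram, which the paper dispatches in a single sentence.
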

\begin{proof}
Given $F \in \GCatm^l_4(\ca,\cb,\cc,\cd;\ce)$, we define $d^4F \in \GCatm^l_4(\cd^\co,\cc^\co,\cb^\co,\ca^\co;\cd^\co)$.

We set
\begin{equation*}
d^4F^d := d^3(F_d), d^4F^d := d^3(F\midscript{c}), d^4F\midscript{b} := d^3(F\midscript{b}) \textnormal{ and } d^4F_a := d^3(F^a).
\end{equation*}

The six equations concerning the interaction of these ternary maps follow from the corresponding equations for $F$.  For instance,
\begin{equation*}
(d^4F^d)(-,-,a) :=  d^3(F_d)_a := d^2(F_d(a-,-))
\end{equation*} whilst
\begin{equation*}
(d^4F_a)(d,-,-) :=  d^3(F^a)^d := d^2(F^a(-,-,d))
\end{equation*}
and now the two right hand sides agree since $(F_d)(a,-,-) = (F^a)(-,-,d)$.  The mecon axiom for $F$, which involves the equality of two $3$-cells in $\cd$, coincides with the mecon axiom for $d^4F$ viewing these $3$-cells as belonging to $\cd^{\co}$ by reversing the orientations of their bounding $2$-cells.

Naturality of $d^4$ follows from naturality of $d^3$ and to see that $d^4$ is a natural bijection, simply observe that 
$d^4_{\ca,\cb,\cc,\cd}\colon\GCatm^l_4(\ca,\cb,\cc,\cd;\ce) \to \GCatm^l_4(\cd^\co,\cc^\co,\cb^\co,\ca^\co;\ce^\co)$ has $d^4_{\cd^{\co},\cc^{\co},\cb^{\co},\ca^{\co},\ce^{\co}}$ as its inverse.
\end{proof}

\subsection{Substitution of nullary into $4$-ary}
Consider a loose $4$-ary map $F\colon\ca, \cb,\cc,\cd \to \ce$ and four nullary maps $a\colon\diamond \to \ca$, $b\colon\diamond \to \cb$, $c\colon\diamond \to \cc$ and $d\colon\diamond \to \cd$.   We define $F \circ_1 a :=F^a$, $F\circ_2 b := F\midscript{b}$, $F \circ_3 c = F\midscript{c}$ and $F \circ_4 d = F_d$.

If $F$ is tight, so are $F \circ_2 b$, $F \circ_3 c$ and $F \circ_4 d$ by definition of these substitutions.

The six associativity equations involving two nullary maps and a $4$-ary map reduce to the six equalities of binary maps in the definition of a $4$-ary map.  As before, naturality of the four substitutions of nullary into $4$-ary follows from the description of substitution of $4$-ary and unary in Table~\ref{Tab:4-aryunary}.

\subsection{Substitution of ternary into binary}
\subsection*{First variable}

Given $F\colon\ca,\cb,\cc \to \cx$ and $G\colon\cx,\cd \to \ce$ we define the substitution $$G \circ_1 F := \lambda_4({\lambda_2}^{-1}(G) \circ_1 F)$$ where ${\lambda_2}^{-1}(G)\colon\cx \to \lax(\cd,\ce)$ is the adjoint map.  

Naturality of this substitution follows from naturality of $\lambda_4$, $\lambda_2$ and the substitution $\circ_1$ of ternary maps.

The multimap $G \circ_1 F$ is tight whenever $F$ and $G$ are.  Indeed, if $G$ is tight, then as $\lambda_2$ respects tightness, so is $(\lambda_2)^{-1}(G)$.  Therefore as $\circ_1$ for binary maps respects tightness, $(\lambda_2)^{-1}(G) \circ_1 F$ is tight.  Finally, since $\lambda_4$ respects tightness, $G \circ_1 F$ is tight.

%Tracing through the details of these bijections one sees that $G \circ_1 F\colon\ca,\cb,\cc,\cd \to \ce$ is the $4$-ary map whose defining ternary maps are as below.

%\begin{table}[ht]
%\begin{center}
%\begin{tabular}{cc:c c c c c c c c}
%$G \circ_1 F$ &&
%$G \circ_1 F^a$ && $G \circ_1 F\midscript{b}$ && $G \circ_1 F_c$ && $G_d \circ_1 F$ \\
%\end{tabular}
%\end{center}
%  \label{Tab:ternbin1}
%\end{table}

%It follows routinely from the above description that the four associativity equations relating the above substitution $\circ_1$ with nullary maps hold.  (\red Last line not really needed.\black)

\subsection*{Second variable} 

Consider $G\colon\ca,\cx \to \ce$ and $F\colon\cb,\cc,\cd \to \cx$.  Firstly, we use the duality bijections to form $$d^2G\colon\cx^{co},\ca^{co} \to \ce^{co} \hspace{0.5cm} \textnormal{and} \hspace{0.5cm} d^2F\colon\cd^{co},\cc^{co},\cb^{co} \to \cx^{co}.$$
Then, we form $d^2G \circ_1 d^2F\colon \cd^{co},\cc^{co},\cb^{co},\ca^{co} \to \ce^{co}$ before dualising again using $d^4$ to define $$G\circ_2 F:= d^4(d^2G \circ_1 d^3F)\colon\ca,\cb,\cc,\cd \to \ce.$$

It follows from naturality of $d^2$, $d^3$, $d^4$ and naturality of $\circ_1$ that $\circ_2$ is natural in each variable.

%Tracing through the details of these bijections one obtains that $G \circ_2 F$ is the multimap with
%
%\begin{table}[ht]
%\begin{center}
%\begin{tabular}{cc:c c c c c c c c}
%$G \circ_2 F$ &&
%$G^a \circ_1 F$ && $G \circ_2 F^b$ && $G\midscript{c} \circ_1 F$ && $G_d \circ_1 F$ \\
%\end{tabular}
%\end{center}
%  \label{Tab:ternbin2}
%\end{table}

Tracing through the details of these bijections one obtains the formulae for the substitutions of ternary into binary maps in \Cref{Tab:ternbin} below.

\begin{table}[ht]
\caption{Explicit substitutions of ternary into binary}
\begin{center}
\begin{tabular}{cc:c c c c c c c c}
$G \circ_1 F$ &&
$G \circ_1 F^a$ && $G \circ_1 F\midscript{b}$ && $G \circ_1 F_c$ && $G_d \circ_1 F$ \\
$G \circ_2 F$ &&
$G^a \circ_2 F$ && $G \circ_2 F^b$ && $G \circ_2 F\midscript{c}$ && $G \circ_2 F_d$ \\
\end{tabular}
\end{center}
  \label{Tab:ternbin}
\end{table}

We must show that $G \circ_2 F$ is tight if $G$ is.  This follows from the description in \Cref{Tab:ternbin}, since, as $\GCatm$ is a $3$-ary skew multicategory, each of the ternary maps $G \circ_2 F^b, G \circ_2 F\midscript{c}$ and $G \circ_2 F_d$ is tight regardless of $F$.

%It follows routinely from the above description that the four associativity equations relating the above substitution $\circ_2$ with nullary maps hold. \red Not needed. \black

\subsection{Substitution of binary into ternary}

\subsection*{First variable} 
Given  $F\colon\ca,\cb \to \cx$ and $G\colon\cx,\cc,\cd \to \ce$ we define the substitution $$G \circ_1 F := \lambda_4(\lambda_3^{-1}(G) \circ_1 F)$$ where $\lambda_3^{-1}(G)\colon\cx,\cc \to \lax(\cd,\ce)$ is the adjoint map.

It follows from naturality of the bijections $\lambda_3$ and $\lambda_4$ and naturality of $\circ_1$ for two binary maps that the above substitution $G \circ_1 F$ is natural in each variable.

The multimap $G \circ_1 F$ is tight whenever $F$ and $G$ are.  Indeed, if $G$ is tight, then as $\lambda_3$ respects tightness, so is $\lambda_3^{-1}(G)$.  Therefore as $\circ_1$ for binary maps respects tightness, $\lambda_3^{-1}(G)\circ_1 F$ is tight.  Finally since $\lambda_4$ respects tightness, $G \circ_1 F$ is tight.

%Explicitly, $G \circ_1 F$ is the $4$-ary map
%
%\begin{table}[ht]
%\begin{center}
%\begin{tabular}{cc:c c c c c c c c}
%$G \circ_1 F$ &&
%$G \circ_1 F^a$ && $G \circ_1 F_b$ && $G \circ_2 F\midscript{c}$ && $G \circ_2 F_d$ \\
%\end{tabular}
%\end{center}
%  \label{Tab:bintern1}
%\end{table}

\subsection*{Second variable}
Given $F\colon\cb,\cc \to \cx$ and $G\colon\ca, \cx,\cd \to \ce$ we define the substitution $$G \circ_2 F := \lambda_4(\lambda_3^{-1}(G) \circ_2 F)$$ where $\lambda_3^{-1}(G)\colon\ca,\cx \to \lax(\cd,\ce)$ is the adjoint map.

It follows from naturality of the bijections $\lambda_3$ and $\lambda_4$ and naturality of $\circ_2$ for two binary maps that the above substitution $G \circ_1 F$ is natural.

To see that $G \circ_2 F$ is tight if $G$ is, we follow the same line of argument as above --- since $\lambda_4$ and $\lambda_3$ respect tightness, it follows from the corresponding claim concerning $\circ_2$ for binary maps.
%Explicitly, it is the $4$-ary map
%
%\begin{table}[ht]
%\begin{center}
%\begin{tabular}{cc:c c c c c c c c}
%$G \circ_2 F$ &&
%$G^a \circ_1 F$ && $G \circ_2 F^b$ && $G \circ_2 F_c$ && $G_d \circ_2 F$ \\
%\end{tabular}
%\end{center}
%%  \label{Tab:bintern2}
%\end{table}

\subsection*{Third variable}
Consider $F\colon\cc,\cd \to \cx$ and $G\colon\ca,\cb,\cx \to \ce$.  We firstly use the duality bijections to form $$d^2F:\cd^\co,\cc^\co \to \cx^\co \hspace{0.5cm} \textnormal{and} \hspace{0.5cm} d^3G\colon\cx^{co},\cb^{co},\ca^{co} \to \ce^{co}.$$
Then we form $d^3G \circ_1 d^2(F)\colon\cd^{co},\cc^{co},\cb^{co},\ca^{co} \to \ce^{co}$ before dualising again to obtain
 $$G \circ_3 F := {d^4}(d^3(G) \circ_1 d^2(F)).$$

It follows from naturality of $d^2, d^3$ and $d^4$ plus naturality of $\circ_1$ that $\circ_3$ is natural.

%and the corresponding equations for $\circ_1$ that all associativity equations involving $G \circ_3 F$ and unary maps hold, i.e. the substitution is natural. 

%Explicitly, it is the $4$-ary map
%
%\begin{table}[ht]
%\begin{center}
%\begin{tabular}{cc:c c c c c c c c}
%$G \circ_3 F$ &&
%$G^a \circ_2 F$ && $G\midscript{b} \circ_2 F$ && $G \circ_3 F^c$ && $G \circ_3 F_d$ \\
%\end{tabular}
%\end{center}
%%  \label{Tab:bintern3}
%\end{table}

Tracing through the defining bijections, we obtain the descriptions for the substitutions of binary into ternary in \Cref{Tab:bintern} below.

\begin{table}[ht]
\caption{Explicit substitutions of binary into ternary}
\begin{center}
\begin{tabular}{cc:c c c c c c c c}
$G \circ_1 F$ &&
$G \circ_1 F^a$ && $G \circ_1 F_b$ && $G \circ_2 F\midscript{c}$ && $G \circ_2 F_d$ \\
$G \circ_2 F$ &&
$G^a \circ_1 F$ && $G \circ_2 F^b$ && $G \circ_2 F_c$ && $G_d \circ_2 F$ \\
$G \circ_3 F$ &&
$G^a \circ_2 F$ && $G\midscript{b} \circ_2 F$ && $G \circ_3 F^c$ && $G \circ_3 F_d$ \\
\end{tabular}
\end{center}
  \label{Tab:bintern}
\end{table}

As in the previous cases of operations defined using duality, we use the description in the above table to show that $G \circ_3 F$ is tight if $G$ is.

\subsection{A $4$-ary skew multicategory}
We have defined all of the structure for the $4$-ary skew multicategory $\GCatm$ and verified all associativity equations except those involving (1) a binary map, a ternary map and a nullary map or (2) three binary maps.  The first group of  associativity equations follow directly from the definitions of substitution of ternary and binary given in Tables~\ref{Tab:ternbin} and ~\ref{Tab:bintern}.

It remains to consider the second group of equations involving three binary maps.  In fact, given that $4$-ary maps are determined by their four ternary components, the remaining equations follow easily from what we have already done. For a representative example, consider 
$F\colon\cb,\cc \to \cx$, $G\colon\cx,\cd \to \cy$ and $H\colon\ca, \cy \to \ce$.  

\begin{center}
\begin{tikzpicture}[triangle/.style = {fill=yellow!50, regular polygon, regular polygon sides=3,rounded corners}]
%Multimaps
\path 
	(0,0.5) node [triangle,draw,shape border rotate=-90,inner sep=1pt,label=135:$\cb$,label=230:$\cc$] (b) {$F$} 
	(2,0) node [triangle,draw,shape border rotate=-90,inner sep=1pt,label=135:$\cx$,label=230:$\cd$] (a) {$G$}
	(4,0.5) node [triangle,draw,shape border rotate=-90,inner sep=1pt,label=135:$\ca$,label=230:$\cy$] (c) {$H$};

%Nodes
\draw [-] (-0.8,0.8)  to (b.139);
\draw [-] (-0.8,0.2) to (b.225);
\draw [-] (3.1,0.8) to (c.139);
\draw [-] (a) .. controls +(right:1cm) and +(left:1cm).. (c.220);
\draw [-] (b) .. controls +(right:1cm) and +(left:1cm).. (a.140);
\draw [-] (c) to node [fill=white] {$\ce$} (6,0.5);
\draw [-] (1.2,-0.3)  to (a.225);
\end{tikzpicture} 
\end{center}

We must prove that $(H \circ_2 G) \circ_2 F = H \circ_2 (G \circ_1 F)$.  To prove these $4$-ary maps are equal, we must show that their four defining ternary maps are the same.
We have 
$$((H \circ_2 G) \circ_2 F)^a= (H \circ_2 G)^a \circ_2 F = (H^a \circ_1 G) \circ_2 F$$
and similarly
$$H \circ_2 (G \circ_1 F)^a = H^a \circ_1 (G \circ_1 F)$$ 
where all equalities follow from the explicit descriptions of substitutions given in Tables \ref{Tab:binbin}, \ref{Tab:ternbin} and \ref{Tab:bintern}. The equality of the right hand sides is just naturality of $\circ_2$ in unary maps.

Similarly, the necessary equalities in $b,c$ and $d$ reduce to the associativity equations
$(H \circ_2 G) \circ_1 F^b = H \circ_2 (G \circ_1 F^b)$, $(H \circ_2 G) \circ_1 F_c = H \circ_2 (G \circ_1 F_c)$ and $(H \circ_2 G_d) \circ_2 F = H \circ_2 (G_d \circ_1 F)$
all of which are just instances of naturality in unary maps, which hold in the $3$-multicategory $\GCatm$.

Putting the results of this section together, we have proven:

\begin{theorem}
With the structure described in this section, $\GCatm$ forms a $4$-ary skew multicategory.
\end{theorem}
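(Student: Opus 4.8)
The plan is to assemble the theorem from the incremental verifications carried out throughout this section, organising them according to the definition of a $4$-ary skew multicategory given in \Cref{sect:structures}. That definition asks for two things: a $4$-ary multicategory $\GCatm^l$ of loose multimaps, and a system of tight subsets closed under the appropriate substitutions. All of the underlying data --- the objects, the sets of multimaps of each arity $n \leq 4$, the unary identities, and the substitution operations $\circ_i$ --- has already been constructed, so what remains is to check the unit equations, the associativity equations, and the tightness closure conditions.

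First I would dispatch the unit equations together with every associativity equation that involves at least one unary map. As noted in the Remark on $k$-ary multicategories, these are all subsumed by the assertion that each $\GCatm^l_n(-;-)$ is a functor $(\GCatp^n)^{op} \times \GCatp \to \Set$ and that every substitution is natural in each variable. The functoriality was obtained for $n = 2,3,4$ by transporting the evident functor structure on the left-hand side of the closedness bijections $\lambda_n$ of Propositions~\ref{prop:binary maps}, \ref{prop:ternary-maps} and \ref{lemma:4-ary-maps}, while naturality of each substitution was recorded at the point of its definition (for those substitutions defined via duality, naturality follows from that of the duality bijections $d^i$ together with naturality of $\circ_1$).

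The only equations not yet covered are the associativity equations in which no unary map appears, that is, those relating nullary, binary, ternary and $4$-ary maps only. These were verified stage by stage as the skew multicategory was built up: the case of one binary and two nullary maps in the subsection establishing the $2$-ary structure; the cases relating a nullary map to the two binary-into-binary substitutions in the subsection establishing the $3$-ary structure; and finally, in the subsection establishing the $4$-ary structure, the cases involving a binary, a ternary and a nullary map (read off from Tables~\ref{Tab:ternbin} and \ref{Tab:bintern}) and the cases involving three binary maps. Thus it only remains to collect these.

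For the tightness conditions I would simply observe that unary identities are Gray-functors and hence tight, and that for each substitution the requisite closure property --- $g \circ_1 f$ tight when both are tight, and $g \circ_i f$ tight when $g$ is, for $i \neq 1$ --- was checked immediately after defining that substitution. The genuinely delicate point, already handled in the text, is the family of associativity equations among three binary maps: because the second-variable substitutions are defined by conjugating $\circ_1$ with the dualities $d^2,d^3,d^4$, a direct calculation would be unpleasant. The key simplification is that a $4$-ary map is determined by its four ternary components, so each such equation reduces to an equality of ternary maps, which in turn unwinds --- via the explicit descriptions in Tables~\ref{Tab:binbin}, \ref{Tab:ternbin} and \ref{Tab:bintern} --- to an instance of naturality of substitution in unary maps, already available in the $3$-ary structure.
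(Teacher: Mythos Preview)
Your proposal is correct and follows essentially the same approach as the paper: the theorem is presented there as a summary statement (``Putting the results of this section together, we have proven''), with the proof consisting precisely of the incremental verifications you describe --- functoriality and naturality handling all unary-involved equations, the stage-by-stage checks for the remaining associativity equations, and the tightness closure properties recorded at each substitution. In particular, your treatment of the three-binary-maps case matches the paper's representative example exactly: reduce to equality of the four ternary components via Tables~\ref{Tab:binbin}, \ref{Tab:ternbin} and \ref{Tab:bintern}, and observe that each resulting equation is an instance of naturality already established in the $3$-ary structure.
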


\section{The skew monoidal closed structure capturing lax transformations}\label{sect:skewmonlax}
%$(\GCat,\otimes_l,1)$

In this section we will use the $4$-ary skew multicategory $\GCatm$ of Section~\ref{sect:shortmult} to construct the closed skew monoidal category $(\GCat,\otimes_l,1)$ with internal hom $\lax(\ca,\cb)$.  In order to do this, we must start by establishing further properties of $\GCatm$ --- firstly, we establish closedness and then left representability.  Combining these results, in \Cref{thm:laxskewmonoidal} we obtain the skew monoidal structure of $(\GCat,\otimes_l,1)$.

\subsection{Closedness}

To this end, recall the natural bijection $\lambda_2\colon\GCatm_1^l(\ca;\lax(\cb,\cc)) \cong \GCatm_2^l(\ca,\cb;\cc)$ of Section~\ref{sect:2closed} and Section~\ref{sect:2fun} which restricts to tight maps on either side.  Its unit is a binary map $$ev = \lambda_2(1_{\lax(\cb,\cc)}) \colon\lax(\cb,\cc),\cb \to \cc$$ whose universal property is that given a binary map $F\colon\ca,\cb \to \cc$ there exists a unique unary map $\overline{F}\colon\ca \to \lax(\cb,\cc)$ such that $ev \circ_1 \overline{F} = F$.  Note that $ev\colon\lax(\cb,\cc),\cb \to \cc$ is \emph{tight}, since it corresponds to the identity on $\lax(\cb,\cc)$ which is tight.  

\begin{theorem}
The tight binary maps $ev \colon\lax(\cb,\cc),\cb \to \cc$ exhibit $\GCatm$ as a closed $4$-ary skew multicategory.
\end{theorem}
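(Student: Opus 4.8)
The plan is to verify the two halves of the definition of a closed $4$-ary skew multicategory directly. First I would check that the loose multicategory $\GCatm^l$ is closed with internal hom $\lax(\cb,\cc)$ and counit $ev$, meaning that each
$$ev \circ_1 (-)\colon \GCatm^l_n(\overline{a};\lax(\cb,\cc)) \to \GCatm^l_{n+1}(\overline{a},\cb;\cc)$$
is a bijection for $0 \leq n < 4$. Then, since $ev$ is already tight by the remark preceding the theorem, I would check that each of these functions restricts to a bijection on tight multimaps for $1 \leq n < 4$. The observation that drives the whole argument is that, in each arity, $ev \circ_1 (-)$ is \emph{literally} one of the closedness bijections $\lambda_{n+1}$ established in Propositions~\ref{prop:binary maps}, \ref{prop:ternary-maps} and \ref{lemma:4-ary-maps}, so that bijectivity comes for free. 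The only input specific to $ev$ is the identity $\lambda_2^{-1}(ev) = 1_{\lax(\cb,\cc)}$, which holds by definition since $ev := \lambda_2(1_{\lax(\cb,\cc)})$.

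I would then treat the four arities in turn. For $n = 1$ the claim is exactly the universal property of $ev$ recorded above the theorem, namely $ev \circ_1 (-) = \lambda_2$. For $n = 0$, substituting a nullary map $P$ (an object of $\lax(\cb,\cc)$) into $ev$ gives $ev \circ_1 P = ev^{P}$, and applying Lemma~\ref{lemma:lambda12} with $F = 1_{\lax(\cb,\cc)}$ yields $ev \circ_1 P = \lambda_1(P) = P$, the identity bijection. For $n = 2$, the operation $ev \circ_1 (-)$ is substitution of a binary map into the binary map $ev$ in the first variable; unwinding its definition gives $ev \circ_1 G = \lambda_3(\lambda_2^{-1}(ev) \circ_1 G) = \lambda_3(1_{\lax(\cb,\cc)} \circ_1 G) = \lambda_3(G)$, using the unit law $1 \circ_1 G = G$. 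The case $n = 3$ is identical in form: substitution of a ternary map into $ev$ in the first variable is defined as $\lambda_4(\lambda_2^{-1}(ev) \circ_1 (-))$, which again collapses to $\lambda_4$. Thus $ev \circ_1 (-)$ equals $\lambda_1,\lambda_2,\lambda_3,\lambda_4$ respectively, each a bijection, establishing closedness of $\GCatm^l$.

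For the skew refinement I would invoke that every $\lambda_i$ restricts to a bijection between tight multimaps, as asserted in each of the cited propositions. Since the identifications $ev \circ_1 (-) = \lambda_{n+1}$ hold on the nose, the same functions restrict to bijections $\GCatm^t_n(\overline{a};\lax(\cb,\cc)) \cong \GCatm^t_{n+1}(\overline{a},\cb;\cc)$ for $1 \leq n < 4$, which together with tightness of $ev$ completes the verification. I do not expect a genuine obstacle here: all the difficulty was absorbed into the earlier construction of the bijections $\lambda_i$ and the first-variable substitution operations, so the present statement is essentially a bookkeeping exercise confirming that $ev$ is the counit packaging those bijections. The one point demanding care is matching the \emph{directions} of the binary-into-binary and ternary-into-binary substitutions in the first variable against their definitions, so that the reduction via $\lambda_2^{-1}(ev) = 1_{\lax(\cb,\cc)}$ is applied to the correct slot.
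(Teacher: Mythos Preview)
Your proposal is correct and matches the paper's own proof essentially step for step: both identify $ev \circ_1 (-)$ with $\lambda_{n+1}$ in each arity using the definition $ev = \lambda_2(1_{\lax(\cb,\cc)})$, Lemma~\ref{lemma:lambda12} for $n=0$, and the defining formula for first-variable substitution of binary/ternary into binary for $n=2,3$; the tight case then follows because each $\lambda_i$ was already shown to restrict to tight maps.
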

\begin{proof}
Consider the bijections  
$$\lambda_{n+1} \colon\GCatm_n^l(\ca_1,\ldots,\ca_n;\lax(\cb,\cc)) \cong \GCatm_{n+1}^l(\ca_1,\ldots,\ca_n,\cb;\cc)$$
of \Cref{sect:shortmult}.  We have constructed these for $n=0,1,2$ and $3$ and shown that, for $n > 0$, these restrict to bijections on tight maps.  What remains to show, in each case, is that $\lambda_{n+1}(F) = ev \circ_1 F$.  %It suffices to verify this formula in the case of the bijections for loose maps, since the formula for tight maps is a special case of it.

Firstly, consider $\lambda_1\colon\GCatm_0^l(\diamond;\lax(\cb,\cc)) \cong \GCatm_1^l(\cb;\cc)$ and $F\colon\diamond \to \lax(\cb,\cc)$.  Then $$\lambda_1 (F) = \lambda_1(1_{\lax(\cb,\cc)} \circ_1 F) = \lambda_2(1_{\lax(\cb,\cc)}) \circ_1 F = ev \circ_1 F$$
where the first equality holds by functoriality, the second by Lemma~\ref{lemma:lambda12} and the third by definition of $ev$.

For $\lambda_2\colon\GCatm_1^l(\ca;\lax(\cb,\cc)) \cong \GCatm_2^l(\ca,\cb;\cc)$ we have that $\lambda_2(F) = ev \circ_1 F$ by definition of $ev$ as the unit of the natural bijection.

Next, consider $\lambda_3\colon\GCatm^l_2(\ca_1,\ca_2;\lax(\cb,\cc)) \to \GCatm^l_3(\ca_1,\ca_2,\cb;\cc)$. We have
$$\lambda_3(F) = \lambda_3(1_{\lax(\cb,\cc)} \circ_1 F) = \lambda_3({\lambda_2}^{-1}(ev) \circ_1 F) = ev \circ_1 F$$ where
the first equality holds by functoriality, the second by definition of $ev_{\cc,\cd}$ and the third by definition of $\circ_1$ for binary maps.  The case of $\lambda_4$ is identical in form to that of $\lambda_3$.

%Finally consider $\lambda_4\colon\GCatm^l_3(\ca_1,\ca_2,\ca_3;\lax(\cb,\cc)) \to \GCatm^l_4(\ca_1,\ca_2,\ca_3,\cb;\cc)$.  As in the previous case we have 
%$$\lambda_4(F) = \lambda_4(1_{\lax(\cb,\cc)} \circ_1 F) = \lambda_4({\lambda_2}^{-1}(ev) \circ_1 F) = ev \circ_1 F$$
%where the final step now uses the definition of $\circ_1$ for substitution of ternary into binary maps.
\end{proof}

%\begin{rmk}
%In fact, $u\colon(-) \to \mathbf{1}$ is also a nullary map classifier in the $4$-ary multicategory $\GCatm^l$, since there is no difference between tight and loose maps from $\mathbf{1}$. Since the induced maps $ev\colon\lax(\mathbf{1},\ca) \to \ca$ and $\GCat_p(\ca,\cb) \to \GCat_p(\mathbf{1},\lax(\ca,\cb))$ are invertible, this makes $\GCatm^l$ into a closed multicategory with unit in the sense of Manzyuk or, equivalently, a closed category in the sense of Eilenberg-Kelly.
%\end{rmk}

If we forget the skew aspects, we have a still stronger result.

\begin{theorem}\label{thm:closed}
The underlying $4$-ary multicategory $\GCatm^l$ of loose multimaps is biclosed, with right internal hom $\oplax(\cb,\cc)$.
\end{theorem}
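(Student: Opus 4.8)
The plan is to deduce biclosedness from the closedness of $\GCatm^l$ just established, transporting it across the duality bijections $d^n$. Closedness concerns substitution in the first variable $\circ_1$, whereas biclosedness concerns substitution in the second variable $\circ_2$, and the operations $\circ_2$ were defined precisely by conjugating $\circ_1$ with the dualities $d^n$; so the two properties are interchanged by duality. Concretely, I would take as right internal hom $[\cb,\cc]' := \oplax(\cb,\cc)$ and as evaluation the loose binary map
\[
e'_{\cb,\cc} := d^2\big(ev_{\cb^{\co},\cc^{\co}}\big) \colon \cb, \oplax(\cb,\cc) \to \cc,
\]
where $ev_{\cb^{\co},\cc^{\co}}\colon \lax(\cb^{\co},\cc^{\co}), \cb^{\co} \to \cc^{\co}$ is the evaluation witnessing closedness. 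Here I silently use the isomorphism $\lax(\cb^{\co},\cc^{\co})^{\co} \cong \oplax(\cb,\cc)$ and that $(-)^{\co}$ is involutive on objects to see that $d^2(ev)$ indeed has domain $\cb, \oplax(\cb,\cc)$.

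The heart of the argument is the identity
\[
e'_{\cb,\cc} \circ_2 F \;=\; d^{n+1}\big(ev_{\cb^{\co},\cc^{\co}} \circ_1 d^n F\big)
\]
for every loose $n$-ary map $F\colon \ca_1,\dots,\ca_n \to \oplax(\cb,\cc)$, where $d^n F$ lands in $\GCatm^l_n(\ca_n^{\co},\dots,\ca_1^{\co};\lax(\cb^{\co},\cc^{\co}))$ after applying the isomorphism $\oplax(\cb,\cc)^{\co} \cong \lax(\cb^{\co},\cc^{\co})$ to its output. For $n = 2, 3$ this is immediate from the very definition of $\circ_2$ of binary/ternary into binary, namely $G \circ_2 F = d^{n+1}(d^2 G \circ_1 d^n F)$, together with the involutivity $d^2 d^2 = \mathrm{id}$, which gives $d^2 e'_{\cb,\cc} = ev_{\cb^{\co},\cc^{\co}}$. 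For $n = 0$ and $n = 1$, where $\circ_2$ was defined directly rather than through duality, the same identity follows from the explicit formulae for substitution of nullary and unary maps into a binary map in \Cref{Tab:Compactsub}, together with naturality of $d^2$; in the nullary case it reduces, via \Cref{lemma:nmaps}, to the statement that $e'_{\cb,\cc} \circ_2 G = G$.

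Granting this identity, biclosedness is immediate: it exhibits $e'_{\cb,\cc} \circ_2 -$ as the composite of $d^n$, the closedness bijection $ev \circ_1 -$, and $d^{n+1}$, namely
\[
\begin{aligned}
\GCatm^l_n(\overline{\ca};\oplax(\cb,\cc))
&\xrightarrow{d^n} \GCatm^l_n(\overline{\ca}^{\mathrm{rev},\co};\lax(\cb^{\co},\cc^{\co})) \\
&\xrightarrow{ev\,\circ_1\,-} \GCatm^l_{n+1}(\overline{\ca}^{\mathrm{rev},\co},\cb^{\co};\cc^{\co}) \\
&\xrightarrow{d^{n+1}} \GCatm^l_{n+1}(\cb,\overline{\ca};\cc).
\end{aligned}
\]
The first and last maps are bijections by \Cref{prop:duality-bin,prop:duality-tern,prop:duality4} (using also the evident duality on nullary and unary maps), and the middle map is a bijection by the closedness established above. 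Hence $e'_{\cb,\cc} \circ_2 -$ is a bijection for all $0 \le n < 4$, which is exactly biclosedness with right internal hom $\oplax(\cb,\cc)$.

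I expect the only real friction to be bookkeeping: tracking the order-reversal and the pervasive $(-)^{\co}$ on both inputs and outputs, matching the isomorphism $\oplax(\cb,\cc) \cong \lax(\cb^{\co},\cc^{\co})^{\co}$ against the output-dualization performed by $d^n$, and verifying the key identity in the low-arity cases $n = 0, 1$ where $\circ_2$ is not defined through duality. None of these steps requires genuinely new ideas beyond those already used to set up $\circ_2$ and the bijections $d^n$.
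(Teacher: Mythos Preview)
Your proposal is correct and follows essentially the same approach as the paper: define the right evaluation as $d^2(ev)$, then show that $e' \circ_2 -$ factors as a composite of the duality bijections $d^n$, $d^{n+1}$ and the closedness bijection $ev \circ_1 -$, treating the cases $n=2,3$ directly from the definition of $\circ_2$ via duality and the low-arity cases $n=0,1$ separately via naturality of $d^2$ and the explicit substitution formulae. The paper's proof is organized slightly differently (it handles each $n$ case-by-case rather than stating a uniform identity), but the content is the same, and the friction you anticipate with the $(-)^{\co}$ bookkeeping is exactly where the paper's computations occur.
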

\begin{proof}
Given the natural isomorphism $(-)^{+}_{\cb,\cc}\colon\oplax(\cb,\cc) \to \lax(\cb^{co},\cc^{co})^{co}$, the problem is equally to prove the corresponding result for the homs $\lax(\cb^{co},\cc^{co})^{co}$.  To this end, we observe that applying the duality $d^2$ to $ev \colon \lax(\cb^{co},\cc^{co}),\cb^{co} \to \cc^{co}$ give rise to a binary map 
$$e := d^2(ev) \colon \cb, \lax(\cb^{co},\cc^{co})^{co} \to \cc$$
and we will show that the induced function
$$e \circ_2 - \colon\GCatm_n^l(\ca_1,\ldots,\ca_n;\lax(\cb^{co},\cc^{co})^{co}) \cong \GCatm_{n+1}^l(\cb, \ca_1,\ldots,\ca_n;\cc)$$
is a bijection for $n=0,1,2,3$.  In fact the cases $n=2,3$ are easiest.  For $n=2$ and a binary map $F$, we have 
$$e \circ_2 F := d^3(d^2 e \circ_1 d^2 F) = d^3(ev \circ_1 d^2 F).$$  In particular, $e \circ_2 -$ is the composite of three bijections $d^2$, $ev \circ_1 -$ and $d^3$ and so a bijection too.  The case $n=3$ is identical in form since substitution of ternary maps into binary maps is defined using duality and $\circ_1$ in the same way.

For $n=1$, $e \circ_2 F := d^2(ev) \circ_2 F = d^2(ev \circ_1 F^{co})$ where the second equality holds by naturality of $d^2$.  In particular, $e \circ_2 -$ is again a composite of three bijections $d^2$, $ev \circ_1 -$ and $(-)^{co}$ and so a bijection.

For $n=0$, observe that for a nullary map $x$ (viewed as an object of its codomain) we have $e \circ_2 x := d^2(ev) \circ_2 x = d^2(ev)_x = (ev)^x = (ev) \circ_1 x$ where the second equality is by definition of $d^2$.  Hence, $e \circ_2 - $ is a bijection since $ev \circ_1 -$ is such on nullary maps.
\end{proof}

\subsection{Left representability}

Following Section~\ref{sect:closedrep}, to show that $\GCatm^l$ is left representable, we must show that it has a nullary map classifier and tight binary map classifiers.

Let us first deal with the nullary map classifier, which is simply the terminal Gray-category $\mathbf{1}$.

\begin{prop}
The $4$-ary skew multicategory $\GCatm$ has nullary map classifier $u\colon \diamond \to \mathbf{1}$, where $u$ is the nullary map selecting the single object $\bullet$ of the terminal Gray-category $\mathbf{1}$.
\end{prop}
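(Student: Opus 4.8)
The plan is to verify directly that $\mathbf 1$, equipped with the nullary map $u$, represents the functor of nullary maps, after which left universality will come for free. Recall from Section~\ref{sect:closedrep} that, since the preceding theorem shows $\GCatm$ to be closed, any nullary map classifier is \emph{automatically} left universal. Hence it suffices to exhibit $\mathbf 1$ together with $u$ as a representation of the functor of nullary maps $\GCatm^l_0(-;-)$ restricted along $j\colon\GCat \to \GCat_p$: that is, to show that for every Gray-category $\ca$ the function
$$
-\circ_1 u \colon \GCatm^t_1(\mathbf 1;\ca) \to \GCatm^l_0(-;\ca), \qquad g \mapsto g \circ_1 u,
$$
is a bijection, natural in $\ca$.

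First I would unwind the two sides. By definition, $\GCatm^l_0(-;\ca)$ is the set of objects of $\ca$, while $\GCatm^t_1(\mathbf 1;\ca)$ is the set of Gray-functors $\mathbf 1 \to \ca$. Since $\mathbf 1$ is the terminal Gray-category --- a single object $\bullet$ whose only cells in every dimension are identities --- a Gray-functor $g\colon\mathbf 1 \to \ca$ preserves all of this structure on the nose, and is therefore completely determined by the object $g(\bullet) \in \ca$; conversely, every object of $\ca$ is the image of $\bullet$ under a unique such Gray-functor. Thus Gray-functors $\mathbf 1 \to \ca$ are in bijection with objects of $\ca$.

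Next I would identify this bijection with $-\circ_1 u$. By the definition of substitution of a nullary map into a unary map given in Section~\ref{sect:shortmult}, one has $g \circ_1 u = g(u) = g(\bullet)$, which is exactly the object picked out above; hence $-\circ_1 u$ \emph{is} the bijection just described. Naturality in $\ca$ is then immediate: for a Gray-functor $H\colon\ca \to \cb$ the associativity equation \eqref{eq:classic-ass-line} gives $H \circ_1 (g \circ_1 u) = (H \circ_1 g) \circ_1 u$, so the comparison square (with verticals $-\circ_1 u$, top edge $g \mapsto H\circ_1 g$ and bottom edge $a \mapsto Ha$) commutes.

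There is essentially no obstacle here: the single point requiring care is the observation that strictness of Gray-functors, combined with terminality of $\mathbf 1$, forces $g$ to be determined by $g(\bullet)$, so that the representing map is genuinely $-\circ_1 u$ and nothing weaker. Left universality --- the bijectivity of $-\circ_1 u\colon\GCatm^t_{n+1}(\mathbf 1,\overline c;d) \to \GCatm^l_n(\overline c;d)$ for $n<4$ --- then follows automatically from closedness, as recorded in Section~\ref{sect:closedrep}, and need not be checked by hand.
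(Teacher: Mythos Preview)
Your proof is correct and takes essentially the same approach as the paper: both verify directly that $-\circ_1 u\colon\GCatm^t_1(\mathbf 1;\ca) \to \GCatm^l_0(-;\ca)$ is a bijection by identifying Gray-functors $\mathbf 1 \to \ca$ with objects of $\ca$. The paper's proof is a single sentence to this effect; your additional remarks on naturality and on left universality following from closedness are correct elaborations, though the latter is not needed for the proposition as stated (it is only a nullary map classifier, not left universality, that is being claimed).
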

\begin{proof}
The function $- \circ_1 u\colon\GCatm^t_1(\mathbf{1};\ca) \to \GCatm^l_0(\diamond;\ca)$ sends $F\colon \mathbf{1} \to \ca$ to its value at $\bullet$ --- this function is clearly a bijection.
\end{proof}

\begin{rmk}
In fact, $u\colon\diamond \to \mathbf{1}$ is also a nullary map classifier in the $4$-ary multicategory $\GCatm^l$ of loose multimaps, since, because of the normality conditions for a pseudomap, every pseudomap out of $\mathbf{1}$ is strict.   %Since the induced maps $l_\ca\colon\lax(\mathbf{1},\ca) \to \ca$ and $\GCat_p(\ca,\cb) \to \GCat_p(\mathbf{1},\lax(\ca,\cb))$ are invertible, this makes $\GCatm^l$ into a closed multicategory with unit in the sense of Manzyuk or, equivalently, a closed category in the sense of Eilenberg-Kelly.
\end{rmk}

The tight binary map classifiers $\ca \otimes_l \cb$ can be shown to exist in two ways.  Following Section~\ref{sect:closedrep}, the first approach is to show each $\lax(\ca,-)$ has a left adjoint, which can be done by appealing to an adjoint functor theorem.  This approach emphasises that the skew monoidal structure can be constructed without requiring any explicit description of the tensor product $\ca \otimes_l \cb$. The second approach is to construct $\ca \otimes_l \cb$ using a \emph{presentation}.  We describe both approaches here, beginning with the first.  The reader more interested in the approach via presentations may skip directly to Section~\ref{sect:presentationlax}.

In order to prove that each $\lax(\ca,-)$ has a left adjoint, it will be helpful to first consider Gohla's construction of $\lax(\ca,\cb)$ using his \emph{path space} functor.

\begin{rmk}\label{rmk:PathSpace2}

Gohla's construction of $\lax(\ca,\cb)$ in Section 5 of \cite{GohlaB:mapp} begins with the construction of a path space functor $P\colon \GCat \to \GCat$, which comes equipped with natural transformations $i\colon 1 \to P$ and $s,t\colon P \rightrightarrows 1$ satisfying $s \circ i  = t \circ i =1$.  In fact $P =\lax(\mathbf 2,-)$ and his construction of $\lax(\ca,\cb)$ builds on this.  We can look at his construction more abstractly as follows.

A \emph{path object functor} on a category $\cc$ can be defined as a functor $P\colon \cc \to \cc$ equipped with natural transformations $i\colon 1 \to P$ and $s,t\colon P \to 1$ satisfying $s \circ i = 1 =t \circ i$.  This defines a functor $P\colon\cc \to [\mathbb G_1^{op},\cc]$ to the category of reflexive graphs in $\cc$ and, if $\cc$ has finite limits, it can be extended inductively to a functor $P_{\bullet}\colon \cc \to [\mathbb G_n^{op},\cc]$, the category of reflexive $n$-graphs in $\cc$.  Here $P_{n+2}\ca$ is defined as the pullback in the square left below.
\begin{equation}\label{eq:po}
\xymatrix{
P_{n+2}\ca \ar@/^2pc/[rr]^{\langle s_{n+1},t_{n+1} \rangle} \ar[d]_{} \ar[r]^-{} & P(P_{n+1}\ca)^2  \ar[d]^{Ps_n \times Pt_n} \ar[r]^-{s \times t} & (P_{n+1}\ca)^{2} \ar[d]^{s_{n} \times t_{n}} \\
(P_{n}\ca)^2 \ar[r]_-{i \times i} & P(P_{n}\ca)^2 \ar[r]_-{s \times t} & (P_n\ca)^{2}}
\end{equation}
The maps $s_{n+1}$ and $t_{n+1}$ are obtained as the composite on the top row, as depicted, and the globularity relations follow by chasing the above diagram under post-composition with the two product projections. The reflexivity map $i_{n+1}\colon P_{n+1}\ca \to P_{n+2}\ca$ is the unique map to the pullback induced by $\langle i, i \rangle \colon  P_{n+1}\ca \to P(P_{n+1}\ca)^2$ and $\langle s_{n+1},t_{n+1} \rangle \colon  P_{n+1}\ca \to (P_{n}\ca)^2$.

Doing this, Gohla produces a functor $P_{\bullet}\colon \GCat \to [\mathbb G_3^{op},\GCat]$ and shows, in fact, that each $3$-globular object $P_{\bullet}\cb$ underlies an \emph{internal Gray-category} in $\GCat_p$ --- here, the pseudomaps are essential.  Accordingly, given a Gray-category $\ca$, the hom globular set $\GCat_p(\ca,P_{\bullet}\cb)$ underlies a Gray-category.  This is exactly $\lax(\ca,\cb)$ --- in particular, the $n$-cells are $\lax(\ca,\cb)$ correspond to pseudomaps $\ca \to P_{n}\cb$.

\end{rmk}

 \begin{prop}\label{prop:mappingspaceadjoint}
 For each Gray-category $\ca$, the functor $\lax(\ca,-)\colon\GCat \to \GCat$ has a left adjoint. 
 \end{prop}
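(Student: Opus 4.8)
The plan is to apply the adjoint functor theorem for locally presentable categories: since $\GCat$ is locally presentable, the functor $\lax(\ca,-)\colon\GCat \to \GCat$ admits a left adjoint as soon as it preserves small limits and is accessible. So the two points to establish are continuity and accessibility, and I would read both off the cell-by-cell description of the mapping space.

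For continuity, I would use the path-space description of Remark~\ref{rmk:PathSpace2}: the $n$-cells of $\lax(\ca,\cb)$ are the pseudomaps $\ca \to P_n\cb$, that is, the elements of $\GCat_p(\ca,P_n\cb) = \GCat(Q\ca,P_n\cb)$, the last equality holding because $\GCat_p$ is the Kleisli category of the comonad $Q$. Limits in $\GCat$ are created by the forgetful functor to $3$-globular sets whose homs are $2$-categories, and so are computed cell by cell together with the Gray-category operations; it therefore suffices to see that each set of cells is preserved by limits in $\cb$. Now $\GCat(Q\ca,-)$ is representable, hence continuous, and each $P_n$ is built from $P = \lax(\mathbf 2,-)$ by the finite limits in \eqref{eq:po}, so everything reduces to continuity of $P$. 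This last fact, equivalently continuity of $\lax(\ca,-)$ directly, can be read straight off the explicit definitions: a pseudomap, lax transformation, modification or perturbation into $\lim_i\cb_i$ is exactly a compatible family of such into the $\cb_i$, since the defining data are tuples of cells of $\cb$ indexed by the cells of $\ca$ and the governing axioms are equations, both of which are preserved strictly by limits of Gray-categories. This yields a natural isomorphism $\lax(\ca,\lim_i\cb_i) \cong \lim_i\lax(\ca,\cb_i)$.

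For accessibility I would run the same analysis against filtered colimits. Fix a regular cardinal $\kappa$ exceeding the total number of cells of $\ca$. A cell of $\lax(\ca,\cb)$ is a family of cells of $\cb$ indexed by the cells of $\ca$, subject to a family of equations likewise indexed by the cells of $\ca$, and both families are $\kappa$-small. Filtered colimits in $\GCat$ are again computed on underlying data, so for a $\kappa$-filtered diagram with colimit $\operatorname{colim}_j\cb_j$, any such $\kappa$-small family of cells, and the $\kappa$-small set of equations it must satisfy, factor through a single stage $\cb_{j_0}$. Hence every cell of $\lax(\ca,\operatorname{colim}_j\cb_j)$ lies in the image of some $\lax(\ca,\cb_{j_0})$, and a routine variant of the same argument (using $\kappa$-filteredness to reconcile two factorisations) shows that the comparison map $\operatorname{colim}_j\lax(\ca,\cb_j) \to \lax(\ca,\operatorname{colim}_j\cb_j)$ is bijective on cells, hence an isomorphism. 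Thus $\lax(\ca,-)$ preserves $\kappa$-filtered colimits and is accessible.

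Combining continuity, accessibility and local presentability of $\GCat$, the adjoint functor theorem produces the required left adjoint. I expect the accessibility step to be the main obstacle: continuity is essentially formal once the path-space description is in hand, whereas accessibility needs the size bookkeeping above together with a careful check that the lax-transformation axioms -- precisely the data that make the left adjoint hard to write down explicitly -- are well behaved under $\kappa$-filtered colimits. One could equally bypass $P_\bullet$ and argue both properties uniformly straight from the cell definitions of $\lax(\ca,\cb)$, which is perhaps the most self-contained route.
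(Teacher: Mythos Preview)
Your proposal is correct and follows the paper's strategy: both invoke the adjoint functor theorem for locally presentable categories, reduce the problem via the path-space description to the composite $\GCat(Q\ca,-)\circ P_i$, and then verify continuity and accessibility of each factor. The one substantive difference is in how accessibility is established. You argue directly by cell-counting: pick a regular $\kappa$ exceeding the number of cells of $\ca$, observe that each cell of $\lax(\ca,\cb)$ is a $\kappa$-small family of cells of $\cb$ subject to $\kappa$-many equations, and conclude that $\lax(\ca,-)$ preserves $\kappa$-filtered colimits. The paper instead shows that each component $P(-)_i\colon\GCat\to\Set$ is represented by an explicit \emph{finite} Gray-category (the free $1$-cell, the free lax square $C_1$, and the free $2$- and $3$-cylinders), so that $P$ preserves \emph{all} filtered colimits; accessibility of $\lax(\ca,-)$ then comes entirely from accessibility of the representable $\GCat(Q\ca,-)$. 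Your route is more elementary and avoids identifying the representing objects; the paper's route yields a sharper preservation statement for $P$ and makes those objects explicit, which is useful elsewhere.
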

 \begin{proof}
 Our proof will use that the category $\GCat$ is locally finitely presentable (l.f.p.) because such categories permit an easily verifiable adjoint functor theorem.  Namely a functor between l.f.p. categories has a left adjoint just when it preserves limits and is accessible (that is, preserves $\lambda$-filtered colimits for some regular cardinal $\lambda$).  This is Theorem 1.66 of \cite{Adamek1994Locally}, to which we also refer the reader as a reference on l.f.p. categories.
 
To show that $\GCat$  is l.f.p, we use Theorem 4.5 of \cite{Kelly2001VCat}, which asserts that if the monoidal biclosed category $\cv$ is l.f.p., then $\cv$-$\Cat$ is too. Starting with the cartesian closed l.f.p. category of sets this establishes that $\GCat$ is l.f.p. in two further steps, passing through the cartesian closed category of categories, followed by the Gray-tensor product of $2$-categories.   
 
Given the above, it remains to prove that each $\lax(\ca,-)\colon\GCat \to \GCat$ preserves limits and is accessible.  Since a Gray-functor is invertible just when its underlying morphism of $3$-graphs is, it follows that $\GCat$ has strong generator $\{D_i :i =0,1,2,3\}$ where $D_i$ denotes the free $i$-cell.  Since each $D_i$ is a finitely presentable (indeed, finite) Gray-category, the representable $\GCat(D_i,-)$ preserves limits and filtered colimits.  Being jointly conservative, they also reflect limits and filtered colimits.  Therefore $\lax(\ca,-)$ will preserve limits and be accessible just when each composite $\GCat(D_i,\lax(\ca,-))$ has these properties.  But following Remark~\ref{rmk:PathSpace2}, we have a natural isomorphism 
 $$\GCat(D_i,\lax(\ca,-)) \cong \GCat_p(A,P_i(-)) \cong \GCat(QA,P_i(-)) = \GCat(QA,-) \circ P_i.$$  By Examples 2.17(1) of \cite{Adamek1994Locally}, each representable preserves limits and is accessible.  Therefore it suffices to prove that each $P_i\colon\GCat \to \GCat$ has these properties --- we will prove that each preserves limits and all filtered colimits. 

Certainly, this is true for the identity functor $P_0$ and let us suppose that we can prove it for $P_1 = P$.  Then by the left pullback square in Equation~\eqref{eq:po}, we will obtain $P_2$ and $P_3$ inductively as finite limits of functors, each of which preserve limits and filtered colimits.  Since finite limits commute with limits (in any category) and filtered colimits (in any l.f.p. one) it will follow that $P_2$ and $P_3$ has these preservation properties too.  Therefore it remains to prove that $P$ preserves limits and filtered colimits.

Arguing as before, we must prove that $P(-)_i \colon\GCat \to \Set$ has these properties for $i = 0,1,2,3$.  In fact each of these functors is represented by a finite, and so finitely presentable, Gray-category.  Now the elements of $P(\ca)_0$ are the $1$-cells in $\ca$ --- in particular, we have a natural isomorphism $P(\ca)_0 \cong \GCat(D_1,\ca)$.  The elements of $P(\ca)_1$ are the lax squares, or $1$-cylinders, in $\ca$.  Hence we have a natural isomorphism $P(\ca)_1 \cong \GCat(C_1,\ca)$, where $C_1$ is the free lax square drawn below.

\begin{equation*}
\xy
(0,0)*+{0}="00";(15,0)*+{1}="10";
 (0,-15)*+{2}="01";(15,-15)*+{3}="11";
{\ar^{} "00"; "10"}; 
{\ar_{} "00"; "01"}; 
{\ar_{} "01"; "11"}; 
{\ar^{} "10"; "11"}; 
{\ar@{=>}^{}(7,-5)*{};(7,-10)*{}}; 
\endxy
\end{equation*}

The elements of $P(\ca)_2$ and $P(\ca)_3$ are $2$- and $3$-cylinders, as described in detail in \cite[Definition~3.1]{GohlaB:mapp}, and are represented by the free $2$/$3$-cylinders.  Whilst, we omit writing them down here, we note that they are both finite $3$-categories (no horizontally composable $2$-cells arise) whose descriptions can easily be extracted from \cite[Definition~3.1]{GohlaB:mapp}.

\end{proof}

In particular, putting the above results together, we have:
\begin{theorem}\label{thm:summarylaxcase}
The $4$-ary skew multicategory $\GCatm$ is left representable and closed, with internal hom $\lax(\ca,\cb)$.
\end{theorem}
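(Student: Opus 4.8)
The plan is to assemble this theorem from the results already established in this section, together with the reduction criterion of Section~\ref{sect:closedrep}. Closedness requires no further work: we have already exhibited, via the tight binary maps $ev \colon \lax(\cb,\cc),\cb \to \cc$, that $\GCatm$ is a closed $4$-ary skew multicategory with internal hom $\lax(\cb,\cc)$. So the only thing that remains to be verified is left representability, and here the whole point is that closedness lets us bypass the left-universality conditions directly.

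Concretely, I would invoke the characterisation recorded in Section~\ref{sect:closedrep}: since $\GCatm$ is closed, any nullary or tight binary map classifier is \emph{automatically} left universal, so that $\GCatm$ is left representable precisely when it admits a nullary map classifier and each endofunctor $[b,-] = \lax(b,-)$ on the category $\GCat$ of tight unary maps has a left adjoint. Both ingredients are already in hand. The nullary map classifier is the terminal Gray-category $\mathbf 1$, as established above. And Proposition~\ref{prop:mappingspaceadjoint} supplies, for each Gray-category $\ca$, a left adjoint to $\lax(\ca,-)\colon\GCat \to \GCat$, which is exactly the required left adjoint to $[\ca,-]$. Feeding these two facts into the criterion of Section~\ref{sect:closedrep} yields left representability.

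Combining this with the already-proven closedness completes the proof, and by construction the internal hom is $\lax(\ca,\cb)$. I do not expect any genuine obstacle at this stage: the substance of the argument lives entirely in the earlier results, and in particular in the accessibility-and-limit-preservation computation underpinning Proposition~\ref{prop:mappingspaceadjoint}, where the inductive path-object description of $\lax(\ca,\cb)$ and local finite presentability of $\GCat$ do the real work. The present theorem is therefore a matter of citing the relevant propositions in the correct order and appealing once to the closed-case simplification of left representability.
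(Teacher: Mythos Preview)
Your proposal is correct and matches the paper's approach exactly: the theorem is stated as a direct consequence of the preceding results, combining the already-established closedness (via the maps $ev$) with the criterion of Section~\ref{sect:closedrep}, the nullary map classifier $\mathbf{1}$, and Proposition~\ref{prop:mappingspaceadjoint} to obtain left representability. There is nothing to add.
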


\subsection{A presentation of the Gray-category $\ca \otimes_l \cb$}\label{sect:presentationlax}
Let us begin by defining \emph{presentations of Gray-categories.}  Informally, firstly, a presentation of a Gray-category $\ca$ consists of 
\begin{enumerate}
\item a set $X_0$ of $0$-cells;
\item a set $X_1$ of $1$-cells with specified source and target in $X_0$;
\item a set $X_2$ of $2$-cells with specified source and target given by formal composites of $1$-cells in $X_1$ (that is, parallel $1$-cells in the Gray-category $\ca_1$ freely generated by the data $X_0$ and $X_1$);
\item a set $X_3$ of $3$-cells with specified source and target given by formal composites of $2$-cells in $X_2$ (that is, parallel $2$-cells in the Gray-category $\ca_2$ freely generated by the data $X_0$, $X_1$ and $X_2$);
\item a set $E_i$ of pairs $(s,t)$ of $i$-cells in $\ca_3$ for $i \in\{0,1,2,3\}$ ,which are forced to become equal in the presented Gray-category $\ca = \ca_4$.
\end{enumerate}

The \emph{freely generated} structures in the above informal description will be defined as certain colimits of $\GCat$.  Note that such colimits exist since, as explained in the proof of Proposition~\ref{prop:mappingspaceadjoint}, $\GCat$ is locally presentable and therefore cocomplete.

Now for $i=0,1,2,3$, let $D_i \in \GCat$ denote the free $i$-cell and $k_i \colon B_{i-1} \to D_{i}$ denote the inclusion of the free parallel pair of $(i-1)$-cells. (Here we set $B_{-1} = \varnothing$.)   We also write $\bigtriangledown_i\colon  D_i + D_i \to D_i$ for the codiagonal.

%Formally, a presentation of $\ca$ consists of
%\begin{itemize}
%\item for $i=0,1,2,3$, a set $X_i$ and Gray-category $\ca_i$;
%\item for $i=0,1,2,3$, a pushout square in $\GCat$ %of a copower of $k_i \colon B_{i-1} \to D_{i}$,
%\begin{equation}\label{eq:presentations}
%\xymatrix{
%X_i . B_{i-1} \ar[d]_{X_i . k_i} \ar[r]^{} & \ca_{i-1} \ar[d] \\
%X_i. D_{i} \ar[r] & \ca_{i}
%}
%%\hspace{2cm}
%%\xymatrix{
%%\sum_{i=0}^{3} E_i.(D_{i} + D_{i}) \ar[d]_{\sum_{i=0}^{3} \bigtriangledown_i} \ar[r]^{} & \ca_{3} \ar[d] \\
%%\sum_{i=0}^{3} E_i.D_{i} \ar[r] & \ca_{4}=A
%%}
%\end{equation}
%where we set $B_{-1}$ and $\ca_{-1}$ equal to the empty Gray-category $\varnothing$;
%\item  for  $i=0,1,2,3$ a set $E_i$;
%\item a final pushout square in $\GCat$ of the form
%\begin{equation}\label{eq:presentations2}
%\xymatrix{
%\sum_{i=0}^{3} E_i.(D_{i} + D_{i}) \ar[d]_{\sum_{i=0}^{3} \bigtriangledown_i} \ar[r]^{} & \ca_{3} \ar[d] \\
%\sum_{i=0}^{3} E_i.D_{i} \ar[r] & \ca_{4} = \ca.
%}
%\end{equation}
%\end{itemize}

Formally, a presentation of $\ca$ consists of, for each $i=0,1,2,3$,  a pushout square in $\GCat$ %of a copower of $k_i \colon B_{i-1} \to D_{i}$,
 as on the left below
\begin{equation}\label{eq:presentations}
\xymatrix{
X_i . B_{i-1} \ar[d]_{X_i . k_i} \ar[r]^{} & \ca_{i-1} \ar[d] \\
X_i. D_{i} \ar[r] & \ca_{i}
}
\hspace{2cm}
\xymatrix{
\sum_{i=0}^{3} E_i.(D_{i} + D_{i}) \ar[d]_{\sum_{i=0}^{3} \bigtriangledown_i} \ar[r]^{} & \ca_{3} \ar[d] \\
\sum_{i=0}^{3} E_i.D_{i} \ar[r] & \ca_{4}=A
}
\end{equation}
where $\ca_{-1} = \varnothing$, together with a final pushout square as on the right above.

The morphism $X_i . B_{i-1} \to \ca_{i-1}$ from the copower assigns to each $\alpha \in X_i$ a parallel pair of $(i-1)$-cells in $\ca_{i-1}$; the pushout $\ca_i$ is then obtained by freely adjoining an $i$-cell for each such $\alpha$ with the parallel pair comprising its source and target.  Being a Gray-category, this pushout also contains all the formal composites of such generating $i$-cells that must exist in any Gray-category.  For instance $\ca_0$ is the discrete Gray-category with objects $X_0$ whilst the morphism $X_1 . B_0 = X_1 + X_1  \to X_0$ specifies a graph $X_1 \rightrightarrows X_0$ on which $\ca_1$ is the free category, viewed as a Gray-category with only identity $2$-cells and $3$-cells.  
% whilst $\ca_1$ is the free category on the graph $X_1 + X_1 = X_1 . B_0 \to X_0$, viewed as a Gray-category with only identity $2$-cells and $3$-cells.  
The morphism $\sum_{i=0}^{3} E_i.(D_{i} + D_{i}) \to \ca_3$ assigns to each element of $E_i$ a pair $(s,t)$ of $i$-cells in $\ca_3$ which are forced to become equal in the final pushout $\ca_4=\ca$.

%\begin{prop}\label{prop:presentationlax}
%The $4$-ary skew multicategory $\GCatm$ has tight binary map classifiers.
%\end{prop}
%\begin{proof}
Now let $\ca$ and $\cb$ be Gray-categories.  In the presentation below, we use our standard notation (see start of \Cref{sect:shortmult}) such as $a,A,\alpha$ and $b,B,\beta$  for cells in $\ca$ and $\cb$ without further comment.  The tight binary map classifier $\ca \otimes_l \cb$ is the Gray-category presented by the following data and equations:

\begin{itemize}
\item[\underline{$0$-cells}:]
	\begin{itemize}
	\item[$\bullet$] at $0$-cells $a$ and $b$, a $0$-cell $T(a,b)$, also denoted by $T^a(b)$ or $T_b(a)$ where convenient;
	\end{itemize}\vspace{0.1cm}
\item[\underline{$1$-cells}:]
	\begin{itemize}
	\item[$\bullet$] at $A\colon a \to a'$ and $b$, a $1$-cell $T_b(A)\colon T_b(a) \to T_b(a')$; 
	\item[$\bullet$] at $a$ and $B\colon b \to b'$, a $1$-cell $T^a(B)\colon T^a(b) \to T^{a}(b')$; 
	\end{itemize}\vspace{0.1cm}
\item[\underline{$2$-cells}:]
	\begin{itemize}
	\item[$\bullet$] at $\alpha\colon A \to A'$ and $b$, a $2$-cell $T_b(\alpha)\colon T_{b}(A) \to T_{b}(A')$;
	\item[$\bullet$] at $a$ and $\beta\colon B \to B'$, a $2$-cell $T^a(\beta)\colon T^a(B)\to T^a(B')$ ;
	\item[$\bullet$] at $a$ and $(B_1\colon b \to b',B_2\colon b' \to b'')$, a $2$-cell $T^a_{B_2,B_1}\colon T^a(B_2)T^a(B_1) \to T^a(B_2B_1)$ (and a further $2$-cell $(T^a_{B_2,B_1})^{-1}\colon  T^a(B_2B_1) \to T^a(B_2)T^a(B_1)$ which will later be forced to be its inverse);
	\item[$\bullet$] at $(A_1\colon a \to a',A_2\colon a' \to a'')$ and $b$, a $2$-cell $T_{b}^{A_2,A_1}\colon T_b(A_2)T_{b}(A_1) \to T_{b}(A_2A_1)$ (which will later be forced to be an identity);
	\item[$\bullet$] at $A\colon a \to a'$ and $B\colon b \to b'$, a $2$-cell $A_B \colon T^{a'}(B)T_b(A) \to T_{b'}(A)T^{a}(B)$;
	\end{itemize}\vspace{0.1cm}
\item[\underline{$3$-cells}:]
	\begin{itemize}
	\item[$\bullet$] at $\Lambda\colon \alpha \to \alpha'$ and $b$, a $3$-cell $T_b(\Lambda)\colon T_b(\alpha) \to T_b(\alpha')$;
	\item[$\bullet$] at $a$ and $\Theta\colon \beta \to \beta'$, a $3$-cell $T^a(\Theta)\colon T^a(\beta) \to T^a(\beta')$;
	\item[$\bullet$] further generating $3$-cells $\alpha_B, A_{\beta}, T^{A_2,A_1}_{B}$ and $T_{B_2,B_1}^A$  whose domains and codomains are as in the definition of a loose binary map (as well as a $3$-cell $(T_{B_2,B_1}^A)^{-1}$).
	\end{itemize}\vspace{0.1cm}
\item[\underline{Equations}:] 
        \begin{itemize}
	\item[$\bullet$] The pseudomap equations of \Cref{subsec:ax-pseudofunct}, interpreted for $T^a$ and $T_b$, plus the loose binary map equations of \Cref{app:ax_bin}, interpreted for $T$.  Furthermore, we require the equations capturing $(T^a_{B_2,B_1})^{-1}$ and $(T_{B_2,B_1}^A)^{-1}$ are in fact the appropriate inverses.  To these we add the equations for a tight binary map: namely that $T_{b}^{A_2,A_1}$ and $T_{B}^{A_2,A_1}$ are identities.
	\end{itemize}
\end{itemize}

Comparing with the formal description in \eqref{eq:presentations} %and \eqref{eq:presentations2} 
above, the universal property of the presented Gray-category $\ca \otimes_l\cb$, as an iterated pushout, is that we have an isomorphism 
$$\GCat(\ca \otimes_l \cb,\cc) \cong\GCatm_2^t(\ca,\cb;\cc)$$
 natural in $\cc$ --- indeed, $T:\ca,\cb \to \ca \otimes_l \cb$, as constructed above, is precisely the universal tight binary map.
%
%
%the iterated pushout $(\ca \otimes_l\cb)_3$ is that Gray-functors $\overline{F}\colon (A \otimes B)_3 \to C$ are in natural bijection with the \emph{data} for a loose bimap $F\colon A,B \to C$ (plus formal cells $(F^a_{B_2,B_1})^{-1}$ and $(F_{B_2,B_1}^A)^{-1}$).  Then, by then universal property of $\ca \otimes \cb = (\ca \otimes \cb)_4$ is that $\overline{F}$ factors (uniquely) through $(\ca \otimes \cb)_4$ just when the data $F$ satisfies the loose bimap equations, sends the formal cells $(F^a_{B_2,B_1})^{-1}$ and $(F_{B_2,B_1}^A)^{-1}$ to the intended inverses, and the cells $F^a_{B_2,B_1}$ and $F^A_{B_2,B_1}$  to identities -- that is, just when $F$ defines a tight binary map.  In summary, we obtain a natural bijection $\GCat(\ca \otimes \cb,\cc) \cong\GCatm_2^t(\ca,\cb;\cc)$, as required.
%

\subsection{The closed skew monoidal structure}\label{sect:lskewmonoidal}

Given Theorem~\ref{thm:summarylaxcase}, the construction of Section~\ref{sect:From} immediately yields: 

\begin{theorem}\label{thm:laxskewmonoidal}
There is a closed skew monoidal structure $(\GCat, \otimes_l, \mathbf{1})$ whose internal hom is $\lax(\ca,\cb)$.
\end{theorem}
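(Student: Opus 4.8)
The final statement, Theorem~\ref{thm:laxskewmonoidal}, asserts the existence of a closed skew monoidal structure $(\GCat,\otimes_l,\mathbf 1)$ with internal hom $\lax(\ca,\cb)$. Crucially, this theorem is stated as an \emph{immediate} consequence of Theorem~\ref{thm:summarylaxcase} via the machinery assembled in Section~\ref{sect:From}. So the plan is not to reprove anything from scratch, but simply to invoke the reduction already established.

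\medskip

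The plan is to proceed as follows. First I would recall Theorem~\ref{thm:summarylaxcase}, which states that the $4$-ary skew multicategory $\GCatm$ is both left representable and closed, with internal hom $\lax(\ca,\cb)$; this in turn packages together the three preceding results, namely that $\GCatm$ is a $4$-ary skew multicategory (the main theorem of Section~\ref{sect:shortmult}), that it is closed (the closedness theorem of Section~\ref{sect:skewmonlax}), and that it is left representable (nullary map classifier $\mathbf 1$ plus the existence of a left adjoint to each $\lax(\ca,-)$, by Proposition~\ref{prop:mappingspaceadjoint}). Then I would apply the general construction of Section~\ref{sect:From}: each left representable $4$-ary skew multicategory $\mc$ gives rise to a skew monoidal structure on its underlying category $\mc^t_1$ of tight unary maps, with tensor product given by the tight binary map classifier, unit given by the nullary map classifier, and associator and unit maps induced by the respective universal properties. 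Here $\mc^t_1 = \GCat$ by construction, the nullary map classifier is $\mathbf 1$, and the resulting tensor is what we name $\otimes_l$.

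\medskip

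Next I would invoke the closedness clause of Section~\ref{sect:From}: when $\mc$ is moreover closed, the composite natural isomorphism $\mc^t_1(ab;c)\cong\mc^t_2(a,b;c)\cong\mc^t_1(a;[b,c])$ shows that the induced skew monoidal structure is closed, with the same internal hom. Since $\GCatm$ is closed with internal hom $\lax(\ca,\cb)$, this yields exactly that $(\GCat,\otimes_l,\mathbf 1)$ is closed with internal hom $\lax(\ca,\cb)$, as claimed. I would note that the detailed verification of the skew monoidal axioms for this construction is precisely what is deferred to Lemma 4.5.8 of \cite{LobbiaThesis}, as flagged in Section~\ref{sect:From}, so no further axiom-checking is needed at this point.

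\medskip

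In truth there is no genuine obstacle remaining at this stage: all the difficulty has already been absorbed into Theorem~\ref{thm:summarylaxcase} and the abstract passage of Section~\ref{sect:From}. If I had to identify where care is needed, it would be only in confirming that the identifications are the expected ones --- that the tight unary maps really do form $\GCat$ (rather than $\GCat_p$), that the internal hom transported through closedness is literally Gohla's $\lax(\ca,\cb)$ and not merely isomorphic to it, and that the closedness of the $4$-ary skew multicategory feeds correctly into the closedness clause (which requires the bijection $\mc^t_2(a,b;c)\cong\mc^t_1(a;[b,c])$, i.e. $\lambda_2$ restricted to tight maps, established in Proposition~\ref{prop:binary maps}). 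Once these bookkeeping points are observed, the theorem follows formally. Accordingly the proof is a one-line deduction: by Theorem~\ref{thm:summarylaxcase} the $4$-ary skew multicategory $\GCatm$ is left representable and closed with internal hom $\lax(\ca,\cb)$, so the construction of Section~\ref{sect:From} produces the desired closed skew monoidal structure $(\GCat,\otimes_l,\mathbf 1)$.
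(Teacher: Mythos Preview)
Your proposal is correct and matches the paper's approach exactly: the paper presents Theorem~\ref{thm:laxskewmonoidal} as an immediate consequence of Theorem~\ref{thm:summarylaxcase} via the construction of Section~\ref{sect:From}, with no further argument given. Your elaboration of the bookkeeping (that $\mc^t_1 = \GCat$, that the nullary map classifier is $\mathbf{1}$, and that closedness transports along the construction) is accurate and faithful to what the paper sets up.
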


Let us describe the skew monoidal structure in a little more detail and explore some of its properties.

\begin{itemize}
\item The tensor product $\ca \otimes_l \cb$ is the tight binary map classifier, and the unit is the terminal Gray-category $\mathbf{1}$.
\item The left unit map $l_{\ca}\colon\mathbf{1} \otimes_l \ca \to \ca$ is, as described in Section~\ref{sect:From}, the counit of the adjunction with right adjoint the inclusion $j\colon\GCat \to \GCat_p$ of tight unary into loose unary.  Accordingly, this is the counit $p_{\ca}\colon Q\ca \to \ca$ of the pseudomorphism classifier comonad of Remark~\ref{rmk:comonad}.  Since the counit $p_{\ca}$ is a triequivalence, we conclude that $l_{\ca}\colon \mathbf{1} \otimes_l \ca \to \ca$ is a triequivalence for all $\ca$.
\item By construction, the right unit map $r_{\ca}$ is given by $\theta_{\ca,\mathbf{1}} \circ_2 u \colon\ca \to \ca \otimes_l \mathbf{1}$.  We will show that $r_{\ca}$ is invertible, so that the skew monoidal structure is right normal.  Indeed, by Yoneda, this is equivalent to show that the composite 
\begin{equation*}
\xymatrix{
\GCatm_1^t(\ca \otimes_l \mathbf{1};\cb) \ar[r]^{- \circ \theta_{\ca,\mathbf{1}}} & \GCatm_2^t(\ca,\mathbf{1};\cb) \ar[r]^{- \circ_2 u} & \GCatm_1^t(\ca;\cb)
}
\end{equation*}
is invertible.  Since its first component is so, we must show that $-\circ_2 u$
%\colon\GCatm_2^t(\ca,\mathbf{1};\cb) \to \GCatm_1^t(\ca;\cb)$ 
 is invertible.  But this is clearly the case, since a tight binary map $\ca,\mathbf{1} \to \cb$ just amounts to a strict map $\ca \to \cb$.
\item 
Let us show that the associator is not, in general, invertible.  One of the axioms for a skew monoidal category gives the commutative diagram below.
\begin{equation*}
\xymatrix{
(\mathbf{1} \otimes_l \mathbf{1}) \otimes_l \ca \ar[rr]^{\alpha_{\mathbf{1},\mathbf{1},\ca}} && \mathbf{1} \otimes_l (\mathbf{1} \otimes_l \ca) \ar[d]^{\mathbf{1} \otimes_l l_{\ca}} \\
\mathbf{1} \otimes_l \ca \ar[u]^{r_{\mathbf{1}} \otimes_l \ca} \ar[rr]_{1} && \mathbf{1} \otimes_l \ca
}
\end{equation*}
Since the left vertical is invertible the $\alpha_{\mathbf{1},\mathbf{1},\ca}$ is invertible just when the right vertical is.  This is $Qp_{\ca}\colon Q^2\ca \to Q\ca$. So, as long as $\ca$ has a composable pair of non-identity maps $f\colon x \to y$ and $g\colon y \to z$, $Qp$ will identify the parallel pair $[[gf]],\,[[g],[f]]\colon x \rightrightarrows z \in Q^2\ca$, and hence fail to be invertible.  In particular, the associator is not in general invertible.  

\end{itemize}

\section{The skew monoidal closed structure capturing pseudo-transformations}\label{sect:pseudo}

In this section, we introduce the notion of \emph{pseudo-transformation} and the Gray-category of pseudo-transformations $\psd(\ca,\cb)$.  We then describe a closed $4$-ary skew multicategory $\GCatmp$ of \emph{pseudo-multimaps} with internal hom $\psd(\ca,\cb)$.  Using this, we construct in Theorem~\ref{thm:pseudoskewmonoidal} a closed skew monoidal structure $(\GCat,\otimes_p,\mathbf{1})$ with internal hom $\psd(\ca,\cb)$.

Both pseudo-transformations and pseudo-binary maps enhance their lax counterparts by equipping them with certain adjoint equivalences.  As such, let us begin by fixing a little notation concerning adjoint equivalences in $2$-categories.

\subsection{Adjoint equivalences in $2$-categories}

Let $\ck$ be a $2$-category.  The category $\adje(\ck)$ of adjoint equivalences in $\ck$ has the same objects as $\ck$ whilst a morphism $(\epsilon, f \dashv g, \eta)\colon x \to y$ is an adjoint equivalence $(\epsilon, f \dashv g, \eta)$ with $g\colon x \to y$ the right adjoint.  
\begin{itemize}
\item Given adjoint equivalences $(\epsilon,f\dashv g,\eta)\colon x \to y$ and $(\epsilon',f'\dashv g',\eta')\colon y \to z$ the composite adjoint equivalence is given by $(\epsilon\cdot f\epsilon' g,\,ff'\dashv g'g,\,g'\eta f'\cdot\eta')\colon x\to z$.  
\item For $x \in \ck$, the identity adjoint equivalence is $1_x \dashv 1_x\colon x \to x$.
\end{itemize} 
A 2-functor $F\colon \ck \to \cl$ induces a functor $\adje(F) \colon \adje(\ck) \to \adje(\cl)$ sending $f \dashv g\colon x \to y$ to $Ff \dashv Fg\colon Fx \to Fy$.

%Given an adjoint equivalence, $\aexi=(\epsilon,f\dashv g,\eta):X \to Y$ in $\cc$, we write $F\aexi$ for the adjoint equivalence $(F\epsilon,Ff\dashv Fg,F\eta)$ in $\cd$.

\subsection{The Gray-category of pseudo-transformations}

Let us now define the Gray-category $\psd(\ca,\cb)$ of pseudo-transformations between two Gray-categories $\ca$ and $\cb$.  Its cells are defined as follows.
\begin{itemize}
\item \emph{0-cells:} pseudomaps $F\colon\ca\to\cb$. 
\item \emph{1-cells:} pseudo-transformations --- these are lax transformations $\alpha\colon F\to G$ equipped with, for any 1-cell $f\colon x\to y\in\ca$, an adjoint equivalence $\alpha^o_f \dashv \alpha_f\colon Gf\alpha_x \to \alpha_yFf$ such that:
	\begin{enumerate}[(a)]
	\item at an object $x\in\ca$, $\alpha^o_{1_x} \dashv \alpha_{1_x}$ is the identity adjoint equivalence;
	\item at a $2$-cell $\phi\colon f\to f'\in\ca$, the 3-cell $\alpha_\phi$ is invertible.
	\end{enumerate}
\item \emph{2-cells:} pseudo-modifications --- these are modifications $\Lambda\colon \alpha\to \beta$ such that, for any 1-cell $f\colon x\to y\in\ca$, the 3-cell $\Lambda_f$ is invertible. 
\item \emph{3-cells:} perturbations $\theta\colon\Lambda\to\Gamma$. 
\end{itemize}

\begin{rmk}
The notation $\alpha^o_f$ for the left adjoint of the adjoint equivalence is intended to suggest that $\alpha^o_f$ is the component of an \emph{oplax transformation}.  This hints at a more symmetric presentation of the notion of pseudo-transformation, which we will explore further in a followup paper. 
\end{rmk}

Our aim is now to equip $\psd(\ca,\cb)$ with the structure of a Gray-category such that the forgetful $3$-graph map $U\colon \psd(\ca,\cb) \to \lax(\ca,\cb)$ becomes a strict Gray-functor.  

The identity $1\colon F \to F$ pseudo-transformation is given by the identity in $\lax(\ca,\cb)$ equipped with the identity adjoint equivalences in each component.  Given a composable pair $\alpha\colon F\to G$ and $\beta\colon G\to H$ of pseudo-transformations, their composite $\beta \alpha\colon F \to H$ in $\psd(\ca,\cb)$ is given by their composite in $\lax(\ca,\cb)$ equipped with
\begin{itemize}
\item at $f\colon x \to y \in \ca$, the composite adjoint equivalence 
\[\begin{tikzcd}[ampersand replacement=\&]
	{Hf\beta_x\alpha_x} \&\& {\beta_y Gf \alpha_x} \&\& {\beta_y \alpha_y Ff.}
	\arrow[""{name=0, anchor=center, inner sep=0}, "{\beta_{f} \circ 1}", curve={height=-12pt}, from=1-1, to=1-3]
	\arrow[""{name=1, anchor=center, inner sep=0}, "{\beta^{o}_{f} \circ 1}", curve={height=-12pt}, from=1-3, to=1-1]
	\arrow["\top"{description}, draw=none, from=1, to=0]
	\arrow[""{name=2, anchor=center, inner sep=0}, "{1 \circ \alpha_f}", curve={height=-12pt}, from=1-3, to=1-5]
	\arrow[""{name=3, anchor=center, inner sep=0}, "{1 \circ \alpha^o_f}", curve={height=-12pt}, from=1-5, to=1-3]
	\arrow["\top"{description}, draw=none, from=3, to=2]
\end{tikzcd}\]
\end{itemize}
To see that $\beta \alpha$ is in fact a pseudo-transformation, firstly suppose that $f$ is an identity $1$-cell.  As $\alpha$ and $\beta$ are pseudo-transformations, $\beta^o_f \dashv \beta_f$ and $\alpha^o_f \dashv \alpha_f$ are identity adjoint equivalences.  Applying the whiskering $2$-functors $-\circ\alpha_x$ and $\beta_y\circ -$ produces identity adjoint equivalences, the two components depicted above, whose composite is therefore the identity adjoint equivalence too, as required.  Furthermore, the $3$-cell $(\beta \alpha)_{\phi}$ is invertible since, as described explicitly in Equation \eqref{comp-lax-2-cell-part} of \Cref{app:lax-gray-cat-str}, it is a composite of whiskers of the invertible 3-cells $\beta_\phi$ and $\alpha_\phi$.

Associativity of $1$-cell composition in $\psd(\ca,\cb)$ follows from associativity in $\lax(\ca,\cb)$, associativity of composition of adjoint equivalences in a $2$-category plus $2$-functoriality of whiskering by $1$-cells.  The unital laws easily follow, hence we get a category structure on $\psd(\ca,\cb)$ making $U$ a functor. With regards $2$-cells, we must show that pseudo-modifications are closed in $\lax(\ca,\cb)$ under vertical composition, whiskering by pseudomaps and identities.  In each case, we simply need to show that the $3$-cell components of the relevant modifications in $\lax(\ca,\cb)$ are invertible --- this follows directly from the descriptions of these operations on modifications in \Cref{app:lax-gray-cat-str}.  Since $3$-cells in $\psd(\ca,\cb)$ are just perturbations, we define their composites and the interchange maps just as in $\lax(\ca,\cb)$.  This establishes that $\psd(\ca,\cb)$ is a Gray-category and $U\colon \psd(\ca,\cb) \to \lax(\ca,\cb)$ a strict Gray-functor.

Next, let $F\colon\ca\to\ca'$ and $G\colon\cb\to\cb'$ be pseudo-functors between Gray-categories. Lifting the action of $\lax(-,-)$ and using that $F$ and $G$ are locally 2-functors (and hence preserve local adjoint equivalences), we obtain:
\begin{itemize}
\item A strict map $\psd(F,\cb)\colon\psd(\ca',\cb)\to\psd(\ca,\cb)$.
\item A pseudomap $\psd(\ca,G)\colon\psd(\ca,\cb)\to\psd(\ca,\cb')$ which is strict if $G$ is.
\end{itemize}

Both $\psd(F,\cb)$ and $\psd(\ca,G)$ act on $0$-cells, that is pseudomaps, by composition.  Consider a pseudo-transformation $\beta\colon G \to G' \in \psd(\ca',\cb)$. Then $\psd(F,\cb)(\beta)$ is the lax transformation $\beta F$ equipped with, at $f\colon x\to y \in\ca$, the adjoint equivalence $\beta^o_{Ff} \dashv \beta_{Ff}$.  Given $\alpha\colon F\to F' \in \psd(\ca,\cb)$, the pseudo-transformation $\psd(\ca,G)(\alpha)$ is the lax transformation $G\alpha$ equipped with, for $f\colon x\to y \in\ca$, the composite adjoint equivalence % https://q.uiver.app/?q=WzAsNCxbMSwwLCJHKEYnZlxcYWxwaGFfeCkiXSxbMywwLCJHKFxcYWxwaGFfeUZmKSJdLFswLDAsIkdGJ2ZHXFxhbHBoYV94Il0sWzQsMCwiR1xcYWxwaGFfeUdGZiJdLFswLDEsIkcoXFxhbHBoYV9mKSIsMCx7ImN1cnZlIjotMn1dLFsxLDAsIkcoXFxvdmVybGluZXtcXGFscGhhX2Z9KSIsMCx7ImN1cnZlIjotMn1dLFsxLDMsIntHXjJ9XnstMX0iLDAseyJvZmZzZXQiOi0yfV0sWzMsMSwiR14yIiwwLHsib2Zmc2V0IjotMn1dLFsyLDAsIkdeMiIsMCx7Im9mZnNldCI6LTJ9XSxbMCwyLCJ7R14yfV57LTF9IiwwLHsib2Zmc2V0IjotMn1dLFs1LDQsIlxcdG9wIiwxLHsic2hvcnRlbiI6eyJzb3VyY2UiOjIwLCJ0YXJnZXQiOjIwfSwic3R5bGUiOnsiYm9keSI6eyJuYW1lIjoibm9uZSJ9LCJoZWFkIjp7Im5hbWUiOiJub25lIn19fV1d
\[\begin{tikzcd}[ampersand replacement=\&]
	{GF'fG\alpha_x} \&\& {G(F'f\alpha_x)} \&\& {G(\alpha_yFf)} \&\& {G\alpha_yGFf}
	\arrow[""{name=0, anchor=center, inner sep=0}, "{G^2}", curve={height=-12pt}, from=1-1, to=1-3]
	\arrow[""{name=1, anchor=center, inner sep=0}, "{(G^2)^{-1}}", curve={height=-12pt}, from=1-3, to=1-1]
	\arrow[""{name=2, anchor=center, inner sep=0}, "{G(\alpha_f)}", curve={height=-12pt}, from=1-3, to=1-5]
	\arrow[""{name=3, anchor=center, inner sep=0}, "{G({\alpha^o_f})}", curve={height=-12pt}, from=1-5, to=1-3]
	\arrow[""{name=4, anchor=center, inner sep=0}, "{{G^2}^{-1}}", curve={height=-12pt}, from=1-5, to=1-7]
	\arrow[""{name=5, anchor=center, inner sep=0}, "{G^2}", curve={height=-12pt}, from=1-7, to=1-5]
	\arrow["\top"{description}, draw=none, from=1, to=0]
	\arrow["\top"{description}, draw=none, from=3, to=2]
	\arrow["\top"{description}, draw=none, from=5, to=4]
\end{tikzcd}\] 
in which the isomorphisms and their inverses, on the left and the right, are viewed as adjoint equivalences with identity counit and unit.  

The higher-dimensional structure of $\psd(F,\cb)$ and $\psd(\ca,G)$ is exactly as for $\lax(F,\cb)$ and $\lax(\ca,G)$.  With this description, we obtain a Gray-functor $\psd(-,-)\colon \GCat_p^{op} \times \GCat_p \to \GCat_p$ and natural transformation $U\colon \psd(-,-) \to \lax(-,-)$.  We leave to the reader the routine verifications of naturality, which, again, amount to checking the equality of a few adjoint equivalences.  

\subsection{The $4$-ary skew multicategory of Gray-categories and pseudo-multimaps}

We will construct the set of $n$-ary pseudo-multimaps inductively, together with a forgetful function
$\mathbb{U}_n\colon \GCatmp^{l}_{n}(\ca_1,\ldots,\ca_n,\cb) \to \GCatm^{l}_{n}(\ca_1,\ldots,\ca_n,\cb)$, and will sometimes refer to the elements of $\GCatm^{l}_{n}(\ca_1,\ldots,\ca_n,\cb)$ as \emph{lax} multimaps.

Nullary pseudo-multimaps are just objects as before; that is, we have 
$\GCatmp^l_0(\diamond;\ca):=\GCatm^l_0(\diamond;\ca)$
 with $\mathbb{U}_0$ the identity function.  For higher $n$, they are defined via the pullback
\begin{equation}
\label{eq:psd-pull-closedness0}
\xymatrix{
\GCatmp^{l}_{n+1}(\ca_1,\ldots,\ca_n,\cb;\cc)   \ar[d]_{\mathbb{U}_{n+1}} \ar[rr]^-{(\lambda_{n+1}^{p})^{-1}} && \GCatmp^{l}_n(\,\ca_1,\ldots,\ca_n;\psd(\cb,\cc)\,) \ar[d]^-{(U_{\cb,\cc} \circ_1 -) \circ \mathbb{U}_{n}} \\
\GCatm^{l}_{n+1}(\ca_1,\ldots,\ca_n,\cb;\cc)  \ar[rr]_-{(\lambda_{n+1})^{-1}} && \GCatm^l_n(\,\ca_1,\ldots,\ca_n;\lax(\cb,\cc)\,)
}
\end{equation}
where the lower horizontal is the inverse of the closedness bijection for lax multimaps constructed in \Cref{sect:shortmult}.  

Unpacking the above yields the following descriptions:

\begin{itemize}
\item A pseudo unary map $\mathbf F\colon\ca \to \cb$ is just a pseudomap $F\colon\ca \to \cb$.
\item A pseudo binary map $\mathbf F\colon\ca,\cb\to\cc$ consists of a lax binary map $F\colon\ca,\cb\to\cc$ in $\GCatm^l$ equipped with adjoint equivalences 
% https://q.uiver.app/?q=WzAsMixbMCwwLCJGXnthJ31fe0J9Rl57QX1fe2J9Il0sWzIsMCwiRl57QX1fe2InfUZee2F9X3tCfSJdLFswLDEsIkFfQiIsMCx7ImN1cnZlIjotMn1dLFsxLDAsIlxcb3ZlcmxpbmV7QV9CfSIsMCx7ImN1cnZlIjotMn1dLFszLDIsIlxcdG9wIiwxLHsic2hvcnRlbiI6eyJzb3VyY2UiOjIwLCJ0YXJnZXQiOjIwfSwic3R5bGUiOnsiYm9keSI6eyJuYW1lIjoibm9uZSJ9LCJoZWFkIjp7Im5hbWUiOiJub25lIn19fV1d
\[\begin{tikzcd}[ampersand replacement=\&]
	{F^{a'}_{B}F^{A}_{b}} \&\& {F^{A}_{b'}F^{a}_{B}}
	\arrow[""{name=0, anchor=center, inner sep=0}, "{A_B}", curve={height=-12pt}, from=1-1, to=1-3]
	\arrow[""{name=1, anchor=center, inner sep=0}, "{{A^o_B}}", curve={height=-12pt}, from=1-3, to=1-1]
	\arrow["\top"{description}, draw=none, from=1, to=0]
\end{tikzcd}\]
in $\cc(F^a_b,F^{a'}_{b'})$ such that:
\begin{enumerate}[(i)]
\item given objects $a\in\ca$ and $b\in\cb$, the adjoint equivalences ${A^o_{1_b}}\dashv A_{1_b}$ and ${{(1_a)}^o_B}\dashv{(1_a)}_B$ are identities;
\item given 1-cells $A\in\ca$ and $B\in\cb$ and 2-cells $\alpha\in\ca$ and $\beta\in\cb$, the $3$-cells $\alpha_B$ and $A_\beta$ in $\cc$ are invertible. 
\end{enumerate}
\item A pseudo ternary map $\mathbf F\colon\ca,\cb,\cc\to\cd$ consists of a lax ternary map $\mathbf F\colon\ca,\cb,\cc\to\cd$ with invertible incubators $(A\mid B\mid C)$ together with pseudo-binary maps $\mathbf{F}^a$, $\mathbf{F}\midscript{b}$ and $\mathbf{F}_{c}$ whose underlying lax binary maps are ${F}^a$, ${F}\midscript{b}$ and ${F}_{c}$
\item A pseudo 4-ary map $\mathbf{F}\colon\ca,\cb,\cc,\cd\to\ce$ consists of a lax 4-ary map $F\colon\ca,\cb,\cc,\cd\to\ce$  together with pseudo-ternary maps $\mathbf{F}^a$, $\mathbf{F}\midscript{b}$, $\mathbf{F}\midscript{c}$ and $\mathbf{F}_d$ satisfying the  six compatibility equations of Section~\ref{sect:4ary} (but with $\mathbf{F}$ substituted for $F$) and whose underlying lax ternary maps are $F^a$, $F\midscript{b}$, $F\midscript{c}$ and $F_d$.
\end{itemize}

When $n=0$, the sets $\GCatmp^l_0(\diamond;\ca)$ form a functor $\GCatmp^l_0(\diamond;-)\colon \GCat_p^{op} \to \Set$ for which the (identity) components $\mathbb{U}_0\colon  \GCatmp^l_0(\diamond;\ca) \to \GCatm^l_0(\diamond;\ca)$ are natural in $\ca$.  Using the established naturality of $U\colon \psd(-,-) \to \lax(-,-)$, by induction the sets $\GCatmp^{l}_{n}(\ca_1,\ldots,\ca_n,\cb)$ obtain the unique structure of a functor $\GCatmp^l_{n}(-;-)\colon (\GCat_p^n)^{op} \times \GCat_p \to \Set$ for which the pullback projections in \eqref{eq:psd-pull-closedness0} are natural in each variable.  Replacing the horizontal bijections in the pullback square by their inverses, this is equally to say that the pullback projections in
\begin{equation}
\label{eq:psd-pull-closedness}
\xymatrix{
\GCatmp^{l}_n(\,\ca_1,\ldots,\ca_n;\psd(\cb,\cc)\,) \ar[d]_{(U_{\cb,\cc} \circ_1 -) \circ \mathbb{U}_{n}} \ar[r]^-{\lambda_{n+1}^{p}} & \GCatmp^{l}_{n+1}(\ca_1,\ldots,\ca_n,\cb;\cc) \ar[d]^-{\mathbb{U}_{n+1}} \\
\GCatm^l_n(\,\ca_1,\ldots,\ca_n;\lax(\cb,\cc)\,) \ar[r]_
-{\lambda_{n+1}} & \GCatm^{l}_{n+1}(\ca_1,\ldots,\ca_n,\cb;\cc)
}
\end{equation}
are natural.

Now that we have the $n$-ary multimap functors $\GCatmp^l_{n}(-;-)$ for all $n$, it remains to describe the substitutions for the $4$-ary multicategory.
%With regards the substitutions, 
Let us split them into two groups.  All are straightforward enhancements of the lax case --- none involve checking any equations between adjoint equivalences, but only verifying that some $3$-cells are invertible. 
\begin{itemize}
\item Firstly, we consider substitutions involving nullary maps.  Given $\mathbf{F}\colon \ca,\cb \to \cc$, we define $\mathbf{F} \circ_1 a = F^a$ and $\mathbf{F} \circ_2 b = F_b$, noting that unary maps are the same in the lax and pseudo-cases.  Given $\mathbf{F}\colon \ca,\cb,\cc \to \cd$, we define $\mathbf{F} \circ_1 a = \mathbf{F}^a$, $\mathbf{F} \circ_2 b = \mathbf{F} \midscript{b}$ and $\mathbf{F} \circ_3 c = \mathbf{F}_{c}$.  Similarly, given $\mathbf{F}\colon \ca,\cb,\cc ,\cd\to \ce$, we define $\mathbf{F} \circ_1 a = \mathbf{F}^a$, $\mathbf{F} \circ_2 b = \mathbf{F} \midscript{b}$, $\mathbf{F} \circ_3 c = \mathbf{F}\midscript{c}$ and $\mathbf{F} \circ_4 d = \mathbf{F}_{d}$. Naturality is routine.  The associativity equations involving two nullary and a pseudo-binary, pseudo-ternary or pseudo-4-ary map hold as a special case of the corresponding equations for $\GCatm$.
\item In the cases of the remaining substitutions, we note that those involving unary maps are already defined --- via the $n$-ary multimaps functors $\GCatmp^l_{n}(-;-)$.  It remains to consider the higher substitutions --- namely, binary into binary, ternary into binary and binary into ternary.  These are defined just as in the tables --- Table~\ref{Tab:binbin}, \ref{Tab:ternbin} and \ref{Tab:bintern} --- from the case of $\GCatm$.  For instance, we define substitutions of binary into binary via the following table, which modifies \Cref{Tab:binbin} only by replacing the lax maps $F$ and $G$ with their enhanced pseudomaps $\mathbf{F}$ and $\mathbf{G}$.  

\begin{table}[ht]
\begin{center}
\begin{tabular}{cc:c c c c c c c c}
$\mathbf{G} \circ_1 \mathbf{F}$ &&
$\mathbf{G} \circ_1 \mathbf{F}^a$ && $\mathbf{G} \circ_1 \mathbf{F}_b$ && $\mathbf{G}_c \circ_1 \mathbf{F}$ && $\textnormal{Incubator in Appendix}~\ref{app:inc-sub-b-in-b-1st}$ \\
$\mathbf{G} \circ_2 \mathbf{F}$ &&
$\mathbf{G}^a \circ_1 \mathbf{F}$ && $\mathbf{G} \circ_2 \mathbf{F}^b$ && $\mathbf{G} \circ_2 \mathbf{F}_c$ && $\textnormal{Incubator in Appendix}~\ref{app:inc-sub-b-in-b-2nd}$ \\
\end{tabular}
\end{center}
%  \label{Tab:binbin}
\end{table}

Note that the substitutions of pseudo-binary and unary in the table have been defined above, so that for these to give pseudo-ternary maps, it suffices to check that the incubators in the cited appendix are invertible, which, as the composite of invertible $3$-cells, is indeed the case.
\end{itemize}

The remaining associativity equations then follow from the descriptions in these tables, just as for the case of $\GCatm^l$ in the preceding section.  In particular, we obtain a $4$-ary multicategory $\GCatmp^l$ and $4$-ary multifunctor $\mathbb{U}^l\colon\GCatmp^l \to \GCatm^l$.  

Declaring a pseudo-multimap $\mathbf{F}$ to be \emph{tight} just when its underlying lax multimap $F$ is tight in $\GCatm$ then equips $\GCatmp^l$ with the structure of a $4$-ary skew multicategory $\GCatmp$ such that $\mathbb{U}\colon\GCatmp \to \GCatm$ is a multifunctor of $4$-ary skew multicategories.
%preserves substitution and reflects tightness, the conditions for the tight pseudo-multimaps to equip $\GCatmp$ with the structure of a $4$-ary \emph{skew} multicategory trivially follow from the corresponding conditions for the $4$-ary skew multicategory $\GCatm$.

\begin{theorem}\label{thm:summarypseudocase}
The $4$-ary skew multicategory $\GCatmp$ is left representable and closed, with internal hom $\psd(\cb,\cc)$.
\end{theorem}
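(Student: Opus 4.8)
The plan is to mirror, step for step, the development that produced Theorem~\ref{thm:summarylaxcase} in the lax case, establishing closedness first and left representability second, and at each stage checking that the only genuinely new ingredient -- the adjoint equivalence data carried by pseudo-multimaps -- causes no difficulty. Throughout I would exploit that pseudo-multimaps were \emph{defined} by the pullback squares \eqref{eq:psd-pull-closedness0} and \eqref{eq:psd-pull-closedness}, so that much of the structure is inherited from $\GCatm$ essentially by construction, together with the forgetful multifunctor $\mathbb{U}\colon\GCatmp \to \GCatm$.

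Closedness is almost immediate. The upper horizontal arrows $\lambda^p_{n+1}$ in \eqref{eq:psd-pull-closedness} are pullback projections along the bijections $\lambda_{n+1}$ of the lax case, hence are themselves bijections; this already exhibits the required closedness isomorphisms on loose multimaps. I would then set $ev^p := \lambda^p_2(1_{\psd(\cb,\cc)})\colon\psd(\cb,\cc),\cb \to \cc$ and note that its underlying lax binary map is exactly the lax $ev$, which is tight; since tightness of a pseudo-multimap is by definition tightness of its underlying lax multimap, $ev^p$ is tight. The identities $\lambda^p_{n+1}(F) = ev^p \circ_1 F$, and the fact that each $\lambda^p_{n+1}$ restricts to a bijection on tight multimaps, then follow from the very same formal computation used to prove $\GCatm$ closed in Theorem~\ref{thm:summarylaxcase}: one needs only the pseudo analogues of Lemmas~\ref{lemma:lambda12} and \ref{lemma:lambda23}, which drop out of the lax versions once one applies $\mathbb{U}$ and uses commutativity of the pullback squares.

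For left representability I would invoke the reduction of Section~\ref{sect:closedrep}: a closed $4$-ary skew multicategory is left representable exactly when it has a nullary map classifier and each internal-hom endofunctor $\psd(\cb,-)\colon\GCat \to \GCat$ admits a left adjoint. The nullary map classifier is again the terminal Gray-category $\mathbf{1}$, with the identical proof as for $\GCatm$: nullary pseudo-maps coincide with nullary lax maps (both are objects), and tight unary maps out of $\mathbf{1}$ are strict Gray-functors, so $-\circ_1 u$ is a bijection. It remains to produce the left adjoints, and here I would run the argument of Proposition~\ref{prop:mappingspaceadjoint} with $\psd$ in place of $\lax$: since $\GCat$ is locally finitely presentable, it suffices by the adjoint functor theorem to show that each $\psd(\cb,-)$ preserves limits and is accessible. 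Limit-preservation is routine -- limits in $\GCat$ are computed on underlying $3$-graphs, the conditions defining a pseudo-transformation (identity adjoint equivalences at identity $1$-cells, invertibility of the relevant $3$-cells) are equational and so created by the limit projections, adjoint equivalences are themselves equationally defined so that $\adje(-)$ preserves limits, and $\lax(\cb,-)$ already preserves limits.

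The main obstacle is accessibility of $\psd(\cb,-)$. I would adapt the path-space computation of Proposition~\ref{prop:mappingspaceadjoint}: reduce, via the jointly conservative representables $\GCat(D_i,-)$, to showing each $\psd(\cb,-)_i$ preserves limits and filtered colimits, observing that for $i=0,3$ this functor agrees with $\lax(\cb,-)_i$ (pseudomaps and perturbations are unchanged) and is already handled, so only the $1$- and $2$-cell levels are new. For these I would introduce a \emph{pseudo path space} $P^{eq}_\bullet := \psd(\mathbf{2},-)_\bullet$ enhancing Gohla's cylinders of Remark~\ref{rmk:PathSpace2} with the chosen adjoint equivalence data and invertibility, verify that $P^{eq}_\bullet\cc$ underlies an internal Gray-category in $\GCat_p$ so that $\psd(\cb,-) \cong \GCat_p(\cb,P^{eq}_\bullet-) \cong \GCat(Q\cb,-)\circ P^{eq}_\bullet$, and build $P^{eq}_2,P^{eq}_3$ as finite limits via the analogue of \eqref{eq:po}. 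Accessibility then reduces to the statement that the representing \emph{free pseudo $i$-cylinders} are finite Gray-categories -- which is where the adjoint equivalence data must be controlled, since an adjoint equivalence contributes only one reverse $2$-cell, two invertible $3$-cells and the two triangle identities, while each imposed invertibility is encoded by the (finite) free invertible $3$-cell. Confirming this finiteness, and hence that each $P^{eq}_i$ preserves limits and finitary filtered colimits, is the crux; everything else transfers verbatim from the lax case.
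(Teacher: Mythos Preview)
Your closedness argument is essentially the paper's: define $\mathbf{ev}$ as $\lambda^p_2(1_{\psd(\cb,\cc)})$, observe its tightness descends along $\mathbb{U}$, and verify $\lambda^p_{n+1} = \mathbf{ev}\circ_1-$. The paper makes the last step a bit more explicit than you do: it first notes that both functions agree after applying $\mathbb{U}_{n+1}$, and then, since a pseudo $(n{+}1)$-ary map is determined by its underlying lax map together with its pseudo $n$-ary components, checks those components agree by induction. For the tight restriction the paper invokes that $U_{\cb,\cc}\colon\psd(\cb,\cc)\to\lax(\cb,\cc)$ \emph{reflects identity $2$-cells}; you should name that ingredient.

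For accessibility of $\psd(\ca,-)$ your route diverges from the paper's and contains one slip. The slip: $\psd(\ca,-)_3$ does \emph{not} agree with $\lax(\ca,-)_3$, since perturbations in $\psd$ lie between pseudo-modifications, a proper subset; the paper handles $i=3$ by the evident pullback of $\lax(\ca,-)_3$ over $(\psd(\ca,-)_2)^2$. More substantially, for $i=2$ the paper does \emph{not} iterate a pseudo path space via an analogue of \eqref{eq:po}. Instead it writes $\psd(\ca,\cb)_2$ directly as a wide pullback of $\lax(\ca,\cb)_2$, the functor $\Set(\ca_1,\mathrm{Iso}(\cb_2))$ encoding invertibility of each $\Lambda_f$, and $(\psd(\ca,\cb)_1)^2$ encoding that source and target are pseudo-transformations; each leg is already known to preserve limits and filtered colimits (the middle because $\mathrm{Iso}(-)$ is represented by the free invertible $2$-cell), so the finite-limits-commute argument finishes at once. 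This entirely avoids verifying that $P^{eq}_\bullet$ underlies an internal Gray-category in $\GCat_p$, that the iteration \eqref{eq:po} reproduces $\psd(\mathbf{2},-)_n$, and that the free pseudo $2$- and $3$-cylinders are finite --- all plausible, but each a genuine piece of work. Your route might be made to succeed, but the paper's is shorter and recycles more of the lax case.
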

\begin{proof}
We will first establish closedness.  To this end, observe that evaluating the natural bijection $\lambda_2^p$ in Equation~\ref{eq:psd-pull-closedness} at $1 \colon \psd(\cb,\cc) \to \psd(\cb,\cc)$ yields a pseudo binary map $\mathbf{ev}\colon \psd(\cb,\cc),\cb \to \cc$ such that $\lambda^p_2 =  \mathbf{ev} \circ_1 -$.  Moreover, chasing the identity around the commutative square of Equation~\ref{eq:psd-pull-closedness} we see that $\mathbb{U}\mathbf(\mathbf{ev}) = ev \circ_1 U_{\cb,\cc} \colon \psd(\cb,\cc),\cb \to \cc$ where  $ev \colon \lax(\cb,\cc),\cb \to \cc$ is the universal lax binary map of Theorem~\ref{thm:closed}.
%\begin{center}
%\begin{tikzpicture}[triangle/.style = {fill=yellow!50, regular polygon, regular polygon sides=3,rounded corners}]
%
%\path
%	(5.5,1.5) node [triangle,draw,shape border rotate=-90,inner sep=-1pt] (b') {$U^l_{\cb,\cc}$} 
%	(9,1) node [triangle,draw,shape border rotate=-90,label=135:{$\lax(\cb,\cc)$},label=230:$\cb$,inner sep=-2pt] (c') {$ev_{\cb,\cc}$};
%
%\draw[-] (2.5,1.5) to node[above] {$\psd(\cb,\cc)$} (b');
%\draw [-] (7.5,0.5) to (c'.223);
%\draw [] (b') .. controls +(right:1cm) and +(left:1cm).. (c'.137);
%\draw [] (c') to node[fill=white] {$\cc$} (11,1);
%\end{tikzpicture}
%\end{center} 
This equation, plus commutativity of Equation~\ref{eq:psd-pull-closedness}, ensure that both functions $$\mathbf{ev} \circ_1 - , \lambda^p_{n+1}\colon \GCatmp^{l}_n(\,\ca_1,\ldots,\ca_n;\psd(\cb,\cc)\,) \rightrightarrows \GCatmp^{l}_{n+1}(\ca_1,\ldots,\ca_n,\cb;\cc)$$ coincide upon post-composition by $\mathbb{U}_{n+1}$.
%$\mathbb{U}_{n+1}:\GCatmp^{l}_{n+1}(\ca_1,\ldots,\ca_n,\cb;\cc) \to \GCatm^{l}_{n+1}(\ca_1,\ldots,\ca_n,\cb;\cc)$.  
Our task is to prove that they coincide in general.  The case $n=0$ is trivial since $\mathbb{U}_{1}$ is the identity function whilst the case $n=1$ is true by definition of $\mathbf{ev}$.  It remains to consider the cases $n=2,3$.

Consider then $\mathbf{F} \in \GCatmp^{l}_n(\ca_1,\ca_2;\psd(\cb,\cc))$.  By the above, we know that $\mathbf{ev} \circ_1 \mathbf{F}$ and $\lambda^p_3 \mathbf{F}$ have the same underlying lax ternary maps.  Therefore it remains to show that their associated pseudo-binary maps coincide.  To prove this, we use that, much as in the lax case of Proposition~\ref{prop:ternary-maps}, the ternary map $\lambda^p_3 \mathbf{F}$ has components $\lambda^p_2 (\mathbf{F}^{a_1}),\lambda^p_2 (\mathbf{F}_{a_2})$ and $\mathbf{ev} _b \circ_1 \mathbf{F}$ where $\mathbf{ev}_b\colon\psd(\cb,\cc) \to \cc$ denotes the evaluation Gray-functor.  

We then have the first equality in the sequence $$(\lambda^p_3 \mathbf{F})^{a_1} = \lambda^p_2 (\mathbf{F}^{a_1}) = \mathbf{ev} \circ_1 (\mathbf{F} \circ_1 a_1) =  (\mathbf{ev} \circ_1 \mathbf{F}) \circ_1 a_1 = (\mathbf{ev} \circ_1 \mathbf{F})^{a_1}.$$
The second equality holds by definition of $\mathbf{ev}$, the third by associativity and the fourth by definition of substitution of nullary maps.  Similarly $(\lambda^p_3 \mathbf{F})\midscript{a_2} = (\mathbf{ev} \circ_1 \mathbf{F})\midscript{a_2}$.  Finally, we have $(\lambda^p_3 \mathbf{F})_{b} =\mathbf{ev} _b \circ_1 \mathbf{F} = (\mathbf{ev} \circ_2 b) \circ_1 \mathbf{F} = (\mathbf{ev} \circ_1 \mathbf{F}) \circ_3 b = (\mathbf{ev} \circ_1 \mathbf{F})_b$.  This completes the case $n=3$ and the case $n=4$ is similar.  

Having established that $\GCatmp^l$ is a closed $4$-ary multicategory, we must also show that the bijections $\lambda^{p}_{n+1}$ restrict to bijections on tight multimaps.  So consider $\mathbf{F} \in \GCatmp^{l}_n(\ca_1,\ldots,\ca_n;\psd(\cb,\cc))$.  By definition it is tight just when $\mathbb{U}_n \mathbf{F}$ is tight.  But since the strict map $U_{\cb,\cc}\colon \psd(\cb,\cc) \to \lax(\cb,\cc)$ reflects identity 2-cells, this is the case precisely when $U_{\cb,\cc} \circ \mathbb{U}_n\mathbf{F}\colon \GCatmp^{l}_n(\ca_1,\ldots,\ca_n;\lax(\cb,\cc))$ is tight.  Since $\GCatm$ is, by Theorem~\ref{thm:closed}, closed in the skew sense, this will in turn be the case just when $\lambda^l_{n+1}(U_{\cb,\cc} \circ \mathbb{U}_n \mathbf{F})$ is tight.  By Equation~\ref{eq:psd-pull-closedness}, this equals $\mathbb{U}_{n+1}(\lambda^p_{n+1} \mathbf{F})$, and so will be tight just when $\lambda^p_{n+1} \mathbf{F}$ is, as required.

Having established closedness, we turn to left representability.  The nullary map classifier $u\colon (-) \to \mathbf{1}$ is as for $\GCatm$.  Therefore, as in that setting, it remains to prove that the internal hom functor, now $\psd(\ca,-)\colon \GCat \to \GCat$, has a left adjoint.  Arguing as in the proof of Proposition~\ref{prop:mappingspaceadjoint}, it suffices to show that $\psd(\ca,-)_i\colon \GCat \to \Set$ preserves limits and is accessible for $i=0,1,2,3$.  Since $\psd(\ca,-)_0 = \lax(\ca,-)_0 \cong \GCat(QA,-)$, the case $i=0$ is as before.  For $i=1$, we write $P_{+}:= \psd(\mathbf 2,-)\colon \GCat \to \GCat$ for the \emph{pseudo} path space functor.  It is straightforward to show that we have a bijection $\psd(\ca,\cb)_1 \cong \GCat_p(\ca,P_{+}\cb)$ natural in $\cb$ --- equally, a natural bijection $\psd(\ca,\cb)_1 \cong \GCat(Q\ca,P_{+}\cb)$ so that, again arguing as in the lax case, it suffices to prove that $P_{+}$ preserves limits and is accessible.  As for $P$, this is the case since $P_{+}$ is representable in each component.

It remains to prove that $\psd(\ca,-)_i$ preserves limits and is accessible for $i=2,3$.  To this end, consider the diagram below left.  
\begin{equation*}
\adjustbox{scale=0.9}{
\xymatrix{
\psd(\ca,\cb)_2 \ar[dd]_{} \ar[dr] \ar[rr]^{} && \Set(\ca_1,Iso(\cb_2)) \ar[d]^{incl} \\
& \lax(\ca,\cb)_2 \ar[d]_{\langle s, t \rangle} \ar[r]^{k_{\ca,\cb}} & \Set(\ca_1,\cb_2) \\
(\psd(\ca,\cb)_1)^2 \ar[r]_{(U_{\ca,\cb})_1^2} & (\lax(\ca,\cb)_1)^2
}
\hspace{0.3cm}
\xymatrix{
\psd(\ca,\cb)_3 \ar[d]_{} \ar[r]^{} & \lax(\ca,\cb)_3 \ar[d]^{\langle s,t \rangle} \\
(\psd(\ca,\cb)_2)^2 \ar[r]_{(U_{\ca,\cb})_2^2} & (\lax(\ca,\cb)_2)^2
}
}
\end{equation*}

Here $k_{\ca,\cb} \colon \lax(\ca,\cb)_2 \to \Set(\ca_1,\cb_2)$ sends a modification $\theta$ to the $\ca_1$-indexed family of $2$-cells $(\theta_f)_{f \in \ca_1}$.  We denote by $incl \colon Iso(\cb) \hookrightarrow \cb_2$ the inclusion of the set of invertible $2$-cells in $\cb$ and note that the functor $Iso(-)\colon \GCat \to \Set$ is represented by the free invertible $2$-cell.  Since a pseudo-modification $\Lambda\colon\alpha \to \beta$ is an ordinary modification for which each $\Lambda_f$ is invertible, and for which $\alpha$ and $\beta$ are pseudo-transformations, the diagram above left exhibits $\psd(\ca,\cb)_2$ as a wide pullback, naturally in $\cb \in \GCat$.  The functors $\Set(\ca_1,Iso(-_2))$ and $\Set(\ca_1,(-)_2)$ preserve limits and are accessible since both are composite of representables.  The functor $\psd(\ca,-)_1$ has been covered above, whilst $\lax(\ca,-)_2$ and $\lax(\ca,-)_1$ have the same preservation properties, as established in the proof of Proposition~\ref{prop:mappingspaceadjoint}.  As such, the finite limit $\psd(\ca,-)_2$ has the same preservation properties since finite limits commute with limits and filtered colimits in the l.f.p. category $\GCat$.

Finally, the simple case of $i=3$ follows from the pullback square depicted above right. 
\end{proof}

\subsection{The associated skew monoidal closed structure}\label{sect:pskewmonoidal}

Given Theorem~\ref{thm:summarypseudocase}, the construction of Section~\ref{sect:From} again yields: 

\begin{theorem}\label{thm:pseudoskewmonoidal}
There is a closed skew monoidal structure $(\GCat, \otimes_p, \mathbf{1})$ whose internal hom is $\psd(\ca,\cb)$.
\end{theorem}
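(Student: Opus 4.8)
The plan is to deduce the theorem directly from Theorem~\ref{thm:summarypseudocase} by invoking the general passage from left representable closed $4$-ary skew multicategories to closed skew monoidal categories described in Section~\ref{sect:From}, exactly as was done for the lax case in Theorem~\ref{thm:laxskewmonoidal}. Since all of the substantial work has already been carried out in constructing $\GCatmp$ and verifying its left representability and closedness, the proof should amount to little more than identifying the resulting structure.

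First I would recall that, by Theorem~\ref{thm:summarypseudocase}, the $4$-ary skew multicategory $\GCatmp$ is left representable and closed, with internal hom $\psd(\cb,\cc)$. The construction of Section~\ref{sect:From} then applies verbatim: a left representable $4$-ary skew multicategory induces a skew monoidal structure on its category $\mc^t_1$ of tight unary maps, whose tensor product is the tight binary map classifier and whose unit is the nullary map classifier, with associator and unit maps obtained from the universal properties of the classifiers.

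Next I would identify the relevant pieces explicitly. The underlying category of tight unary maps of $\GCatmp$ is $\GCat$: pseudo-unary maps coincide with the loose unary maps of $\GCatm$, namely the pseudomaps, and a pseudo-multimap is declared tight precisely when its underlying lax multimap is tight, so the tight unary maps are exactly the Gray-functors, just as for $\GCatm$. The nullary map classifier is again $u\colon(-) \to \mathbf{1}$, furnishing the unit $\mathbf{1}$, and the tight binary map classifier provides the tensor product, which we denote $\otimes_p$. Finally, since $\GCatmp$ is closed with internal hom $\psd(\cb,\cc)$, the closedness clause at the end of Section~\ref{sect:From} --- the composite natural isomorphism $\GCatmp^t_1(\ca \otimes_p \cb;\cc) \cong \GCatmp^t_2(\ca,\cb;\cc) \cong \GCatmp^t_1(\ca;\psd(\cb,\cc))$ --- shows that the skew monoidal structure on $\GCat$ is closed, with the same internal hom $\psd(\ca,\cb)$.

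There is essentially no obstacle remaining, as everything reduces to the already-established Theorem~\ref{thm:summarypseudocase} together with the formal machinery of Section~\ref{sect:From}. The only point requiring a moment's care is the identification of the category of tight unary maps with $\GCat$, which is immediate from the definition of tightness for pseudo-multimaps, namely that $\mathbf{F}$ is tight just when its underlying lax multimap is.
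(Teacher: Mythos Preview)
Your proposal is correct and follows exactly the same approach as the paper: the theorem is stated immediately after Theorem~\ref{thm:summarypseudocase} with the single sentence ``Given Theorem~\ref{thm:summarypseudocase}, the construction of Section~\ref{sect:From} again yields,'' so your expanded identification of the tight unary maps, unit, tensor, and internal hom is a faithful (and slightly more detailed) rendering of the paper's own argument.
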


 Let us describe the skew monoidal structure in a little more detail.

\begin{itemize}
\item The unit is the terminal Gray-category $\mathbf{1}$ as before.
\item The tensor product $\ca \otimes_p \cb$ is the tight pseudo-binary map classifier.  It can be presented by adapting the presentation of $\ca \otimes_l \cb$ in Section~\ref{sect:presentationlax} by adjoining further:
\vspace{0.1cm}
\begin{itemize}
\item[\underline{$2$-cells}:] 
	\begin{itemize}
	\item[$\bullet$] at $A\colon a \to a'$ and $B\colon b \to b'$, a $2$-cell $A^{o}_B \colon T_{b'}(A)T^{a}(B) \to T^{a'}(B)T_b(A)$;
	\end{itemize}
\item[\underline{$3$-cells}:] 
	\begin{itemize}\vspace{0.1cm}
	\item[$\bullet$] at $A\colon a \to a'$ and $B\colon b \to b'$, $3$-cells $\epsilon_{A,B} \colon A^{o}_B A_B \to 1$ and $\eta_{A,B} \colon 1 \to A_B A^{o}_B$, along with $3$-cells $(\epsilon_{A,B})^{-1}$ and $(\eta_{A,B})^{-1}$;
	\item[$\bullet$] Further $3$-cells $(T^{\alpha}_B)^{-1}$ and $(T^{A}_{\beta})^{-1}$.
	\end{itemize}
\item[\underline{Equations}:] 
        \begin{itemize}\vspace{0.1cm}
\item[$\bullet$] The triangle equations for an adjunction $A^{o}_B \dashv A_B$ plus the equations that $(\epsilon_{A,B})^{-1}$ and $(\eta_{A,B})^{-1}$ are the intended inverses so that we have an adjoint equivalence.  Furthermore the equations asserting that $(T^{\alpha}_B)^{-1}$ and $(T^{A}_{\beta})^{-1}$ are the intended inverses.  Finally the equations that $A^{o}_B, \epsilon_{A,B}$ and $\eta_{A,B}$ are identities whenever $A$ or $B$ is an identity. 
\end{itemize}
	\end{itemize}
%\underline{Equations}: The triangle equations for an adjunction $A^{o}_B \dashv A_B$ plus the equations that $(\epsilon_{A,B})^{-1}$ and $(\eta_{A,B})^{-1}$ are the intended inverses so that we have an adjoint equivalence.  Furthermore the equations asserting that $(T^{\alpha}_B)^{-1}$ and $(T^{A}_{\beta})^{-1}$ are the intended inverses.  Finally the equations that $A^{o}_B, \epsilon_{A,B}$ and $\eta_{A,B}$ are identities whenever $A$ or $B$ is an identity. 

\item The left unit map $l_{\ca}\colon \mathbf{1} \otimes_p \ca\to\ca$ is again the counit $p_{\ca}\colon Q\ca \to \ca$ whilst the right unit map $r_\ca\colon \ca \to \ca \otimes_p \mathbf{1}$ is, as before, invertible. The example of Section~\ref{sect:pskewmonoidal} again shows that the associator is not invertible.
\end{itemize}

In a followup paper, we will show that $(\GCat, \otimes_p, \mathbf{1})$ forms a symmetric skew monoidal structure in the sense of \cite{BourkeLack:symmetric}.
\section{Restricting to strict maps in $\lax(\ca,\cb)$ and $\psd(\ca,\cb)$}\label{sect:sharp}

The reader may have noticed that the full sub-Gray-categories $\lax(\ca,\cb)_{s}$ and $\psd(\ca,\cb)_{s}$ of $\lax(\ca,\cb)$ and $\psd(\ca,\cb)$ containing the strict maps form Gray-categories too.  In this section, we show that these form the internal homs of closed left representable $4$-ary multicategories $\GCatm^{\sharp}$ and $\GCatmp^{\sharp}$, and so give rise to skew monoidal closed structures on $\GCat$ which are both left and right normal.  

In fact the $4$-ary multicategories $\GCatm^{\sharp}$ and $\GCatmp^{\sharp}$ are obtained in a universal way from $\GCatm$ and $\GCatmp$, by applying the construction sending a $4$-ary skew multicategory to its $4$-ary multicategory of \emph{sharp maps}, which we introduce below.  

Let $\mc$ be a $k$-ary skew multicategory, and let us re-emphasise that we require that the functions $\mc^t_n(x_1,\ldots,x_n;y) \subseteq \mc^l_n(x_1,\ldots,x_n;y)$ are subset inclusions, since this condition is used in the constructions below.  The notion of sharp multimap is defined inductively, as follows. 
\begin{itemize}
\item Each nullary map is sharp.
\item For $n>0$, a multimap $f\colon a_1,\ldots, a_n \to b$ is sharp if it is tight and $f \circ _i x$ is sharp for each nullary $x \colon \diamond \to a_i$ and $1 \leq i \leq n$.
\end{itemize}

\begin{eg}
A binary map $F\colon\ca,\cb\to\cc \in \GCatm$ (that is, a lax binary map) is sharp if it is tight and, moreover, each $F^a$ is a Gray-functor.  In more elementary terms, a lax binary map $F\colon\ca,\cb\to\cc$ is sharp just when each of $F^a$ and $F_b$ are Gray-functors and the $3$-cells $F^{A_2,A_1}_{B}$ are identities.  

A binary map $\mathbf F \colon\ca,\cb\to\cc \in \GCatmp$ (that is, a pseudo binary map) is sharp in just when its underlying lax binary map is sharp.
\end{eg}

We let $\mc^{\sharp}$ have the same objects as $\mc^l$ and have multimaps the sharp ones.

\begin{prop}
$\mc^{\sharp}$ is a $k$-ary sub-multicategory of $\mc^{l}$.
\end{prop}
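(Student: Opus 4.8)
The plan is to verify the two defining properties of a sub-multicategory: that $\mc^{\sharp}$ contains the unary identities of $\mc^l$ and that it is closed under the single-variable substitutions $\circ_i$. Since sharp multimaps form a subset of the loose ones and the arity bound $k$ is inherited, the unit and associativity equations \eqref{eq:classic-ass-line}--\eqref{eq:classic-ass-not-line} then hold in $\mc^{\sharp}$ automatically, being equations among loose multimaps that already hold in $\mc^l$. This is precisely what is required for $\mc^{\sharp}$ to be a $k$-ary sub-multicategory.

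First I would dispatch the identities. By the skew multicategory axioms the unary identity $1_a$ is tight, and for any nullary $x\colon()\to a$ the unit equation gives $1_a \circ_1 x = x$, which is nullary and hence sharp by definition. Thus $1_a$ is sharp, and the nullary maps are sharp by fiat.

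The substance is closure under substitution: given sharp $g\colon b_1,\ldots,b_n \to c$ and sharp $f\colon a_1,\ldots,a_m \to b_i$, I must show that $g \circ_i f$ is sharp. I would argue by induction on $n+m$. The base case is $m=0$ (so $f$ is nullary), where $g \circ_i f$ is sharp immediately from the defining clause for sharpness of $g$. For the inductive step, tightness of $g \circ_i f$ follows from the skew axioms governing tightness of composites: when $i=1$ these use tightness of both $g$ and $f$, and when $i \neq 1$ only tightness of $g$, all of which hold since $g,f$ are sharp. It then remains to check that $(g \circ_i f) \circ_p x$ is sharp for every nullary $x$ and every input slot $p$, splitting into cases according to whether $p$ lands in a slot inherited from $f$ or from $g$. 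For a slot of $f$, the associativity equation \eqref{eq:classic-ass-line} rewrites the composite as $g \circ_i (f \circ_l x)$, where $f \circ_l x$ is sharp of arity $m-1$; for a slot of $g$ (whether before or after $i$), equation \eqref{eq:classic-ass-not-line} rewrites it as $(g \circ_j x) \circ f$ (in an appropriate slot), where $g \circ_j x$ is sharp of arity $n-1$. In each case the total arity strictly decreases, so the inductive hypothesis applies and the composite is sharp; hence $g \circ_i f$ is sharp.

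The main obstacle is purely bookkeeping: correctly tracking the index shifts when a nullary map is substituted into a slot of $g \circ_i f$, and matching them against the precise forms of \eqref{eq:classic-ass-line} and \eqref{eq:classic-ass-not-line}, so that the reductions land exactly on $g \circ_i (f \circ_l x)$ and $(g \circ_j x) \circ f$ with the correct subscripts. There is no conceptual difficulty beyond this: the two associativity laws are exactly the tool that lets a nullary substitution commute past the substitution of $f$, and the inductive definition of sharpness is tailored so that these reductions terminate at nullary maps.
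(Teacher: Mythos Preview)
Your proof is correct and takes essentially the same approach as the paper: an induction that uses the two associativity laws to push a nullary substitution past $\circ_i$, reducing to a composite of sharp maps of strictly smaller total arity. The only cosmetic difference is that the paper inducts on the arity of $g \circ_i f$ rather than on $n+m$, which forces it to handle the nullary-$f$ case inside the inductive step; your choice of base case $m=0$ slightly streamlines the tightness argument.
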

\begin{proof}
The unary identities are tight and so sharp.  Therefore it remains to prove that $g \circ_i f$ is sharp if $g$ and $f$ are.  We argue by induction on the degree $n$ of $g \circ_i f$.  If $n=0$, it is trivial, since all nullary maps are sharp.  Suppose then that $g \circ_i f$ has degree $n+1$.  
Firstly, we show it is tight.  If $f$ is of degree $0$, then $g$ has degree at least $2$.  Hence since $g \circ_i f$ is sharp of degree $\geq 1$, it is tight.  Otherwise both $g$ and $f$ have degree at least $1$ and so are both tight by assumption.  Hence their composite $g \circ_i f$ is tight.  

It remains to show that $(g \circ_i f) \circ_j x$ is sharp for each nullary $x$.   Let $f$ have degree $m$.  There are two cases.  If $i \leq j \leq i + m$ (so that $x$ is substituted inside $f$) then associativity gives us $(g \circ_i f) \circ_j x = g \circ_{p} (f \circ_q x)$ for the appropriate $p$ and $q$.  In particular, the right hand side is a composite of sharp multimaps of composite degree $\leq n$ and so sharp by assumption.  Otherwise ($x$ substituted outside of $f$) associativity gives $(g \circ_i f) \circ_j x = (g \circ_{p} x) \circ_{q} f$ for suitable $p$ and $q$ which, again, has composite degree $\leq n$ and so is sharp by induction.

%It remains to show that $(G \circ_i F) \circ_j X$ is sharp for each nullary $X$.  There are three cases.
%\begin{itemize}
%\item If $j < i$, then $(G \circ_i F) \circ_j X = (G \circ_j X) \circ_{i-1} F$.
%\item If $i \leq j \leq i + m$, then $(G \circ_i F) \circ_j X = G \circ_i (F \circ_{j+1-i} X)$.
%\item If $i + m < j$, then $(G \circ_i F) \circ_j X = (G \circ_j X) \circ_{j} F$
%\end{itemize}
%
%\begin{align*}
%(G\circ_i F)\circ_{j+i-1}X = G \circ_i(F\circ_j X)   \hspace{0.5cm} &\textnormal{for} \hspace{0.5cm} 1 \leq i \leq m, 1 \leq j \leq n   %\label{eq:classic-ass-line}
%\\
%(G\circ_i F)\circ_{n+j-1}X=(G \circ_j X) \circ_{i} F \hspace{0.5cm} &\textnormal{for} \hspace{0.5cm} 1 \leq i<j \leq m. %\label{eq:classic-ass-not-line} 
%\end{align*}
\end{proof}

The $k$-ary multicategory $\mc^{\sharp}$ has a universal property.  To see this, consider the full inclusion $I \colon \kmulti \hookrightarrow \skkmulti$ viewing a $k$-ary multicategory as a $k$-ary skew multicategory in which all loose morphisms of arity $>0$ are tight.  It restricts to a full inclusion $I \colon \kmulti^{0} \hookrightarrow \skkmulti^{0}$ where, on either side, we take the \emph{$0$-fully faithful morphisms}: those $F \colon \mc \to \md$ for which each function $ \mc_0(\diamond;a) \to \md_0(\diamond;Fa)$ is a bijection.

%Now let us write $I \colon \mathbf{Mult}^{0}_k \hookrightarrow \mathbf{SkMult}^{0}_k$ for the restricted inclusion.

\begin{prop}
The inclusion $I \colon \kmulti^{0} \hookrightarrow \skkmulti^{0}$ has a right adjoint, whose value at a $k$-ary skew multicategory $\mc$ is the $k$-ary multicategory $\mc^{\sharp}$.
\end{prop}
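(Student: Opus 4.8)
The goal is to establish an adjunction $I \dashv (-)^{\sharp}$ between the categories $\kmulti^{0}$ and $\skkmulti^{0}$ of $0$-fully-faithful morphisms, with the right adjoint computing $\mc^{\sharp}$. The cleanest route is to exhibit, for each $k$-ary skew multicategory $\mc$, a \emph{counit} $\varepsilon_{\mc} \colon I(\mc^{\sharp}) \to \mc$ in $\skkmulti^{0}$ and verify its universal property directly, rather than constructing unit, counit and checking triangle identities. First I would note that $\mc^{\sharp}$ is a genuine $k$-ary multicategory by the preceding proposition, so $I(\mc^{\sharp})$ makes sense; the counit $\varepsilon_{\mc}$ is then simply the identity-on-objects multifunctor sending a sharp multimap to itself, viewed as a loose (indeed tight) multimap of $\mc$. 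Since sharp multimaps include all nullary maps and these coincide on both sides, $\varepsilon_{\mc}$ is $0$-fully faithful, so it lives in $\skkmulti^{0}$.

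The main content is the universal property: given any $k$-ary multicategory $\cn$ and a $0$-fully-faithful multifunctor $G \colon I(\cn) \to \mc$ in $\skkmulti^{0}$, I must show $G$ factors uniquely through $\varepsilon_{\mc}$ as a multifunctor $\widehat{G} \colon \cn \to \mc^{\sharp}$ in $\kmulti^{0}$. On objects $\widehat{G}$ must agree with $G$. For multimaps, the key observation is that since $G$ lands in $\skkmulti$ and every multimap of $\cn$ of arity $>0$ is tight (by definition of the inclusion $I$), $G$ must send each such multimap to a \emph{tight} multimap of $\mc$, because multifunctors of skew multicategories preserve tightness. The crux is then to prove that $G$ in fact sends every multimap to a \emph{sharp} one, so that $\widehat{G}$ is forced to equal $G$ on multimaps as well, giving both existence and uniqueness of the factorisation.

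The heart of the argument is therefore the claim: \emph{if $G \colon I(\cn) \to \mc$ is $0$-fully faithful, then $Gf$ is sharp for every multimap $f$ of $\cn$.} This I would prove by induction on arity, mirroring the inductive definition of sharpness. Nullary maps are sharp automatically, and $0$-full-faithfulness guarantees $G$ restricts to a bijection on nullary maps, which is what lets the induction interact correctly with substitution by nullary maps. For the inductive step at a multimap $f$ of arity $n>0$: such $f$ is tight in $I(\cn)$, so $Gf$ is tight in $\mc$; and for each nullary $x \colon () \to a_i$, multifunctoriality gives $(Gf) \circ_i Gx = G(f \circ_i x)$, where $f \circ_i x$ has lower arity and so $G(f \circ_i x)$ is sharp by the inductive hypothesis. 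Since $0$-full-faithfulness makes every nullary map of $\mc$ in the relevant hom of the form $Gx$, this shows $Gf \circ_i (-)$ is sharp on all nullary inputs, which is exactly the condition for $Gf$ to be sharp. The one subtlety to watch --- and the place I expect to spend the most care --- is confirming that $0$-full-faithfulness really does supply, for each nullary map $y \colon () \to G a_i$ in $\mc$, a preimage $x$ with $Gx = y$; this is precisely the bijectivity of $\cn_0(-;a_i) \to \mc_0(-;Ga_i)$, and it is what guarantees that sharpness of $Gf$ (a condition quantified over \emph{all} nullary maps of $\mc$) is detected by the finitely many composites coming from $\cn$. Once this claim is in hand, the factorisation $\widehat{G} = G$ is immediate and unique, $\widehat{G}$ is a multifunctor because $G$ is, and it is $0$-fully faithful since $G$ is; naturality of the resulting bijection $\kmulti^{0}(\cn, \mc^{\sharp}) \cong \skkmulti^{0}(I\cn, \mc)$ in both variables follows routinely from the fact that the bijection is the identity on underlying data.
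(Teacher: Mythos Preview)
Your proposal is correct and follows essentially the same approach as the paper: exhibit the identity-on-objects inclusion $I(\mc^{\sharp}) \to \mc$ as the counit, then verify its universal property by proving inductively on arity that any $0$-fully-faithful $G \colon I(\cn) \to \mc$ sends every multimap to a sharp one, using $0$-full-faithfulness to lift nullary maps of $\mc$ back to $\cn$. Your write-up is somewhat more detailed than the paper's, but the argument is the same.
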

\begin{proof}
We have an identity on objects $0$-fully faithful multifunctor $I(\mc^{\sharp}) \to \mc$ which we will show the be the counit of the adjunction. This amounts to showing, given a $k$-ary multicategory $\md$ and a multifunctor $F \colon I(\md) \to \mc$, that each multimorphism in the image of $F$ is sharp and we prove this by induction. The case $n=0$ is trivial since each nullary multimorphism is sharp. Suppose it is true for multimaps of degree $\leq n$, and consider $f \colon a_1, \ldots, a_{n+1} \to b\in I(\md)$.  Since $f$ is tight, so is $Ff$.  Given $y \colon \diamond \to Fa_i\in\mc$, it remains to show that $Ff \circ_i y$ is sharp.  But since $F$ is $0$-fully faithful, there exists a unique nullary map $x \colon \diamond \to a_i\in I(\md)$ such that $Fx = y$.  Then $Ff \circ_i y = Ff \circ_i Fx = F(f \circ_i x)$ which is sharp by induction, since $f \circ_i x$ has degree $n$.
\end{proof}
%To see this, let $\mathbf{Mult}_k$ (resp. $\mathbf{Sk-Mult}_k$) denote the categories of $k$-ary multicategories (resp. $k$-ary skew multicategories)  and multifunctors between them
In order to prove that $\GCatm^{\sharp}$ and $\GCatmp^{\sharp}$ are left representable, we will use the following result.

\begin{prop}\label{prop:sharp2}
Let $k \geq 2$.  If $\mc$ is a left representable $k$-ary skew multicategory and the category of tight unary maps $\mc^t_1$ is cocomplete, then $\mc^{\sharp}$ admits binary and nullary map classifiers. 
\end{prop}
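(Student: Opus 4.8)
The plan is to analyse the two classifiers separately, reducing each to the corresponding classifier that already exists in $\mc$ (which is available since $\mc$ is left representable) and then checking that the relevant universal multimap is sharp and that its universal property restricts correctly to sharp maps. First I would treat the \emph{nullary map classifier}. Since sharp nullary maps are exactly the nullary maps of $\mc^l$, and since $\mc$ has a left universal nullary map classifier $u \colon (-) \to i$, I would show that the same $u$ serves as a nullary map classifier in $\mc^{\sharp}$. The content to verify is that the induced function $-\circ_1 u \colon \mc^{\sharp}_{n+1}(i,\overline{c};d) \to \mc^{\sharp}_n(\overline{c};d)$ is a bijection; injectivity and surjectivity follow from the corresponding statement in $\mc$ once one checks that $f \circ_1 u$ is sharp iff $f$ is, and that a sharp extension is unique. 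The key point is that substituting the nullary classifier into a sharp map preserves sharpness in both directions, which is immediate from the inductive definition of sharpness.

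The more substantial part is the \emph{binary map classifier}. Here I would \emph{not} expect the tight binary map classifier $\theta_{a,b}$ of $\mc$ to work directly, because $\theta_{a,b}$ need only be tight, whereas in $\mc^{\sharp}$ we need a \emph{sharp} classifying map representing the sharp binary maps $\mc^{\sharp}_2(a,b;-)$. The natural construction is to build the binary map classifier in $\mc^{\sharp}$ as a \emph{coequalizer-type colimit} in the cocomplete category $\mc^t_1$, forcing the relevant $3$-cells $F^{A_2,A_1}_B$ (and the failure of $F^a$ to be strict) to become identities. Concretely, starting from the tight binary map classifier $a \otimes b := \theta_{a,b}$ in $\mc$, whose universal property classifies tight binary maps, I would impose the extra sharpness relations by taking a suitable quotient/colimit $a \otimes^{\sharp} b$ in $\mc^t_1$; the universal sharp binary map $\theta^{\sharp}_{a,b} \colon a,b \to a \otimes^{\sharp} b$ is then obtained by composing $\theta_{a,b}$ with the colimiting map. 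This is precisely where cocompleteness of $\mc^t_1$ is used, and it is the main obstacle: one must identify exactly which diagram to take the colimit of so that the resulting representation classifies sharp binary maps rather than merely tight ones, and then verify that $\theta^{\sharp}_{a,b}$ is itself sharp.

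For the verification of the universal property I would argue as follows. A sharp binary map $F \colon a,b \to c$ is a tight binary map such that each $F \circ_1 x$ is sharp; by the universal property of $\theta_{a,b}$ it corresponds to a unique tight unary map $a \otimes b \to c$, and the sharpness condition on $F$ translates into the condition that this unary map coequalizes precisely the relations defining $a \otimes^{\sharp} b$. Hence sharp binary maps $a,b \to c$ correspond bijectively to unary maps $a \otimes^{\sharp} b \to c$, which must themselves be shown to be tight (indeed sharp), giving the required representation of $\mc^{\sharp}_2(a,b;-)$. The only genuinely delicate step is checking that the colimiting map $a \otimes b \to a \otimes^{\sharp} b$ is again tight, so that $\theta^{\sharp}_{a,b}$ lands in the tight subcategory and is sharp; this should follow from the fact that colimits are computed in $\mc^t_1$ and that tightness is closed under the substitutions involved. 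I expect the bookkeeping of which $3$-cells must be trivialised, and the precise indexing of the defining diagram, to absorb most of the work, while the bijection itself is then formal.
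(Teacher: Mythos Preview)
Your high-level plan is in the right direction, but you have conflated the abstract setting with the specific Gray-category application. The proposition concerns an arbitrary left representable $k$-ary skew multicategory, so talk of ``$3$-cells $F^{A_2,A_1}_B$'' and ``the failure of $F^a$ to be strict'' is out of place: those are features of $\GCatm$, not of the abstract $\mc$. The relevant abstract observation is much simpler than you anticipate: a \emph{tight} binary map $f\colon a,b \to c$ is sharp iff $f \circ_1 x$ is tight for every nullary $x\colon (-) \to a$, since the other condition --- that each $f \circ_2 y$ be tight --- is automatic from the skew multicategory axioms (tightness is preserved by $\circ_i$ for $i\neq 1$). This is the characterisation you need, and it removes all the ``bookkeeping'' you were worried about.

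The paper then translates this condition through the existing classifiers. Writing $\overline{f}\colon a\otimes b \to c$ and $\overline{x}\colon i \to a$ for the adjoint maps, tightness of $f\circ_1 x$ says exactly that $\overline{f}\circ(\overline{x}\otimes 1)\colon i\otimes b \to c$ factors through the left unit map $l_b\colon i\otimes b \to b$. Since $l_b$ is epi (it is the counit of $i\otimes{-}\dashv j$ with $j$ faithful), such a factorisation is unique when it exists. Packaging all $\overline{x}$ at once via the copower $\mc^t_1(i,a)\cdot(i\otimes b)$, the sharp binary map classifier $a\star b$ is the pushout of $\mc^t_1(i,a)\cdot l_b$ along the canonical map $\mc^t_1(i,a)\cdot(i\otimes b) \to a\otimes b$. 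So your ``coequalizer-type colimit'' is a single pushout in $\mc^t_1$, and the diagram you were unsure how to index is completely explicit. The nullary classifier is, as you guessed, unchanged --- and note that only the \emph{classifier} property is claimed here, not left universality, so your first paragraph proves more than is needed.
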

\begin{proof}
The nullary map classifier $i$ is unchanged.  Now a tight binary map $f\colon a,b \to c$ is sharp just when for each nullary map $x\colon \diamond \to a$, the composite $f \circ_1 x\colon  b \to c$ is tight.  Let $\overline{f}\colon a \otimes b \to c$ denote the corresponding map out of the tight binary map classifier and $\overline{x}\colon i \to a$ the corresponding map out of $i$.  Expressed in these terms, the condition that $f \circ_1 x\colon b \to c$ is tight says that the (upper horizontal, right vertical) composite in the diagram below factors through $l_b\colon i \otimes b \to c$ as depicted.
\begin{equation*}
\xymatrix{i \otimes b \ar[d]_{l_b} \ar[r]^{\overline{x} \otimes 1} & a \otimes b \ar[d]^{\overline{f}} \\
b \ar[r]_{\exists} & c}
\end{equation*}
Here $l_b\colon i \otimes b \to b$ is the left unit map of the corresponding skew monoidal structure.  Note that $l_b$ is epi since, following Section~\ref{sect:From}, it is the counit of the adjunction $i\otimes- \dashv j$ with faithful right adjoint $j \colon \mc^t_1 \subseteq \mc^l_1$.  In particular, such a factorisation is unique.  

We can express the condition for all $\overline{x}$ simultaneously by requiring that the (upper horizontal, right vertical) composite in the diagram below left
\begin{equation*}
\xymatrix{\mc^t_1(i,a) . i \otimes b \ar[d]_{ \mc^t_1(i,a) . l_b } \ar[r]^-{\epsilon} & a \otimes b \ar[d]^{\overline{f}} \\
\mc^t_1(i,a) . b \ar[r]_{\exists} & c}
\hspace{1cm}
\xymatrix{\mathbb C^t_1(i,a) . i \otimes b \ar[d]_{\mc^t_1(i,a) . l_b } \ar[r]^-{\epsilon} & a \otimes b \ar[d]^{} \\
\mc^t_1(i,a) . b \ar[r] & a \star b}
\end{equation*}

factors through the left vertical epimorphism,  where $\epsilon$ is the morphism from the copower whose component at $\overline{x}\colon i \to a$ is $\overline{x} \otimes 1\colon i \otimes b \to a \otimes b$.  Accordingly, such an $\overline{f}$ bijectively corresponds to a map out of the pushout $a \star b$ in the right diagram --- that is, the pushout classifies sharp binary multimaps.

\end{proof}

\begin{theorem}\label{thm:sharpmulti}
Both the $4$-ary multicategories $\GCatm^{\sharp}$ and $\GCatmp^{\sharp}$ are left representable and closed, with internal homs $\lax(\cb,\cc)_{s}$ and $\psd(\cb,\cc)_{s}$.
\end{theorem}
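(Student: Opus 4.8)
The plan is to treat both cases uniformly, establishing closedness first by restricting the closedness bijections already available for $\GCatm$ and $\GCatmp$, and then deducing left representability from Proposition~\ref{prop:sharp2}. I would write the argument for $\GCatm^{\sharp}$ in full; the case of $\GCatmp^{\sharp}$ is word-for-word the same, reading $\psd$ for $\lax$ and invoking Theorem~\ref{thm:summarypseudocase} in place of Theorem~\ref{thm:summarylaxcase}.

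For closedness, write $\iota\colon\lax(\cb,\cc)_{s} \hookrightarrow \lax(\cb,\cc)$ for the full inclusion (a strict, hence sharp, unary map) and set $ev_{s} := ev \circ_1 \iota\colon\lax(\cb,\cc)_{s},\cb \to \cc$, where $ev$ is the universal lax binary map witnessing closedness of $\GCatm$. Recalling that $ev^{X} = X$ for every object $X$ of $\lax(\cb,\cc)$, one checks immediately that $ev_{s}$ is sharp: it is tight as a composite of tight maps, $ev_{s} \circ_1 F = F$ is strict for each strict $F \in \lax(\cb,\cc)_{s}$, and $ev_{s} \circ_2 b = ev_{b} \circ_1 \iota$ is a Gray-functor. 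I then claim the functions $ev_{s}\circ_1-\colon\GCatm^{\sharp}_{n}(\overline{a};\lax(\cb,\cc)_{s}) \to \GCatm^{\sharp}_{n+1}(\overline{a},\cb;\cc)$ are bijections for $0 \le n < 4$, where $\overline{a} = \ca_1,\ldots,\ca_n$. Given a sharp $G\colon\overline{a},\cb \to \cc$, it is in particular tight, so closedness of $\GCatm$ (which restricts to tight maps) gives a unique tight $P\colon\overline{a} \to \lax(\cb,\cc)$ with $ev \circ_1 P = G$. The two things to verify are that $P$ factors through $\lax(\cb,\cc)_{s}$ and that the resulting map is sharp. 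For the first, substituting nullary maps $x_1,\ldots,x_n$ into the first $n$ slots and using associativity gives $ev \circ_1 (P \circ (x_1,\ldots,x_n)) = G \circ (x_1,\ldots,x_n)$; the left side equals the object-value $P\circ(x_1,\ldots,x_n)$ since $ev^{X}=X$, while the right side is a sharp unary map $\cb \to \cc$, hence a Gray-functor, hence strict. Thus every object-value of $P$ lies in the full subcategory $\lax(\cb,\cc)_{s}$, so $P = \iota \circ_1 P'$ for a unique $P'\colon\overline{a} \to \lax(\cb,\cc)_{s}$. Sharpness of $P'$ then follows by induction on $n$: it is tight, and for nullary $x\colon()\to\ca_i$ the composite $P' \circ_i x$ is the transpose of $G \circ_i x$, which is sharp since $G$ is, the base case $n=0$ being trivial. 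Finally $ev_{s} \circ_1 P' = ev \circ_1 (\iota \circ_1 P') = ev \circ_1 P = G$, and injectivity of $\iota \circ_1 -$ together with uniqueness of $P$ yields uniqueness of $P'$, so $ev_{s}$ exhibits $\lax(\cb,\cc)_{s}$ as internal hom.

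For left representability, the category of tight unary maps of both $\GCatm$ and $\GCatmp$ is $\GCat$, which is locally finitely presentable (as shown in the proof of Proposition~\ref{prop:mappingspaceadjoint}) and hence cocomplete. Since $\GCatm$ and $\GCatmp$ are left representable by Theorems~\ref{thm:summarylaxcase} and~\ref{thm:summarypseudocase}, Proposition~\ref{prop:sharp2} supplies nullary and binary map classifiers for $\GCatm^{\sharp}$ and $\GCatmp^{\sharp}$. Having already established closedness, the reduction of Section~\ref{sect:closedrep} — which applies to a multicategory viewed as a skew multicategory in which every multimap is tight — shows these classifiers are automatically left universal, so both $4$-ary multicategories are left representable.

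The main obstacle is the closedness step, specifically the bookkeeping that the single condition that the transpose $P$ both factors through the strict maps and is sharp is exactly matched by sharpness of $G$. The crux is the identity $ev^{X}=X$, which converts an object-value of $P$ into the restricted unary map $G\circ(x_1,\ldots,x_n)$; once this is in hand, both the factorisation through $\lax(\cb,\cc)_{s}$ and the inductive verification of sharpness are routine, and the pseudo-case needs no new ideas beyond replacing $\lax$ by $\psd$ throughout.
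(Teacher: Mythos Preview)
Your proof is correct and uses the same overall strategy as the paper: deduce left representability from Proposition~\ref{prop:sharp2} (together with cocompleteness of $\GCat$) and establish closedness by restricting the closedness bijections already known for $\GCatm$ and $\GCatmp$, then invoke Section~\ref{sect:closedrep}.

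The difference lies in how you verify the restriction of closedness. The paper proceeds case-by-case through $n=0,1,2,3$, unpacking at each stage what it means for the transpose $P$ to factor through $\lax(\cb,\cc)_{s}$ via a sharp map, and explicitly checking that sharpness of the ``last slot'' restriction $F_b$ is redundant given sharpness of the others. You instead give a uniform inductive argument: the identity $ev^{X}=X$ forces every object-value of the transpose $P$ to be strict, fullness of $\iota$ then yields the factorisation $P = \iota \circ_1 P'$, faithfulness of $\iota$ transfers tightness to $P'$, and sharpness of $P' \circ_i x$ follows from the inductive hypothesis applied to $G \circ_i x$. Your route is more streamlined and avoids the arity-specific redundancy arguments; the paper's route is more explicit about exactly which conditions on the transpose are automatic. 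Both arguments hinge on the same observation --- that $\iota$ is a full sub-Gray-category inclusion --- but you exploit it structurally while the paper unwinds it dimension by dimension.
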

\begin{proof}
Since both $\GCatm$ and $\GCatmp$ are left representable, and since $\GCat$ is cocomplete, by Proposition~\ref{prop:sharp2} both $\GCatm^{\sharp}$ and $\GCatmp^{\sharp}$ admit nullary and binary map classifiers.  Therefore, by Section~\ref{sect:closedrep} it remains to show that both are closed.  The arguments in both cases are identical in form, and we will give the proof in the case of $\lax(\cb,\cc)_s$.

%Firstly, observe that the Gray-categories $I_{\ca,\cb}:\lax(\ca,\cb)_sh \hookrightarrow \lax(\ca,\cb)$ restrict $\lax(-,-)$ to a functor $\lax(-,-)_sh:\GCat^{op} \times \GCat \to \GCat$ and natural inclusion $\lax(-,-)_{sh} \to \lax(-,-)$.  

Let us write $I_{\cb,\cc}\colon\lax(\cb,\cc)_{s} \hookrightarrow \lax(\cb,\cc)$ for the inclusion.  We will prove that the closedness bijections for $\GCatm$ on the bottom row below

\begin{equation*}
\xymatrix{
\GCatm^{\sharp}_n(\,\ca_1,\ldots,\ca_n;\lax(\cb,\cc)_{s}\,) \ar[d]_{I_{\cb,\cc} \circ_1 -} \ar[r]^-{\lambda_{n+1}^{\sharp}} & \GCatm^{\sharp}_{n+1}(\ca_1,\ldots,\ca_n,\cb;\cc) \ar[d]^-{} \\
\GCatm^{t}_n(\,\ca_1,\ldots,\ca_n;\lax(\cb,\cc)\,) \ar[r]_
-{\lambda_{n+1}} & \GCatm^{t}_{n+1}(\ca_1,\ldots,\ca_n,\cb;\cc)
}
\end{equation*}
restrict to bijections as on the top row --- this will imply closedness, since it implies that $\lambda_{n+1}^{\sharp}(F) = \lambda_{n+1}(I_{\ca,\cb} \circ F) = (ev \circ_1 I_{\cb,\cc}) \circ F$, and so exhibits $ev \circ_1 I_{\cb,\cc} \colon \lax(\cb,\cc)_{s},\cb \to \cc$ as witnessing the bijections on the top row.

The case $n=0$ is trivial.  For $n=1$, consider a tight binary map $\overline{F}\colon \ca_1 \to \lax(\cb,\cc)$.  The associated tight binary map $F\colon \ca_1,\cb \to \cc$ will be sharp just when $F^{a_1}$ and $F_{b}$ are strict.  Since $F$ is tight, this is just to say that each $F^{a_1}$ is strict --- that is, belongs to $\lax(\cb,\cc)_{s}$ --- which is to say that $\overline{F}$ factors uniquely through $\lax(\cb,\cc)_{s}$.

For $n=2$, consider a tight binary map $\overline{F}\colon \ca_1,\ca_2 \to \lax(\cb,\cc)$ with $F\colon \ca_1,\ca_2,\cb \to \cc$ the associated tight ternary map.  To say that $\overline{F}$ factors through $\lax(\cb,\cc)_s \hookrightarrow \lax(\cb,\cc)$ via a sharp binary map is, since the inclusion is full, equally to say that the unary maps $\overline{F}^{a_1}$ and $\overline{F}_{a_2}$ lift to strict maps $\ca_i \to \lax_s(\cb,\cc)$.  By the preceding part, this is equally to say that the corresponding tight binary maps $F^{a_1}$ and $F\midscript{a_2}$ are sharp.  It remains, therefore, to show that the condition that $F_b$ is sharp is redundant.  Indeed, since $F$ is tight, so is $F_b$, so we must only show that $(F_b)^{a_1}$ and $(F_b)_{a_2}$ are strict, but these equal $(F^{a_1})_b$ and $(F \midscript{a_2})_b$ which are strict since $F^{a_1}$ and $F\midscript{a_2}$ are sharp.

For $n=3$, consider a tight ternary map $\overline{F}\colon \ca_1,\ca_2, \ca_3 \to \lax(\cb,\cc)$ with associated tight $4$-ary map $F\colon \ca_1,\ca_2,\ca_3,\cb \to \cc$.  Arguing as before, to say that $\overline{F}$ factors through $\lax(\cb,\cc)_s \hookrightarrow \lax(\cb,\cc)$ via a sharp ternary map is just to say that each of the ternary maps $F^{a_1}$, $F\midscript{a_2}$ and $F\midscript{a_3}$ are sharp.  Therefore it suffices to show the condition that $F_b$ is sharp is redundant.  Since $F_b$ is tight, this follows from the fact that $(F_b)^{a_1} = (F^{a_1})_b$, $(F_b)\midscript{a_2} = (F\midscript{a_2})_b$ and $(F_b)_{a_3} = (F\midscript{a_3})_b$ and each of these are sharp.

\end{proof}

\subsection{The associated skew monoidal closed structure}\label{sect:sharpskewmonoidal}

As usual, combining Theorem~\ref{thm:sharpmulti} with Section~\ref{sect:From}, we obtain:

\begin{theorem}\label{thm:sharpskewmonoidal}
There are closed skew monoidal structures $(\GCat, \otimes^{\sharp}_l, \mathbf{1})$ and $(\GCat, \otimes^{\sharp}_p, \mathbf{1})$, having internal homs $\lax(\ca,\cb)_s$ and $\psd(\ca,\cb)_s$.  
\end{theorem}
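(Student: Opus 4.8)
The plan is to invoke the general machinery assembled in Section~\ref{sect:From} and apply it to the two $4$-ary multicategories $\GCatm^{\sharp}$ and $\GCatmp^{\sharp}$ produced in Theorem~\ref{thm:sharpmulti}. Concretely, the whole statement should follow as a routine corollary of the preceding results, so the proof will consist in checking that the hypotheses of Section~\ref{sect:From} are met and then reading off the conclusion. First I would recall that Theorem~\ref{thm:sharpmulti} establishes that both $\GCatm^{\sharp}$ and $\GCatmp^{\sharp}$ are left representable and closed $4$-ary multicategories, with internal homs $\lax(\ca,\cb)_s$ and $\psd(\ca,\cb)_s$ respectively. Since a $4$-ary multicategory may be regarded as a $4$-ary skew multicategory in which every multimap is tight, these are in particular left representable $4$-ary skew multicategories to which the construction of Section~\ref{sect:From} applies.

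The key step is then to apply that construction verbatim. It produces, from any left representable $4$-ary skew multicategory $\mc$, a skew monoidal structure on the underlying category $\mc^t_1$ of tight unary maps, with tensor the tight binary map classifier and unit the nullary map classifier; and when $\mc$ is moreover closed, the closed natural isomorphism $\mc^t_1(ab;c) \cong \mc^t_1(a;[b,c])$ shows the skew monoidal structure is closed with the same internal hom $[b,c]$. For $\GCatm^{\sharp}$ the category of tight unary maps is $\GCat$ (the unary maps, tight or loose, are unchanged by passing to the sharp multicategory, since all nullary maps are sharp and unary identities are tight), and the internal hom is $\lax(\ca,\cb)_s$; this yields the closed skew monoidal structure $(\GCat, \otimes^{\sharp}_l, \mathbf{1})$. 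The identical argument applied to $\GCatmp^{\sharp}$ yields $(\GCat, \otimes^{\sharp}_p, \mathbf{1})$ with internal hom $\psd(\ca,\cb)_s$. One should note that the unit remains the terminal Gray-category $\mathbf{1}$, since the nullary map classifier was shown to be unchanged in the sharp setting, and that the tensor products $\otimes^{\sharp}_l$ and $\otimes^{\sharp}_p$ are by definition the sharp binary map classifiers $\ca \star \cb$ exhibited as pushouts in the proof of Proposition~\ref{prop:sharp2}.

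The only genuine subtlety, and the step I would be most careful about, is the claim made in the surrounding text that these structures are \emph{both left and right normal}. Right normality (invertibility of $r$) transfers from the lax and pseudo cases by the same Yoneda argument given in Section~\ref{sect:lskewmonoidal}, since a tight binary map $\ca,\mathbf{1} \to \cb$ that is sharp still amounts simply to a strict map $\ca \to \cb$. Left normality is the new phenomenon: here the left unit map $l_{\ca}\colon \mathbf{1} \otimes^{\sharp} \ca \to \ca$ is the counit of the adjunction $i \otimes^{\sharp} - \dashv j$ between $\GCat$ and the loose unary maps, and one must verify this counit is invertible rather than merely a triequivalence as in the non-sharp case. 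This amounts to observing that for the sharp multicategories, sharp binary maps $\mathbf{1}, \ca \to \cb$ correspond exactly to strict maps $\ca \to \cb$ (a sharp binary map must have each $F^{\bullet}$ a Gray-functor, and with domain $\mathbf 1$ this collapses all the weak coherence data), so that $\mathbf{1} \otimes^{\sharp} \ca \cong \ca$. Thus the subcategory inclusion $j$ is reflective with invertible counit, giving left normality. I would therefore present the proof as: apply Section~\ref{sect:From} to Theorem~\ref{thm:sharpmulti} to obtain the two closed skew monoidal structures, and then add a short remark verifying normality in both variables via the observations above.

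\begin{proof}
By Theorem~\ref{thm:sharpmulti}, both $\GCatm^{\sharp}$ and $\GCatmp^{\sharp}$ are left representable and closed $4$-ary multicategories, with internal homs $\lax(\ca,\cb)_s$ and $\psd(\ca,\cb)_s$ respectively. Regarding each as a $4$-ary skew multicategory in which every multimap is tight, the construction of Section~\ref{sect:From} applies. Since in each case the category of tight unary maps is $\GCat$ and the nullary map classifier is the terminal Gray-category $\mathbf{1}$, this yields skew monoidal structures $(\GCat, \otimes^{\sharp}_l, \mathbf{1})$ and $(\GCat, \otimes^{\sharp}_p, \mathbf{1})$ whose tensor products are the respective sharp binary map classifiers. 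As both multicategories are closed, the composite natural isomorphism $\GCat(\ca \otimes^{\sharp} \cb;\cc) \cong \GCat(\ca;[\cb,\cc])$ of Section~\ref{sect:closedrep} exhibits these skew monoidal structures as closed, with internal homs $\lax(\ca,\cb)_s$ and $\psd(\ca,\cb)_s$.
\end{proof}
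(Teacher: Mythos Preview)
Your proposal is correct and matches the paper's approach exactly: the paper's proof is the single sentence ``As usual, combining Theorem~\ref{thm:sharpmulti} with Section~\ref{sect:From}, we obtain:'' preceding the statement, and your formal proof is a faithful expansion of this. The normality discussion in your preamble corresponds to the bullet-point remarks the paper places \emph{after} the theorem rather than to the proof itself, so it is correctly omitted from your \texttt{proof} environment.
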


\begin{itemize}
\item The tensor products $\ca \otimes^{\sharp}_l \cb$ and $\ca \otimes^{\sharp}_p \cb$ classify sharp lax binary maps and sharp pseudo binary maps, respectively. Both admit presentations, obtained by modifying the presentations in Section~\ref{sect:presentationlax} and Section~\ref{sect:pskewmonoidal} for $\ca \otimes_l \cb$  and $\ca \otimes_p \cb$  respectively simply by adding the equation asserting that $T^a_{B_2,B_1}$ is an identity 2-cell.
\item Both skew monoidal structures are right normal as before but now they are also left normal.  Indeed, $\mathbf{1} \otimes^{\sharp}_l \ca$ classifies loose unary maps in $\GCatm^{\sharp}$ but, in this restricted setting, these are just the strict maps, and so classified by $\ca$ itself.  The associator for neither skew monoidal closed structure is invertible --- for instance, $(\mathbf 2 \otimes^{\sharp}_l \mathbf 2) \otimes^{\sharp}_l \mathbf 2$ and $\mathbf 2 \otimes^{\sharp}_l (\mathbf 2 \otimes^{\sharp}_l \mathbf 2)$ are not isomorphic. 
\item Both skew monoidal closed structures give rise to skew closed structures in the sense of Street \cite{Street2013Skew} --- in fact, since these skew monoidal structures are left and right normal, the corresponding skew closed structures are in fact closed categories in the sense of Eilenberg and Kelly \cite{EilenbergS:cloc}.
\item In a followup paper, we will show that the skew structure $(\GCat, \otimes^{\sharp}_p, \mathbf{1})$ is homotopically well behaved with respect to Lack's model structure on $\GCat$ \cite{Lack2011A-quillen}.  Furthermore, we will show that it descends to a symmetric monoidal closed structure on the homotopy category of Gray-categories.  This implies, for instance, that the associators $(\ca \otimes^{\sharp}_p \cb) \otimes^{\sharp}_p \cc \to \ca \otimes^{\sharp}_p (\cb \otimes^{\sharp}_p \cc)$ are triequivalences whenever $\ca,\cb$ and $\cc$ are cofibrant Gray-categories.
\end{itemize}

The reader is entitled to wonder whether we could give a simplified construction of the $4$-ary multicategories of sharp multimaps, without considering pseudomaps and the larger skew multicategories $\GCatm$ and $\GCatmp$ at all.  It is not clear that this is the case.  

The reason is that if $F\colon \ca,\cb \to \cc$ and $G\colon \cx,\cc \to \cd$ are sharp lax binary maps, our construction of $G \circ_2 F$ will still pass through pseudomaps.  Indeed, our construction used the dual $d_2 G\colon \cc^{co},\cx^{co} \to \cd^{co}$ but this need not be tight, so that the adjoint map $\cc^{co} \to \lax(\cx^{co},\cd^{co})$ is only a pseudomap. Since the definition of $G \circ_2 F$ ultimately uses this, pseudomaps cannot be easily eliminated!

A closely related issue is that since a sharp map $\ca,\cb \to \cc$ only induces a pseudomap $\cb \to \oplax(\ca,\cc)$, the multicategory $\GCatm^{\sharp}$ is not biclosed.  One only obtains biclosedness in the larger $4$-ary multicategory $\GCatm^l$ of loose multimaps.  But this suffers from not being left representable.  In between $\GCatm^{\sharp}$ and $\GCatm^{l}$, we have the $4$-ary skew multicategory $\GCatm$ which, in addition to being left representable and closed, contains both $\GCatm^{\sharp}$ and $\GCatm^l$ as canonically associated $4$-ary multicategories.

%\begin{rmk}
%Given that the final structures we describe, $(\GCat, \otimes^{\sharp}_l, 1)$ and $(\GCat, \otimes^{\sharp}_p, 1)$, involve only strict maps in their homs, the reader may wonder whether we could have saved a lot of space
%\end{rmk}

\appendix

\section{Axioms for $\lax(\ca,\cb)$}
\label{app:ax-gohla-mapp}

\subsection{Pseudomaps}
\label{subsec:ax-pseudofunct}
A pseudomap $F\colon\ca\to\cb$ has to satisfy the following axioms. 

\begin{enumerate}[(i)]
\item\label{ax:psfct.id-1cells}\emph{Preservation of identity $1$-cells:} at an object $x \in \ca$ we have $F(1_x) = 1_{Fx}$.
\item\label{ax:psfct.loc-sesq} \emph{Locally a $2$-functor:} at objects $x,y\in\ca$, $F_{x,y}\colon\ca(x,y)\to\cb(Fx,Fy)$ is a \emph{2-functor}.

\item\label{ax:psfct.F2-1-cells} \emph{Compatibility of $F^2_{-,-}$ with composition of 1-cells:} at $1$-cells $f_1\colon x\to y$, $f_2\colon y\to z$ and $f_3\colon z\to$~$w$ $\ca$, we have:
\begin{equation*}
F^2_{f_3,f_2f_1}\cdot (1\circ F^2_{f_2,f_1})=F^2_{f_3f_2,f_1}\cdot ( F^2_{f_2,f_1}\circ1).
\end{equation*}

\item\label{ax:psfct.deg-F2}  \emph{Degeneracy for $F^2_{f_2,f_1}$:} at a $1$-cell $f\colon x\to y$ in $\ca$, we have
\begin{equation*}
F^2_{1_y,f}=F^2_{f,1_x}=1_{Ff}.
\end{equation*}
%Let $f_1\colon x\to y$ and $f_2\colon y\to Z$ be 1-cells in $\ca$.
%\[
%    F^2_{f_2,f_1}= 
%\begin{cases}
%    1_{Ff_2},& \text{if } f_1=1_y\\
%    1_{Ff_1},& \text{if } f_2=1_y.
%\end{cases}
%\]

\item\label{ax:psfct.F2-whisk-2} \emph{Compatibility of $F^2_{-,-}$ with whiskering of 2-cells:}
In the context of the diagrams in $\ca$ on the left below, we have the two equations on the right:
\begin{center}
(L)\hspace{0.25cm}
% https://q.uiver.app/?q=WzAsMyxbMCwwLCJYIl0sWzEsMCwiWSJdLFsyLDAsIloiXSxbMCwxLCJmXzEiXSxbMSwyLCJmXzIiLDAseyJjdXJ2ZSI6LTJ9XSxbMSwyLCJmXzInIiwyLHsiY3VydmUiOjJ9XSxbNCw1LCJcXHBzaSIsMCx7InNob3J0ZW4iOnsic291cmNlIjoyMCwidGFyZ2V0IjoyMH19XV0=
\begin{tikzcd}
	x & y & z,
	\arrow["{f_1}", from=1-1, to=1-2]
	\arrow[""{name=0, anchor=center, inner sep=0}, "{f_2}", curve={height=-12pt}, from=1-2, to=1-3]
	\arrow[""{name=1, anchor=center, inner sep=0}, "{f_2'}"', curve={height=12pt}, from=1-2, to=1-3]
	\arrow["\psi", shorten <=3pt, shorten >=3pt, Rightarrow, from=0, to=1]
\end{tikzcd}
\hspace{0.25cm}
$F(\psi\circ f_1)\cdot F^2_{f_2,f_1}=F^2_{f'_2,f_1}\cdot (F\psi\circ Ff_1);$
\end{center}
%Right whiskering is similar but for 
\begin{center}
(R) \hspace{0.25cm}
% https://q.uiver.app/?q=WzAsMyxbMCwwLCJYIl0sWzEsMCwiWSJdLFsyLDAsIloiXSxbMCwxLCJmXzEiLDAseyJjdXJ2ZSI6LTJ9XSxbMSwyLCJmXzIiXSxbMCwxLCJmJ18xIiwyLHsiY3VydmUiOjJ9XSxbMyw1LCJcXHBoaSIsMCx7InNob3J0ZW4iOnsic291cmNlIjoyMCwidGFyZ2V0IjoyMH19XV0=
\begin{tikzcd}
	x & y & z,
	\arrow[""{name=0, anchor=center, inner sep=0}, "{f_1}", curve={height=-12pt}, from=1-1, to=1-2]
	\arrow["{f_2}", from=1-2, to=1-3]
	\arrow[""{name=1, anchor=center, inner sep=0}, "{f'_1}"', curve={height=12pt}, from=1-1, to=1-2]
	\arrow["\phi", shorten <=3pt, shorten >=3pt, Rightarrow, from=0, to=1]
\end{tikzcd}\hspace{0.25cm}
$F(f_2\circ\phi)\cdot F^2_{f_2,f_1}=F^2_{f_2,f'_1}\cdot (Ff_2\circ F\phi).$
\end{center}

\item\label{ax:psfct.F2-whisk-3} \emph{Compatibility of $F^2$ with whiskering of 3-cells:}
In the context of the diagrams in $\ca$ on the left below, we have the two equations on the right:
\begin{center}
(L)\hspace{0.25cm}%\blue
% https://q.uiver.app/#q=WzAsMyxbMCwwLCJ4Il0sWzEsMCwieSJdLFszLDAsInoiXSxbMCwxLCJmXzEiXSxbMSwyLCJmXzIiLDAseyJjdXJ2ZSI6LTV9XSxbMSwyLCJmXzInIiwyLHsiY3VydmUiOjV9XSxbNCw1LCJcXHBzaSciLDAseyJvZmZzZXQiOi01LCJzaG9ydGVuIjp7InNvdXJjZSI6MjAsInRhcmdldCI6MjB9fV0sWzQsNSwiXFxwc2kiLDIseyJvZmZzZXQiOjUsInNob3J0ZW4iOnsic291cmNlIjoyMCwidGFyZ2V0IjoyMH19XSxbNyw2LCJcXERlbHRhIiwwLHsic2hvcnRlbiI6eyJzb3VyY2UiOjIwLCJ0YXJnZXQiOjIwfX1dXQ==
\begin{tikzcd}[ampersand replacement=\&]
	x \& y \&\& z
	\arrow["{f_1}", from=1-1, to=1-2]
	\arrow[""{name=0, anchor=center, inner sep=0}, "{f_2}", curve={height=-30pt}, from=1-2, to=1-4]
	\arrow[""{name=1, anchor=center, inner sep=0}, "{f_2'}"', curve={height=30pt}, from=1-2, to=1-4]
	\arrow[""{name=2, anchor=center, inner sep=0}, "{\psi'}", shift left=5, shorten <=8pt, shorten >=8pt, Rightarrow, from=0, to=1]
	\arrow[""{name=3, anchor=center, inner sep=0}, "\psi"', shift right=5, shorten <=8pt, shorten >=8pt, Rightarrow, from=0, to=1]
	\arrow["\Delta"{yshift=0.15cm}, shorten <=4pt, shorten >=4pt, Rightarrow, scaling nfold=3, from=3, to=2]%, scaling nfold=3
\end{tikzcd}\hspace{0.25cm}
$F(\Delta\circ f_1)\cdot F^2_{f_2,f_1}=F^2_{f'_2,f_1}\cdot (F\Delta\circ Ff_1);$
\end{center}
%Right whiskering is similar but for 
\begin{center}
(R) \hspace{0.25cm}%\blue
% https://q.uiver.app/#q=WzAsMyxbMCwwLCJ4Il0sWzIsMCwieSJdLFszLDAsInoiXSxbMCwxLCJmXzEiLDAseyJjdXJ2ZSI6LTV9XSxbMSwyLCJmXzIiXSxbMCwxLCJmJ18xIiwyLHsiY3VydmUiOjV9XSxbMyw1LCJcXHBoaSciLDAseyJvZmZzZXQiOi01LCJzaG9ydGVuIjp7InNvdXJjZSI6MjAsInRhcmdldCI6MjB9fV0sWzMsNSwiXFxwaGkiLDIseyJvZmZzZXQiOjUsInNob3J0ZW4iOnsic291cmNlIjoyMCwidGFyZ2V0IjoyMH19XSxbNyw2LCJcXEdhbW1hIiwwLHsic2hvcnRlbiI6eyJzb3VyY2UiOjIwLCJ0YXJnZXQiOjIwfX1dXQ==
\begin{tikzcd}[ampersand replacement=\&]
	x \&\& y \& z
	\arrow[""{name=0, anchor=center, inner sep=0}, "{f_1}", curve={height=-30pt}, from=1-1, to=1-3]
	\arrow["{f_2}", from=1-3, to=1-4]
	\arrow[""{name=1, anchor=center, inner sep=0}, "{f'_1}"', curve={height=30pt}, from=1-1, to=1-3]
	\arrow[""{name=2, anchor=center, inner sep=0}, "{\phi'}", shift left=5, shorten <=8pt, shorten >=8pt, Rightarrow, from=0, to=1]
	\arrow[""{name=3, anchor=center, inner sep=0}, "\phi"', shift right=5, shorten <=8pt, shorten >=8pt, Rightarrow, from=0, to=1]
	\arrow["\Gamma"{yshift=0.15cm}, shorten <=4pt, shorten >=4pt, Rightarrow, scaling nfold=3, from=3, to=2]%, scaling nfold=3
\end{tikzcd}  
\hspace{0.25cm}
$F(f_2\circ\Gamma)\cdot F^2_{f_2,f_1}=F^2_{f_2,f'_1}\cdot (Ff_2\circ F\Gamma).$
\end{center}

\item\label{ax:psfct.F.int} \emph{Compatibility of $F$ with the interchange:}
Given a diagram in $\ca$
% https://q.uiver.app/?q=WzAsMyxbMCwwLCJ4Il0sWzEsMCwieSJdLFsyLDAsInoiXSxbMCwxLCJmXzEiLDAseyJjdXJ2ZSI6LTJ9XSxbMSwyLCJmXzIiLDAseyJjdXJ2ZSI6LTJ9XSxbMSwyLCJmXzInIiwyLHsiY3VydmUiOjJ9XSxbMCwxLCJmXzEnIiwyLHsiY3VydmUiOjJ9XSxbNCw1LCJcXHBzaSIsMCx7InNob3J0ZW4iOnsic291cmNlIjoyMCwidGFyZ2V0IjoyMH19XSxbMyw2LCJcXHBoaSIsMCx7InNob3J0ZW4iOnsic291cmNlIjoyMCwidGFyZ2V0IjoyMH19XV0=
\[\begin{tikzcd}[ampersand replacement=\&]
	x \& y \& z
	\arrow[""{name=0, anchor=center, inner sep=0}, "{f_1}", curve={height=-12pt}, from=1-1, to=1-2]
	\arrow[""{name=1, anchor=center, inner sep=0}, "{f_2}", curve={height=-12pt}, from=1-2, to=1-3]
	\arrow[""{name=2, anchor=center, inner sep=0}, "{f_2'}"', curve={height=12pt}, from=1-2, to=1-3]
	\arrow[""{name=3, anchor=center, inner sep=0}, "{f_1'}"', curve={height=12pt}, from=1-1, to=1-2]
	\arrow["\psi"{xshift=0.1cm}, shorten <=3pt, shorten >=3pt, Rightarrow, from=1, to=2]
	\arrow["\phi"{xshift=0.1cm}, shorten <=3pt, shorten >=3pt, Rightarrow, from=0, to=3]
\end{tikzcd}\]
we require the following equality
\begin{center}
% https://q.uiver.app/?q=WzAsNyxbMCwzLCJGKGZfMmZfMSkiXSxbMSwyLCJGKGZfMmYnXzEpIl0sWzEsNCwiRihmJ18yZl8xKSJdLFsyLDMsIkYoZidfMmYnXzEpIl0sWzAsMSwiRmZfMkZmXzEiXSxbMSwwLCJGZl8yRmYnXzEiXSxbMiwxLCJGZidfMkZmJ18xIl0sWzAsMSwiMVxcY2lyY1xccGhpIiwxLHsiY3VydmUiOi0yfV0sWzEsMywiXFxwc2lcXGNpcmMxIiwxLHsiY3VydmUiOi0yfV0sWzAsMiwiXFxwc2lcXGNpcmMxIiwyLHsiY3VydmUiOjJ9XSxbMiwzLCIxXFxjaXJjXFxwaGkiLDIseyJjdXJ2ZSI6Mn1dLFs0LDAsIkZeMl97Zl8yLGZfMX0iLDJdLFs0LDUsIjFcXGNpcmMgRlxccGhpIiwwLHsiY3VydmUiOi0yfV0sWzUsMSwiRl4yX3tmXzIsZidfMX0iLDFdLFs1LDYsIkZcXHBzaVxcY2lyYzEiLDAseyJjdXJ2ZSI6LTJ9XSxbNiwzLCJGXjJfe2YnXzIsZidfMX0iXSxbMSwyLCJGKFxccGhpOlxccHNpKSIsMSx7InNob3J0ZW4iOnsic291cmNlIjozMCwidGFyZ2V0IjozMH0sImxldmVsIjoyfV0sWzUsMCwiKFxcZmxhdCkiLDEseyJzdHlsZSI6eyJib2R5Ijp7Im5hbWUiOiJub25lIn0sImhlYWQiOnsibmFtZSI6Im5vbmUifX19XSxbMyw1LCIoXFxmbGF0XFxmbGF0KSIsMSx7InNob3J0ZW4iOnsic291cmNlIjoyMCwidGFyZ2V0IjoyMH0sImxldmVsIjoyLCJzdHlsZSI6eyJib2R5Ijp7Im5hbWUiOiJub25lIn0sImhlYWQiOnsibmFtZSI6Im5vbmUifX19XV0=
\adjustbox{scale=0.9}{\begin{tikzcd}
	& {Ff_2Ff'_1} \\
	{Ff_2Ff_1} && {Ff'_2Ff'_1} \\
	& {F(f_2f'_1)} \\
	{F(f_2f_1)} && {F(f'_2f'_1)} \\
	& {F(f'_2f_1)}
	\arrow["F(1\circ\phi)"{description}, curve={height=-12pt}, from=4-1, to=3-2]
	\arrow["F(\psi\circ1)"{description}, curve={height=-12pt}, from=3-2, to=4-3]
	\arrow["F(\psi\circ1)"', curve={height=12pt}, from=4-1, to=5-2]
	\arrow["F(1\circ\phi)"', curve={height=12pt}, from=5-2, to=4-3]
	\arrow["{F^2_{f_2,f_1}}"', from=2-1, to=4-1]
	\arrow["{1\circ F\phi}", curve={height=-12pt}, from=2-1, to=1-2]
	\arrow["{F^2_{f_2,f'_1}}"{description}, from=1-2, to=3-2]
	\arrow["F\psi\circ1", curve={height=-12pt}, from=1-2, to=2-3]
	\arrow["{F^2_{f'_2,f'_1}}", from=2-3, to=4-3]
	\arrow["{F(\psi:\phi)}"{description}, shorten <=10pt, shorten >=10pt, Rightarrow, from=3-2, to=5-2]
	\arrow["{}"{description}, draw=none, from=1-2, to=4-1]
	\arrow["{}"{description}, Rightarrow, draw=none, from=4-3, to=1-2]
\end{tikzcd} $=$
% https://q.uiver.app/?q=WzAsNyxbMCwzLCJGKGZfMmZfMSkiXSxbMSw0LCJGKGYnXzJmXzEpIl0sWzIsMywiRihmJ18yZidfMSkiXSxbMCwxLCJGZl8yRmZfMSJdLFsxLDAsIkZmXzJGZidfMSJdLFsyLDEsIkZmJ18yRmYnXzEiXSxbMSwyLCJGZidfMkZmXzEiXSxbMCwxLCJcXHBzaVxcY2lyYzEiLDIseyJjdXJ2ZSI6Mn1dLFsxLDIsIjFcXGNpcmNcXHBoaSIsMix7ImN1cnZlIjoyfV0sWzMsMCwiRl4yX3tmXzIsZl8xfSIsMl0sWzMsNCwiMVxcY2lyYyBGXFxwaGkiLDAseyJjdXJ2ZSI6LTJ9XSxbNCw1LCJGXFxwc2lcXGNpcmMxIiwwLHsiY3VydmUiOi0yfV0sWzUsMiwiRl4yX3tmJ18yLGYnXzF9Il0sWzMsNiwiRlxccHNpXFxjaXJjMSIsMSx7ImN1cnZlIjoyfV0sWzYsNSwiMVxcY2lyYyBGXFxwaGkiLDEseyJjdXJ2ZSI6Mn1dLFs2LDFdLFs1LDEsIihcXHNoYXJwXFxzaGFycCkiLDEseyJzdHlsZSI6eyJib2R5Ijp7Im5hbWUiOiJub25lIn0sImhlYWQiOnsibmFtZSI6Im5vbmUifX19XSxbMywxLCIoXFxzaGFycCkiLDEseyJzdHlsZSI6eyJib2R5Ijp7Im5hbWUiOiJub25lIn0sImhlYWQiOnsibmFtZSI6Im5vbmUifX19XSxbNCw2LCJGXFxwaGk6RlxccHNpIiwxLHsic2hvcnRlbiI6eyJzb3VyY2UiOjIwLCJ0YXJnZXQiOjIwfSwibGV2ZWwiOjJ9XV0=
\begin{tikzcd}
	& {Ff_2Ff'_1} \\
	{Ff_2Ff_1} && {Ff'_2Ff'_1} \\
	& {Ff'_2Ff_1} \\
	{F(f_2f_1)} && {F(f'_2f'_1).} \\
	& {F(f'_2f_1)}
	\arrow["F(\psi\circ1)"', curve={height=12pt}, from=4-1, to=5-2]
	\arrow["F(1\circ\phi)"', curve={height=12pt}, from=5-2, to=4-3]
	\arrow["{F^2_{f_2,f_1}}"', from=2-1, to=4-1]
	\arrow["{1\circ F\phi}", curve={height=-12pt}, from=2-1, to=1-2]
	\arrow["F\psi\circ1", curve={height=-12pt}, from=1-2, to=2-3]
	\arrow["{F^2_{f'_2,f'_1}}", from=2-3, to=4-3]
	\arrow["F\psi\circ1"{description}, curve={height=12pt}, from=2-1, to=3-2]
	\arrow["{1\circ F\phi}"{description}, curve={height=12pt}, from=3-2, to=2-3]
	\arrow[from=3-2, to=5-2]
	\arrow["{}"{description}, draw=none, from=2-3, to=5-2]
	\arrow["{}"{description}, draw=none, from=2-1, to=5-2]
	\arrow["{F\psi:F\phi}"{description}, shorten <=10pt, shorten >=10pt, Rightarrow, from=1-2, to=3-2]
\end{tikzcd}}
\end{center}
where the unlabelled regions commute by \eqref{ax:psfct.F2-whisk-2}. 

\item\label{ax:psfct.F2.int} \emph{Compatibility of $F^2$ with the interchange map:}
at 1-cells $f_1\colon x\to y$, $f_2\colon y\to z$, $f_3\colon z\to w$ and $f_4\colon w\to u$ in $\ca$ we have the diagram 
% https://q.uiver.app/?q=WzAsNSxbMCwxLCJGWCJdLFsxLDAsIkZZIl0sWzIsMSwiRloiXSxbMywwLCJGVyJdLFs0LDEsIkZVIl0sWzAsMSwiRmZfMSIsMCx7ImN1cnZlIjotMX1dLFsxLDIsIkZmXzIiLDAseyJjdXJ2ZSI6LTF9XSxbMiwzLCJGZl8zIiwwLHsiY3VydmUiOi0xfV0sWzMsNCwiRmZfNCIsMCx7ImN1cnZlIjotMX1dLFswLDIsIkYoZl8yZl8xKSIsMix7ImN1cnZlIjozfV0sWzIsNCwiRihmXzRmXzMpIiwyLHsiY3VydmUiOjN9XSxbMSw5LCJGXjJfe2ZfMixmXzF9IiwxLHsic2hvcnRlbiI6eyJzb3VyY2UiOjMwLCJ0YXJnZXQiOjMwfX1dLFszLDEwLCJGXjJfe2ZfNCxmXzN9IiwxLHsic2hvcnRlbiI6eyJzb3VyY2UiOjMwLCJ0YXJnZXQiOjMwfX1dXQ==
\[\begin{tikzcd}
	& Fy && Fw \\
	Fx && Fz && Fu.
	\arrow["{Ff_1}", curve={height=-6pt}, from=2-1, to=1-2]
	\arrow["{Ff_2}", curve={height=-6pt}, from=1-2, to=2-3]
	\arrow["{Ff_3}", curve={height=-6pt}, from=2-3, to=1-4]
	\arrow["{Ff_4}", curve={height=-6pt}, from=1-4, to=2-5]
	\arrow[""{name=0, anchor=center, inner sep=0}, "{F(f_2f_1)}"', curve={height=18pt}, from=2-1, to=2-3]
	\arrow[""{name=1, anchor=center, inner sep=0}, "{F(f_4f_3)}"', curve={height=18pt}, from=2-3, to=2-5]
	\arrow["{F^2_{f_2,f_1}}"{description}, shorten <=8pt, shorten >=8pt, Rightarrow, from=1-2, to=0]
	\arrow["{F^2_{f_4,f_3}}"{description}, shorten <=8pt, shorten >=8pt, Rightarrow, from=1-4, to=1]
\end{tikzcd}\]
and require the equality of $3$-cells $F^2_{f_4,f_3}:F^2_{f_2,f_1}=1$, remarking that the lhs is automatically an endo-$3$-cell by \eqref{ax:psfct.F2-1-cells}.
%(we can prove that the domain and codomain of this 3-cell are the same by post-composing with the invertible 2-cell $F^2_{f_4f_3,f_2f_1}$ and applying five instances of (ii)).  

\item \label{ax:psfct.F2.int.2-cells} \emph{Compatibility of $F^2$ with 2-cells and the interchange:}
Given the diagram below in $\ca$
% https://q.uiver.app/?q=WzAsNCxbMCwwLCJYIl0sWzEsMCwiWSJdLFszLDAsIlciXSxbMiwwLCJaIl0sWzAsMSwiZl8xIiwwLHsiY3VydmUiOi0yfV0sWzAsMSwiZl8xJyIsMix7ImN1cnZlIjoyfV0sWzMsMiwiZl8zIiwwLHsiY3VydmUiOi0yfV0sWzMsMiwiZl8zJyIsMix7ImN1cnZlIjoyfV0sWzEsMywiZl8yIl0sWzQsNSwiXFxwaGkiLDAseyJzaG9ydGVuIjp7InNvdXJjZSI6MjAsInRhcmdldCI6MjB9fV0sWzYsNywiXFx2YXJwaGkiLDAseyJzaG9ydGVuIjp7InNvdXJjZSI6MjAsInRhcmdldCI6MjB9fV1d
\[\begin{tikzcd}
	x & y & z & w,
	\arrow[""{name=0, anchor=center, inner sep=0}, "{f_1}", curve={height=-12pt}, from=1-1, to=1-2]
	\arrow[""{name=1, anchor=center, inner sep=0}, "{f_1'}"', curve={height=12pt}, from=1-1, to=1-2]
	\arrow[""{name=2, anchor=center, inner sep=0}, "{f_3}", curve={height=-12pt}, from=1-3, to=1-4]
	\arrow[""{name=3, anchor=center, inner sep=0}, "{f_3'}"', curve={height=12pt}, from=1-3, to=1-4]
	\arrow["{f_2}", from=1-2, to=1-3]
	\arrow["\phi", shorten <=3pt, shorten >=3pt, Rightarrow, from=0, to=1]
	\arrow["\varphi", shorten <=3pt, shorten >=3pt, Rightarrow, from=2, to=3]
\end{tikzcd}\]
we require the equations
%\begin{center}
 $F^2_{f_3,f_2}:F\phi=1$ and 
 $F\varphi:F^2_{f_2,f_1}=1$,
%\end{center}
 remarking again that both $3$-cells on the lhs of equations are endomorphisms by three instances of \eqref{ax:psfct.F2-whisk-2} and two of \eqref{ax:psfct.F2-1-cells}.% 3-cells are "endo" by post-composing with $F^2_{f_3f_2,f_1'}$ and $F^2_{f'_3,f_2f_1}$ using three instances of (iii) and two of (ii). 
\end{enumerate}

\subsection{Lax Transformations}
\label{subsec:ax-lax-transf}
A lax transformation $\alpha\colon F\to G$ has to satisfy the following axioms. 
\begin{enumerate}[(i)]

\item\label{ax:lax-tr.deg-2-cell} \emph{Degeneracy for the 2-cell component:}
at an object $x\in\ca$, then $\alpha_{1_x}=1_{\alpha_x}$. 

\item\label{ax:lax-tr.3-cell} \emph{Compatibility with 3-cells:}
at $1$-cells $f,f'\colon x\to y$, $2$-cells $\phi,\phi' \colon f\to f'$ and a $3$-cell $\Gamma\colon\phi\to\phi'$, we require:
% https://q.uiver.app/?q=WzAsOSxbNiwwLCJHZlxcYWxwaGFfWCJdLFs4LDAsIlxcYWxwaGFfWSBGZiJdLFs4LDIsIlxcYWxwaGFfWSBGZiciXSxbNiwyLCJHZidcXGFscGhhX1giXSxbNCwxLCI9Il0sWzIsMCwiXFxhbHBoYV9ZIEZmIl0sWzIsMiwiXFxhbHBoYV9ZIEZmJyJdLFswLDAsIkdmXFxhbHBoYV9YIl0sWzAsMiwiR2YnXFxhbHBoYV9YIl0sWzAsMSwiXFxhbHBoYV9mIl0sWzEsMiwiMVxcY2lyYyBGXFxwaGkiLDAseyJjdXJ2ZSI6LTN9XSxbMywyLCJcXGFscGhhX3tmJ30iLDJdLFswLDMsIkdcXHBoaSdcXGNpcmMgMSIsMix7ImN1cnZlIjozfV0sWzEsMiwiMVxcY2lyYyBGXFxwaGknIiwxLHsiY3VydmUiOjN9XSxbOCw2LCJcXGFscGhhX3tmJ30iLDJdLFs3LDUsIlxcYWxwaGFfZiJdLFs1LDYsIjFcXGNpcmMgRlxccGhpIiwwLHsiY3VydmUiOi0zfV0sWzcsOCwiR1xccGhpJ1xcY2lyYyAxIiwyLHsiY3VydmUiOjN9XSxbNyw4LCJHXFxwaGlcXGNpcmMgMSIsMSx7ImN1cnZlIjotM31dLFsxMCwxMywiMVxcY2lyYyBGXFxHYW1tYSIsMix7InNob3J0ZW4iOnsic291cmNlIjozMCwidGFyZ2V0IjozMH19XSxbMTMsMTIsIlxcYWxwaGFfe1xccGhpJ30iLDIseyJzaG9ydGVuIjp7InNvdXJjZSI6NDAsInRhcmdldCI6NDB9fV0sWzE2LDE4LCJcXGFscGhhX3tcXHBoaX0iLDIseyJzaG9ydGVuIjp7InNvdXJjZSI6NDAsInRhcmdldCI6NDB9fV0sWzE4LDE3LCJHXFxHYW1tYVxcY2lyYzEiLDIseyJzaG9ydGVuIjp7InNvdXJjZSI6MzAsInRhcmdldCI6MzB9fV1d
\[\begin{tikzcd}
	{Gf\alpha_x} && {\alpha_y Ff} &&&& {Gf\alpha_x} && {\alpha_y Ff} \\
	&&&& {=} \\
	{Gf'\alpha_x} && {\alpha_y Ff'} &&&& {Gf'\alpha_x} && {\alpha_y Ff'.}
	\arrow["{\alpha_f}", from=1-7, to=1-9]
	\arrow[""{name=0, anchor=center, inner sep=0}, "{1\circ F\phi}", curve={height=-18pt}, from=1-9, to=3-9]
	\arrow["{\alpha_{f'}}"', from=3-7, to=3-9]
	\arrow[""{name=1, anchor=center, inner sep=0}, "{G\phi'\circ 1}"', curve={height=18pt}, from=1-7, to=3-7]
	\arrow[""{name=2, anchor=center, inner sep=0}, "{1\circ F\phi'}"{description}, curve={height=18pt}, from=1-9, to=3-9]
	\arrow["{\alpha_{f'}}"', from=3-1, to=3-3]
	\arrow["{\alpha_f}", from=1-1, to=1-3]
	\arrow[""{name=3, anchor=center, inner sep=0}, "{1\circ F\phi}", curve={height=-18pt}, from=1-3, to=3-3]
	\arrow[""{name=4, anchor=center, inner sep=0}, "{G\phi'\circ 1}"', curve={height=18pt}, from=1-1, to=3-1]
	\arrow[""{name=5, anchor=center, inner sep=0}, "{G\phi\circ 1}"{description}, curve={height=-18pt}, from=1-1, to=3-1]
	\arrow["{1\circ F\Gamma}"'{yshift=0.1cm}, shorten <=11pt, shorten >=11pt, Rightarrow, from=0, to=2]
	\arrow["{\alpha_{\phi'}}"'{yshift=0.1cm}, shorten <=28pt, shorten >=28pt, Rightarrow, from=2, to=1]
	\arrow["{\alpha_{\phi}}"'{yshift=0.1cm}, shorten <=28pt, shorten >=28pt, Rightarrow, from=3, to=5]
	\arrow["G\Gamma\circ1"'{yshift=0.1cm}, shorten <=11pt, shorten >=11pt, Rightarrow, from=5, to=4]
\end{tikzcd}\]

\item\label{ax:lax-tr.comp-2-cell} \emph{Compatibility with vertical composition of 2-cells:}
at $1$-cells $f,f',f''\colon x\to y$ and $2$-cells $\phi_1\colon f\to f',\phi_2\colon f'\to f''$ in $\ca$, we require:
% https://q.uiver.app/?q=WzAsMTEsWzAsMCwiR2ZcXGFscGhhX1giXSxbMCwyLCJHZidcXGFscGhhX1giXSxbMCw0LCJHZicnXFxhbHBoYV9YIl0sWzIsMCwiXFxhbHBoYV9ZRmYiXSxbMiwyLCJcXGFscGhhX1lGZiciXSxbMiw0LCJcXGFscGhhX1lGZicnIl0sWzMsMiwiPSJdLFs0LDAsIkdmXFxhbHBoYV9YIl0sWzYsMCwiXFxhbHBoYV9ZRmYiXSxbNiw0LCJcXGFscGhhX1lGZicnIl0sWzQsNCwiR2YnJ1xcYWxwaGFfWCJdLFs3LDEwLCJHKFxccGhpXzJcXGNkb3RcXHBoaV8xKVxcY2lyYzEiLDFdLFs4LDksIjFcXGNpcmMgRihcXHBoaV8yXFxjZG90XFxwaGlfMSkiXSxbMCwzLCJcXGFscGhhX3tmfSJdLFs3LDgsIlxcYWxwaGFfZiJdLFsxLDQsIlxcYWxwaGFfe2YnfSIsMV0sWzIsNSwiXFxhbHBoYV97ZicnfSIsMl0sWzEwLDksIlxcYWxwaGFfe2YnJ30iLDJdLFszLDQsIjFcXGNpcmMgRlxccGhpXzEiXSxbNCw1LCIxXFxjaXJjIEZcXHBoaV8yIl0sWzAsMSwiR1xccGhpXzFcXGNpcmMxIiwyXSxbMSwyLCJHXFxwaGlfMlxcY2lyYzEiLDJdLFsxMywxNSwiXFxhbHBoYV97XFxwaGlfMX0iLDAseyJzaG9ydGVuIjp7InNvdXJjZSI6NDAsInRhcmdldCI6NDB9fV0sWzE1LDE2LCJcXGFscGhhX3tcXHBoaV8yfSIsMCx7InNob3J0ZW4iOnsic291cmNlIjo0MCwidGFyZ2V0Ijo0MH19XSxbMTQsMTcsIlxcYWxwaGFfe1xccGhpXzJcXGNkb3RcXHBoaV8xfSIsMCx7InNob3J0ZW4iOnsic291cmNlIjo0MCwidGFyZ2V0Ijo0MH19XV0=
\[\begin{tikzcd}
	{Gf\alpha_x} && {\alpha_yFf} && {Gf\alpha_x} && {\alpha_yFf} \\
	\\
	{Gf'\alpha_x} && {\alpha_yFf'} & {=} \\
	\\
	{Gf''\alpha_x} && {\alpha_yFf''} && {Gf''\alpha_x} && {\alpha_yFf''.}
	\arrow["{G(\phi_2\cdot\phi_1)\circ1}"{description}, from=1-5, to=5-5]
	\arrow["{1\circ F(\phi_2\cdot\phi_1)}", from=1-7, to=5-7]
	\arrow[""{name=0, anchor=center, inner sep=0}, "{\alpha_{f}}", from=1-1, to=1-3]
	\arrow[""{name=1, anchor=center, inner sep=0}, "{\alpha_f}", from=1-5, to=1-7]
	\arrow[""{name=2, anchor=center, inner sep=0}, "{\alpha_{f'}}"{description}, from=3-1, to=3-3]
	\arrow[""{name=3, anchor=center, inner sep=0}, "{\alpha_{f''}}"', from=5-1, to=5-3]
	\arrow[""{name=4, anchor=center, inner sep=0}, "{\alpha_{f''}}"', from=5-5, to=5-7]
	\arrow["{1\circ F\phi_1}", from=1-3, to=3-3]
	\arrow["{1\circ F\phi_2}", from=3-3, to=5-3]
	\arrow["{G\phi_1\circ1}"', from=1-1, to=3-1]
	\arrow["{G\phi_2\circ1}"', from=3-1, to=5-1]
	\arrow["{\alpha_{\phi_1}}"{xshift=0.1cm}, shorten <=17pt, shorten >=17pt, Rightarrow, from=0, to=2]
	\arrow["{\alpha_{\phi_2}}"{xshift=0.1cm}, shorten <=17pt, shorten >=17pt, Rightarrow, from=2, to=3]
	\arrow["{\alpha_{\phi_2\cdot\phi_1}}"{xshift=0.1cm}, shorten <=34pt, shorten >=34pt, Rightarrow, from=1, to=4]
\end{tikzcd}\]

\item \label{ax:lax-tr.deg-3-cell}\emph{Degeneracy for the 3-cell component:} at a $1$-cell $f \colon x \to y$ we require $\alpha_{1_f}=1_{\alpha_f}$.

\item\label{ax:lax-tr.comp-1-cell} \emph{Compatibility with composition of 1-cells:} at composable $1$-cells $f_1\colon w\to x$, $f_2\colon x\to y$ and $f_3\colon y\to z$ in $\ca$, we require:
\[
\adjustbox{scale=0.85}{
% https://q.uiver.app/?q=WzAsMTIsWzAsMCwiR2ZfM0dmXzJHZl8xXFxhbHBoYV9YIl0sWzAsMiwiR2ZfM0coZl8yZl8xKVxcYWxwaGFfWCJdLFsyLDAsIkdmXzNHZl8yXFxhbHBoYV9ZRmZfMSJdLFs0LDIsIkdmXzNcXGFscGhhX1pGKGZfMmZfMSkiXSxbNCwwLCJHZl8zXFxhbHBoYV9aRmZfMkZmXzEiXSxbNiwwLCJcXGFscGhhX1dGZl8zRmZfMkZmXzEiXSxbNiwyLCJcXGFscGhhX1dGZl8zRihmXzJmXzEpIl0sWzAsNCwiRyhmXzNmXzJmXzEpXFxhbHBoYV9YIl0sWzYsNCwiXFxhbHBoYV9XRihmXzNmXzJmXzEpIl0sWzMsMl0sWzMsNSwiPSJdLFsyLDJdLFswLDIsIjFcXGNpcmNcXGFscGhhX3tmXzF9Il0sWzEsMywiMVxcY2lyY1xcYWxwaGFfe2ZfMmZfMX0iLDJdLFswLDEsIjFcXGNpcmMgR14yX3tmXzIsZl8xfVxcY2lyYzEiLDJdLFsyLDQsIjFcXGNpcmNcXGFscGhhX3tmXzJ9XFxjaXJjMSJdLFs0LDMsIjFcXGNpcmMgRl4yX3tmXzIsZl8xfSIsMV0sWzQsNSwiXFxhbHBoYV97Zl8zfVxcY2lyYzEiXSxbMyw2LCJcXGFscGhhX3tmXzN9XFxjaXJjMSIsMl0sWzUsNiwiMVxcY2lyYyBGXjJfe2ZfMixmXzF9Il0sWzEsNywiIEdeMl97Zl8zLGZfMmZfMX1cXGNpcmMxIiwyXSxbNiw4LCIxXFxjaXJjIEZeMl97Zl8zLGZfMmZfMX0iXSxbNyw4LCJcXGFscGhhX3tmXzNmXzJmXzF9IiwyXSxbMiwxMSwiMVxcY2lyYyBcXGFscGhhXjJfe2ZfMixmXzF9IiwwLHsic2hvcnRlbiI6eyJzb3VyY2UiOjMwLCJ0YXJnZXQiOjMwfSwibGV2ZWwiOjJ9XSxbMTcsMTgsIiIsMCx7InNob3J0ZW4iOnsic291cmNlIjozMCwidGFyZ2V0IjozMH0sInN0eWxlIjp7ImJvZHkiOnsibmFtZSI6ImRhc2hlZCJ9fX1dLFs5LDIyLCJcXGFscGhhXjJfe2ZfMyxmXzJmXzF9IiwwLHsic2hvcnRlbiI6eyJzb3VyY2UiOjMwLCJ0YXJnZXQiOjMwfX1dXQ==
\begin{tikzcd}
	{Gf_3Gf_2Gf_1\alpha_w} && {Gf_3Gf_2\alpha_xFf_1} && {Gf_3\alpha_yFf_2Ff_1} && {\alpha_zFf_3Ff_2Ff_1} \\
	\\
	{Gf_3G(f_2f_1)\alpha_w} && {} & {} & {Gf_3\alpha_yF(f_2f_1)} && {\alpha_zFf_3F(f_2f_1)} \\
	\\
	{G(f_3f_2f_1)\alpha_w} &&&&&& {\alpha_zF(f_3f_2f_1)} \\
	&&& {=}
	\arrow["{1\circ\alpha_{f_1}}", from=1-1, to=1-3]
	\arrow["{1\circ\alpha_{f_2f_1}}"', from=3-1, to=3-5]
	\arrow["{1\circ G^2_{f_2,f_1}\circ1}"', from=1-1, to=3-1]
	\arrow["{1\circ\alpha_{f_2}\circ1}", from=1-3, to=1-5]
	\arrow["{1\circ F^2_{f_2,f_1}}"{description}, from=1-5, to=3-5]
	\arrow[""{name=0, anchor=center, inner sep=0}, "{\alpha_{f_3}\circ1}", from=1-5, to=1-7]
	\arrow[""{name=1, anchor=center, inner sep=0}, "{\alpha_{f_3}\circ1}"', from=3-5, to=3-7]
	\arrow["{1\circ F^2_{f_2,f_1}}", from=1-7, to=3-7]
	\arrow["{ G^2_{f_3,f_2f_1}\circ1}"', from=3-1, to=5-1]
	\arrow["{1\circ F^2_{f_3,f_2f_1}}", from=3-7, to=5-7]
	\arrow[""{name=2, anchor=center, inner sep=0}, "{\alpha_{f_3f_2f_1}}"', from=5-1, to=5-7]
	\arrow["{1\circ \alpha^2_{f_2,f_1}}"{xshift=0.1cm}, shorten <=10pt, shorten >=10pt, Rightarrow, from=1-3, to=3-3]
	\arrow[shorten <=13pt, shorten >=13pt, Rightarrow, dashed, from=0, to=1]
	\arrow["{\alpha^2_{f_3,f_2f_1}}"{xshift=0.1cm}, shorten <=11pt, shorten >=11pt, Rightarrow, from=3-4, to=2]
\end{tikzcd}}\]
% https://q.uiver.app/?q=WzAsMTEsWzAsMCwiR2ZfM0dmXzJHZl8xXFxhbHBoYV9YIl0sWzAsMiwiRyhmXzNmXzIpR2ZfMVxcYWxwaGFfWCJdLFsyLDAsIkdmXzNHZl8yXFxhbHBoYV9ZRmZfMSJdLFs0LDAsIkdmXzNcXGFscGhhX1pGZl8yRmZfMSJdLFs2LDAsIlxcYWxwaGFfV0ZmXzNGZl8yRmZfMSJdLFs2LDIsIlxcYWxwaGFfWkYoZl8zZl8yKUZmXzEiXSxbMCw0LCJHKGZfM2ZfMmZfMSlcXGFscGhhX1giXSxbNiw0LCJcXGFscGhhX1dGKGZfM2ZfMmZfMSkiXSxbMywyXSxbMiwyLCJHKGZfM2ZfMilcXGFscGhhX1lGZl8xIl0sWzQsMl0sWzAsMiwiMVxcY2lyY1xcYWxwaGFfe2ZfMX0iXSxbMCwxLCJHXjJfe2ZfMyxmXzJ9XFxjaXJjMSIsMl0sWzIsMywiMVxcY2lyY1xcYWxwaGFfe2ZfMn1cXGNpcmMxIl0sWzMsNCwiXFxhbHBoYV97Zl8zfVxcY2lyYzEiXSxbNCw1LCIxXFxjaXJjIEZeMl97Zl8zLGZfMn1cXGNpcmMxIl0sWzEsNiwiIEdeMl97Zl8zZl8yLGZfMX1cXGNpcmMxIiwyXSxbNSw3LCIxXFxjaXJjIEZeMl97Zl8zZl8yLGZfMX0iXSxbNiw3LCJcXGFscGhhX3tmXzNmXzJmXzF9IiwyXSxbMiw5LCJHXjJfe2ZfMyxmXzJ9XFxjaXJjMSIsMV0sWzEsOSwiMVxcY2lyY1xcYWxwaGFfe2ZfMX0iLDJdLFs5LDUsIlxcYWxwaGFfe2ZfM2ZfMn1cXGNpcmMxIiwyXSxbMywxMCwiXFxhbHBoYV4yX3tmXzMsZl8yfVxcY2lyYzEiLDAseyJzaG9ydGVuIjp7InNvdXJjZSI6MzAsInRhcmdldCI6MzB9LCJsZXZlbCI6Mn1dLFsxMSwyMCwiIiwwLHsic2hvcnRlbiI6eyJzb3VyY2UiOjMwLCJ0YXJnZXQiOjMwfSwic3R5bGUiOnsiYm9keSI6eyJuYW1lIjoiZGFzaGVkIn19fV0sWzgsMTgsIlxcYWxwaGFeMl97Zl8zZl8yLGZfMX0iLDAseyJzaG9ydGVuIjp7InNvdXJjZSI6MzAsInRhcmdldCI6MzB9fV1d
\[\adjustbox{scale=0.85}{\begin{tikzcd}[ampersand replacement=\&]
	{Gf_3Gf_2Gf_1\alpha_w} \&\& {Gf_3Gf_2\alpha_xFf_1} \&\& {Gf_3\alpha_yFf_2Ff_1} \&\& {\alpha_zFf_3Ff_2Ff_1} \\
	\\
	{G(f_3f_2)Gf_1\alpha_w} \&\& {G(f_3f_2)\alpha_xFf_1} \& {} \& {} \&\& {\alpha_zF(f_3f_2)Ff_1} \\
	\\
	{G(f_3f_2f_1)\alpha_w} \&\&\&\&\&\& {\alpha_zF(f_3f_2f_1).}
	\arrow[""{name=0, anchor=center, inner sep=0}, "{1\circ\alpha_{f_1}}", from=1-1, to=1-3]
	\arrow["{G^2_{f_3,f_2}\circ1}"', from=1-1, to=3-1]
	\arrow["{1\circ\alpha_{f_2}\circ1}", from=1-3, to=1-5]
	\arrow["{\alpha_{f_3}\circ1}", from=1-5, to=1-7]
	\arrow["{1\circ F^2_{f_3,f_2}\circ1}", from=1-7, to=3-7]
	\arrow["{ G^2_{f_3f_2,f_1}\circ1}"', from=3-1, to=5-1]
	\arrow["{1\circ F^2_{f_3f_2,f_1}}", from=3-7, to=5-7]
	\arrow[""{name=1, anchor=center, inner sep=0}, "{\alpha_{f_3f_2f_1}}"', from=5-1, to=5-7]
	\arrow["{G^2_{f_3,f_2}\circ1}"{description}, from=1-3, to=3-3]
	\arrow[""{name=2, anchor=center, inner sep=0}, "{1\circ\alpha_{f_1}}"', from=3-1, to=3-3]
	\arrow["{\alpha_{f_3f_2}\circ1}"', from=3-3, to=3-7]
	\arrow["{\alpha^2_{f_3,f_2}\circ1}"{xshift=0.1cm}, shorten <=10pt, shorten >=10pt, Rightarrow, from=1-5, to=3-5]
	\arrow[shorten <=13pt, shorten >=13pt, Rightarrow, dashed, from=0, to=2]
	\arrow["{\alpha^2_{f_3f_2,f_1}}"{xshift=0.1cm}, shorten <=11pt, shorten >=11pt, Rightarrow, from=3-4, to=1]
\end{tikzcd}}\]

\item\label{ax:lax-tr.deg-alpha2} \emph{Degeneracy for $\alpha^2_{f_2,f_1}$:} at a $1$-cell $f\colon x\to y$ in $\ca$, we require
$\alpha^2_{1_y,f}=\alpha^2_{f,1_x}=1_{\alpha f}$.
%Let $f_1\colon x\to y$ and $f_2\colon y\to z$ be 1-cells in $\ca$.
%\[
%    \alpha^2_{f_2,f_1}= 
%\begin{cases}
%    1_{\alpha_{f_2}},& \text{if } f_1=1_y\\
%    1_{\alpha_{f_1}},& \text{if } f_2=1_y.
%\end{cases}
%\]

\item\label{ax:lax-tr.whisk-alpha2} \emph{Compatibility of $\alpha^2_{-,-}$ with whiskering of 2-cells:} at a diagram in $\ca$ as below
% https://q.uiver.app/?q=WzAsMyxbMCwwLCJYIl0sWzEsMCwiWSJdLFsyLDAsIloiXSxbMCwxLCJmXzEiXSxbMSwyLCJmXzIiLDAseyJjdXJ2ZSI6LTJ9XSxbMSwyLCJmXzInIiwyLHsiY3VydmUiOjJ9XSxbNCw1LCJcXHBzaSIsMCx7InNob3J0ZW4iOnsic291cmNlIjoyMCwidGFyZ2V0IjoyMH19XV0=
\[\begin{tikzcd}
	x & y & z,
	\arrow["{f_1}", from=1-1, to=1-2]
	\arrow[""{name=0, anchor=center, inner sep=0}, "{f_2}", curve={height=-12pt}, from=1-2, to=1-3]
	\arrow[""{name=1, anchor=center, inner sep=0}, "{f_2'}"', curve={height=12pt}, from=1-2, to=1-3]
	\arrow["\psi"{xshift=0.1cm}, shorten <=3pt, shorten >=3pt, Rightarrow, from=0, to=1]
\end{tikzcd}\] we require:
\begin{center}
(L)
% https://q.uiver.app/?q=WzAsOCxbMCwwLCJHZl8yR2ZfMVxcYWxwaGFfWCJdLFsyLDAsIkdmXzJcXGFscGhhX1lGZl8xIl0sWzQsMCwiXFxhbHBoYV9aRmZfMkZmXzEiXSxbMCwxLCJHZidfMkdmXzFcXGFscGhhX1giXSxbMiwxLCJHZidfMlxcYWxwaGFfWUZmXzEiXSxbNCwxLCJcXGFscGhhX1pGZidfMkZmXzEiXSxbMCwyLCJHKGYnXzJmXzEpXFxhbHBoYV9YIl0sWzQsMiwiXFxhbHBoYV9aRihmJ18yZl8xKSJdLFswLDEsIjFcXGNpcmNcXGFscGhhX3tmXzF9Il0sWzEsMiwiXFxhbHBoYV97Zl8yfVxcY2lyYzEiXSxbMCwzLCJHXFxwc2lcXGNpcmMxIiwyXSxbMyw2LCJHXjJfe2ZfMicsZl8xfVxcY2lyYzEiLDJdLFs1LDcsIjFcXGNpcmMgRl4yX3tmXzInLGZfMX0iXSxbMiw1LCIxXFxjaXJjIEZcXHBzaVxcY2lyYzEiXSxbMSw0LCJHXFxwc2lcXGNpcmMxIiwxXSxbMyw0LCIxXFxjaXJjXFxhbHBoYV97Zl8xfSIsMV0sWzQsNSwiXFxhbHBoYV97ZidfMn1cXGNpcmMxIiwxXSxbNiw3LCJcXGFscGhhX3tmJ18yZl8xfSIsMl0sWzksMTYsIlxcYWxwaGFfXFxwc2lcXGNpcmMxIiwwLHsic2hvcnRlbiI6eyJzb3VyY2UiOjQwLCJ0YXJnZXQiOjQwfX1dLFs4LDE1LCIiLDAseyJzaG9ydGVuIjp7InNvdXJjZSI6NDAsInRhcmdldCI6NDB9LCJzdHlsZSI6eyJib2R5Ijp7Im5hbWUiOiJkYXNoZWQifX19XSxbNCwxNywiXFxhbHBoYV4yX3tmJ18yLGZfMX0iLDAseyJzaG9ydGVuIjp7InNvdXJjZSI6MzAsInRhcmdldCI6MzB9fV1d
\begin{tikzcd}[ampersand replacement=\&]
	{Gf_2Gf_1\alpha_x} \&\& {Gf_2\alpha_yFf_1} \&\& {\alpha_ZFf_2Ff_1} \\
	{Gf'_2Gf_1\alpha_x} \&\& {Gf'_2\alpha_yFf_1} \&\& {\alpha_ZFf'_2Ff_1} \\
	{G(f'_2f_1)\alpha_x} \&\&\&\& {\alpha_ZF(f'_2f_1)}
	\arrow[""{name=0, anchor=center, inner sep=0}, "{1\circ\alpha_{f_1}}", from=1-1, to=1-3]
	\arrow[""{name=1, anchor=center, inner sep=0}, "{\alpha_{f_2}\circ1}", from=1-3, to=1-5]
	\arrow["G\psi\circ1"', from=1-1, to=2-1]
	\arrow["{G^2_{f_2',f_1}\circ1}"', from=2-1, to=3-1]
	\arrow["{1\circ F^2_{f_2',f_1}}", from=2-5, to=3-5]
	\arrow["{1\circ F\psi\circ1}", from=1-5, to=2-5]
	\arrow["G\psi\circ1"{description}, from=1-3, to=2-3]
	\arrow[""{name=2, anchor=center, inner sep=0}, "{1\circ\alpha_{f_1}}"{description}, from=2-1, to=2-3]
	\arrow[""{name=3, anchor=center, inner sep=0}, "{\alpha_{f'_2}\circ1}"{description}, from=2-3, to=2-5]
	\arrow[""{name=4, anchor=center, inner sep=0}, "{\alpha_{f'_2f_1}}"', from=3-1, to=3-5]
	\arrow["{\alpha_\psi\circ1}"{xshift=0.1cm}, shorten <=9pt, shorten >=9pt, Rightarrow, from=1, to=3]
	\arrow[shorten <=9pt, shorten >=9pt, Rightarrow, dashed, from=0, to=2]
	\arrow["{\alpha^2_{f'_2,f_1}}"{xshift=0.1cm}, shorten <=5pt, shorten >=5pt, Rightarrow, from=2-3, to=4]
\end{tikzcd} \\
$=$
% https://q.uiver.app/?q=WzAsNyxbMCwwLCJHZl8yR2ZfMVxcYWxwaGFfWCJdLFsyLDAsIkdmXzJcXGFscGhhX1lGZl8xIl0sWzQsMCwiXFxhbHBoYV9aRmZfMkZmXzEiXSxbMCwxLCJHKGZfMmZfMSlcXGFscGhhX1giXSxbNCwxLCJcXGFscGhhX1pGKGZfMmZfMSkiXSxbMCwyLCJHKGYnXzJmXzEpXFxhbHBoYV9YIl0sWzQsMiwiXFxhbHBoYV9aRihmJ18yZl8xKSJdLFswLDEsIjFcXGNpcmNcXGFscGhhX3tmXzF9Il0sWzEsMiwiXFxhbHBoYV97Zl8yfVxcY2lyYzEiXSxbMCwzLCJHXjJfe2ZfMixmXzF9XFxjaXJjMSIsMl0sWzMsNSwiRyhcXHBzaSBmXzEpXFxjaXJjMSIsMl0sWzQsNiwiMVxcY2lyYyBGKFxccHNpIGZfMSkiXSxbMiw0LCIxXFxjaXJjIEZeMl97Zl8yLGZfMX0iXSxbNSw2LCJcXGFscGhhX3tmJ18yZl8xfSIsMl0sWzMsNCwiXFxhbHBoYV97Zl8yZl8xfSIsMV0sWzEsMTQsIlxcYWxwaGFeMl97Zl8yLGZfMX0iLDAseyJzaG9ydGVuIjp7InNvdXJjZSI6NDAsInRhcmdldCI6NDB9fV0sWzE0LDEzLCJcXGFscGhhX3tcXHBzaSBmXzF9IiwwLHsic2hvcnRlbiI6eyJzb3VyY2UiOjQwLCJ0YXJnZXQiOjQwfX1dXQ==
\begin{tikzcd}[ampersand replacement=\&]
	{Gf_2Gf_1\alpha_x} \&\& {Gf_2\alpha_yFf_1} \&\& {\alpha_zFf_2Ff_1} \\
	{G(f_2f_1)\alpha_x} \&\&\&\& {\alpha_zF(f_2f_1)} \\
	{G(f'_2f_1)\alpha_x} \&\&\&\& {\alpha_zF(f'_2f_1).}
	\arrow["{1\circ\alpha_{f_1}}", from=1-1, to=1-3]
	\arrow["{\alpha_{f_2}\circ1}", from=1-3, to=1-5]
	\arrow["{G^2_{f_2,f_1}\circ1}"', from=1-1, to=2-1]
	\arrow["{G(\psi f_1)\circ1}"', from=2-1, to=3-1]
	\arrow["{1\circ F(\psi f_1)}", from=2-5, to=3-5]
	\arrow["{1\circ F^2_{f_2,f_1}}", from=1-5, to=2-5]
	\arrow[""{name=0, anchor=center, inner sep=0}, "{\alpha_{f'_2f_1}}"', from=3-1, to=3-5]
	\arrow[""{name=1, anchor=center, inner sep=0}, "{\alpha_{f_2f_1}}"{description}, from=2-1, to=2-5]
	\arrow["{\alpha^2_{f_2,f_1}}"{xshift=0.1cm}, shorten <=6pt, shorten >=6pt, Rightarrow, from=1-3, to=1]
	\arrow["{\alpha_{\psi f_1}}"{xshift=0.1cm}, shorten <=9pt, shorten >=9pt, Rightarrow, from=1, to=0]
\end{tikzcd}
\end{center}
noting that the vertical paths in the two diagrams are equal by \eqref{ax:psfct.F2-whisk-2} for $G$ and $F$.  

Similarly, given a diagram in $\ca$ as below
% https://q.uiver.app/?q=WzAsMyxbMCwwLCJYIl0sWzEsMCwiWSJdLFsyLDAsIloiXSxbMCwxLCJmXzEiLDAseyJjdXJ2ZSI6LTJ9XSxbMSwyLCJmXzIiXSxbMCwxLCJmJ18xIiwyLHsiY3VydmUiOjJ9XSxbMyw1LCJcXHBoaSIsMCx7InNob3J0ZW4iOnsic291cmNlIjoyMCwidGFyZ2V0IjoyMH19XV0=
\[\begin{tikzcd}
	x & y & z,
	\arrow[""{name=0, anchor=center, inner sep=0}, "{f_1}", curve={height=-12pt}, from=1-1, to=1-2]
	\arrow["{f_2}", from=1-2, to=1-3]
	\arrow[""{name=1, anchor=center, inner sep=0}, "{f'_1}"', curve={height=12pt}, from=1-1, to=1-2]
	\arrow["\phi"{xshift=0.1cm}, shorten <=3pt, shorten >=3pt, Rightarrow, from=0, to=1]
\end{tikzcd}\]
we require:
\begin{center}
(R)
% https://q.uiver.app/?q=WzAsOCxbMCwwLCJHZl8yR2ZfMVxcYWxwaGFfWCJdLFsyLDAsIkdmXzJcXGFscGhhX1lGZl8xIl0sWzQsMCwiXFxhbHBoYV9aRmZfMkZmXzEiXSxbMCwxLCJHZl8yR2YnXzFcXGFscGhhX1giXSxbMiwxLCJHZl8yXFxhbHBoYV9ZRmYnXzEiXSxbNCwxLCJcXGFscGhhX1pGZl8yRmYnXzEiXSxbMCwyLCJHKGZfMmYnXzEpXFxhbHBoYV9YIl0sWzQsMiwiXFxhbHBoYV9aRihmJ18yZl8xKSJdLFswLDEsIjFcXGNpcmNcXGFscGhhX3tmXzF9Il0sWzEsMiwiXFxhbHBoYV97Zl8yfVxcY2lyYzEiXSxbMCwzLCIxXFxjaXJjIEdcXHBoaVxcY2lyYzEiLDJdLFszLDYsIkdeMl97Zl8yLGZfMSd9XFxjaXJjMSIsMl0sWzUsNywiMVxcY2lyYyBGXjJfe2ZfMixmXzEnfSJdLFsyLDUsIjFcXGNpcmMgRlxccGhpIl0sWzEsNCwiMVxcY2lyYyBGXFxwaGkiLDFdLFszLDQsIjFcXGNpcmNcXGFscGhhX3tmJ18xfSIsMV0sWzQsNSwiXFxhbHBoYV97Zl8yfVxcY2lyYzEiLDFdLFs2LDcsIlxcYWxwaGFfe2ZfMmYnXzF9IiwyXSxbOSwxNiwiIiwwLHsic2hvcnRlbiI6eyJzb3VyY2UiOjQwLCJ0YXJnZXQiOjQwfSwic3R5bGUiOnsiYm9keSI6eyJuYW1lIjoiZGFzaGVkIn19fV0sWzgsMTUsIjFcXGNpcmNcXGFscGhhX1xccGhpIiwwLHsic2hvcnRlbiI6eyJzb3VyY2UiOjQwLCJ0YXJnZXQiOjQwfX1dLFs0LDE3LCJcXGFscGhhXjJfe2ZfMixmJ18xfSIsMCx7InNob3J0ZW4iOnsic291cmNlIjo0MCwidGFyZ2V0Ijo0MH19XV0=
\begin{tikzcd}[ampersand replacement=\&]
	{Gf_2Gf_1\alpha_X} \&\& {Gf_2\alpha_YFf_1} \&\& {\alpha_ZFf_2Ff_1} \\
	{Gf_2Gf'_1\alpha_X} \&\& {Gf_2\alpha_YFf'_1} \&\& {\alpha_ZFf_2Ff'_1} \\
	{G(f_2f'_1)\alpha_X} \&\&\&\& {\alpha_ZF(f'_2f_1)}
	\arrow[""{name=0, anchor=center, inner sep=0}, "{1\circ\alpha_{f_1}}", from=1-1, to=1-3]
	\arrow[""{name=1, anchor=center, inner sep=0}, "{\alpha_{f_2}\circ1}", from=1-3, to=1-5]
	\arrow["{1\circ G\phi\circ1}"', from=1-1, to=2-1]
	\arrow["{G^2_{f_2,f_1'}\circ1}"', from=2-1, to=3-1]
	\arrow["{1\circ F^2_{f_2,f_1'}}", from=2-5, to=3-5]
	\arrow["{1\circ F\phi}", from=1-5, to=2-5]
	\arrow["{1\circ F\phi}"{description}, from=1-3, to=2-3]
	\arrow[""{name=2, anchor=center, inner sep=0}, "{1\circ\alpha_{f'_1}}"{description}, from=2-1, to=2-3]
	\arrow[""{name=3, anchor=center, inner sep=0}, "{\alpha_{f_2}\circ1}"{description}, from=2-3, to=2-5]
	\arrow[""{name=4, anchor=center, inner sep=0}, "{\alpha_{f_2f'_1}}"', from=3-1, to=3-5]
	\arrow[shorten <=9pt, shorten >=9pt, Rightarrow, dashed, from=1, to=3]
	\arrow["{1\circ\alpha_\phi}"{xshift=0.1cm}, shorten <=9pt, shorten >=9pt, Rightarrow, from=0, to=2]
	\arrow["{\alpha^2_{f_2,f'_1}}"{xshift=0.1cm}, shorten <=6pt, shorten >=6pt, Rightarrow, from=2-3, to=4]
\end{tikzcd} \\
	$=$
% https://q.uiver.app/?q=WzAsNyxbMCwwLCJHZl8yR2ZfMVxcYWxwaGFfWCJdLFsyLDAsIkdmXzJcXGFscGhhX1lGZl8xIl0sWzQsMCwiXFxhbHBoYV9aRmZfMkZmXzEiXSxbMCwxLCJHKGZfMmZfMSlcXGFscGhhX1giXSxbNCwxLCJcXGFscGhhX1pGKGZfMmZfMSkiXSxbMCwyLCJHKGZfMmYnXzEpXFxhbHBoYV9YIl0sWzQsMiwiXFxhbHBoYV9aRihmXzJmJ18xKSJdLFswLDEsIjFcXGNpcmNcXGFscGhhX3tmXzF9Il0sWzEsMiwiXFxhbHBoYV97Zl8yfVxcY2lyYzEiXSxbMCwzLCJHXjJfe2ZfMixmXzF9XFxjaXJjMSIsMl0sWzMsNSwiRyhmXzJcXHBoaSlcXGNpcmMxIiwyXSxbNCw2LCIxXFxjaXJjIEYoZl8yXFxwaGkpIl0sWzIsNCwiMVxcY2lyYyBGXjJfe2ZfMixmXzF9Il0sWzUsNiwiXFxhbHBoYV97Zl8yZidfMX0iLDJdLFszLDQsIlxcYWxwaGFfe2ZfMmZfMX0iLDFdLFsxLDE0LCJcXGFscGhhXjJfe2ZfMixmXzF9IiwwLHsic2hvcnRlbiI6eyJzb3VyY2UiOjQwLCJ0YXJnZXQiOjQwfX1dLFsxNCwxMywiXFxhbHBoYV97Zl8yXFxwaGl9IiwwLHsic2hvcnRlbiI6eyJzb3VyY2UiOjQwLCJ0YXJnZXQiOjQwfX1dXQ==
\begin{tikzcd}[ampersand replacement=\&]
	{Gf_2Gf_1\alpha_x} \&\& {Gf_2\alpha_yFf_1} \&\& {\alpha_zFf_2Ff_1} \\
	{G(f_2f_1)\alpha_x} \&\&\&\& {\alpha_zF(f_2f_1)} \\
	{G(f_2f'_1)\alpha_x} \&\&\&\& {\alpha_zF(f_2f'_1).}
	\arrow["{1\circ\alpha_{f_1}}", from=1-1, to=1-3]
	\arrow["{\alpha_{f_2}\circ1}", from=1-3, to=1-5]
	\arrow["{G^2_{f_2,f_1}\circ1}"', from=1-1, to=2-1]
	\arrow["{G(f_2\phi)\circ1}"', from=2-1, to=3-1]
	\arrow["{1\circ F(f_2\phi)}", from=2-5, to=3-5]
	\arrow["{1\circ F^2_{f_2,f_1}}", from=1-5, to=2-5]
	\arrow[""{name=0, anchor=center, inner sep=0}, "{\alpha_{f_2f'_1}}"', from=3-1, to=3-5]
	\arrow[""{name=1, anchor=center, inner sep=0}, "{\alpha_{f_2f_1}}"{description}, from=2-1, to=2-5]
	\arrow["{\alpha^2_{f_2,f_1}}"{xshift=0.1cm}, shorten <=6pt, shorten >=6pt, Rightarrow, from=1-3, to=1]
	\arrow["{\alpha_{f_2\phi}}"{xshift=0.1cm}, shorten <=9pt, shorten >=9pt, Rightarrow, from=1, to=0]
\end{tikzcd}
\end{center}

\end{enumerate}

\subsection{Modifications}
\label{subsec:ax-mod}
%Let $F,G\colon\ca\to\cb$ be pseudomaps, $\alpha,\beta\colon F\to G$ lax transformations between them. A (lax) modification $\Gamma\colon\alpha\to\beta$ has to satisfy the following axioms.  
A modification $\Gamma\colon\alpha\to\beta$ has to satisfy the following axioms.

\begin{enumerate}[(i)]
\item\label{ax:mod.deg-3} \emph{Degeneracy for the 3-cell component:} at an object $x\in\ca$, $\Gamma_{1_x}=1_{\Gamma_x}$. 

\item\label{ax:mod.comp-1} \emph{Compatibility with composition of 1-cells:}
at $1$-cells $f_1\colon x\to y$ and $f_2\colon y\to z$ in $\ca$, 
\begin{center}
\adjustbox{scale=0.8}{
% https://q.uiver.app/?q=WzAsMTAsWzAsMCwiR2ZfMkdmXzFcXGFscGhhX1giXSxbMSwxLCJHZl8yXFxhbHBoYV9ZRmZfMSJdLFsyLDIsIlxcYWxwaGFfWkZmXzJGZl8xIl0sWzQsMiwiXFxiZXRhX1pGZl8yRmZfMSJdLFs0LDQsIlxcYmV0YV9aRihmXzJmXzEpIl0sWzIsMCwiR2ZfMkdmXzFcXGJldGFfWCJdLFszLDEsIkdmXzJcXGJldGFfWUZmXzEiXSxbMiw0LCJcXGFscGhhX1pGKGZfMmZfMSkiXSxbMCwyLCJHKGZfMmZfMSlcXGFscGhhX1giXSxbMiwzLCJHZl8yXFxhbHBoYV9ZRmZfMSIsWzAsMCwxMDAsMV1dLFswLDEsIjFcXGNpcmNcXGFscGhhX3tmXzF9IiwxXSxbMSwyLCJcXGFscGhhX3tmXzJ9XFxjaXJjMSIsMV0sWzIsMywiXFxHYW1tYV9aXFxjaXJjMSIsMl0sWzMsNCwiMVxcY2lyYyBGXjJfe2ZfMicsZl8xfSJdLFswLDUsIjFcXGNpcmNcXEdhbW1hX1giXSxbNSw2LCIxXFxjaXJjXFxiZXRhX3tmXzF9Il0sWzYsMywiXFxiZXRhX3tmXzJ9XFxjaXJjMSJdLFsxLDYsIjFcXGNpcmNcXEdhbW1hX1lcXGNpcmMxIiwxXSxbMiw3LCIxXFxjaXJjIEZeMl97Zl8yJyxmXzF9IiwxXSxbMCw4LCJHXjJfe2ZfMicsZl8xfVxcY2lyYzEiLDFdLFs4LDcsIlxcYWxwaGFfe2ZfMmZfMX0iLDJdLFs3LDQsIlxcR2FtbWFfWlxcY2lyYzEiLDJdLFsxLDIwLCJcXGFscGhhXjJfe2ZfMixmXzF9IiwwLHsic2hvcnRlbiI6eyJzb3VyY2UiOjQwLCJ0YXJnZXQiOjQwfX1dLFsxMiwyMSwiIiwyLHsic2hvcnRlbiI6eyJzb3VyY2UiOjQwLCJ0YXJnZXQiOjQwfSwic3R5bGUiOnsiYm9keSI6eyJuYW1lIjoiZGFzaGVkIn19fV0sWzE0LDE3LCIxXFxjaXJjXFxHYW1tYV97Zl8xfSIsMCx7InNob3J0ZW4iOnsic291cmNlIjo0MCwidGFyZ2V0Ijo0MH19XSxbMTcsMTIsIlxcR2FtbWFfe2ZfMn1cXGNpcmMxIiwwLHsic2hvcnRlbiI6eyJzb3VyY2UiOjQwLCJ0YXJnZXQiOjQwfX1dXQ==
\begin{tikzcd}[ampersand replacement=\&]
	{Gf_2Gf_1\alpha_x} \&\& {Gf_2Gf_1\beta_x} \\
	\& {Gf_2\alpha_yFf_1} \&\& {Gf_2\beta_yFf_1} \\
	{G(f_2f_1)\alpha_x} \&\& {\alpha_zFf_2Ff_1} \&\& {\beta_zFf_2Ff_1} \\
	\&\& \textcolor{white}{Gf_2\alpha_yFf_1} \\
	\&\& {\alpha_zF(f_2f_1)} \&\& {\beta_zF(f_2f_1)}
	\arrow["{1\circ\alpha_{f_1}}"{description}, from=1-1, to=2-2]
	\arrow["{\alpha_{f_2}\circ1}"{description}, from=2-2, to=3-3]
	\arrow[""{name=0, anchor=center, inner sep=0}, "{\Gamma_z\circ1}"', from=3-3, to=3-5]
	\arrow["{1\circ F^2_{f_2',f_1}}", from=3-5, to=5-5]
	\arrow[""{name=1, anchor=center, inner sep=0}, "{1\circ\Gamma_x}", from=1-1, to=1-3]
	\arrow["{1\circ\beta_{f_1}}", from=1-3, to=2-4]
	\arrow["{\beta_{f_2}\circ1}", from=2-4, to=3-5]
	\arrow[""{name=2, anchor=center, inner sep=0}, "{1\circ\Gamma_y\circ1}"{description}, from=2-2, to=2-4]
	\arrow["{1\circ F^2_{f_2',f_1}}"{description}, from=3-3, to=5-3]
	\arrow["{G^2_{f_2',f_1}\circ1}"{description}, from=1-1, to=3-1]
	\arrow[""{name=3, anchor=center, inner sep=0}, "{\alpha_{f_2f_1}}"', from=3-1, to=5-3]
	\arrow[""{name=4, anchor=center, inner sep=0}, "{\Gamma_z\circ1}"', from=5-3, to=5-5]
	\arrow["{\alpha^2_{f_2,f_1}}"{xshift=0.1cm}, shorten <=17pt, shorten >=17pt, Rightarrow, from=2-2, to=3]
	\arrow[shorten <=20pt, shorten >=20pt, Rightarrow, dashed, from=0, to=4]
	\arrow["{1\circ\Gamma_{f_1}}"{xshift=0.1cm,yshift=-0.15cm}, shorten <=25pt, shorten >=25pt, Rightarrow, from=1, to=2]
	\arrow["{\Gamma_{f_2}\circ1}"{xshift=0.1cm,yshift=-0.15cm}, shorten <=25pt, shorten >=25pt, Rightarrow, from=2, to=0]
\end{tikzcd}} \\ $=$
\adjustbox{scale=0.8}{
% https://q.uiver.app/?q=WzAsMTAsWzAsMCwiR2ZfMkdmXzFcXGFscGhhX1giXSxbNCwyLCJcXGJldGFfWkZmXzJGZl8xIl0sWzQsNCwiXFxiZXRhX1pGKGZfMmZfMSkiXSxbMiwwLCJHZl8yR2ZfMVxcYmV0YV9YIl0sWzMsMSwiR2ZfMlxcYmV0YV9ZRmZfMSJdLFsyLDQsIlxcYWxwaGFfWkYoZl8yZl8xKSJdLFswLDIsIkcoZl8yZl8xKVxcYWxwaGFfWCJdLFsyLDIsIkcoZl8yZl8xKVxcYmV0YV9YIl0sWzEsMCwiR2ZfMlxcYmV0YV9ZRmZfMSIsWzAsMCwxMDAsMV1dLFszLDMsIkdmXzJcXGJldGFfWUZmXzEiLFswLDAsMTAwLDFdXSxbMSwyLCIxXFxjaXJjIEZeMl97Zl8yJyxmXzF9Il0sWzAsMywiMVxcY2lyY1xcR2FtbWFfWCJdLFszLDQsIjFcXGNpcmNcXGJldGFfe2ZfMX0iXSxbNCwxLCJcXGJldGFfe2ZfMn1cXGNpcmMxIl0sWzAsNiwiR14yX3tmXzInLGZfMX1cXGNpcmMxIiwxXSxbNiw1LCJcXGFscGhhX3tmXzJmXzF9IiwyXSxbNSwyLCJcXEdhbW1hX1pcXGNpcmMxIiwyXSxbMyw3LCJHXjJfe2ZfMicsZl8xfVxcY2lyYzEiLDFdLFs3LDJdLFs2LDcsIjFcXGNpcmNcXEdhbW1hX1giXSxbMTEsMTksIiIsMCx7InNob3J0ZW4iOnsic291cmNlIjo0MCwidGFyZ2V0Ijo0MH0sInN0eWxlIjp7ImJvZHkiOnsibmFtZSI6ImRhc2hlZCJ9fX1dLFs0LDE4LCJcXGJldGFeMl97Zl8yLGZfMX0iLDAseyJzaG9ydGVuIjp7InNvdXJjZSI6NDAsInRhcmdldCI6NDB9fV0sWzE5LDE2LCJcXEdhbW1hX3tmXzJmXzF9IiwwLHsic2hvcnRlbiI6eyJzb3VyY2UiOjQwLCJ0YXJnZXQiOjQwfX1dXQ==
\begin{tikzcd}[ampersand replacement=\&]
	{Gf_2Gf_1\alpha_x} \& \textcolor{white}{Gf_2\beta_yFf_1} \& {Gf_2Gf_1\beta_x} \\
	\&\&\& {Gf_2\beta_yFf_1} \\
	{G(f_2f_1)\alpha_x} \&\& {G(f_2f_1)\beta_x} \&\& {\beta_zFf_2Ff_1} \\
	\&\&\& \textcolor{white}{Gf_2\beta_yFf_1} \\
	\&\& {\alpha_zF(f_2f_1)} \&\& {\beta_zF(f_2f_1).}
	\arrow["{1\circ F^2_{f_2',f_1}}", from=3-5, to=5-5]
	\arrow[""{name=0, anchor=center, inner sep=0}, "{1\circ\Gamma_x}", from=1-1, to=1-3]
	\arrow["{1\circ\beta_{f_1}}", from=1-3, to=2-4]
	\arrow["{\beta_{f_2}\circ1}", from=2-4, to=3-5]
	\arrow["{G^2_{f_2',f_1}\circ1}"{description}, from=1-1, to=3-1]
	\arrow["{\alpha_{f_2f_1}}"', from=3-1, to=5-3]
	\arrow[""{name=1, anchor=center, inner sep=0}, "{\Gamma_z\circ1}"', from=5-3, to=5-5]
	\arrow["{G^2_{f_2',f_1}\circ1}"{description}, from=1-3, to=3-3]
	\arrow[""{name=2, anchor=center, inner sep=0}, from=3-3, to=5-5]
	\arrow[""{name=3, anchor=center, inner sep=0}, "{1\circ\Gamma_x}", from=3-1, to=3-3]
	\arrow[shorten <=20pt, shorten >=20pt, Rightarrow, dashed, from=0, to=3]
	\arrow["{\beta^2_{f_2,f_1}}"{xshift=0.1cm}, shorten <=15pt, shorten >=15pt, Rightarrow, from=2-4, to=2]
	\arrow["{\Gamma_{f_2f_1}}", shorten <=55pt, shorten >=55pt, Rightarrow, from=3, to=1]
\end{tikzcd}}
\end{center}

\item\label{ax:mod.2-cell} \emph{Compatibility with 2-cells:}
at $1$-cells $f,f'\colon x\to y$ and a $2$-cell $\phi\colon f\to f'$, we require:
\begin{center}
% https://q.uiver.app/?q=WzAsNyxbMCwwLCJHZlxcYWxwaGFfWCJdLFsyLDAsIkdmXFxiZXRhX1giXSxbMiwxLCJcXGFscGhhX1lGZiJdLFs0LDEsIlxcYmV0YV9ZRmYiXSxbMiwzLCJcXGFscGhhX1lGZiciXSxbNCwzLCJcXGJldGFfWUZmJyJdLFswLDIsIkdmJ1xcYWxwaGFfWCJdLFswLDEsIjFcXGNpcmNcXEdhbW1hX1giXSxbMCwyLCJcXGFscGhhX2YiLDFdLFsxLDMsIlxcYmV0YV9mIl0sWzIsMywiXFxHYW1tYV9ZXFxjaXJjMSIsMV0sWzIsNCwiMVxcY2lyYyBGXFxwaGkiLDFdLFszLDUsIjFcXGNpcmMgRlxccGhpIl0sWzAsNiwiR1xccGhpXFxjaXJjMSIsMl0sWzYsNCwiXFxhbHBoYV97Zid9IiwyXSxbNCw1LCJcXEdhbW1hX1lcXGNpcmMxIiwyXSxbNywxMCwiXFxHYW1tYV9mIiwwLHsic2hvcnRlbiI6eyJzb3VyY2UiOjQwLCJ0YXJnZXQiOjQwfX1dLFsxMCwxNSwiIiwwLHsic2hvcnRlbiI6eyJzb3VyY2UiOjQwLCJ0YXJnZXQiOjQwfSwic3R5bGUiOnsiYm9keSI6eyJuYW1lIjoiZGFzaGVkIn19fV0sWzgsMTQsIlxcYWxwaGFfXFxwaGkiLDAseyJzaG9ydGVuIjp7InNvdXJjZSI6NDAsInRhcmdldCI6NDB9fV1d
\adjustbox{scale=0.85}{\begin{tikzcd}
	{Gf\alpha_x} && {Gf\beta_x} \\
	&& {\alpha_yFf} && {\beta_yFf} \\
	{Gf'\alpha_x} \\
	&& {\alpha_yFf'} && {\beta_yFf'}
	\arrow[""{name=0, anchor=center, inner sep=0}, "{1\circ\Gamma_x}", from=1-1, to=1-3]
	\arrow[""{name=1, anchor=center, inner sep=0}, "{\alpha_f}"{description}, from=1-1, to=2-3]
	\arrow["{\beta_f}", from=1-3, to=2-5]
	\arrow[""{name=2, anchor=center, inner sep=0}, "{\Gamma_y\circ1}"{description}, from=2-3, to=2-5]
	\arrow["{1\circ F\phi}"{description}, from=2-3, to=4-3]
	\arrow["{1\circ F\phi}", from=2-5, to=4-5]
	\arrow["G\phi\circ1"', from=1-1, to=3-1]
	\arrow[""{name=3, anchor=center, inner sep=0}, "{\alpha_{f'}}"', from=3-1, to=4-3]
	\arrow[""{name=4, anchor=center, inner sep=0}, "{\Gamma_y\circ1}"', from=4-3, to=4-5]
	\arrow["{\Gamma_f}"{xshift=0.1cm,yshift=-0.1cm}, shorten <=30pt, shorten >=30pt, Rightarrow, from=0, to=2]
	\arrow[shorten <=20pt, shorten >=20pt, Rightarrow, dashed, from=2, to=4]
	\arrow["{\alpha_\phi}"{xshift=0.1cm}, shorten <=20pt, shorten >=20pt, Rightarrow, from=1, to=3]
\end{tikzcd} $=$
% https://q.uiver.app/?q=WzAsNyxbMCwwLCJHZlxcYWxwaGFfeCJdLFsyLDAsIkdmXFxiZXRhX3giXSxbNCwxLCJcXGJldGFfeUZmIl0sWzIsMywiXFxhbHBoYV95RmYnIl0sWzQsMywiXFxiZXRhX3lGZiciXSxbMCwyLCJHZidcXGFscGhhX3giXSxbMiwyLCJHZidcXGJldGFfeCJdLFswLDEsIjFcXGNpcmNcXEdhbW1hX1giXSxbMSwyLCJcXGJldGFfZiJdLFsyLDQsIjFcXGNpcmMgRlxccGhpIl0sWzUsMywiXFxhbHBoYV97Zid9IiwyXSxbMyw0LCJcXEdhbW1hX1lcXGNpcmMxIiwyXSxbNSw2LCIxXFxjaXJjXFxHYW1tYV9YIl0sWzEsNiwiR1xccGhpXFxjaXJjMSIsMV0sWzYsNCwiXFxiZXRhX3tmJ30iLDFdLFswLDUsIkdcXHBoaVxcY2lyYzEiLDJdLFs3LDEyLCIiLDAseyJzaG9ydGVuIjp7InNvdXJjZSI6NDAsInRhcmdldCI6NDB9LCJzdHlsZSI6eyJib2R5Ijp7Im5hbWUiOiJkYXNoZWQifX19XSxbOCwxNCwiXFxiZXRhX1xccGhpIiwwLHsic2hvcnRlbiI6eyJzb3VyY2UiOjQwLCJ0YXJnZXQiOjQwfX1dLFsxMiwxMSwiXFxHYW1tYV97Zid9IiwwLHsic2hvcnRlbiI6eyJzb3VyY2UiOjQwLCJ0YXJnZXQiOjQwfX1dXQ==
\begin{tikzcd}[ampersand replacement=\&]
	{Gf\alpha_x} \&\& {Gf\beta_x} \\
	\&\&\&\& {\beta_yFf} \\
	{Gf'\alpha_x} \&\& {Gf'\beta_x} \\
	\&\& {\alpha_yFf'} \&\& {\beta_yFf'.}
	\arrow[""{name=0, anchor=center, inner sep=0}, "{1\circ\Gamma_x}", from=1-1, to=1-3]
	\arrow[""{name=1, anchor=center, inner sep=0}, "{\beta_f}", from=1-3, to=2-5]
	\arrow["{1\circ F\phi}", from=2-5, to=4-5]
	\arrow["{\alpha_{f'}}"', from=3-1, to=4-3]
	\arrow[""{name=2, anchor=center, inner sep=0}, "{\Gamma_Y\circ1}"', from=4-3, to=4-5]
	\arrow[""{name=3, anchor=center, inner sep=0}, "{1\circ\Gamma_x}", from=3-1, to=3-3]
	\arrow["G\phi\circ1"{description}, from=1-3, to=3-3]
	\arrow[""{name=4, anchor=center, inner sep=0}, "{\beta_{f'}}"{description}, from=3-3, to=4-5]
	\arrow["G\phi\circ1"', from=1-1, to=3-1]
	\arrow[shorten <=20pt, shorten >=20pt, Rightarrow, dashed, from=0, to=3]
	\arrow["{\beta_\phi}"{xshift=0.1cm}, shorten <=20pt, shorten >=20pt, Rightarrow, from=1, to=4]
	\arrow["{\Gamma_{f'}}"{xshift=0.1cm,yshift=-0.1cm}, shorten <=30pt, shorten >=30pt, Rightarrow, from=3, to=2]
\end{tikzcd}}
\end{center}

\end{enumerate}

\subsection{Perturbations}
\label{subsec:ax-pert}
%Let $F,G\colon\ca\to\cb$ be two pseudomaps, $\alpha,\beta\colon F\to G$ two lax transformations, $\Gamma,\Lambda\colon\alpha\to\beta$ two (lax) modifications. 
A perturbation $\theta\colon\Gamma\to\Lambda$ has to satisfy the following equality for any 1-cell $f\colon x\to y$ in $\ca$:
% https://q.uiver.app/?q=WzAsOSxbNCwwLCJHZlxcYWxwaGFfWCJdLFs2LDAsIkdmXFxiZXRhX1giXSxbNCwyLCJcXGFscGhhX1lGZiJdLFs2LDIsIlxcYmV0YV9ZRmYiXSxbMywxLCI9Il0sWzIsMCwiR2ZcXGJldGFfWCJdLFswLDAsIkdmXFxhbHBoYV9YIl0sWzIsMiwiXFxiZXRhX1lGZiJdLFswLDIsIlxcYWxwaGFfWUZmIl0sWzAsMSwiMVxcY2lyY1xcR2FtbWFfWCIsMCx7ImN1cnZlIjotM31dLFswLDIsIlxcYWxwaGFfZiIsMV0sWzEsMywiXFxiZXRhX2YiXSxbMiwzLCJcXEdhbW1hX1lcXGNpcmMxIiwxLHsiY3VydmUiOi0zfV0sWzIsMywiXFxMYW1iZGFfWVxcY2lyYzEiLDIseyJjdXJ2ZSI6M31dLFs2LDUsIjFcXGNpcmNcXEdhbW1hX1giLDAseyJjdXJ2ZSI6LTN9XSxbNiw1LCIxXFxjaXJjXFxMYW1iZGFfWCIsMSx7ImN1cnZlIjozfV0sWzUsNywiXFxiZXRhX2YiXSxbNiw4LCJcXGFscGhhX2YiLDJdLFs4LDcsIlxcTGFtYmRhX1lcXGNpcmMxIiwyLHsiY3VydmUiOjN9XSxbOSwxMiwiXFxHYW1tYV9mIiwxLHsic2hvcnRlbiI6eyJzb3VyY2UiOjQwLCJ0YXJnZXQiOjQwfX1dLFsxMiwxMywiXFx0aGV0YV9ZXFxjaXJjMSIsMSx7InNob3J0ZW4iOnsic291cmNlIjoyMCwidGFyZ2V0IjoyMH19XSxbMTQsMTUsIjFcXGNpcmNcXHRoZXRhX1giLDEseyJzaG9ydGVuIjp7InNvdXJjZSI6MjAsInRhcmdldCI6MjB9fV0sWzE1LDE4LCJcXEdhbW1hX2YiLDEseyJzaG9ydGVuIjp7InNvdXJjZSI6NDAsInRhcmdldCI6NDB9fV1d
\[\begin{tikzcd}
	{Gf\alpha_x} && {Gf\beta_x} && {Gf\alpha_x} && {Gf\beta_x} \\
	&&& {=} \\
	{\alpha_yFf} && {\beta_yFf} && {\alpha_yFf} && {\beta_yFf.}
	\arrow[""{name=0, anchor=center, inner sep=0}, "{1\circ\Gamma_x}", curve={height=-18pt}, from=1-5, to=1-7]
	\arrow["{\alpha_f}"{description}, from=1-5, to=3-5]
	\arrow["{\beta_f}", from=1-7, to=3-7]
	\arrow[""{name=1, anchor=center, inner sep=0}, "{\Gamma_y\circ1}"{description}, curve={height=-18pt}, from=3-5, to=3-7]
	\arrow[""{name=2, anchor=center, inner sep=0}, "{\Lambda_y\circ1}"', curve={height=18pt}, from=3-5, to=3-7]
	\arrow[""{name=3, anchor=center, inner sep=0}, "{1\circ\Gamma_x}", curve={height=-18pt}, from=1-1, to=1-3]
	\arrow[""{name=4, anchor=center, inner sep=0}, "{1\circ\Lambda_x}"{description}, curve={height=18pt}, from=1-1, to=1-3]
	\arrow["{\beta_f}", from=1-3, to=3-3]
	\arrow["{\alpha_f}"', from=1-1, to=3-1]
	\arrow[""{name=5, anchor=center, inner sep=0}, "{\Lambda_y\circ1}"', curve={height=18pt}, from=3-1, to=3-3]
	\arrow["{\Gamma_f}"{description}, shorten <=17pt, shorten >=17pt, Rightarrow, from=0, to=1]
	\arrow["{\theta_y\circ1}"{description}, shorten <=5pt, shorten >=5pt, Rightarrow, from=1, to=2]
	\arrow["{1\circ\theta_x}"{description}, shorten <=5pt, shorten >=5pt, Rightarrow, from=3, to=4]
	\arrow["{\Gamma_f}"{description}, shorten <=17pt, shorten >=17pt, Rightarrow, from=4, to=5]
\end{tikzcd}\]

\section{The Gray-categorical structure of $\lax(\ca,\cb)$}
\label{app:lax-gray-cat-str}

Let $\ca$ and $\cb$ be Gray-categories.  We now describe the operations equipping $\lax(\ca,\cb)$ with the structure of a Gray-category. 

\subsection{The category of pseudo-functors and lax transformations}
\label{app:lax(A,B)-cat-fct-transf}
Given a pseudomap $F\colon\ca\to\cb$, the identity lax transformation $1_F\colon F\to F\in\lax(\ca,\cb)$ is given by the identity in each component. 

Let $\alpha\colon F\to G$ and $\beta\colon G\to H$ be lax transformations. The lax transformation $\beta\cdot\alpha\colon F\to H$ is defined as follows:
\begin{itemize}
\item at an object $x\in\ca$, $(\beta\cdot\alpha)_x:=\beta_x\alpha_x\colon Fx\to Gx\to Hx$;
\item at a 1-cell $f\colon x\to y$ in $\ca$, the 2-cell $(\beta\cdot\alpha)_f\colon Hf(\beta\cdot\alpha)_x\to(\beta\cdot\alpha)_yFf$ is the composite
% https://q.uiver.app/?q=WzAsMyxbMCwwLCJIZihcXGJldGFcXGNkb3RcXGFscGhhKV9YPUhmXFxiZXRhX1hcXGFscGhhX1giXSxbMSwwLCJcXGJldGFfWUdmXFxhbHBoYV9YIl0sWzIsMCwiXFxiZXRhX1lcXGFscGhhX1lGZj0oXFxiZXRhXFxjZG90XFxhbHBoYSlfWUZmIl0sWzAsMSwiXFxiZXRhX2ZcXGNpcmMxIl0sWzEsMiwiMVxcY2lyY1xcYWxwaGFfZiJdXQ==
\[\begin{tikzcd}[ampersand replacement=\&]
	{Hf(\beta\cdot\alpha)_x=Hf\beta_x\alpha_x} \& {\beta_yGf\alpha_x} \& {\beta_y\alpha_yFf=(\beta\cdot\alpha)_yFf;}
	\arrow["{\beta_f\circ1}", from=1-1, to=1-2]
	\arrow["{1\circ\alpha_f}", from=1-2, to=1-3]
\end{tikzcd}\]
	
\item at a 2-cell $\phi\colon f\to f'$, $(\beta\cdot\alpha)_\phi$ is the composite
\begin{equation}\label{comp-lax-2-cell-part}
% https://q.uiver.app/?q=WzAsNixbMCwwLCJIZlxcYmV0YV94XFxhbHBoYV94Il0sWzIsMCwiXFxiZXRhX3lHZlxcYWxwaGFfeCJdLFs0LDAsIlxcYmV0YV95XFxhbHBoYV95RmYiXSxbMCwxLCJIZidcXGJldGFfeFxcYWxwaGFfeCJdLFsyLDEsIlxcYmV0YV95R2YnXFxhbHBoYV94Il0sWzQsMSwiXFxiZXRhX3lcXGFscGhhX3lGZiciXSxbMCwxLCJcXGJldGFfZlxcY2lyYzEiXSxbMSwyLCIxXFxjaXJjXFxhbHBoYV9mIl0sWzAsMywiSFxccGhpXFxjaXJjMSIsMl0sWzEsNCwiMVxcY2lyYyBHXFxwaGlcXGNpcmMxIiwxXSxbMiw1LCIxXFxjaXJjIEZcXHBoaSJdLFszLDQsIlxcYmV0YV97Zid9XFxjaXJjMSIsMl0sWzQsNSwiMVxcY2lyY1xcYWxwaGFfe2YnfSIsMl0sWzYsMTEsIlxcYmV0YV9cXHBoaVxcY2lyYzEiLDEseyJzaG9ydGVuIjp7InNvdXJjZSI6MjAsInRhcmdldCI6MjB9fV0sWzcsMTIsIjFcXGNpcmNcXGFscGhhX1xccGhpIiwxLHsic2hvcnRlbiI6eyJzb3VyY2UiOjIwLCJ0YXJnZXQiOjIwfX1dXQ==
\begin{tikzcd}[ampersand replacement=\&]
	{Hf\beta_x\alpha_x} \&\& {\beta_yGf\alpha_x} \&\& {\beta_y\alpha_yFf} \\
	{Hf'\beta_x\alpha_x} \&\& {\beta_yGf'\alpha_x} \&\& {\beta_y\alpha_yFf';}
	\arrow[""{name=0, anchor=center, inner sep=0}, "{\beta_f\circ1}", from=1-1, to=1-3]
	\arrow[""{name=1, anchor=center, inner sep=0}, "{1\circ\alpha_f}", from=1-3, to=1-5]
	\arrow["H\phi\circ1"', from=1-1, to=2-1]
	\arrow["{1\circ G\phi\circ1}"{description}, from=1-3, to=2-3]
	\arrow["{1\circ F\phi}", from=1-5, to=2-5]
	\arrow[""{name=2, anchor=center, inner sep=0}, "{\beta_{f'}\circ1}"', from=2-1, to=2-3]
	\arrow[""{name=3, anchor=center, inner sep=0}, "{1\circ\alpha_{f'}}"', from=2-3, to=2-5]
	\arrow["{\beta_\phi\circ1}"{description}, shorten <=6pt, shorten >=6pt, Rightarrow, from=0, to=2]
	\arrow["{1\circ\alpha_\phi}"{description}, shorten <=6pt, shorten >=6pt, Rightarrow, from=1, to=3]
\end{tikzcd}
\end{equation}

\item at 1-cells $f_1\colon x\to y$ and $f_2\colon y\to z$ in $\ca$, the invertible 3-cell $(\beta\cdot\alpha)^2_{f_2,f_1}$ is the composite
% https://q.uiver.app/?q=WzAsMTMsWzAsMCwiSGZfMkhmXzFcXGJldGFfWFxcYWxwaGFfWCJdLFsxLDAsIkhmXzJcXGJldGFfWUdmXzFcXGFscGhhX1giXSxbMiwwLCJIZl8yXFxiZXRhX1lcXGFscGhhX1lGZl8xIl0sWzMsMCwiXFxiZXRhX1pHZl8yXFxhbHBoYV9ZRmZfMSJdLFs0LDAsIlxcYmV0YV9aXFxhbHBoYV9aRmZfMkZmXzEiXSxbMCwzLCJIKGZfMmZfMSlcXGJldGFfWFxcYWxwaGFfWCJdLFsyLDMsIlxcYmV0YV9aRyhmXzJmXzEpXFxhbHBoYV9YIl0sWzQsMywiXFxiZXRhX1pcXGFscGhhX1pGKGZfMmZfMSkiXSxbMiwxLCJcXGJldGFfWkdmXzJHZl8xXFxhbHBoYV9YIl0sWzEsMV0sWzEsMl0sWzMsMV0sWzMsMl0sWzAsMSwiMVxcY2lyY1xcYmV0YV97Zl8xfVxcY2lyYzEiXSxbMSwyLCIxXFxjaXJjXFxhbHBoYV97Zl8xfSJdLFsyLDMsIlxcYmV0YV97Zl8yfVxcY2lyYzEiXSxbMyw0LCIxXFxjaXJjXFxhbHBoYV97Zl8yfVxcY2lyYzEiXSxbMCw1LCJIXjJfe2ZfMixmXzF9XFxjaXJjMSIsMl0sWzUsNiwiXFxiZXRhX3tmXzJmXzF9XFxjaXJjMSIsMl0sWzYsNywiMVxcY2lyY1xcYWxwaGFfe2ZfMmZfMX0iLDJdLFs0LDcsIjFcXGNpcmMgRl4yX3tmXzIsZl8xfSJdLFsxLDgsIlxcYmV0YV97Zl8yfVxcY2lyYzEiLDJdLFs4LDMsIjFcXGNpcmNcXGFscGhhX3tmXzF9IiwyXSxbMiw4LCIiLDEseyJzaG9ydGVuIjp7InNvdXJjZSI6MzAsInRhcmdldCI6MzB9LCJsZXZlbCI6Miwic3R5bGUiOnsiYm9keSI6eyJuYW1lIjoiZGFzaGVkIn19fV0sWzgsNiwiMVxcY2lyYyBHXjJfe2ZfMixmXzF9XFxjaXJjMSIsMV0sWzksMTAsIlxcYmV0YV4yX3tmXzIsZl8xfVxcY2lyYzEiLDIseyJsZXZlbCI6Mn1dLFsxMSwxMiwiMVxcY2lyY1xcYWxwaGFeMl97Zl8yLGZfMX0iLDAseyJsZXZlbCI6Mn1dXQ==
\[\scalebox{0.8}{
\begin{tikzcd}[ampersand replacement=\&]
	{Hf_2Hf_1\beta_x\alpha_x} \& {Hf_2\beta_yGf_1\alpha_x} \& {Hf_2\beta_y\alpha_yFf_1} \& {\beta_zGf_2\alpha_yFf_1} \& {\beta_z\alpha_zFf_2Ff_1} \\
	\& {} \& {\beta_zGf_2Gf_1\alpha_x} \& {} \\
	\& {} \&\& {} \\
	{H(f_2f_1)\beta_x\alpha_x} \&\& {\beta_zG(f_2f_1)\alpha_x} \&\& {\beta_z\alpha_zF(f_2f_1).}
	\arrow["{1\circ\beta_{f_1}\circ1}", from=1-1, to=1-2]
	\arrow["{1\circ\alpha_{f_1}}", from=1-2, to=1-3]
	\arrow["{\beta_{f_2}\circ1}", from=1-3, to=1-4]
	\arrow["{1\circ\alpha_{f_2}\circ1}", from=1-4, to=1-5]
	\arrow["{H^2_{f_2,f_1}\circ1}"', from=1-1, to=4-1]
	\arrow["{\beta_{f_2f_1}\circ1}"', from=4-1, to=4-3]
	\arrow["{1\circ\alpha_{f_2f_1}}"', from=4-3, to=4-5]
	\arrow["{1\circ F^2_{f_2,f_1}}", from=1-5, to=4-5]
	\arrow["{\beta_{f_2}\circ1}"', from=1-2, to=2-3]
	\arrow["{1\circ\alpha_{f_1}}"'{xshift=0.1cm}, from=2-3, to=1-4]
	\arrow[shorten <=3pt, shorten >=3pt, Rightarrow, dashed, from=1-3, to=2-3]
	\arrow["{1\circ G^2_{f_2,f_1}\circ1}"{description}, from=2-3, to=4-3]
	\arrow["{\beta^2_{f_2,f_1}\circ1}"'{xshift=-0.1cm}, Rightarrow, from=2-2, to=3-2]
	\arrow["{1\circ\alpha^2_{f_2,f_1}}", Rightarrow, from=2-4, to=3-4]
\end{tikzcd}
}\]
\end{itemize}

\subsection{Hom-2-categories $\lax(\ca,\cb)[F,G]$}

\subsubsection{Composition of modifications}
\label{app:lax-comp-mod}
Given a lax transformation $\alpha\colon F\to G\in\lax(\ca,\cb)$, the identity modification $1_\alpha\colon\alpha\to\alpha$ is given by the identity in each component. 
%%%%%%%%%% LONGER DESCRIPTION HERE %%%%%%%
%\begin{itemize}
%	\item for any $x\in\ca$, $(1_\alpha)_x$ is the identity 2-cell $1_{\alpha_x}\colon\alpha_x\to\alpha_x$ in $\cb$;
%	\item for any $f\colon x\to y\in\ca$, $(1_\alpha)_f$ is the identity 3-cell $1_{\alpha_f}$ in $\cb$. 
%\end{itemize}
%%%%%%%%%%%%%%%%%%%%%%%%%%%%%%%%%%%%%%%%%%

Let $\alpha,\beta,\gamma\colon F\to G$ be lax transformations and $\Lambda\colon\alpha\to\beta$ and $\Lambda'\colon\beta\to\gamma$ be modifications. Then the modification $\Lambda'\ast\Lambda\colon\alpha\to\gamma$ is defined by:
\begin{itemize}
	\item at $x\in\ca$, $(\Lambda'\ast\Lambda)_x:=\Lambda'_x\cdot\Lambda_x\colon\alpha_x\to\beta_x\to\gamma_x$;
	\item at $f\colon x\to y\in\ca$, $(\Lambda'\ast\Lambda)_f$ is the composite
% https://q.uiver.app/?q=WzAsNixbMCwwLCJHZlxcYWxwaGFfWCJdLFsyLDAsIkdmXFxiZXRhX1giXSxbNCwwLCJHZlxcZ2FtbWFfWCJdLFs0LDEsIlxcZ2FtbWFfWUZmIl0sWzIsMSwiXFxiZXRhX1lGZiJdLFswLDEsIlxcYWxwaGFfWUZmIl0sWzAsNSwiXFxhbHBoYV9mIiwyXSxbNSw0LCJcXExhbWJkYV9ZXFxjaXJjMSIsMl0sWzQsMywiXFxMYW1iZGEnX1lcXGNpcmMxIiwyXSxbMCwxLCIxXFxjaXJjXFxMYW1iZGFfWCJdLFsxLDIsIjFcXGNpcmNcXExhbWJkYSdfWCJdLFsyLDMsIlxcZ2FtbWFfZiJdLFsxLDQsIlxcYmV0YV9mIiwxXSxbOSw3LCJcXExhbWJkYV9mIiwxLHsic2hvcnRlbiI6eyJzb3VyY2UiOjIwLCJ0YXJnZXQiOjIwfX1dLFsxMCw4LCJcXExhbWJkYSdfZiIsMSx7InNob3J0ZW4iOnsic291cmNlIjoyMCwidGFyZ2V0IjoyMH19XV0=
\[\begin{tikzcd}[ampersand replacement=\&]
	{Gf\alpha_x} \&\& {Gf\beta_x} \&\& {Gf\gamma_x} \\
	{\alpha_yFf} \&\& {\beta_yFf} \&\& {\gamma_yFf.}
	\arrow["{\alpha_f}"', from=1-1, to=2-1]
	\arrow[""{name=0, anchor=center, inner sep=0}, "{\Lambda_y\circ1}"', from=2-1, to=2-3]
	\arrow[""{name=1, anchor=center, inner sep=0}, "{\Lambda'_y\circ1}"', from=2-3, to=2-5]
	\arrow[""{name=2, anchor=center, inner sep=0}, "{1\circ\Lambda_x}", from=1-1, to=1-3]
	\arrow[""{name=3, anchor=center, inner sep=0}, "{1\circ\Lambda'_x}", from=1-3, to=1-5]
	\arrow["{\gamma_f}", from=1-5, to=2-5]
	\arrow["{\beta_f}"{description}, from=1-3, to=2-3]
	\arrow["{\Lambda_f}"{description}, shorten <=5pt, shorten >=5pt, Rightarrow, from=2, to=0]
	\arrow["{\Lambda'_f}"{description}, shorten <=5pt, shorten >=5pt, Rightarrow, from=3, to=1]
\end{tikzcd}\]
\end{itemize}

\subsubsection{Composition of perturbations}
\label{app:lax-comp-pert}

Given a modification $\Lambda\colon\alpha\to\beta\in\lax(\ca,\cb)[F,G]$, the identity perturbation $1_\Lambda\colon\Lambda\to\Lambda$  has $(1_\Lambda)_x:=1_{\Lambda_x}$ at $x\in\ca$.

Let $\theta\colon\Lambda\to\Gamma$ and $\theta'\colon\Gamma\to\Omega$ be composable perturbations in $\lax(\ca,\cb)$. The perturbation $\theta'\theta\colon\Lambda\to\Omega$ has component, at an object $x \in \ca$, the 3-cell 
$$(\theta'\theta)_x:=\theta'_x\ast\theta_x\colon\Lambda_x\to\Gamma_x\to\Omega_x.$$

\subsubsection{Whiskering of perturbations by modifications}

Consider the following diagram in $\lax(\ca,\cb)[F,G]$.
% https://q.uiver.app/?q=WzAsMyxbMCwwLCJcXGFscGhhIl0sWzEsMCwiXFxiZXRhIl0sWzIsMCwiXFxnYW1tYSJdLFswLDEsIlxcTGFtYmRhIiwwLHsiY3VydmUiOi0yfV0sWzAsMSwiXFxMYW1iZGEnIiwyLHsiY3VydmUiOjJ9XSxbMSwyLCJcXEdhbW1hIiwwLHsiY3VydmUiOi0yfV0sWzEsMiwiXFxHYW1tYSciLDIseyJjdXJ2ZSI6Mn1dLFs1LDYsIlxcbnUiLDAseyJzaG9ydGVuIjp7InNvdXJjZSI6NDAsInRhcmdldCI6NDB9fV0sWzMsNCwiXFx0aGV0YSIsMCx7InNob3J0ZW4iOnsic291cmNlIjo0MCwidGFyZ2V0Ijo0MH19XV0=
\[\begin{tikzcd}[ampersand replacement=\&]
	\alpha \& \beta \& \gamma
	\arrow[""{name=0, anchor=center, inner sep=0}, "\Lambda", curve={height=-12pt}, from=1-1, to=1-2]
	\arrow[""{name=1, anchor=center, inner sep=0}, "{\Lambda'}"', curve={height=12pt}, from=1-1, to=1-2]
	\arrow[""{name=2, anchor=center, inner sep=0}, "\Gamma", curve={height=-12pt}, from=1-2, to=1-3]
	\arrow[""{name=3, anchor=center, inner sep=0}, "{\Gamma'}"', curve={height=12pt}, from=1-2, to=1-3]
	\arrow["\nu"{xshift=0.1cm}, shorten <=6pt, shorten >=6pt, Rightarrow, from=2, to=3]
	\arrow["\theta"{xshift=0.1cm}, shorten <=6pt, shorten >=6pt, Rightarrow, from=0, to=1]
\end{tikzcd}\]

\begin{itemize}
\item The whiskering $\nu\ast\Lambda\colon\Gamma\ast\Lambda\to\Gamma'\ast\Lambda$ is given by, at $x\in\ca$, $$(\nu\ast\Lambda)_x:=\nu_x\cdot\Lambda_x\colon\Gamma_x\cdot\Lambda_x\to\Gamma'_x\cdot\Lambda_x.$$

\item The whiskering $\Gamma\ast\theta\colon\Gamma\ast\Lambda\to\Gamma\ast\Lambda'$ is given by, at $x\in\ca$, $$(\Gamma\ast\theta)_x:=\Gamma_x\cdot\theta_x\colon\Gamma_x\cdot\Lambda_x\to\Gamma_x\cdot\Lambda'_x.$$
\end{itemize}

\subsection{Whiskering by lax transformations}

\subsubsection{Whiskering of modifications}
\label{app:lax-whisk-mod-fct}
Consider the following diagram below in $\lax(\ca,\cb)$, 
% https://q.uiver.app/?q=WzAsMyxbMCwwLCJGIl0sWzEsMCwiRyJdLFsyLDAsIkgiXSxbMCwxLCJcXGFscGhhIiwwLHsiY3VydmUiOi0yfV0sWzAsMSwiXFxhbHBoYSciLDIseyJjdXJ2ZSI6Mn1dLFsxLDIsIlxcYmV0YSIsMCx7ImN1cnZlIjotMn1dLFsxLDIsIlxcYmV0YSciLDIseyJjdXJ2ZSI6Mn1dLFszLDQsIlxcTGFtYmRhIiwwLHsic2hvcnRlbiI6eyJzb3VyY2UiOjQwLCJ0YXJnZXQiOjQwfX1dLFs1LDYsIlxcR2FtbWEiLDAseyJzaG9ydGVuIjp7InNvdXJjZSI6NDAsInRhcmdldCI6NDB9fV1d
\[\begin{tikzcd}[ampersand replacement=\&]
	F \& G \& H
	\arrow[""{name=0, anchor=center, inner sep=0}, "\alpha", curve={height=-12pt}, from=1-1, to=1-2]
	\arrow[""{name=1, anchor=center, inner sep=0}, "{\alpha'}"', curve={height=12pt}, from=1-1, to=1-2]
	\arrow[""{name=2, anchor=center, inner sep=0}, "\beta", curve={height=-12pt}, from=1-2, to=1-3]
	\arrow[""{name=3, anchor=center, inner sep=0}, "{\beta'}"', curve={height=12pt}, from=1-2, to=1-3]
	\arrow["\Lambda"{xshift=0.1cm}, shorten <=6pt, shorten >=6pt, Rightarrow, from=0, to=1]
	\arrow["\Gamma"{xshift=0.1cm}, shorten <=6pt, shorten >=6pt, Rightarrow, from=2, to=3]
\end{tikzcd}\]
The whiskerings $\Gamma\cdot\alpha\colon \beta\alpha\to\beta'\alpha$ and $\beta\cdot\Lambda\colon \beta\alpha\to\beta\alpha'$ are defined as, 
\begin{itemize}
\item at $x\in\ca$, $(\Gamma\cdot\alpha)_x:=\Gamma_x\circ \alpha_x \colon\beta_x\alpha_x\to\beta'_x\alpha_x$ and $(\beta\cdot\Lambda)_x:=\beta_x \circ\Lambda_x\colon\beta_x\alpha_x\to\beta_x\alpha'_x$;
	\item at $f\colon x\to y\in\ca$, $(\Gamma\cdot\alpha)_f$ and $(\beta\cdot\Lambda)_f$ are defined as the composites on the left and on the right below, respectively:
\begin{center}
%%%$(\Gamma\cdot\alpha)_f:=$
% https://q.uiver.app/?q=WzAsNixbMCwwLCJIZlxcYmV0YV9YXFxhbHBoYV9YIl0sWzIsMCwiSGZcXGJldGEnX1hcXGFscGhhX1giXSxbMCwxLCJcXGJldGFfWUdmXFxhbHBoYV9YIl0sWzAsMiwiXFxiZXRhX1lcXGFscGhhX1lGZiJdLFsyLDEsIlxcYmV0YSdfWUdmXFxhbHBoYV9YIl0sWzIsMiwiXFxiZXRhJ19ZXFxhbHBoYV9ZRmYiXSxbMCwxLCIxXFxjaXJjXFxHYW1tYV9YXFxjaXJjMSJdLFswLDIsIlxcYmV0YV9mXFxjaXJjMSIsMl0sWzIsMywiMVxcY2lyY1xcYWxwaGFfZiIsMl0sWzEsNCwiXFxiZXRhJ19mXFxjaXJjMSJdLFs0LDUsIjFcXGNpcmNcXGFscGhhX2YiXSxbMiw0LCJcXEdhbW1hX3lcXGNpcmMxIiwxXSxbMyw1LCJcXEdhbW1hX3lcXGNpcmMxIiwyXSxbMTEsMTIsIiIsMCx7InNob3J0ZW4iOnsic291cmNlIjo0MCwidGFyZ2V0Ijo0MH0sInN0eWxlIjp7ImJvZHkiOnsibmFtZSI6ImRhc2hlZCJ9fX1dLFs2LDExLCJcXEdhbW1hX2ZcXGNpcmMxIiwwLHsic2hvcnRlbiI6eyJzb3VyY2UiOjQwLCJ0YXJnZXQiOjQwfX1dXQ==
\begin{tikzcd}[ampersand replacement=\&]
	{Hf\beta_x\alpha_x} \&\& {Hf\beta'_x\alpha_x} \\
	{\beta_yGf\alpha_x} \&\& {\beta'_yGf\alpha_x} \\
	{\beta_y\alpha_yFf} \&\& {\beta'_y\alpha_yFf}
	\arrow[""{name=0, anchor=center, inner sep=0}, "{1\circ\Gamma_x\circ1}", from=1-1, to=1-3]
	\arrow["{\beta_f\circ1}"', from=1-1, to=2-1]
	\arrow["{1\circ\alpha_f}"', from=2-1, to=3-1]
	\arrow["{\beta'_f\circ1}", from=1-3, to=2-3]
	\arrow["{1\circ\alpha_f}", from=2-3, to=3-3]
	\arrow[""{name=1, anchor=center, inner sep=0}, "{\Gamma_y\circ1}"{description}, from=2-1, to=2-3]
	\arrow[""{name=2, anchor=center, inner sep=0}, "{\Gamma_y\circ1}"', from=3-1, to=3-3]
	\arrow[shorten <=9pt, shorten >=9pt, Rightarrow, dashed, from=1, to=2]
	\arrow["{\Gamma_f\circ1}"{xshift=0.1cm}, shorten <=9pt, shorten >=9pt, Rightarrow, from=0, to=1]
\end{tikzcd} \hspace{0.5cm} \hspace{0.5cm}
	% https://q.uiver.app/?q=WzAsNixbMCwwLCJIZlxcYmV0YV9YXFxhbHBoYV9YIl0sWzIsMCwiSGZcXGJldGFfWFxcYWxwaGFfWCJdLFswLDEsIlxcYmV0YV9ZR2ZcXGFscGhhX1giXSxbMCwyLCJcXGJldGFfWVxcYWxwaGFfWUZmIl0sWzIsMSwiXFxiZXRhX1lHZlxcYWxwaGEnX1giXSxbMiwyLCJcXGJldGFfWVxcYWxwaGEnX1lGZiJdLFswLDEsIjFcXGNpcmNcXExhbWJkYV9YIl0sWzAsMiwiXFxiZXRhX2ZcXGNpcmMxIiwyXSxbMiwzLCIxXFxjaXJjXFxhbHBoYV9mIiwyXSxbMSw0LCJcXGJldGFfZlxcY2lyYzEiXSxbNCw1LCIxXFxjaXJjXFxhbHBoYSdfZiJdLFszLDUsIjFcXGNpcmNcXExhbWJkYV9ZXFxjaXJjMSIsMl0sWzIsNCwiMVxcY2lyY1xcTGFtYmRhX1giLDFdLFs2LDEyLCIiLDAseyJzaG9ydGVuIjp7InNvdXJjZSI6NDAsInRhcmdldCI6NDB9LCJzdHlsZSI6eyJib2R5Ijp7Im5hbWUiOiJkYXNoZWQifX19XSxbMTIsMTEsIjFcXGNpcmNcXExhbWJkYV9mIiwwLHsic2hvcnRlbiI6eyJzb3VyY2UiOjQwLCJ0YXJnZXQiOjQwfX1dXQ==
\begin{tikzcd}[ampersand replacement=\&]
	{Hf\beta_x\alpha_x} \&\& {Hf\beta_x\alpha_x} \\
	{\beta_yGf\alpha_x} \&\& {\beta_yGf\alpha'_x} \\
	{\beta_y\alpha_yFf} \&\& {\beta_y\alpha'_yFf.}
	\arrow[""{name=0, anchor=center, inner sep=0}, "{1\circ\Lambda_x}", from=1-1, to=1-3]
	\arrow["{\beta_f\circ1}"', from=1-1, to=2-1]
	\arrow["{1\circ\alpha_f}"', from=2-1, to=3-1]
	\arrow["{\beta_f\circ1}", from=1-3, to=2-3]
	\arrow["{1\circ\alpha'_f}", from=2-3, to=3-3]
	\arrow[""{name=1, anchor=center, inner sep=0}, "{1\circ\Lambda_y\circ1}"', from=3-1, to=3-3]
	\arrow[""{name=2, anchor=center, inner sep=0}, "{1\circ\Lambda_x}"{description}, from=2-1, to=2-3]
	\arrow[shorten <=9pt, shorten >=9pt, Rightarrow, dashed, from=0, to=2]
	\arrow["{1\circ\Lambda_f}"{xshift=0.1cm}, shorten <=9pt, shorten >=9pt, Rightarrow, from=2, to=1]
\end{tikzcd}
\end{center}
\end{itemize}

\subsubsection{Whiskering of perturbations}
Given the diagram in $\lax(\ca,\cb)$ %\blue
% https://q.uiver.app/#q=WzAsMyxbMCwwLCJGIl0sWzIsMCwiRyJdLFs0LDAsIkgiXSxbMCwxLCJcXGFscGhhIiwwLHsiY3VydmUiOi01fV0sWzEsMiwiXFxiZXRhIiwwLHsiY3VydmUiOi01fV0sWzEsMiwiXFxiZXRhJyIsMix7ImN1cnZlIjo1fV0sWzAsMSwiXFxhbHBoYSciLDIseyJjdXJ2ZSI6NX1dLFs0LDUsIlxcR2FtbWEnIiwwLHsib2Zmc2V0IjotNSwic2hvcnRlbiI6eyJzb3VyY2UiOjIwLCJ0YXJnZXQiOjIwfX1dLFs0LDUsIlxcR2FtbWEiLDIseyJvZmZzZXQiOjUsInNob3J0ZW4iOnsic291cmNlIjoyMCwidGFyZ2V0IjoyMH19XSxbMyw2LCJcXExhbWJkYSciLDAseyJvZmZzZXQiOi01LCJzaG9ydGVuIjp7InNvdXJjZSI6MjAsInRhcmdldCI6MjB9fV0sWzMsNiwiXFxMYW1iZGEiLDIseyJvZmZzZXQiOjUsInNob3J0ZW4iOnsic291cmNlIjoyMCwidGFyZ2V0IjoyMH19XSxbOCw3LCJcXG51IiwwLHsic2hvcnRlbiI6eyJzb3VyY2UiOjIwLCJ0YXJnZXQiOjIwfX1dLFsxMCw5LCJcXHRoZXRhIiwwLHsic2hvcnRlbiI6eyJzb3VyY2UiOjIwLCJ0YXJnZXQiOjIwfX1dXQ==
\[\begin{tikzcd}[ampersand replacement=\&]
	F \&\& G \&\& H
	\arrow[""{name=0, anchor=center, inner sep=0}, "\alpha", curve={height=-30pt}, from=1-1, to=1-3]
	\arrow[""{name=1, anchor=center, inner sep=0}, "\beta", curve={height=-30pt}, from=1-3, to=1-5]
	\arrow[""{name=2, anchor=center, inner sep=0}, "{\beta'}"', curve={height=30pt}, from=1-3, to=1-5]
	\arrow[""{name=3, anchor=center, inner sep=0}, "{\alpha'}"', curve={height=30pt}, from=1-1, to=1-3]
	\arrow[""{name=4, anchor=center, inner sep=0}, "{\Gamma'}", shift left=5, shorten <=8pt, shorten >=8pt, Rightarrow, from=1, to=2]
	\arrow[""{name=5, anchor=center, inner sep=0}, "\Gamma"', shift right=5, shorten <=8pt, shorten >=8pt, Rightarrow, from=1, to=2]
	\arrow[""{name=6, anchor=center, inner sep=0}, "{\Lambda'}", shift left=5, shorten <=8pt, shorten >=8pt, Rightarrow, from=0, to=3]
	\arrow[""{name=7, anchor=center, inner sep=0}, "\Lambda"', shift right=5, shorten <=8pt, shorten >=8pt, Rightarrow, from=0, to=3]
	\arrow["\nu"{yshift=0.15cm}, shorten <=4pt, shorten >=4pt, Rightarrow, scaling nfold=3,  from=5, to=4] %scaling nfold=3,
	\arrow["\theta"{yshift=0.15cm}, shorten <=4pt, shorten >=4pt, Rightarrow, scaling nfold=3, from=7, to=6]
\end{tikzcd}\] %, scaling nfold=3 \black 
\begin{itemize}
\item the whiskering $\nu\cdot\alpha\colon\Gamma\cdot\alpha\to\Gamma'\cdot\alpha$ is defined by, for any $x\in\ca$, $$(\nu\cdot\alpha)_x:=\nu_x\circ\alpha_x\colon\Gamma_x\circ\alpha_x\to\Gamma'_x\circ\alpha_x.$$

\item the whiskering $\beta\cdot\theta\colon\beta\cdot\Lambda\to\beta\cdot\Lambda'$ is defined by, for any $x\in\ca$, $$(\beta\cdot\theta)_x:=\beta_x\circ\theta_x\colon\beta_x\circ\Lambda_x\to\beta_x\circ\Lambda'_x.$$
\end{itemize}

\subsection{Interchange}
Given the diagram below in $\lax(\ca,\cb)$
% https://q.uiver.app/?q=WzAsMyxbMCwwLCJGIl0sWzEsMCwiRyJdLFsyLDAsIkgiXSxbMCwxLCJcXGFscGhhIiwwLHsiY3VydmUiOi0yfV0sWzAsMSwiXFxhbHBoYSciLDIseyJjdXJ2ZSI6Mn1dLFsxLDIsIlxcYmV0YSIsMCx7ImN1cnZlIjotMn1dLFsxLDIsIlxcYmV0YSciLDIseyJjdXJ2ZSI6Mn1dLFszLDQsIlxcTGFtYmRhIiwwLHsic2hvcnRlbiI6eyJzb3VyY2UiOjQwLCJ0YXJnZXQiOjQwfX1dLFs1LDYsIlxcR2FtbWEiLDAseyJzaG9ydGVuIjp7InNvdXJjZSI6NDAsInRhcmdldCI6NDB9fV1d
\[\begin{tikzcd}[ampersand replacement=\&]
	F \& G \& H,
	\arrow[""{name=0, anchor=center, inner sep=0}, "\alpha", curve={height=-12pt}, from=1-1, to=1-2]
	\arrow[""{name=1, anchor=center, inner sep=0}, "{\alpha'}"', curve={height=12pt}, from=1-1, to=1-2]
	\arrow[""{name=2, anchor=center, inner sep=0}, "\beta", curve={height=-12pt}, from=1-2, to=1-3]
	\arrow[""{name=3, anchor=center, inner sep=0}, "{\beta'}"', curve={height=12pt}, from=1-2, to=1-3]
	\arrow["\Lambda"{xshift=0.1cm}, shorten <=6pt, shorten >=6pt, Rightarrow, from=0, to=1]
	\arrow["\Gamma"{xshift=0.1cm}, shorten <=6pt, shorten >=6pt, Rightarrow, from=2, to=3]
\end{tikzcd}\]

the interchange perturbation $\Gamma:\Lambda$ is defined, at $x\in\ca$, as $(\Gamma:\Lambda)_x:=\Gamma_x:\Lambda_x$ in $\cb$. 

\section{The action of $\lax(-,-)$ on pseudomaps}\label{sect:operations}

In this section, given pseudomaps $F\colon\ca\to\ca'$ and $G\colon\cb\to\cb'$, we will spell out the action of the strict map $\lax(F,\cb)\colon\lax(\ca',\cb)\to\lax(\ca,\cb)$ and the pseudomap $\lax(\ca,G)\colon\lax(\ca,\cb)\to\lax(\ca,\cb')$. These two maps are defined using left and right whiskering respectively. 

\subsection{The strict map $\lax(F,\cb)$}
\label{app:lax.F.B}

For any pseudomap $F\colon\ca\to\ca'$, $\lax(F,\cb)$ is a strict map, so it is specified by its action on pseudomaps, lax transformations, modifications and perturbations.  At a pseudomap $G \colon \ca' \to \cb$, we have $\lax(F,\cb)G = GF$.

\subsubsection{Left whiskering of lax transformations}
\label{app:lax.F.B-transf}

Given the following diagram in $\lax(\ca,\cb)$
% https://q.uiver.app/?q=WzAsMyxbMCwwLCJcXGNhIl0sWzEsMCwiXFxjYSciXSxbMiwwLCJcXGNiIl0sWzAsMSwiRiJdLFsxLDIsIkciLDAseyJjdXJ2ZSI6LTJ9XSxbMSwyLCJHJyIsMix7ImN1cnZlIjoyfV0sWzQsNSwiXFxiZXRhIiwwLHsic2hvcnRlbiI6eyJzb3VyY2UiOjQwLCJ0YXJnZXQiOjQwfX1dXQ==
\[\begin{tikzcd}[ampersand replacement=\&]
	\ca \& {\ca'} \& \cb,
	\arrow["F", from=1-1, to=1-2]
	\arrow[""{name=0, anchor=center, inner sep=0}, "G", curve={height=-12pt}, from=1-2, to=1-3]
	\arrow[""{name=1, anchor=center, inner sep=0}, "{G'}"', curve={height=12pt}, from=1-2, to=1-3]
	\arrow["\beta"{xshift=0.1cm}, shorten <=6pt, shorten >=6pt, Rightarrow, from=0, to=1]
\end{tikzcd}\]
 the lax transformation $\lax(F,\cb)(\beta)=\beta F\colon GF\to G'F$ is defined by:

\begin{itemize}
\item at an object $x\in\ca$, $(\beta F)_x:=\beta_{Fx}\colon GFx\to G'Fx$;

\item at a 1-cell $f\colon x\to y$ in $\ca$, $(\beta F)_f:=\beta_{Ff}\colon G'Ff\beta_{Fx}\to \beta_{Fy}GFf$; 

\item at a 2-cell $\phi\colon f\to f'$ in $\ca$, $(\beta F)_\phi$ is defined as
% https://q.uiver.app/?q=WzAsNCxbMSwwLCJcXGJldGFfe0ZZfUdGZiJdLFswLDAsIkcnRmZcXGJldGFfe0ZYfSJdLFswLDEsIkcnRmYnXFxiZXRhX3tGWH0iXSxbMSwxLCJcXGJldGFfe0ZZfUdGZiciXSxbMSwwLCJcXGJldGFfe0ZmfSJdLFsyLDMsIlxcYmV0YV97RmYnfSIsMl0sWzEsMiwiRydGXFxwaGlcXGNpcmMxIiwyXSxbMCwzLCIxXFxjaXJjIEdGXFxwaGkiXSxbNCw1LCJcXGJldGFfe0ZcXHBoaX0iLDAseyJzaG9ydGVuIjp7InNvdXJjZSI6MzAsInRhcmdldCI6MzB9fV1d
\[\begin{tikzcd}
	{G'Ff\beta_{Fx}} & {\beta_{Fy}GFf} \\
	{G'Ff'\beta_{Fx}} & {\beta_{Fy}GFf';}
	\arrow[""{name=0, anchor=center, inner sep=0}, "{\beta_{Ff}}", from=1-1, to=1-2]
	\arrow[""{name=1, anchor=center, inner sep=0}, "{\beta_{Ff'}}"', from=2-1, to=2-2]
	\arrow["{G'F\phi\circ1}"', from=1-1, to=2-1]
	\arrow["{1\circ GF\phi}", from=1-2, to=2-2]
	\arrow["{\beta_{F\phi}}"{xshift=0.1cm}, shorten <=8pt, shorten >=8pt, Rightarrow, from=0, to=1]
\end{tikzcd}\]

\item at composable 1-cells $f\colon x\to y$ and $g\colon y\to z$ in $\ca$, the invertible 3-cell $(\beta F)^2_{g,f}$ in $\cb$ is defined as 
% https://q.uiver.app/?q=WzAsOSxbMCwwLCJHJ0ZnRydGZlxcYmV0YV97Rlh9Il0sWzIsMCwiRydGZ1xcYmV0YV97Rll9IEdGZiJdLFs0LDAsIlxcYmV0YV97Rlp9R0ZnR0ZmIl0sWzAsMiwiR0YoZ2YpIFxcYmV0YV97Rlh9Il0sWzQsMiwiXFxiZXRhX3tGWn1HRihnZikiXSxbMCwxLCJHJyhGZ0ZmKVxcYmV0YV97Rlh9Il0sWzQsMSwiXFxiZXRhX3tGWn1HKEZnRmYpIl0sWzIsMl0sWzIsMV0sWzAsMSwiMVxcY2lyY1xcYmV0YV97RmZ9Il0sWzEsMiwiXFxiZXRhX3tGZ31cXGNpcmMxIl0sWzMsNCwiXFxiZXRhX3tGKGdmKX0iLDJdLFswLDUsIntHJ31eMl97RmcsRmZ9XFxjaXJjMSIsMl0sWzUsMywiRyh7Rn1eMl97ZyxmfSlcXGNpcmMxIiwyXSxbMiw2LCIxXFxjaXJjIEdeMl97RmcsRmZ9Il0sWzYsNCwiMVxcY2lyYyBHKEZeMl97ZyxmfSkiXSxbNSw2LCJcXGJldGFfe0ZnRmZ9IiwxXSxbMTYsMTEsIlxcYmV0YV97Rl4yX3tnLGZ9fSIsMCx7InNob3J0ZW4iOnsic291cmNlIjozMCwidGFyZ2V0IjozMH19XSxbMSwxNiwiXFxiZXRhXjJfe0ZnLEZmfSIsMCx7InNob3J0ZW4iOnsic291cmNlIjoyMCwidGFyZ2V0IjozMH19XV0=
\[\begin{tikzcd}
	{G'FgG'Ff\beta_{Fx}} && {G'Fg\beta_{Fy} GFf} && {\beta_{Fz}GFgGFf} \\
	{G'(FgFf)\beta_{Fx}} && {} && {\beta_{Fz}G(FgFf)} \\
	{GF(gf) \beta_{Fx}} && {} && {\beta_{Fz}GF(gf),}
	\arrow["{1\circ\beta_{Ff}}", from=1-1, to=1-3]
	\arrow["{\beta_{Fg}\circ1}", from=1-3, to=1-5]
	\arrow[""{name=0, anchor=center, inner sep=0}, "{\beta_{F(gf)}}"', from=3-1, to=3-5]
	\arrow["{{G'}^2_{Fg,Ff}\circ1}"', from=1-1, to=2-1]
	\arrow["{G({F}^2_{g,f})\circ1}"', from=2-1, to=3-1]
	\arrow["{1\circ G^2_{Fg,Ff}}", from=1-5, to=2-5]
	\arrow["{1\circ G(F^2_{g,f})}", from=2-5, to=3-5]
	\arrow[""{name=1, anchor=center, inner sep=0}, "{\beta_{FgFf}}"{description}, from=2-1, to=2-5]
	\arrow["{\beta_{F^2_{g,f}}}"{xshift=0.1cm}, shorten <=8pt, shorten >=8pt, Rightarrow, from=1, to=0]
	\arrow["{\beta^2_{Fg,Ff}}"{xshift=0.1cm}, shorten <=8pt, shorten >=10pt, Rightarrow, from=1-3, to=1]
\end{tikzcd}\]
	where the 3-cell $\beta_{F^2_{g,f}}$ is invertible because $F^2_{g,f}$ is an invertible 2-cell (this follows from Axiom \eqref{ax:lax-tr.comp-2-cell} of a lax transformation). 
\end{itemize} 

\subsubsection{Left whiskering of modifications}
\label{app:lax.F.B-mod}
Let $\Theta\colon\beta\to\beta'$ be a modification in $\lax(\ca',\cb)$. The modification $\lax(F,\cb)(\Theta)=\Theta F\colon \beta F\to \beta' F$ is given by: 
	\begin{itemize}
	\item at an object $x\in\ca$ the 2-cell $(\Theta F)_x:=\Theta_{Fx}\colon \beta_{Fx}\to \beta'_{Fx}$ in $\cb$;
	\item at a 1-cell $f\colon x\to y$ in $\ca$, the 3-cell $(\Theta F)_f$ in $\cb$ is defined as 
% https://q.uiver.app/?q=WzAsNCxbMCwwLCJHRmZcXGJldGFfe0ZYfSJdLFswLDEsIlxcYmV0YV97Rll9IEdGZiJdLFsxLDEsIlxcYmV0YSdfe0ZZfSBHRmYiXSxbMSwwLCJHRmZcXGJldGEnX3tGWH0iXSxbMCwzLCIxXFxjaXJjIFxcVGhldGFfe0ZYfSJdLFsxLDIsIlxcVGhldGFfe0ZZfVxcY2lyYzEiLDJdLFswLDEsIlxcYmV0YV97RmZ9IiwyXSxbMywyLCJcXGJldGEnX3tGZn0iXSxbNCw1LCJcXFRoZXRhX3tGZn0iLDAseyJzaG9ydGVuIjp7InNvdXJjZSI6MjAsInRhcmdldCI6MjB9fV1d
\[\begin{tikzcd}
	{GFf\beta_{Fx}} & {GFf\beta'_{Fx}} \\
	{\beta_{Fy} GFf} & {\beta'_{Fy} GFf.}
	\arrow[""{name=0, anchor=center, inner sep=0}, "{1\circ \Theta_{Fx}}", from=1-1, to=1-2]
	\arrow[""{name=1, anchor=center, inner sep=0}, "{\Theta_{Fy}\circ1}"', from=2-1, to=2-2]
	\arrow["{\beta_{Ff}}"', from=1-1, to=2-1]
	\arrow["{\beta'_{Ff}}", from=1-2, to=2-2]
	\arrow["{\Theta_{Ff}}"{xshift=0.1cm}, shorten <=8pt, shorten >=8pt, Rightarrow, from=0, to=1]
\end{tikzcd}\]
	\end{itemize} 

\subsubsection{Left whiskering of perturbations}
\label{app:lax.F.B-pert}

Let $\delta\colon\Theta\to\Lambda$ be a perturbation in $\lax(\ca',\cb)$. The perturbation $\lax(F,\cb)(\delta)=\delta F\colon \Theta F\to\Lambda F$ is defined, at an object $x\in\ca$, as $(\delta F)_x:=\delta_{Fx}\colon \Theta_{Fx}\to\Lambda_{Fx}$.

%%%%%%%%%%%%%%%%%%%%%%%

\subsection{The pseudomap $\lax(\ca,G)$}
\label{app:lax.A.G}
To describe the pseudomap $\lax(\ca,G)\colon\lax(\ca,\cb)\to\lax(\ca,\cb')$, we must specify its action on pseudomaps, lax transformations, modifications, perturbations and also describe the cocycles $\lax(\ca,G)^2$.  At a pseudomap $F \colon \ca \to \cb$, it is defined by $\lax(\ca,G)(F) = GF$.

\subsubsection{Right whiskering of lax transformations}
\label{app:lax.A.G-transf}
Given the following diagram in $\lax(\ca,\cb')$
% https://q.uiver.app/?q=WzAsMyxbMCwwLCJcXGNhIl0sWzEsMCwiXFxjYiJdLFsyLDAsIlxcY2InIl0sWzAsMSwiRiIsMCx7ImN1cnZlIjotMn1dLFswLDEsIkYnIiwyLHsiY3VydmUiOjJ9XSxbMSwyLCJHIl0sWzMsNCwiXFxhbHBoYSIsMCx7InNob3J0ZW4iOnsic291cmNlIjo0MCwidGFyZ2V0Ijo0MH19XV0=
\[\begin{tikzcd}[ampersand replacement=\&]
	\ca \& \cb \& {\cb',}
	\arrow[""{name=0, anchor=center, inner sep=0}, "F", curve={height=-12pt}, from=1-1, to=1-2]
	\arrow[""{name=1, anchor=center, inner sep=0}, "{F'}"', curve={height=12pt}, from=1-1, to=1-2]
	\arrow["G", from=1-2, to=1-3]
	\arrow["\alpha"{xshift=0.1cm}, shorten <=6pt, shorten >=6pt, Rightarrow, from=0, to=1]
\end{tikzcd}\]
the lax transformation $\lax(\ca,G)(\alpha)=G\alpha\colon GF\to GF'$ is defined by:
\begin{itemize}
\item at an object $x\in\ca$, $(G\alpha)_x:=G(\alpha_x)\colon GFx\to GF'x$.
\item at a 1-cell $f\colon x\to y$ in $\ca$, $(G\alpha)_f$ is defined as
% https://q.uiver.app/?q=WzAsNCxbMCwwLCJHRidmXFxjaXJjIEdcXGFscGhhX1giXSxbMiwwLCJHKEYnZlxcY2lyYyBcXGFscGhhX1gpIl0sWzQsMCwiRyhcXGFscGhhX1lcXGNpcmMgRmYpIl0sWzYsMCwiR1xcYWxwaGFfWVxcY2lyYyBHRmYuIl0sWzAsMSwiR14yX3tGJ2YsXFxhbHBoYV9YfSJdLFsxLDIsIkcoXFxhbHBoYV9mKSJdLFsyLDMsIntHXjJfe1xcYWxwaGFfWSwgRmZ9fV57LTF9Il1d
\[\begin{tikzcd}
	{GF'f\circ G\alpha_x} && {G(F'f\circ \alpha_x)} && {G(\alpha_y\circ Ff)} && {G\alpha_y\circ GFf.}
	\arrow["{G^2_{F'f,\alpha_x}}", from=1-1, to=1-3]
	\arrow["{G(\alpha_f)}", from=1-3, to=1-5]
	\arrow["{{G^2_{\alpha_y, Ff}}^{-1}}", from=1-5, to=1-7]
\end{tikzcd}\]
Note that this composite is just $G(\alpha_f)$ if $G$ is a Gray-functor. 

\item at a 2-cell $\phi\colon f\to f'$ in $\ca$, $(G\alpha)_\phi$ is defined as the 3-cell
% https://q.uiver.app/?q=WzAsOCxbNCwwLCJHXFxhbHBoYV9ZXFxjaXJjIEdGZiJdLFszLDAsIkcoXFxhbHBoYV9ZXFxjaXJjIEZmKSJdLFsxLDAsIkcoRidmXFxjaXJjXFxhbHBoYV9YKSJdLFswLDAsIkdGJ2ZcXGNpcmMgR1xcYWxwaGFfWCJdLFs0LDEsIkdcXGFscGhhX1lcXGNpcmMgR0ZmJyJdLFszLDEsIkcoXFxhbHBoYV9ZXFxjaXJjIEZmJykiXSxbMSwxLCJHKEYnZidcXGNpcmNcXGFscGhhX1gpIl0sWzAsMSwiR0YnZidcXGNpcmMgR1xcYWxwaGFfWCJdLFsxLDAsIntHXjJfe1xcYWxwaGFfWSxGZn19XnstMX0iXSxbMiwxLCJHKFxcYWxwaGFfZikiXSxbMywyLCJHXjJfe0YnZixcXGFscGhhX1h9Il0sWzAsNCwiMVxcY2lyYyBHRlxccGhpIl0sWzEsNSwiRygxXFxjaXJjIEZcXHBoaSkiLDFdLFs1LDQsIntHXjJfe1xcYWxwaGFfWSxGZid9fV57LTF9IiwyXSxbMiw2LCJHKEYnXFxwaGlcXGNpcmMxKSIsMV0sWzMsNywiR0YnXFxwaGlcXGNpcmMxIiwyXSxbNiw1LCJHKFxcYWxwaGFfe2YnfSkiLDJdLFs3LDYsIntHXjJfe0YnZicsXFxhbHBoYV9YfX0iLDJdLFsxMSwxMiwiJyciLDEseyJsYWJlbF9wb3NpdGlvbiI6NDAsInNob3J0ZW4iOnsic291cmNlIjoyMCwidGFyZ2V0IjoyMH0sInN0eWxlIjp7ImJvZHkiOnsibmFtZSI6Im5vbmUifSwiaGVhZCI6eyJuYW1lIjoibm9uZSJ9fX1dLFs5LDE2LCJHKFxcYWxwaGFfXFxwaGkpIiwwLHsic2hvcnRlbiI6eyJzb3VyY2UiOjIwLCJ0YXJnZXQiOjIwfX1dLFsxNCwxNSwiJyciLDEseyJsYWJlbF9wb3NpdGlvbiI6NjAsInNob3J0ZW4iOnsic291cmNlIjoyMCwidGFyZ2V0IjoyMH0sInN0eWxlIjp7ImJvZHkiOnsibmFtZSI6Im5vbmUifSwiaGVhZCI6eyJuYW1lIjoibm9uZSJ9fX1dXQ==
\[\begin{tikzcd}
	{GF'f\circ G\alpha_x} & {G(F'f\circ\alpha_x)} && {G(\alpha_y\circ Ff)} & {G\alpha_y\circ GFf} \\
	{GF'f'\circ G\alpha_x} & {G(F'f'\circ\alpha_x)} && {G(\alpha_y\circ Ff')} & {G\alpha_y\circ GFf',}
	\arrow["{{G^2_{\alpha_y,Ff}}^{-1}}", from=1-4, to=1-5]
	\arrow[""{name=0, anchor=center, inner sep=0}, "{G(\alpha_f)}", from=1-2, to=1-4]
	\arrow["{G^2_{F'f,\alpha_x}}", from=1-1, to=1-2]
	\arrow[""{name=1, anchor=center, inner sep=0}, "{1\circ GF\phi}", from=1-5, to=2-5]
	\arrow[""{name=2, anchor=center, inner sep=0}, "{G(1\circ F\phi)}"{description}, from=1-4, to=2-4]
	\arrow["{{G^2_{\alpha_y,Ff'}}^{-1}}"', from=2-4, to=2-5]
	\arrow[""{name=3, anchor=center, inner sep=0}, "{G(F'\phi\circ1)}"{description}, from=1-2, to=2-2]
	\arrow[""{name=4, anchor=center, inner sep=0}, "{GF'\phi\circ1}"', from=1-1, to=2-1]
	\arrow[""{name=5, anchor=center, inner sep=0}, "{G(\alpha_{f'})}"', from=2-2, to=2-4]
	\arrow["{{G^2_{F'f',\alpha_x}}}"', from=2-1, to=2-2]
	\arrow["{}"{description, pos=0.4}, Rightarrow, draw=none, from=1, to=2]
	\arrow["{G(\alpha_\phi)}"{xshift=0.1cm}, shorten <=8pt, shorten >=8pt, Rightarrow, from=0, to=5]
	\arrow["{}"{description, pos=0.6}, Rightarrow, draw=none, from=3, to=4]
\end{tikzcd}\]
where the left and right square commute by Axiom \eqref{ax:psfct.F2-whisk-2} for $GF$ and $GF'$ as pseudomaps;
	\item at composable 1-cells $f\colon x\to y$ and $g\colon y\to z$ in $\ca$, the invertible 3-cell $(G\alpha)^2_{g,f}$ in $\cb'$ is defined as
	% https://q.uiver.app/?q=WzAsMTAsWzAsMCwiR0YnZ0dGJ2ZHXFxhbHBoYV9YIl0sWzAsMiwiR0YnKGdmKUdcXGFscGhhX1giXSxbNCwyLCJHXFxhbHBoYV9aR0YoZ2YpIl0sWzAsMSwiRyhGJ2dGJ2YpR1xcYWxwaGFfWCJdLFszLDIsIkcoXFxhbHBoYV9aRihnZikpIl0sWzEsMiwiRyhGJyhnZilcXGFscGhhX1gpIl0sWzQsMSwiR1xcYWxwaGFfWkcoRmdGZikiXSxbMiwxLCJHKEYnZ1xcYWxwaGFfWUZmKSJdLFsxLDEsIkcoRidnRidmXFxhbHBoYV9YKSJdLFszLDEsIkcoXFxhbHBoYV9aRmdGZikiXSxbNSw0LCJHKFxcYWxwaGFfe2dmfSkiLDJdLFszLDEsIkcoe0YnfV4yX3tnLGZ9KVxcY2lyYzEiLDJdLFsxLDUsIntHXjJ9X3tGJyhnZiksXFxhbHBoYV9YfSIsMl0sWzQsMiwie0deMn1eey0xfV97XFxhbHBoYV9aLEYoZ2YpfSIsMl0sWzYsMiwiMVxcY2lyYyBHKHtGfV4yX3tnLGZ9KSJdLFswLDMsIkdeMl97RidnLEYnZn1cXGNpcmMxIiwyXSxbOCw1LCJHKHtGJ31eMl97ZyxmfVxcY2lyYzEpIiwxXSxbOSw0LCJHKDFcXGNpcmMge0Z9XjJfe2csZn0pIiwxXSxbOCw3LCJHKDFcXGNpcmMgXFxhbHBoYV9mKSJdLFs3LDksIkcoXFxhbHBoYV9nXFxjaXJjMSkiXSxbMyw4LCJHXjIiXSxbOSw2LCJ7R14yfV57LTF9Il0sWzcsMTAsIkcoXFxhbHBoYV4yX3tnLGZ9KSIsMCx7InNob3J0ZW4iOnsic291cmNlIjozMCwidGFyZ2V0IjozMH19XV0=
\[\begin{tikzcd}
	{GF'gGF'fG\alpha_x} \\
	{G(F'gF'f)G\alpha_x} & {G(F'gF'f\alpha_x)} & {G(F'g\alpha_yFf)} & {G(\alpha_zFgFf)} & {G\alpha_zG(FgFf)} \\
	{GF'(gf)G\alpha_x} & {G(F'(gf)\alpha_x)} && {G(\alpha_zF(gf))} & {G\alpha_zGF(gf).}
	\arrow[""{name=0, anchor=center, inner sep=0}, "{G(\alpha_{gf})}"', from=3-2, to=3-4]
	\arrow["{G({F'}^2_{g,f})\circ1}"', from=2-1, to=3-1]
	\arrow["{{G^2}_{F'(gf),\alpha_x}}"', from=3-1, to=3-2]
	\arrow["{{G^2}^{-1}_{\alpha_z,F(gf)}}"', from=3-4, to=3-5]
	\arrow["{1\circ G({F}^2_{g,f})}", from=2-5, to=3-5]
	\arrow["{G^2_{F'g,F'f}\circ1}"', from=1-1, to=2-1]
	\arrow["{G({F'}^2_{g,f}\circ1)}"{description}, from=2-2, to=3-2]
	\arrow["{G(1\circ {F}^2_{g,f})}"{description}, from=2-4, to=3-4]
	\arrow["{G(1\circ \alpha_f)}", from=2-2, to=2-3]
	\arrow["{G(\alpha_g\circ1)}", from=2-3, to=2-4]
	\arrow["{G^2}", from=2-1, to=2-2]
	\arrow["{{G^2}^{-1}}", from=2-4, to=2-5]
	\arrow["{G(\alpha^2_{g,f})}"{xshift=0.1cm}, shorten <=6pt, shorten >=8pt, Rightarrow, from=2-3, to=0]
\end{tikzcd}\]
%(using various axioms of $G$ as a pseudomap, one can check that the domain of this 3-cell is the right one). 
\end{itemize}

\subsubsection{Right whiskering of modifications}
\label{app:lax.A.G-mod}
Let $\Lambda\colon\alpha\to\alpha'$ be a modification in $\lax(\ca,\cb)$. The modification $\lax(\ca,G)(\Lambda)=G\Lambda\colon G\alpha\to G\alpha'$ is given by: 

	\begin{itemize}
	\item at an object $x\in\ca$ the 2-cell $(G\Lambda)_x:=G\Lambda_x\colon G\alpha_x\to G\alpha_x$ in $\cb'$.
	\item at a 1-cell $f\colon x\to y$ in $\ca$ the 3-cell $(G\Lambda)_f$ in $\cb'$ is defined as
% https://q.uiver.app/?q=WzAsOCxbMCwwLCJHRidmR1xcYWxwaGFfeCJdLFswLDMsIkdcXGFscGhhX3kgR0ZmIl0sWzIsMywiR1xcYWxwaGEnX3kgR0ZmIl0sWzIsMCwiR0YnZkdcXGFscGhhJ194Il0sWzIsMiwiRyhcXGFscGhhJ195IEZmKSJdLFsyLDEsIkcoRidmXFxhbHBoYSdfeCkiXSxbMCwxLCJHKEYnZlxcYWxwaGFfeCkiXSxbMCwyLCJHKFxcYWxwaGFfeSBGZikiXSxbMCwzLCIxXFxjaXJjIEdcXExhbWJkYV94Il0sWzEsMiwiR1xcTGFtYmRhX3lcXGNpcmMxIiwyXSxbNSw0LCJHXFxhbHBoYSdfe2Z9Il0sWzYsNywiR1xcYWxwaGFfZiIsMl0sWzYsNSwiRygxXFxjaXJjIFxcTGFtYmRhX3gpIl0sWzcsNCwiRyhcXExhbWJkYV95XFxjaXJjMSkiLDJdLFswLDYsIkdeMiIsMl0sWzMsNSwiR14yIl0sWzcsMSwie0deMn1eey0xfSIsMl0sWzQsMiwie0deMn1eey0xfSJdLFsxMiwxMywiRyhcXExhbWJkYV9mKSIsMCx7InNob3J0ZW4iOnsic291cmNlIjo0MCwidGFyZ2V0Ijo0MH19XV0=
\[\begin{tikzcd}[ampersand replacement=\&]
	{GF'fG\alpha_x} \&\& {GF'fG\alpha'_x} \\
	{G(F'f\alpha_x)} \&\& {G(F'f\alpha'_x)} \\
	{G(\alpha_y Ff)} \&\& {G(\alpha'_y Ff)} \\
	{G\alpha_y GFf} \&\& {G\alpha'_y GFf,}
	\arrow["{1\circ G\Lambda_x}", from=1-1, to=1-3]
	\arrow["{G\Lambda_y\circ1}"', from=4-1, to=4-3]
	\arrow["{G\alpha'_{f}}", from=2-3, to=3-3]
	\arrow["{G\alpha_f}"', from=2-1, to=3-1]
	\arrow[""{name=0, anchor=center, inner sep=0}, "{G(1\circ \Lambda_x)}", from=2-1, to=2-3]
	\arrow[""{name=1, anchor=center, inner sep=0}, "{G(\Lambda_y\circ1)}"', from=3-1, to=3-3]
	\arrow["{G^2}"', from=1-1, to=2-1]
	\arrow["{G^2}", from=1-3, to=2-3]
	\arrow["{{G^2}^{-1}}"', from=3-1, to=4-1]
	\arrow["{{G^2}^{-1}}", from=3-3, to=4-3]
	\arrow["{G(\Lambda_f)}"{xshift=0.1cm}, shorten <=10pt, shorten >=10pt, Rightarrow, from=0, to=1]
\end{tikzcd}\]
where the top and bottom squares commute by Axiom \eqref{ax:psfct.F2-whisk-2} for $G$. 
	\end{itemize}

\subsubsection{Right whiskering of perturbations}
\label{app:lax.A.G-pert}
Let $\theta\colon\Delta\to\Gamma$ be a perturbation in $\lax(\ca,\cb)$. The perturbation $\lax(\ca,G)(\theta)=G\theta\colon G\Delta\to G\Gamma$ is defined, at an object $x\in\ca$, as $(G\theta)_x:=G\theta_x\colon G\Delta_x\to G\Gamma_x$.

\subsubsection{Cocycle for $\lax(\ca,G)$}
\label{app:lax.A.G-cocyle}

At composable 1-cells $\alpha\colon F\to F'$ and $\alpha'\colon F'\to F''$ in $\lax(\ca,\cb)$, the cocyle invertible modification $\lax(\ca,G)^2_{\alpha',\alpha}\colon G\alpha'\cdot G\alpha\to G(\alpha'\cdot \alpha)$ is defined by:

\begin{itemize}
	\item at an object $x\in\ca$, $(\lax(\ca,G)^2_{\alpha',\alpha})_x$ is the invertible 2-cell $$G^2_{\alpha'_x,\alpha_x}\colon G\alpha'_x\circ G\alpha_x\to G(\alpha'_x\circ \alpha_x);$$
	\item at a 1-cell $f\colon x\to y\in\ca$, $(\lax(\ca,G)^2_{\alpha',\alpha})_f$ is the identity. %is given by the identity (which we can see is well-typed looking at the diagram below).
\end{itemize}

\section{Axioms for multimaps in $\GCatm$}

\subsection{Axioms for binary maps}
\label{app:ax_bin}

\emph{Compatibility of $A_\beta$ with 3-cells $\Theta\colon \beta\rightarrow\beta'$ in $\cb$}, 
\begin{equation}\label{ax:bin_A:Theta}
\tag{$A:\Theta$}
\adjustbox{scale=0.9}{% https://q.uiver.app/?q=WzAsNCxbMCwwLCJGXnthJ30oQilGX2IoQSkiXSxbMSwwLCJGX3tiJ30oQSlGXnthfShCKSJdLFswLDIsIkZee2EnfShCJylGX2IoQSkiXSxbMSwyLCJGX3tiJ30oQSlGXnthfShCJykiXSxbMCwyLCJGXnthJ30oXFxiZXRhKVxcY2lyYyAxIiwxLHsibGFiZWxfcG9zaXRpb24iOjYwLCJjdXJ2ZSI6LTR9XSxbMCwxLCJBX0IiXSxbMiwzLCJBX3tCJ30iLDJdLFsxLDMsIjFcXGNpcmMgRl57YX0oXFxiZXRhKSIsMSx7ImN1cnZlIjotM31dLFswLDIsIkZee2EnfShcXGJldGEnKVxcY2lyYyAxIiwyLHsiY3VydmUiOjR9XSxbNCw4LCJGXnthJ30oXFxUaGV0YSlcXGNpcmMgMSIsMix7InNob3J0ZW4iOnsic291cmNlIjozMCwidGFyZ2V0IjozMH19XSxbNyw0LCJmX3tcXGJldGF9IiwyLHsic2hvcnRlbiI6eyJzb3VyY2UiOjQwLCJ0YXJnZXQiOjQwfX1dXQ==
\begin{tikzcd}[ampersand replacement=\&]
	{F^{a'}(B)F_b(A)} \& {F_{b'}(A)F^{a}(B)} \\
	\\
	{F^{a'}(B')F_b(A)} \& {F_{b'}(A)F^{a}(B')}
	\arrow[""{name=0, anchor=center, inner sep=0}, "{F^{a'}(\beta)\circ 1}"{description, pos=0.6}, curve={height=-24pt}, from=1-1, to=3-1]
	\arrow["{A_B}", from=1-1, to=1-2]
	\arrow["{A_{B'}}"', from=3-1, to=3-2]
	\arrow[""{name=1, anchor=center, inner sep=0}, "{1\circ F^{a}(\beta)}"{description}, curve={height=-18pt}, from=1-2, to=3-2]
	\arrow[""{name=2, anchor=center, inner sep=0}, "{F^{a'}(\beta')\circ 1}"', curve={height=24pt}, from=1-1, to=3-1]
	\arrow["{F^{a'}(\Theta)\circ 1}"', shorten <=15pt, shorten >=15pt, Rightarrow, from=0, to=2]
	\arrow["{A_{\beta}}"', shorten <=30pt, shorten >=30pt, Rightarrow, from=1, to=0]
\end{tikzcd}
	\hspace{0.25cm}=\hspace{0.25cm}
	% https://q.uiver.app/?q=WzAsNCxbMCwwLCJGXnthJ30oQilGX2IoQSkiXSxbMSwwLCJGX3tiJ30oQSlGXnthfShCKSJdLFsxLDIsIkZfe2InfShBKUZee2F9KEInKSJdLFswLDIsIkZee2EnfShCJylGX2IoQSkiXSxbMCwxLCJBX0IiXSxbMSwyLCIxXFxjaXJjIEZee2F9KFxcYmV0YSkiLDAseyJjdXJ2ZSI6LTR9XSxbMCwzLCJGXnthJ30oXFxiZXRhJylcXGNpcmMgMSIsMSx7ImN1cnZlIjo0fV0sWzEsMiwiMVxcY2lyYyBGXnthfShcXGJldGEnKSIsMSx7ImxhYmVsX3Bvc2l0aW9uIjo3MCwiY3VydmUiOjR9XSxbMywyLCJBX3tCJ30iLDJdLFs1LDcsIjFcXGNpcmMgRl57YSd9KFxcVGhldGEpICIsMix7InNob3J0ZW4iOnsic291cmNlIjozMCwidGFyZ2V0IjozMH19XSxbNyw2LCJmX3tcXGJldGEnfSIsMix7InNob3J0ZW4iOnsic291cmNlIjo0MCwidGFyZ2V0Ijo0MH19XV0=
\begin{tikzcd}[ampersand replacement=\&]
	{F^{a'}(B)F_b(A)} \& {F_{b'}(A)F^{a}(B)} \\
	\\
	{F^{a'}(B')F_b(A)} \& {F_{b'}(A)F^{a}(B')}
	\arrow["{A_B}", from=1-1, to=1-2]
	\arrow[""{name=0, anchor=center, inner sep=0}, "{1\circ F^{a}(\beta)}", curve={height=-24pt}, from=1-2, to=3-2]
	\arrow[""{name=1, anchor=center, inner sep=0}, "{F^{a'}(\beta')\circ 1}"{description}, curve={height=24pt}, from=1-1, to=3-1]
	\arrow[""{name=2, anchor=center, inner sep=0}, "{1\circ F^{a}(\beta')}"{description, pos=0.7}, curve={height=24pt}, from=1-2, to=3-2]
	\arrow["{A_{B'}}"', from=3-1, to=3-2]
	\arrow["{1\circ F^{a'}(\Theta) }"', shorten <=15pt, shorten >=15pt, Rightarrow, from=0, to=2]
	\arrow["{A_{\beta'}}"', shorten <=30pt, shorten >=30pt, Rightarrow, from=2, to=1]
\end{tikzcd}}
\end{equation}
and \emph{of $\alpha_B$  with 3-cells $\Lambda\colon \alpha\rightarrow\alpha'$ in $\ca$},
\begin{equation}\label{ax:bin_Lamd:B}
\tag{$\Lambda:B$}
% https://q.uiver.app/?q=WzAsOCxbMiwwLCJGXnthJ30oQilGX2IoQScpIl0sWzAsMCwiRl57YSd9KEIpRl9iKEEpIl0sWzAsMiwiRl97Yid9KEEpRl57YX0oQikiXSxbMiwyLCJGX3tiJ30oQScpRl57YX0oQikiXSxbMywwLCJGXnthJ30oQilGX2IoQSkiXSxbNSwwLCJGXnthJ30oQilGX2IoQScpIl0sWzUsMiwiRl97Yid9KEEnKUZee2F9KEIpIl0sWzMsMiwiRl97Yid9KEEpRl57YX0oQikiXSxbMSwwLCIxXFxjaXJjIEZfYihcXGFscGhhKSIsMCx7ImN1cnZlIjotM31dLFsyLDMsIkZfe2InfShcXGFscGhhKVxcY2lyYyAxIiwxLHsiY3VydmUiOi0zfV0sWzEsMiwiQV9CIiwyXSxbMCwzLCJBJ19CIl0sWzIsMywiRl97Yid9KFxcYWxwaGEnKVxcY2lyYyAxIiwyLHsiY3VydmUiOjN9XSxbNCw1LCIxXFxjaXJjIEZfYihcXGFscGhhKSIsMCx7ImN1cnZlIjotM31dLFs0LDcsIkFfQiIsMl0sWzcsNiwiRl97Yid9KFxcYWxwaGEnKVxcY2lyYyAxIiwyLHsiY3VydmUiOjN9XSxbNCw1LCIxXFxjaXJjIEZfe2J9KFxcYWxwaGEnKSIsMSx7ImN1cnZlIjozfV0sWzUsNiwiQSdfQiJdLFs5LDEyLCJGX3tiJ30oXFxMYW1iZGEpXFxjaXJjMSIsMSx7InNob3J0ZW4iOnsic291cmNlIjoyMCwidGFyZ2V0IjoyMH19XSxbOCw5LCJcXGFscGhhX0IiLDAseyJzaG9ydGVuIjp7InNvdXJjZSI6MzAsInRhcmdldCI6MzB9fV0sWzE2LDE1LCJcXGFscGhhJ19CIiwwLHsic2hvcnRlbiI6eyJzb3VyY2UiOjMwLCJ0YXJnZXQiOjMwfX1dLFsxMywxNiwiMVxcY2lyYyBGX3tiJ30oXFxMYW1iZGEpIiwxLHsic2hvcnRlbiI6eyJzb3VyY2UiOjIwLCJ0YXJnZXQiOjIwfX1dLFsxMSwxNCwiIiwwLHsic2hvcnRlbiI6eyJzb3VyY2UiOjQwLCJ0YXJnZXQiOjQwfSwic3R5bGUiOnsiaGVhZCI6eyJuYW1lIjoibm9uZSJ9fX1dXQ==
\adjustbox{scale=0.9}{\begin{tikzcd}
	{F^{a'}(B)F_b(A)} && {F^{a'}(B)F_b(A')} & {F^{a'}(B)F_b(A)} && {F^{a'}(B)F_b(A')} \\
	\\
	{F_{b'}(A)F^{a}(B)} && {F_{b'}(A')F^{a}(B)} & {F_{b'}(A)F^{a}(B)} && {F_{b'}(A')F^{a}(B).}
	\arrow[""{name=0, anchor=center, inner sep=0}, "{1\circ F_b(\alpha)}", curve={height=-18pt}, from=1-1, to=1-3]
	\arrow[""{name=1, anchor=center, inner sep=0}, "{F_{b'}(\alpha)\circ 1}"{description}, curve={height=-18pt}, from=3-1, to=3-3]
	\arrow["{A_B}"', from=1-1, to=3-1]
	\arrow[""{name=2, anchor=center, inner sep=0}, "{A'_B}", from=1-3, to=3-3]
	\arrow[""{name=3, anchor=center, inner sep=0}, "{F_{b'}(\alpha')\circ 1}"', curve={height=18pt}, from=3-1, to=3-3]
	\arrow[""{name=4, anchor=center, inner sep=0}, "{1\circ F_b(\alpha)}", curve={height=-18pt}, from=1-4, to=1-6]
	\arrow[""{name=5, anchor=center, inner sep=0}, "{A_B}"', from=1-4, to=3-4]
	\arrow[""{name=6, anchor=center, inner sep=0}, "{F_{b'}(\alpha')\circ 1}"', curve={height=18pt}, from=3-4, to=3-6]
	\arrow[""{name=7, anchor=center, inner sep=0}, "{1\circ F_{b}(\alpha')}"{description}, curve={height=18pt}, from=1-4, to=1-6]
	\arrow["{A'_B}", from=1-6, to=3-6]
	\arrow["{F_{b'}(\Lambda)\circ1}"{description}, shorten <=5pt, shorten >=4pt, Rightarrow, from=1, to=3]
	\arrow["{\alpha_B}"{xshift=0.1cm}, shorten <=20pt, shorten >=20pt, Rightarrow, from=0, to=1]
	\arrow["{\alpha'_B}"{xshift=0.1cm}, shorten <=20pt, shorten >=20pt, Rightarrow, from=7, to=6]
	\arrow["{1\circ F_{b}(\Lambda)}"{description,yshift=0.1cm}, shorten <=4pt, shorten >=5pt, Rightarrow, from=4, to=7]
	\arrow[shorten <=45pt, shorten >=45pt, Rightarrow, no head, from=2, to=5]
\end{tikzcd}}
\end{equation}
\emph{Compatibility of $A_\beta$ with composition of 2-cells $B\xrightarrow{\beta_1}B'\xrightarrow{\beta_2}B''$ in $\cb$},
\begin{equation}\label{ax:bin_A:bet1bet2}
\tag{$A:\beta_1,\beta_2$}
% https://q.uiver.app/?q=WzAsNixbMCwwLCJGXnthJ30oQilGX2IoQSkiXSxbMSwwLCJGX2InKEEpRl57YX0oQikiXSxbMCwxLCJGXnthJ30oQicpRl9iKEEpIl0sWzEsMSwiRl9iJyhBKUZee2F9KEInKSJdLFswLDIsIkZee2EnfShCJycpRl9iKEEpIl0sWzEsMiwiRl9iJyhBKUZee2F9KEInJykiXSxbMCwyLCJGXnthJ30oXFxiZXRhXzEpXFxjaXJjIDEiLDJdLFswLDEsIkFfQiJdLFsyLDMsIkFfe0InfSIsMV0sWzEsMywiMVxcY2lyYyBGXnthJ30oXFxiZXRhXzEpIiwxXSxbMiw0LCJGXnthJ30oXFxiZXRhXzIpXFxjaXJjIDEiLDJdLFszLDUsIjFcXGNpcmMgRl57YSd9KFxcYmV0YV8yKSIsMV0sWzQsNSwiQV97QicnfSIsMl0sWzgsMTIsIkFfe1xcYmV0YV8yfSIsMix7InNob3J0ZW4iOnsic291cmNlIjo0MCwidGFyZ2V0Ijo0MH19XSxbNyw4LCJBX3tcXGJldGFfMX0iLDIseyJzaG9ydGVuIjp7InNvdXJjZSI6NDAsInRhcmdldCI6NDB9fV1d
\begin{tikzcd}[ampersand replacement=\&]
	{F^{a'}(B)F_b(A)} \& {F_b'(A)F^{a}(B)} \\
	{F^{a'}(B')F_b(A)} \& {F_b'(A)F^{a}(B')} \\
	{F^{a'}(B'')F_b(A)} \& {F_b'(A)F^{a}(B'')}
	\arrow["{F^{a'}(\beta_1)\circ 1}"', from=1-1, to=2-1]
	\arrow[""{name=0, anchor=center, inner sep=0}, "{A_B}", from=1-1, to=1-2]
	\arrow[""{name=1, anchor=center, inner sep=0}, "{A_{B'}}"{description}, from=2-1, to=2-2]
	\arrow["{1\circ F^{a'}(\beta_1)}"{description}, from=1-2, to=2-2]
	\arrow["{F^{a'}(\beta_2)\circ 1}"', from=2-1, to=3-1]
	\arrow["{1\circ F^{a'}(\beta_2)}"{description}, from=2-2, to=3-2]
	\arrow[""{name=2, anchor=center, inner sep=0}, "{A_{B''}}"', from=3-1, to=3-2]
	\arrow["{A_{\beta_2}}"', shorten <=9pt, shorten >=9pt, Rightarrow, from=1, to=2]
	\arrow["{A_{\beta_1}}"', shorten <=9pt, shorten >=9pt, Rightarrow, from=0, to=1]
\end{tikzcd}
	\hspace{0.5cm}=\hspace{0.5cm}
% https://q.uiver.app/?q=WzAsNCxbMCwwLCJGXnthJ30oQilGX2IoQSkiXSxbMSwwLCJGX2InKEEpRl57YX0oQikiXSxbMSwyLCJGX2InKEEpRl57YX0oQicnKSJdLFswLDIsIkZee2EnfShCJycpRl9iKEEpIl0sWzAsMSwiQV9CIl0sWzAsMywiRl57YSd9KFxcYmV0YV8yXFxjZG90XFxiZXRhXzEpXFxjaXJjIDEiLDFdLFsxLDIsIjFcXGNpcmMgRl57YSd9KFxcYmV0YV8yXFxjZG90XFxiZXRhXzEpIl0sWzMsMiwiQV97QicnfSIsMl0sWzQsNywiQV97XFxiZXRhXzJcXGNkb3RcXGJldGFfMX0iLDAseyJzaG9ydGVuIjp7InNvdXJjZSI6NDAsInRhcmdldCI6NDB9fV1d
\begin{tikzcd}[ampersand replacement=\&]
	{F^{a'}(B)F_b(A)} \& {F_b'(A)F^{a}(B)} \\
	\\
	{F^{a'}(B'')F_b(A)} \& {F_b'(A)F^{a}(B''),}
	\arrow[""{name=0, anchor=center, inner sep=0}, "{A_B}", from=1-1, to=1-2]
	\arrow["{F^{a'}(\beta_2\cdot\beta_1)\circ 1}"{description}, from=1-1, to=3-1]
	\arrow["{1\circ F^{a'}(\beta_2\cdot\beta_1)}", from=1-2, to=3-2]
	\arrow[""{name=1, anchor=center, inner sep=0}, "{A_{B''}}"', from=3-1, to=3-2]
	\arrow["{A_{\beta_2\cdot\beta_1}}", shorten <=17pt, shorten >=17pt, Rightarrow, from=0, to=1]
\end{tikzcd}
\end{equation}
and \emph{of $\alpha_B$ with composition of 2-cells $A\xrightarrow{\alpha_1}A'\xrightarrow{\alpha_2}A''$ in $\ca$},
\begin{equation}\label{ax:bin_alp1alph2:B}
\tag{$\alpha_1,\alpha_2:B$}
\adjustbox{scale=.8}{
% https://q.uiver.app/?q=WzAsNixbMiwwLCJGXnthJ30oQilGX2IoQScpIl0sWzAsMCwiRl57YSd9KEIpRl9iKEEpIl0sWzAsMiwiRl97Yid9KEEpRl57YX0oQikiXSxbMiwyLCJGX3tiJ30oQScpRl57YX0oQikiXSxbNCwwLCJGXnthJ30oQilGX2IoQScnKSJdLFs0LDIsIkZfe2InfShBJycpRl57YX0oQikiXSxbMSwwLCIxXFxjaXJjIEZfYih7XFxhbHBoYV8xfSkiXSxbMiwzLCJGX3tiJ30oe1xcYWxwaGFfMX0pXFxjaXJjIDEiLDJdLFsxLDIsIkFfQiIsMl0sWzAsMywiQSdfQiIsMV0sWzAsNCwiMVxcY2lyYyBGX2Ioe1xcYWxwaGFfMn0pIl0sWzMsNSwiRl97Yid9KHtcXGFscGhhXzJ9KVxcY2lyYyAxIiwyXSxbNCw1LCJBJydfQiJdLFs2LDcsIntcXGFscGhhXzF9X2ciLDAseyJzaG9ydGVuIjp7InNvdXJjZSI6NDAsInRhcmdldCI6NDB9fV0sWzEwLDExLCJ7XFxhbHBoYV8yfV9nIiwwLHsic2hvcnRlbiI6eyJzb3VyY2UiOjQwLCJ0YXJnZXQiOjQwfX1dXQ==
\begin{tikzcd}[ampersand replacement=\&]
	{F^{a'}(B)F_b(A)} \&\& {F^{a'}(B)F_b(A')} \&\& {F^{a'}(B)F_b(A'')} \\
	\\
	{F_{b'}(A)F^{a}(B)} \&\& {F_{b'}(A')F^{a}(B)} \&\& {F_{b'}(A'')F^{a}(B)}
	\arrow[""{name=0, anchor=center, inner sep=0}, "{1\circ F_b({\alpha_1})}", from=1-1, to=1-3]
	\arrow[""{name=1, anchor=center, inner sep=0}, "{F_{b'}({\alpha_1})\circ 1}"', from=3-1, to=3-3]
	\arrow["{A_B}"', from=1-1, to=3-1]
	\arrow["{A'_B}"{description}, from=1-3, to=3-3]
	\arrow[""{name=2, anchor=center, inner sep=0}, "{1\circ F_b({\alpha_2})}", from=1-3, to=1-5]
	\arrow[""{name=3, anchor=center, inner sep=0}, "{F_{b'}({\alpha_2})\circ 1}"', from=3-3, to=3-5]
	\arrow["{A''_B}", from=1-5, to=3-5]
	\arrow["{{\alpha_1}_{ B}}"{xshift=0.1cm}, shorten <=17pt, shorten >=17pt, Rightarrow, from=0, to=1]
	\arrow["{{\alpha_2}_{ B}}"{xshift=0.1cm}, shorten <=17pt, shorten >=17pt, Rightarrow, from=2, to=3]
\end{tikzcd}}=
\adjustbox{scale=1}{
% https://q.uiver.app/?q=WzAsNCxbMCwwLCJGXnthJ30oQilGX2IoQSkiXSxbMiwwLCJGXnthJ30oQilGX2IoQScnKSJdLFsyLDEsIkZfe2InfShBJycpRl57YX0oQikiXSxbMCwxLCJGX3tiJ30oQSlGXnthfShCKSJdLFswLDMsImZfZyIsMl0sWzEsMiwiZicnX2ciXSxbMywyLCJGX3tiJ30oXFxhbHBoYV8yXFxjZG90XFxhbHBoYV8xKVxcY2lyYyAxIiwyXSxbMCwxLCIxXFxjaXJjIEZfe2J9KFxcYWxwaGFfMlxcY2RvdFxcYWxwaGFfMSkiXSxbNyw2LCJ7KFxcYWxwaGFfMlxcY2RvdFxcYWxwaGFfMSl9X2ciLDAseyJzaG9ydGVuIjp7InNvdXJjZSI6NDAsInRhcmdldCI6NDB9fV1d
\begin{tikzcd}[ampersand replacement=\&]
	{F^{a'}(B)F_b(A)} \&\& {F^{a'}(B)F_b(A'')} \\
	{F_{b'}(A)F^{a}(B)} \&\& {F_{b'}(A'')F^{a}(B).}
	\arrow["{ A_B}"', from=1-1, to=2-1]
	\arrow["{ A''_B}", from=1-3, to=2-3]
	\arrow[""{name=0, anchor=center, inner sep=0}, "{F_{b'}(\alpha_2\cdot\alpha_1)\circ 1}"', from=2-1, to=2-3]
	\arrow[""{name=1, anchor=center, inner sep=0}, "{1\circ F_{b}(\alpha_2\cdot\alpha_1)}", from=1-1, to=1-3]
	\arrow["{{(\alpha_2\cdot\alpha_1)}_g}"{xshift=0.1cm}, shorten <=9pt, shorten >=9pt, Rightarrow, from=1, to=0]
\end{tikzcd}}
\end{equation}

\emph{Compatibility of $A_\beta$ with composition of 1-cells $a\xrightarrow{A_1}a'\xrightarrow{A_2}a''$ in $\ca$},
\begin{equation}\label{ax:bin_A1A2:bet}
\tag{$A_1,A_2:\beta$}
\begin{split}\hspace*{-3.5cm}
\adjustbox{scale=.7}{
% https://q.uiver.app/?q=WzAsOSxbMSwyLCJGX3tiJ30oQV8yKUZee2EnfShCKUZfYihBXzEpIl0sWzMsMiwiRl97Yid9KEFfMilGX3tiJ30oQV8xKUZee2F9KEIpIl0sWzEsNCwiRl97Yid9KEFfMilGXnthJ30oQicpRl9iKEFfMSkiXSxbMyw0LCJGX3tiJ30oQV8yKUZfe2InfShBXzEpRl57YX0oQicpIl0sWzAsMSwiRl57YScnfShCKUZfYihBXzIpRl9iKEFfMSkiXSxbMCwzLCJGXnthJyd9KEInKUZfYihBXzIpRl9iKEFfMSkiXSxbMiwwLCJGXnthJyd9KEIpRl9iKEFfMkFfMSkiXSxbNCwxLCJGX3tiJ30oQV8yQV8xKUZee2F9KEIpIl0sWzQsMywiRl97Yid9KEFfMkFfMSlGXnthfShCKSJdLFswLDEsIjFcXGNpcmMge0FfMX1fQiJdLFsyLDMsIjFcXGNpcmMge0FfMX1fe0InfSIsMl0sWzEsMywiMVxcY2lyYyBGXnthJ30oXFxiZXRhKSIsMV0sWzAsMiwiMVxcY2lyYyBGXnthJ30oXFxiZXRhKVxcY2lyYyAxIiwxXSxbNCwwLCJ7QV8yfV9CXFxjaXJjMSIsMV0sWzQsNSwiRl57YSd9KFxcYmV0YSlcXGNpcmMgMSIsMl0sWzUsMiwie0FfMn1fe0InfVxcY2lyYzEiLDJdLFs0LDYsIjFcXGNpcmMgRl57QV8yLEFfMX1fYiJdLFs2LDcsIihoZilfZyJdLFsxLDcsIkZee0FfMixBXzF9X3tiJ31cXGNpcmMxIiwxXSxbNyw4LCIxXFxjaXJjIEZee2EnfShcXGJldGEpIl0sWzMsOCwiRl57QV8yLEFfMX1fe2InfVxcY2lyYzEiLDFdLFs5LDEwLCIxXFxjaXJjIHtBXzF9X3tcXGJldGF9IiwxLHsic2hvcnRlbiI6eyJzb3VyY2UiOjQwLCJ0YXJnZXQiOjQwfX1dLFsxMywxNSwie0FfMn1fXFxiZXRhXFxjaXJjMSIsMSx7InNob3J0ZW4iOnsic291cmNlIjo0MCwidGFyZ2V0Ijo0MH19XSxbMTgsMjAsIiIsMCx7InNob3J0ZW4iOnsic291cmNlIjo0MCwidGFyZ2V0Ijo0MH0sInN0eWxlIjp7ImJvZHkiOnsibmFtZSI6ImRhc2hlZCJ9fX1dLFs2LDksIkZee0FfMixBXzF9X0IiLDEseyJzaG9ydGVuIjp7InNvdXJjZSI6NDAsInRhcmdldCI6NDB9fV1d
\begin{tikzcd}[ampersand replacement=\&]
	\&\& {F^{a''}(B)F_b(A_2A_1)} \\
	{F^{a''}(B)F_b(A_2)F_b(A_1)} \&\&\&\& {F_{b'}(A_2A_1)F^{a}(B)} \\
	\& {F_{b'}(A_2)F^{a'}(B)F_b(A_1)} \&\& {F_{b'}(A_2)F_{b'}(A_1)F^{a}(B)} \\
	{F^{a''}(B')F_b(A_2)F_b(A_1)} \&\&\&\& {F_{b'}(A_2A_1)F^{a}(B)} \\
	\& {F_{b'}(A_2)F^{a'}(B')F_b(A_1)} \&\& {F_{b'}(A_2)F_{b'}(A_1)F^{a}(B')}
	\arrow[""{name=0, anchor=center, inner sep=0}, "{1\circ {A_1}_B}", from=3-2, to=3-4]
	\arrow[""{name=1, anchor=center, inner sep=0}, "{1\circ {A_1}_{B'}}"', from=5-2, to=5-4]
	\arrow["{1\circ F^{a'}(\beta)}"{description}, from=3-4, to=5-4]
	\arrow["{1\circ F^{a'}(\beta)\circ 1}"{description}, from=3-2, to=5-2]
	\arrow[""{name=2, anchor=center, inner sep=0}, "{{A_2}_B\circ1}"{description}, from=2-1, to=3-2]
	\arrow["{F^{a'}(\beta)\circ 1}"', from=2-1, to=4-1]
	\arrow[""{name=3, anchor=center, inner sep=0}, "{{A_2}_{B'}\circ1}"', from=4-1, to=5-2]
	\arrow["{1\circ F^{A_2,A_1}_b}", from=2-1, to=1-3]
	\arrow["{(A_2A_1)_B}", from=1-3, to=2-5]
	\arrow[""{name=4, anchor=center, inner sep=0}, "{F^{A_2,A_1}_{b'}\circ1}"{description}, from=3-4, to=2-5]
	\arrow["{1\circ F^{a'}(\beta)}", from=2-5, to=4-5]
	\arrow[""{name=5, anchor=center, inner sep=0}, "{F^{A_2,A_1}_{b'}\circ1}"{description}, from=5-4, to=4-5]
	\arrow["{1\circ {A_1}_{\beta}}"{description}, shorten <=17pt, shorten >=17pt, Rightarrow, from=0, to=1]
	\arrow["{{A_2}_\beta\circ1}"{description}, shorten <=17pt, shorten >=17pt, Rightarrow, from=2, to=3]
	\arrow[shorten <=17pt, shorten >=17pt, Rightarrow, dashed, from=4, to=5]
	\arrow["{F^{A_2,A_1}_B}"{description}, shorten <=15pt, shorten >=15pt, Rightarrow, from=1-3, to=0]
\end{tikzcd}} \\
\hspace*{-1.5cm} = 
\adjustbox{scale=.7}{
% https://q.uiver.app/?q=WzAsOCxbMSw0LCJGX3tiJ30oQV8yKUZee2EnfShCJylGX2IoQV8xKSJdLFszLDQsIkZfe2InfShBXzIpRl97Yid9KEFfMSlGXnthfShCJykiXSxbMCwxLCJGXnthJyd9KEIpRl9iKEFfMilGX2IoQV8xKSJdLFswLDMsIkZee2EnJ30oQicpRl9iKEFfMilGX2IoQV8xKSJdLFsyLDAsIkZee2EnJ30oQilGX2IoQV8yQV8xKSJdLFs0LDEsIkZfe2InfShBXzJBXzEpRl57YX0oQikiXSxbNCwzLCJGX3tiJ30oQV8yQV8xKUZee2F9KEIpLiJdLFsyLDIsIkZfe2InfShBXzJBXzEpRl57YX0oQikiXSxbMCwxLCIxXFxjaXJjIHtBXzF9X3tCJ30iLDJdLFsyLDMsIkZee2EnfShcXGJldGEpXFxjaXJjIDEiLDJdLFszLDAsIntBXzJ9X3tCJ31cXGNpcmMxIiwyXSxbMiw0LCIxXFxjaXJjIEZee0FfMixBXzF9X2IiXSxbNCw1LCIoQV8yQV8xKV9CIl0sWzUsNiwiMVxcY2lyYyBGXnthJ30oXFxiZXRhKSJdLFsxLDYsIkZee0FfMixBXzF9X3tiJ31cXGNpcmMxIiwxXSxbMyw3LCIxXFxjaXJjIEZee0FfMixBXzF9X2IiLDJdLFs0LDcsIkZee2EnfShcXGJldGEpXFxjaXJjIDEiLDFdLFs3LDYsIihBXzJBXzEpX3tCJ30iLDJdLFsxMiwxNywiKEFfMkFfMSlfXFxiZXRhIiwxLHsic2hvcnRlbiI6eyJzb3VyY2UiOjQwLCJ0YXJnZXQiOjQwfX1dLFsxMSwxNSwiIiwwLHsic2hvcnRlbiI6eyJzb3VyY2UiOjQwLCJ0YXJnZXQiOjQwfSwic3R5bGUiOnsiYm9keSI6eyJuYW1lIjoiZGFzaGVkIn19fV0sWzcsOCwiRl57QV8yLEFfMX1fe0InfSIsMSx7InNob3J0ZW4iOnsic291cmNlIjo0MCwidGFyZ2V0Ijo0MH19XV0=
\begin{tikzcd}[ampersand replacement=\&]
	\&\& {F^{a''}(B)F_b(A_2A_1)} \\
	{F^{a''}(B)F_b(A_2)F_b(A_1)} \&\&\&\& {F_{b'}(A_2A_1)F^{a}(B)} \\
	\&\& {F_{b'}(A_2A_1)F^{a}(B)} \\
	{F^{a''}(B')F_b(A_2)F_b(A_1)} \&\&\&\& {F_{b'}(A_2A_1)F^{a}(B).} \\
	\& {F_{b'}(A_2)F^{a'}(B')F_b(A_1)} \&\& {F_{b'}(A_2)F_{b'}(A_1)F^{a}(B')}
	\arrow[""{name=0, anchor=center, inner sep=0}, "{1\circ {A_1}_{B'}}"', from=5-2, to=5-4]
	\arrow["{F^{a'}(\beta)\circ 1}"', from=2-1, to=4-1]
	\arrow["{{A_2}_{B'}\circ1}"', from=4-1, to=5-2]
	\arrow[""{name=1, anchor=center, inner sep=0}, "{1\circ F^{A_2,A_1}_b}", from=2-1, to=1-3]
	\arrow[""{name=2, anchor=center, inner sep=0}, "{(A_2A_1)_B}", from=1-3, to=2-5]
	\arrow["{1\circ F^{a'}(\beta)}", from=2-5, to=4-5]
	\arrow["{F^{A_2,A_1}_{b'}\circ1}"{description}, from=5-4, to=4-5]
	\arrow[""{name=3, anchor=center, inner sep=0}, "{1\circ F^{A_2,A_1}_b}"', from=4-1, to=3-3]
	\arrow["{F^{a'}(\beta)\circ 1}"{description}, from=1-3, to=3-3]
	\arrow[""{name=4, anchor=center, inner sep=0}, "{(A_2A_1)_{B'}}"', from=3-3, to=4-5]
	\arrow["{(A_2A_1)_\beta}"{description}, shorten <=17pt, shorten >=17pt, Rightarrow, from=2, to=4]
	\arrow[shorten <=17pt, shorten >=17pt, Rightarrow, dashed, from=1, to=3]
	\arrow["{F^{A_2,A_1}_{B'}}"{description}, shorten <=15pt, shorten >=15pt, Rightarrow, from=3-3, to=0]
\end{tikzcd}}
\end{split}
\end{equation}
%Compatibility of and of $\alpha_B$ with composition of 1-cells $b\xrightarrow{B_1}b'\xrightarrow{B_2}b''$ in $\cb$.
and \emph{of $\alpha_B$ with composition of 1-cells $b\xrightarrow{B_1}b'\xrightarrow{B_2}b''$ in $\cb$,}
\begin{equation}\label{ax:bin_alp:B1B2}
\tag{$\alpha:B_1,B_2$}
\begin{split}
\adjustbox{scale=0.8}{
% https://q.uiver.app/?q=WzAsMTAsWzIsMCwiRl57YSd9KEJfMilGXnthJ30oQl8xKUZfYihBJykiXSxbMCwwLCJGXnthJ30oQl8yKUZee2EnfShCXzEpRl9iKEEpIl0sWzEsMSwiRl57YSd9KEJfMilGX3tiJ30oQSlGXnthfShCXzEpIl0sWzMsMSwiRl57YSd9KEJfMilGX3tiJ30oQScpRl57YX0oQl8xKSJdLFsxLDBdLFsxLDMsIkZfe2InJ30oQSlGXnthfShCXzIpRl57YX0oQl8xKSJdLFsxLDUsIkZfe2InJ30oQSlGXnthfShCXzJCXzEpIl0sWzAsNCwiRl57YSd9KEJfMkJfMSlGX2IoQSkiXSxbMyw1LCJGX3tiJyd9KEEnKUZee2F9KEJfMkJfMSkiXSxbMywzLCJGX3tiJyd9KEEnKUZee2F9KEJfMilGXnthfShCXzEpIl0sWzEsMCwiMVxcY2lyYyBGX2IoXFxhbHBoYSkiXSxbMiwzLCIxXFxjaXJjIEZfe2InfShcXGFscGhhKVxcY2lyYyAxIiwxXSxbMSwyLCIxXFxjaXJjIEFfe0JfMX0iLDFdLFswLDMsIjFcXGNpcmMgQSdfe0JfMX0iXSxbMiw1LCJBX3tCXzJ9XFxjaXJjMSIsMV0sWzUsNiwiMVxcY2lyYyBGXmFfe0JfMixCXzF9IiwxXSxbMSw3LCJGXnthJ31fe0JfMixCXzF9XFxjaXJjMSIsMl0sWzcsNiwiQV97KEJfMkJfMSl9IiwyXSxbNiw4LCJGX3tiJyd9KFxcYWxwaGEpXFxjaXJjIDEiLDFdLFszLDksIjFcXGNpcmMgQSdfe0JfMn1cXGNpcmMxIiwxXSxbOSw4LCIxXFxjaXJjIEZeYV97Ql8yLEJfMX0iXSxbNSw5LCJGX3tiJyd9KFxcYWxwaGEpXFxjaXJjIDEiLDFdLFsxMCwxMSwiMVxcY2lyY1xcYWxwaGFfe0JfMX0iLDAseyJsYWJlbF9wb3NpdGlvbiI6NzAsInNob3J0ZW4iOnsic291cmNlIjo0MCwidGFyZ2V0Ijo0MH19XSxbMTIsMTcsIkZeQV97Ql8yLEJfMX0iLDIseyJzaG9ydGVuIjp7InNvdXJjZSI6NDAsInRhcmdldCI6NDB9fV0sWzExLDIxLCJcXGFscGhhX3tCXzJ9XFxjaXJjMSIsMCx7InNob3J0ZW4iOnsic291cmNlIjo0MCwidGFyZ2V0Ijo0MH19XSxbMjEsMTgsIiIsMCx7InNob3J0ZW4iOnsic291cmNlIjo0MCwidGFyZ2V0Ijo0MH0sInN0eWxlIjp7ImJvZHkiOnsibmFtZSI6ImRhc2hlZCJ9fX1dXQ==
\begin{tikzcd}[ampersand replacement=\&]
	{F^{a'}(B_2)F^{a'}(B_1)F_b(A)} \& {} \& {F^{a'}(B_2)F^{a'}(B_1)F_b(A')} \\
	\& {F^{a'}(B_2)F_{b'}(A)F^{a}(B_1)} \&\& {F^{a'}(B_2)F_{b'}(A')F^{a}(B_1)} \\
	\\
	\& {F_{b''}(A)F^{a}(B_2)F^{a}(B_1)} \&\& {F_{b''}(A')F^{a}(B_2)F^{a}(B_1)} \\
	{F^{a'}(B_2B_1)F_b(A)} \\
	\& {F_{b''}(A)F^{a}(B_2B_1)} \&\& {F_{b''}(A')F^{a}(B_2B_1)}
	\arrow[""{name=0, anchor=center, inner sep=0}, "{1\circ F_b(\alpha)}", from=1-1, to=1-3]
	\arrow[""{name=1, anchor=center, inner sep=0}, "{1\circ F_{b'}(\alpha)\circ 1}"{description}, from=2-2, to=2-4]
	\arrow[""{name=2, anchor=center, inner sep=0}, "{1\circ A_{B_1}}"{description}, from=1-1, to=2-2]
	\arrow["{1\circ A'_{B_1}}", from=1-3, to=2-4]
	\arrow["{A_{B_2}\circ1}"{description}, from=2-2, to=4-2]
	\arrow["{1\circ F^a_{B_2,B_1}}"{description}, from=4-2, to=6-2]
	\arrow["{F^{a'}_{B_2,B_1}\circ1}"', from=1-1, to=5-1]
	\arrow[""{name=3, anchor=center, inner sep=0}, "{A_{(B_2B_1)}}"', from=5-1, to=6-2]
	\arrow[""{name=4, anchor=center, inner sep=0}, "{F_{b''}(\alpha)\circ 1}"{description}, from=6-2, to=6-4]
	\arrow["{1\circ A'_{B_2}\circ1}"{description}, from=2-4, to=4-4]
	\arrow["{1\circ F^a_{B_2,B_1}}", from=4-4, to=6-4]
	\arrow[""{name=5, anchor=center, inner sep=0}, "{F_{b''}(\alpha)\circ 1}"{description}, from=4-2, to=4-4]
	\arrow["{1\circ\alpha_{B_1}}"{pos=0.7}, shorten <=45pt, shorten >=45pt, Rightarrow, from=0, to=1]
	\arrow["{F^A_{B_2,B_1}}"'{xshift=-0.1cm}, shorten <=45pt, shorten >=45pt, Rightarrow, from=2, to=3]
	\arrow["{\alpha_{B_2}\circ1}"{xshift=0.1cm}, shorten <=20pt, shorten >=20pt, Rightarrow, from=1, to=5]
	\arrow[shorten <=25pt, shorten >=25pt, Rightarrow, dashed, from=5, to=4]
\end{tikzcd}
} \\
\hspace*{1cm}=
\adjustbox{scale=0.8}{
% https://q.uiver.app/?q=WzAsOCxbMiwwLCJGXnthJ30oQl8yKUZee2EnfShCXzEpRl9iKEEnKSJdLFswLDAsIkZee2EnfShCXzIpRl57YSd9KEJfMSlGX2IoQSkiXSxbMywxLCJGXnthJ30oQl8yKUZfe2InfShBJylGXnthfShCXzEpIl0sWzEsNSwiRl97YicnfShBKUZee2F9KEJfMkJfMSkiXSxbMCw0LCJGXnthJ30oQl8yQl8xKUZfYihBKSJdLFszLDUsIkZfe2InJ30oQScpRl57YX0oQl8yQl8xKSJdLFszLDMsIkZfe2InJ30oQScpRl57YX0oQl8yKUZee2F9KEJfMSkiXSxbMiw0LCJGXnthJ30oQl8yQl8xKUZfYihBJykiXSxbMSwwLCIxXFxjaXJjIEZfYihcXGFscGhhKSJdLFswLDIsIjFcXGNpcmMgZidfZyJdLFsxLDQsIkZee2EnfV97Ql8yLEJfMX1cXGNpcmMxIiwyXSxbNCwzLCJBX3soQl8yLEJfMSl9IiwyXSxbMyw1LCJGX3tiJyd9KFxcYWxwaGEpXFxjaXJjIDEiLDJdLFsyLDYsIjFcXGNpcmMgQSdfa1xcY2lyYzEiLDFdLFs2LDUsIjFcXGNpcmMgRl5hX3tCXzIsQl8xfSJdLFswLDcsIkZee2EnfV97Ql8yLEJfMX1cXGNpcmMxIiwyXSxbNCw3LCIxXFxjaXJjIEZfYihcXGFscGhhKSJdLFs3LDUsIkEnX3soQl8yQl8xKX0iXSxbOCwxNiwiIiwwLHsic2hvcnRlbiI6eyJzb3VyY2UiOjQwLCJ0YXJnZXQiOjQwfSwic3R5bGUiOnsiYm9keSI6eyJuYW1lIjoiZGFzaGVkIn19fV0sWzksMTcsIkZee0EnfV97Ql8yLEJfMX0iLDIseyJzaG9ydGVuIjp7InNvdXJjZSI6NDAsInRhcmdldCI6NDB9fV0sWzE2LDEyLCJcXGFscGhhX3soQl8yQl8xKX0iLDAseyJsYWJlbF9wb3NpdGlvbiI6NzAsInNob3J0ZW4iOnsic291cmNlIjo0MCwidGFyZ2V0Ijo0MH19XV0=
\begin{tikzcd}[ampersand replacement=\&]
	{F^{a'}(B_2)F^{a'}(B_1)F_b(A)} \&\& {F^{a'}(B_2)F^{a'}(B_1)F_b(A')} \\
	\&\&\& {F^{a'}(B_2)F_{b'}(A')F^{a}(B_1)} \\
	\\
	\&\&\& {F_{b''}(A')F^{a}(B_2)F^{a}(B_1)} \\
	{F^{a'}(B_2B_1)F_b(A)} \&\& {F^{a'}(B_2B_1)F_b(A')} \\
	\& {F_{b''}(A)F^{a}(B_2B_1)} \&\& {F_{b''}(A')F^{a}(B_2B_1).}
	\arrow[""{name=0, anchor=center, inner sep=0}, "{1\circ F_b(\alpha)}", from=1-1, to=1-3]
	\arrow[""{name=1, anchor=center, inner sep=0}, "{1\circ A'_{B_1}}", from=1-3, to=2-4]
	\arrow["{F^{a'}_{B_2,B_1}\circ1}"', from=1-1, to=5-1]
	\arrow["{A_{(B_2,B_1)}}"', from=5-1, to=6-2]
	\arrow[""{name=2, anchor=center, inner sep=0}, "{F_{b''}(\alpha)\circ 1}"', from=6-2, to=6-4]
	\arrow["{1\circ A'_{ B_2}\circ1}"{description}, from=2-4, to=4-4]
	\arrow["{1\circ F^a_{B_2,B_1}}", from=4-4, to=6-4]
	\arrow["{F^{a'}_{B_2,B_1}\circ1}"', from=1-3, to=5-3]
	\arrow[""{name=3, anchor=center, inner sep=0}, "{1\circ F_b(\alpha)}", from=5-1, to=5-3]
	\arrow[""{name=4, anchor=center, inner sep=0}, "{A'_{(B_2B_1)}}", from=5-3, to=6-4]
	\arrow[shorten <=45pt, shorten >=45pt, Rightarrow, dashed, from=0, to=3]
	\arrow["{F^{A'}_{B_2,B_1}}"'{xshift=-0.1cm}, shorten <=45pt, shorten >=45pt, Rightarrow, from=1, to=4]
	\arrow["{\alpha_{(B_2B_1)}}"{pos=0.7}, shorten <=45pt, shorten >=45pt, Rightarrow, from=3, to=2]
\end{tikzcd}
}
\end{split}
\end{equation}

\emph{Compatibility of $A_\beta$ and $\alpha_B$:}

\begin{equation}\label{ax:bin_alp:bet}
\tag{$\alpha:\beta$}
\begin{split}
\hspace*{-2cm}\adjustbox{scale=.9}{
% https://q.uiver.app/?q=WzAsNyxbMiwwLCJGXnthJ30oQilGX2IoQScpIl0sWzAsMCwiRl57YSd9KEIpRl9iKEEpIl0sWzEsMSwiRl97Yid9KEEpRl57YX0oQikiXSxbMywxLCJGX3tiJ30oQScpRl57YX0oQikiXSxbMCwyLCJGXnthJ30oQicpRl9iKEEpIl0sWzEsMywiRl97Yid9KEEpRl57YX0oQicpIl0sWzMsMywiRl97Yid9KEEnKUZee2F9KEInKSJdLFsxLDAsIjFcXGNpcmMgRl9iKFxcYWxwaGEpIl0sWzIsMywiRl97Yid9KFxcYWxwaGEpXFxjaXJjIDEiLDFdLFsxLDIsIkFfQiIsMV0sWzAsMywiQSdfQiJdLFsxLDQsIkZee2EnfShcXGJldGEpXFxjaXJjIDEiLDJdLFsyLDUsIjFcXGNpcmMgRl57YX0oXFxiZXRhKSIsMV0sWzUsNiwiRl97Yid9KFxcYWxwaGEpXFxjaXJjIDEiLDJdLFszLDYsIjFcXGNpcmMgRl57YX0oXFxiZXRhKSJdLFs0LDUsIkFfe0InfSIsMl0sWzcsOCwiXFxhbHBoYV9CIiwwLHsic2hvcnRlbiI6eyJzb3VyY2UiOjQwLCJ0YXJnZXQiOjQwfX1dLFs4LDEzLCIiLDAseyJzaG9ydGVuIjp7InNvdXJjZSI6NDAsInRhcmdldCI6NDB9LCJzdHlsZSI6eyJib2R5Ijp7Im5hbWUiOiJkYXNoZWQifX19XSxbOSwxNSwiQV9cXGJldGEiLDIseyJzaG9ydGVuIjp7InNvdXJjZSI6NDAsInRhcmdldCI6NDB9fV1d
\begin{tikzcd}[ampersand replacement=\&]
	{F^{a'}(B)F_b(A)} \&\& {F^{a'}(B)F_b(A')} \\
	\& {F_{b'}(A)F^{a}(B)} \&\& {F_{b'}(A')F^{a}(B)} \\
	{F^{a'}(B')F_b(A)} \\
	\& {F_{b'}(A)F^{a}(B')} \&\& {F_{b'}(A')F^{a}(B')}
	\arrow[""{name=0, anchor=center, inner sep=0}, "{1\circ F_b(\alpha)}", from=1-1, to=1-3]
	\arrow[""{name=1, anchor=center, inner sep=0}, "{F_{b'}(\alpha)\circ 1}"{description}, from=2-2, to=2-4]
	\arrow[""{name=2, anchor=center, inner sep=0}, "{A_B}"{description}, from=1-1, to=2-2]
	\arrow["{A'_B}", from=1-3, to=2-4]
	\arrow["{F^{a'}(\beta)\circ 1}"', from=1-1, to=3-1]
	\arrow["{1\circ F^{a}(\beta)}"{description}, from=2-2, to=4-2]
	\arrow[""{name=3, anchor=center, inner sep=0}, "{F_{b'}(\alpha)\circ 1}"', from=4-2, to=4-4]
	\arrow["{1\circ F^{a}(\beta)}", from=2-4, to=4-4]
	\arrow[""{name=4, anchor=center, inner sep=0}, "{A_{B'}}"', from=3-1, to=4-2]
	\arrow["{\alpha_B}", shorten <=30pt, shorten >=30pt, Rightarrow, from=0, to=1]
	\arrow[shorten <=25pt, shorten >=25pt, Rightarrow, dashed, from=1, to=3]
	\arrow["{A_\beta}"'{xshift=-0.1cm}, shorten <=25pt, shorten >=25pt, Rightarrow, from=2, to=4]
\end{tikzcd}} \\
\hspace*{1cm} =
\adjustbox{scale=.9}{% https://q.uiver.app/?q=WzAsNyxbMiwwLCJGXnthJ30oQilGX2IoQScpIl0sWzAsMCwiRl57YSd9KEIpRl9iKEEpIl0sWzMsMSwiRl97Yid9KEEnKUZee2F9KEIpIl0sWzAsMiwiRl57YSd9KEInKUZfYihBKSJdLFsxLDMsIkZfe2InfShBKUZee2F9KEInKSJdLFszLDMsIkZfe2InfShBJylGXnthfShCJykiXSxbMiwyLCJGXnthJ30oQicpRl9iKEEnKSJdLFsxLDAsIjFcXGNpcmMgRl9iKFxcYWxwaGEpIl0sWzAsMiwiQSdfQiJdLFsxLDMsIkZee2EnfShcXGJldGEpXFxjaXJjIDEiLDJdLFs0LDUsIkZfe2InfShcXGFscGhhKVxcY2lyYyAxIiwyXSxbMiw1LCIxXFxjaXJjIEZee2F9KFxcYmV0YSkiXSxbMyw0LCJBX3tCJ30iLDJdLFswLDYsIkZee2EnfShcXGJldGEpXFxjaXJjIDEiLDJdLFszLDYsIjFcXGNpcmMgRl9iKFxcYWxwaGEpIl0sWzYsNSwiQSdfe0InfSJdLFs3LDE0LCIiLDAseyJzaG9ydGVuIjp7InNvdXJjZSI6NDAsInRhcmdldCI6NDB9LCJzdHlsZSI6eyJib2R5Ijp7Im5hbWUiOiJkYXNoZWQifX19XSxbOCwxNSwiQSdfXFxiZXRhIiwwLHsic2hvcnRlbiI6eyJzb3VyY2UiOjQwLCJ0YXJnZXQiOjQwfX1dLFsxNCwxMCwiXFxhbHBoYV97Qid9IiwwLHsibGFiZWxfcG9zaXRpb24iOjcwLCJzaG9ydGVuIjp7InNvdXJjZSI6NDAsInRhcmdldCI6NDB9fV1d
\begin{tikzcd}[ampersand replacement=\&]
	{F^{a'}(B)F_b(A)} \&\& {F^{a'}(B)F_b(A')} \\
	\&\&\& {F_{b'}(A')F^{a}(B)} \\
	{F^{a'}(B')F_b(A)} \&\& {F^{a'}(B')F_b(A')} \\
	\& {F_{b'}(A)F^{a}(B')} \&\& {F_{b'}(A')F^{a}(B').}
	\arrow[""{name=0, anchor=center, inner sep=0}, "{1\circ F_b(\alpha)}", from=1-1, to=1-3]
	\arrow[""{name=1, anchor=center, inner sep=0}, "{A'_B}", from=1-3, to=2-4]
	\arrow["{F^{a'}(\beta)\circ 1}"', from=1-1, to=3-1]
	\arrow[""{name=2, anchor=center, inner sep=0}, "{F_{b'}(\alpha)\circ 1}"', from=4-2, to=4-4]
	\arrow["{1\circ F^{a}(\beta)}", from=2-4, to=4-4]
	\arrow["{A_{B'}}"', from=3-1, to=4-2]
	\arrow["{F^{a'}(\beta)\circ 1}"', from=1-3, to=3-3]
	\arrow[""{name=3, anchor=center, inner sep=0}, "{1\circ F_b(\alpha)}", from=3-1, to=3-3]
	\arrow[""{name=4, anchor=center, inner sep=0}, "{A'_{B'}}", from=3-3, to=4-4]
	\arrow[shorten <=25pt, shorten >=25pt, Rightarrow, dashed, from=0, to=3]
	\arrow["{A'_\beta}"{xshift=0.1cm}, shorten <=25pt, shorten >=25pt, Rightarrow, from=1, to=4]
	\arrow["{\alpha_{B'}}", shorten <=30pt, shorten >=30pt, Rightarrow, from=3, to=2]
\end{tikzcd}}
\end{split}
\end{equation}

\emph{Associativity for $F^A_{B_2,B_1}$: }

\begin{equation}\label{ax:bin_A:B1B2B3}
\tag{$A:B_1,B_2,B_3$}\hspace*{-4.5cm}
\begin{split}
% https://q.uiver.app/#q=WzAsMTIsWzAsMCwiRl57YSd9X3tCXzN9Rl57YSd9X3tCXzJ9Rl57YSd9X3tCXzF9Rl9iXkEiXSxbMiwwLCJGXnthJ31fe0JfM31GXnthJ31fe0JfMn1GX3tiJ31eQUZee2F9X3tCXzF9Il0sWzQsMCwiRl57YSd9X3tCXzN9Rl97YicnfV5BRl57YX1fe0JfMn1GXnthfV97Ql8xfSJdLFs0LDIsIkZee2EnfV97Ql8zfUZfe2InJ31eQUZee2F9X3tCXzJCXzF9Il0sWzAsMiwiRl57YSd9X3tCXzN9Rl57YSd9X3tCXzJCXzF9Rl9iXkEiXSxbNiwwLCJGX3tiJycnfV5BRl57YX1fe0JfM31GXnthfV97Ql8yfUZee2F9X3tCXzF9Il0sWzYsMiwiRl97YicnJ31eQUZee2F9X3tCXzN9Rl57YX1fe0JfMkJfMX0iXSxbMCw0LCJGXnthJ31fe0JfM0JfMkJfMX1GX3tifV5BIl0sWzYsNCwiRl97YicnJ31eQUZee2F9X3tCXzNCXzJCXzF9Il0sWzMsMl0sWzMsNF0sWzIsMl0sWzAsMSwiMVxcY2lyYyBBX3tCXzF9Il0sWzEsMiwiMVxcY2lyYyBBX3tCXzJ9XFxjaXJjMSJdLFsyLDMsIjFcXGNpcmMgRl5hX3tCXzIsQl8xfSIsMl0sWzAsNCwiMVxcY2lyYyBGXnthJ31fe0JfMixCXzF9XFxjaXJjMSIsMl0sWzQsMywiMVxcY2lyYyBBX3soQl8yQl8xKX0iLDFdLFsyLDUsIkFfe0JfM31cXGNpcmMgMSJdLFszLDYsIkFfe0JfM31cXGNpcmMgMSIsMl0sWzUsNiwiMVxcY2lyYyBGXmFfe0JfMixCXzF9Il0sWzQsNywiRl57YSd9X3tCXzMsQl8yQl8xfVxcY2lyYzEiLDJdLFs3LDgsIkFfeyhCXzNCXzJCXzEpfSIsMl0sWzYsOCwiMVxcY2lyYyBGXnthfV97Ql8zLEJfMkJfMX0iXSxbOSwxMCwiRl5BX3tCXzMsQl8yQl8xfSIsMix7InNob3J0ZW4iOnsic291cmNlIjo0MCwidGFyZ2V0Ijo0MH0sImxldmVsIjoyfV0sWzEsMTEsIjFcXGNpcmMgRl5BX3tCXzIsQl8xfSIsMix7InNob3J0ZW4iOnsic291cmNlIjo0MCwidGFyZ2V0Ijo0MH0sImxldmVsIjoyfV0sWzE3LDE4LCIiLDAseyJzaG9ydGVuIjp7InNvdXJjZSI6NDAsInRhcmdldCI6NDB9LCJzdHlsZSI6eyJib2R5Ijp7Im5hbWUiOiJkYXNoZWQifX19XV0=
\begin{tikzcd}[ampersand replacement=\&]
	{F^{a'}_{B_3}F^{a'}_{B_2}F^{a'}_{B_1}F_b^A} \&\& {F^{a'}_{B_3}F^{a'}_{B_2}F_{b'}^AF^{a}_{B_1}} \&\& {F^{a'}_{B_3}F_{b''}^AF^{a}_{B_2}F^{a}_{B_1}} \&\& {F_{b'''}^AF^{a}_{B_3}F^{a}_{B_2}F^{a}_{B_1}} \\
	\\
	{F^{a'}_{B_3}F^{a'}_{B_2B_1}F_b^A} \&\& {} \& {} \& {F^{a'}_{B_3}F_{b''}^AF^{a}_{B_2B_1}} \&\& {F_{b'''}^AF^{a}_{B_3}F^{a}_{B_2B_1}} \\
	\\
	{F^{a'}_{B_3B_2B_1}F_{b}^A} \&\&\& {} \&\&\& {F_{b'''}^AF^{a}_{B_3B_2B_1}}
	\arrow["{1\circ A_{B_1}}", from=1-1, to=1-3]
	\arrow["{1\circ A_{B_2}\circ1}", from=1-3, to=1-5]
	\arrow["{1\circ F^a_{B_2,B_1}}"', from=1-5, to=3-5]
	\arrow["{1\circ F^{a'}_{B_2,B_1}\circ1}"', from=1-1, to=3-1]
	\arrow["{1\circ A_{(B_2B_1)}}"{description}, from=3-1, to=3-5]
	\arrow[""{name=0, anchor=center, inner sep=0}, "{A_{B_3}\circ 1}", from=1-5, to=1-7]
	\arrow[""{name=1, anchor=center, inner sep=0}, "{A_{B_3}\circ 1}"', from=3-5, to=3-7]
	\arrow["{1\circ F^a_{B_2,B_1}}", from=1-7, to=3-7]
	\arrow["{F^{a'}_{B_3,B_2B_1}\circ1}"', from=3-1, to=5-1]
	\arrow["{A_{(B_3B_2B_1)}}"', from=5-1, to=5-7]
	\arrow["{1\circ F^{a}_{B_3,B_2B_1}}", from=3-7, to=5-7]
	\arrow["{F^A_{B_3,B_2B_1}}"', shorten <=13pt, shorten >=13pt, Rightarrow, from=3-4, to=5-4]
	\arrow["{1\circ F^A_{B_2,B_1}}"', shorten <=13pt, shorten >=13pt, Rightarrow, from=1-3, to=3-3]
	\arrow[shorten <=17pt, shorten >=17pt, Rightarrow, dashed, from=0, to=1]
\end{tikzcd}
\\
= 
% https://q.uiver.app/#q=WzAsMTIsWzAsMCwiRl57YSd9X3tCXzN9Rl57YSd9X3tCXzJ9Rl57YSd9X3tCXzF9Rl9iXkEiXSxbMiwwLCJGXnthJ31fe0JfM31GXnthJ31fe0JfMn1GX3tiJ31eQUZee2F9X3tCXzF9Il0sWzQsMCwiRl57YSd9X3tCXzN9Rl97YicnfV5BRl57YX1fe0JfMn1GXnthfV97Ql8xfSJdLFs2LDAsIkZfe2InJyd9XkFGXnthfV97Ql8zfUZee2F9X3tCXzJ9Rl57YX1fe0JfMX0iXSxbMCwyLCJGXnthJ31fe0JfM0JfMn1GXnthJ31fe0JfMX1GX2JeQSJdLFswLDQsIkZee2EnfV97Ql8zQl8yQl8xfUZfe2J9XkEiXSxbNiwyLCJGX3tiJycnfV5BRl57YX1fe0JfM0JfMn1GXnthfV97Ql8xfSJdLFs2LDQsIkZfe2InJyd9XkFGXnthfV97Ql8zQl8yQl8xfSJdLFsyLDIsIkZee2EnfV97Ql8zQl8yfUZfe2InfV5BRl57YX1fe0JfMX0iXSxbMyw0XSxbMywyXSxbNCwyXSxbNSw3LCJBX3soQl8zQl8yQl8xKX0iLDJdLFs2LDcsIjFcXGNpcmMgRl57YX1fe0JfM0JfMixCXzF9Il0sWzMsNiwiMVxcY2lyYyBGXnthfV97Ql8zLEJfMn1cXGNpcmMxIl0sWzIsMywiQV97Ql8zfVxcY2lyYyAxIl0sWzEsMiwiMVxcY2lyYyBBX3tCXzJ9XFxjaXJjMSJdLFswLDEsIjFcXGNpcmMgQV97Ql8xfSJdLFswLDQsIkZee2EnfV97Ql8zLEJfMn1cXGNpcmMxIiwyXSxbNCw1LCJGXnthJ31fe0JfM0JfMixCXzF9XFxjaXJjMSIsMl0sWzEsOCwiRl57YSd9X3tCXzMsQl8yfVxcY2lyYzEiXSxbNCw4LCIxXFxjaXJjIEFfe0JfMX0iLDJdLFs4LDYsIkFfeyhCXzNCXzIpfVxcY2lyYzEiLDJdLFsxMCw5LCJGXkFfe0JfM0JfMixCXzF9IiwwLHsic2hvcnRlbiI6eyJzb3VyY2UiOjQwLCJ0YXJnZXQiOjQwfSwibGV2ZWwiOjJ9XSxbMiwxMSwiRl5BX3tCXzMsQl8yfVxcY2lyYzEiLDAseyJzaG9ydGVuIjp7InNvdXJjZSI6NDAsInRhcmdldCI6NDB9LCJsZXZlbCI6Mn1dLFsxNywyMSwiIiwwLHsic2hvcnRlbiI6eyJzb3VyY2UiOjQwLCJ0YXJnZXQiOjQwfSwic3R5bGUiOnsiYm9keSI6eyJuYW1lIjoiZGFzaGVkIn19fV1d
\begin{tikzcd}[ampersand replacement=\&]
	{F^{a'}_{B_3}F^{a'}_{B_2}F^{a'}_{B_1}F_b^A} \&\& {F^{a'}_{B_3}F^{a'}_{B_2}F_{b'}^AF^{a}_{B_1}} \&\& {F^{a'}_{B_3}F_{b''}^AF^{a}_{B_2}F^{a}_{B_1}} \&\& {F_{b'''}^AF^{a}_{B_3}F^{a}_{B_2}F^{a}_{B_1}} \\
	\\
	{F^{a'}_{B_3B_2}F^{a'}_{B_1}F_b^A} \&\& {F^{a'}_{B_3B_2}F_{b'}^AF^{a}_{B_1}} \& {} \& {} \&\& {F_{b'''}^AF^{a}_{B_3B_2}F^{a}_{B_1}} \\
	\\
	{F^{a'}_{B_3B_2B_1}F_{b}^A} \&\&\& {} \&\&\& {F_{b'''}^AF^{a}_{B_3B_2B_1}}
	\arrow["{A_{(B_3B_2B_1)}}"', from=5-1, to=5-7]
	\arrow["{1\circ F^{a}_{B_3B_2,B_1}}", from=3-7, to=5-7]
	\arrow["{1\circ F^{a}_{B_3,B_2}\circ1}", from=1-7, to=3-7]
	\arrow["{A_{B_3}\circ 1}", from=1-5, to=1-7]
	\arrow["{1\circ A_{B_2}\circ1}", from=1-3, to=1-5]
	\arrow[""{name=0, anchor=center, inner sep=0}, "{1\circ A_{B_1}}", from=1-1, to=1-3]
	\arrow["{F^{a'}_{B_3,B_2}\circ1}"', from=1-1, to=3-1]
	\arrow["{F^{a'}_{B_3B_2,B_1}\circ1}"', from=3-1, to=5-1]
	\arrow["{F^{a'}_{B_3,B_2}\circ1}", from=1-3, to=3-3]
	\arrow[""{name=1, anchor=center, inner sep=0}, "{1\circ A_{B_1}}"', from=3-1, to=3-3]
	\arrow["{A_{(B_3B_2)}\circ1}"', from=3-3, to=3-7]
	\arrow["{F^A_{B_3B_2,B_1}}", shorten <=13pt, shorten >=13pt, Rightarrow, from=3-4, to=5-4]
	\arrow["{F^A_{B_3,B_2}\circ1}", shorten <=13pt, shorten >=13pt, Rightarrow, from=1-5, to=3-5]
	\arrow[shorten <=17pt, shorten >=17pt, Rightarrow, dashed, from=0, to=1]
\end{tikzcd}
\end{split}
\end{equation}

where we note that the left and right sides of the two diagrams coincide by Axiom (\ref{ax:psfct.F2-1-cells}) for the pseudomaps $F^a$ and $F^{a'}$.

\emph{Associativity for $F^{A_2,A_1}_B$:}

\begin{equation}\label{ax:bin_A1A2A3:B}
\tag{$A_1,A_2,A_3:B$}
\begin{split}
\adjustbox{scale=.85}{
% https://q.uiver.app/?q=WzAsOSxbMCwwLCJGXnthJycnfV9CRl9iXntBXzN9Rl9iXntBXzJ9Rl9iXntBXzF9Il0sWzIsMCwiRl57YScnJ31fQkZfYl57QV8zfUZfYl57QV8yQV8xfSJdLFsyLDEsIkZfe2InfV57QV8zfUZee2EnJ31fQkZfYl57QV8yQV8xfSJdLFswLDIsIkZfe2InfV57QV8zfUZfe2InfV57QV8yfUZee2EnfV9CRl9iXntBXzF9Il0sWzAsMywiRl97Yid9XntBXzN9Rl97Yid9XntBXzJ9Rl97Yid9XntBXzF9Rl57YX1fQiJdLFsyLDMsIkZfe2InfV57QV8zfUZfe2InfV57QV8yQV8xfUZee2F9X0IiXSxbNCwwLCJGXnthJycnfV9CRl9iXntBXzNBXzJBXzF9Il0sWzQsMywiRl97Yid9XntBXzNBXzJBXzF9Rl57YX1fQiJdLFswLDEsIkZfe2InfV57QV8zfUZee2EnJ31fQkZfYl57QV8yfUZfYl57QV8xfSJdLFswLDEsIjFcXGNpcmMgRl57QV8yLEFfMX1fe2J9Il0sWzEsMiwie0FfM31fQlxcY2lyYzEiXSxbMyw0LCIxXFxjaXJjIHtBXzF9X0IiLDJdLFsyLDUsIjFcXGNpcmMgKEFfMkFfMSlfQiIsMCx7ImxhYmVsX3Bvc2l0aW9uIjo3MH1dLFs0LDUsIjFcXGNpcmMgRl57QV8yLEFfMX1fe2InfVxcY2lyYzEiLDJdLFsxLDYsIjFcXGNpcmMgRl57QV8zLEFfMkFfMX1fe2InfSJdLFs2LDcsIihBXzNBXzJBXzEpX0IiXSxbNSw3LCJGXntBXzMsQV8yQV8xfV97Yid9XFxjaXJjMSIsMl0sWzAsOCwie0FfM31fQlxcY2lyYzEiLDJdLFs4LDMsIjFcXGNpcmMge0FfMn1fQlxcY2lyYzEiLDJdLFs4LDIsIjFcXGNpcmMgRl57aCxmfV97Yn0iLDIseyJvZmZzZXQiOjF9XSxbOSwxOSwiIiwwLHsic2hvcnRlbiI6eyJzb3VyY2UiOjQwLCJ0YXJnZXQiOjQwfSwic3R5bGUiOnsiYm9keSI6eyJuYW1lIjoiZGFzaGVkIn19fV0sWzE0LDE2LCJGXntBXzMsQV8yQV8xfV9CIiwwLHsic2hvcnRlbiI6eyJzb3VyY2UiOjQwLCJ0YXJnZXQiOjQwfX1dLFsxOSwxMywiMVxcY2lyYyBGXntBXzIsQV8xfV9CIiwwLHsic2hvcnRlbiI6eyJzb3VyY2UiOjQwLCJ0YXJnZXQiOjQwfX1dXQ==
\begin{tikzcd}[ampersand replacement=\&]
	{F^{a'''}_BF_b^{A_3}F_b^{A_2}F_b^{A_1}} \&\& {F^{a'''}_BF_b^{A_3}F_b^{A_2A_1}} \&\& {F^{a'''}_BF_b^{A_3A_2A_1}} \\
	{F_{b'}^{A_3}F^{a''}_BF_b^{A_2}F_b^{A_1}} \&\& {F_{b'}^{A_3}F^{a''}_BF_b^{A_2A_1}} \\
	{F_{b'}^{A_3}F_{b'}^{A_2}F^{a'}_BF_b^{A_1}} \\
	{F_{b'}^{A_3}F_{b'}^{A_2}F_{b'}^{A_1}F^{a}_B} \&\& {F_{b'}^{A_3}F_{b'}^{A_2A_1}F^{a}_B} \&\& {F_{b'}^{A_3A_2A_1}F^{a}_B}
	\arrow[""{name=0, anchor=center, inner sep=0}, "{1\circ F^{A_2,A_1}_{b}}", from=1-1, to=1-3]
	\arrow["{{A_3}_B\circ1}", from=1-3, to=2-3]
	\arrow["{1\circ {A_1}_B}"', from=3-1, to=4-1]
	\arrow["{1\circ (A_2A_1)_B}"{pos=0.7}, from=2-3, to=4-3]
	\arrow[""{name=1, anchor=center, inner sep=0}, "{1\circ F^{A_2,A_1}_{b'}\circ1}"', from=4-1, to=4-3]
	\arrow[""{name=2, anchor=center, inner sep=0}, "{1\circ F^{A_3,A_2A_1}_{b'}}", from=1-3, to=1-5]
	\arrow["{(A_3A_2A_1)_B}", from=1-5, to=4-5]
	\arrow[""{name=3, anchor=center, inner sep=0}, "{F^{A_3,A_2A_1}_{b'}\circ1}"', from=4-3, to=4-5]
	\arrow["{{A_3}_B\circ1}"', from=1-1, to=2-1]
	\arrow["{1\circ {A_2}_B\circ1}"', from=2-1, to=3-1]
	\arrow[""{name=4, anchor=center, inner sep=0}, "{1\circ F^{h,f}_{b}}"', shift right=1, from=2-1, to=2-3]
	\arrow[shorten <=9pt, shorten >=9pt, Rightarrow, dashed, from=0, to=4]
	\arrow["{F^{A_3,A_2A_1}_B}", shorten <=26pt, shorten >=26pt, Rightarrow, from=2, to=3]
	\arrow["{1\circ F^{A_2,A_1}_B}", shorten <=17pt, shorten >=17pt, Rightarrow, from=4, to=1]
\end{tikzcd}}\\
= 
\adjustbox{scale=.85}{
% https://q.uiver.app/?q=WzAsOSxbMCwwLCJGXnthJycnfV9CRl9iXntBXzN9Rl9iXntBXzJ9Rl9iXntBXzF9Il0sWzIsMiwiRl97Yid9XntBXzNBXzJ9Rl57YSd9X0JGX2Jee0FfMX0iXSxbMCwxLCJGX3tiJ31ee0FfM31GXnthJyd9X0JGX2Jee0FfMn1GX2Jee0FfMX0iXSxbMCwyLCJGX3tiJ31ee0FfM31GX3tiJ31ee0FfMn1GXnthJ31fQkZfYl57QV8xfSJdLFswLDMsIkZfe2InfV57QV8zfUZfe2InfV57QV8yfUZfe2InfV57QV8xfUZee2F9X0IiXSxbNCwwLCJGXnthJycnfV9CRl9iXntBXzNBXzJBXzF9Il0sWzQsMywiRl97Yid9XntBXzNBXzJBXzF9Rl57YX1fQiJdLFsyLDMsIkZfe2InfV57QV8zQV8yfUZfe2InfV57QV8xfUZee2F9X0IiXSxbMiwwLCJGXnthJycnfV9CRl9iXntBXzNBXzJ9Rl9iXntBXzF9Il0sWzAsMiwie0FfM31fQlxcY2lyYzEiLDJdLFsyLDMsIjFcXGNpcmMge0FfMn1fQlxcY2lyYzEiLDJdLFsxLDcsIjFcXGNpcmMge0FfMX1fQiJdLFs1LDYsIihBXzNBXzJBXzEpX0IiXSxbMywxLCJGXntzLGh9X3tiJ31cXGNpcmMxIl0sWzcsNiwiRl57QV8zQV8yLEFfMX1fe2InfVxcY2lyYzEiLDJdLFs4LDEsIihBXzNBXzIpX0JcXGNpcmMxIiwwLHsibGFiZWxfcG9zaXRpb24iOjMwfV0sWzAsOCwiMVxcY2lyYyBGXntzLGh9X3tifVxcY2lyYzEiXSxbOCw1LCIxXFxjaXJjIEZfYl57QV8zQV8yLEFfMX0iXSxbMyw0LCIxXFxjaXJjIHtBXzF9X0IiLDJdLFs0LDcsIkZee0FfMyxBXzJ9X3tiJ31cXGNpcmMxIiwyXSxbMTYsMTMsIkZee0FfMyxBXzJ9X0JcXGNpcmMxIiwwLHsic2hvcnRlbiI6eyJzb3VyY2UiOjQwLCJ0YXJnZXQiOjQwfX1dLFsxNywxNCwiRl57QV8zQV8yLEFfMX1fQiIsMCx7InNob3J0ZW4iOnsic291cmNlIjo0MCwidGFyZ2V0Ijo0MH19XSxbMTMsMTksIiIsMCx7InNob3J0ZW4iOnsic291cmNlIjo0MCwidGFyZ2V0Ijo0MH0sInN0eWxlIjp7ImJvZHkiOnsibmFtZSI6ImRhc2hlZCJ9fX1dXQ==
\begin{tikzcd}[ampersand replacement=\&]
	{F^{a'''}_BF_b^{A_3}F_b^{A_2}F_b^{A_1}} \&\& {F^{a'''}_BF_b^{A_3A_2}F_b^{A_1}} \&\& {F^{a'''}_BF_b^{A_3A_2A_1}} \\
	{F_{b'}^{A_3}F^{a''}_BF_b^{A_2}F_b^{A_1}} \\
	{F_{b'}^{A_3}F_{b'}^{A_2}F^{a'}_BF_b^{A_1}} \&\& {F_{b'}^{A_3A_2}F^{a'}_BF_b^{A_1}} \\
	{F_{b'}^{A_3}F_{b'}^{A_2}F_{b'}^{A_1}F^{a}_B} \&\& {F_{b'}^{A_3A_2}F_{b'}^{A_1}F^{a}_B} \&\& {F_{b'}^{A_3A_2A_1}F^{a}_B}
	\arrow["{{A_3}_B\circ1}"', from=1-1, to=2-1]
	\arrow["{1\circ {A_2}_B\circ1}"', from=2-1, to=3-1]
	\arrow["{1\circ {A_1}_B}", from=3-3, to=4-3]
	\arrow["{(A_3A_2A_1)_B}", from=1-5, to=4-5]
	\arrow[""{name=0, anchor=center, inner sep=0}, "{F^{A_2,A_1}_{b'}\circ1}", from=3-1, to=3-3]
	\arrow[""{name=1, anchor=center, inner sep=0}, "{F^{A_3A_2,A_1}_{b'}\circ1}"', from=4-3, to=4-5]
	\arrow["{(A_3A_2)_B\circ1}"{pos=0.3}, from=1-3, to=3-3]
	\arrow[""{name=2, anchor=center, inner sep=0}, "{1\circ F^{A_3,A_2}_{b}\circ1}", from=1-1, to=1-3]
	\arrow[""{name=3, anchor=center, inner sep=0}, "{1\circ F_b^{A_3A_2,A_1}}", from=1-3, to=1-5]
	\arrow["{1\circ {A_1}_B}"', from=3-1, to=4-1]
	\arrow[""{name=4, anchor=center, inner sep=0}, "{F^{A_3,A_2}_{b'}\circ1}"', from=4-1, to=4-3]
	\arrow["{F^{A_3,A_2}_B\circ1}", shorten <=17pt, shorten >=17pt, Rightarrow, from=2, to=0]
	\arrow["{F^{A_3A_2,A_1}_B}", shorten <=26pt, shorten >=26pt, Rightarrow, from=3, to=1]
	\arrow[shorten <=9pt, shorten >=9pt, Rightarrow, dashed, from=0, to=4]
\end{tikzcd}
}
\end{split}
\end{equation}
where we note that the top and bottom of the two diagrams in the equation coincide by Axiom (\ref{ax:psfct.F2-1-cells}) for the pseudomaps $F_b$ and $F_{b'}$. \\

\emph{Left whiskering for $F^A_{B_2,B_1}$:}

\begin{equation}\label{ax:bin_A:B1bet}
\tag{$A:B_1,\beta$}
\begin{split}
\adjustbox{scale=.85}{
% https://q.uiver.app/?q=WzAsNyxbMiwwLCJGXnthJ31fe0JfMn1GX3tiJ31eQUZee2F9X3tCXzF9Il0sWzAsMCwiRl57YSd9X3tCXzJ9Rl57YSd9X3tCXzF9Rl9iXkEiXSxbNCwwLCJGX3tiJyd9XkFGXnthfV97Ql8yfUZee2F9X3tCXzF9Il0sWzAsMSwiRl57YSd9X3tCXzJCXzF9Rl9iXkEiXSxbNCwxLCJGX3tiJyd9XkFGXnthfV97Ql8yQl8xfSJdLFs0LDIsIkZfe2InJ31eQUZee2F9X3tCXzInQl8xfSJdLFswLDIsIkZee2EnfV97Ql8yJ0JfMX1GX2JeQSJdLFsxLDAsIjFcXGNpcmMgQV97Ql8xfSJdLFswLDIsIkFfe0JfMn1cXGNpcmMgMSJdLFsxLDMsIkZee2EnfV97Ql8yLEJfMX1cXGNpcmMxIiwyXSxbMyw0LCJBX3soQl8yQl8xKX0iLDFdLFsyLDQsIjFcXGNpcmMgRl57YX1fe0JfMixCXzF9Il0sWzMsNiwiRl57YSd9X3tcXGJldGFcXGNpcmMgQl8xfVxcY2lyYzEiLDJdLFs0LDUsIjFcXGNpcmMgRl57YX1fe1xcYmV0YVxcY2lyYyBCXzF9Il0sWzYsNSwiQV97KEJfMidCXzEpfSIsMl0sWzAsMTAsIkZeQV97Ql8yLEJfMX0iLDAseyJzaG9ydGVuIjp7InNvdXJjZSI6NDAsInRhcmdldCI6NDB9fV0sWzEwLDE0LCJBX3tcXGJldGFcXGNpcmMgQl8xfSIsMCx7InNob3J0ZW4iOnsic291cmNlIjo0MCwidGFyZ2V0Ijo0MH19XV0=
\begin{tikzcd}[ampersand replacement=\&]
	{F^{a'}_{B_2}F^{a'}_{B_1}F_b^A} \&\& {F^{a'}_{B_2}F_{b'}^AF^{a}_{B_1}} \&\& {F_{b''}^AF^{a}_{B_2}F^{a}_{B_1}} \\
	{F^{a'}_{B_2B_1}F_b^A} \&\&\&\& {F_{b''}^AF^{a}_{B_2B_1}} \\
	{F^{a'}_{B_2'B_1}F_b^A} \&\&\&\& {F_{b''}^AF^{a}_{B_2'B_1}}
	\arrow["{1\circ A_{B_1}}", from=1-1, to=1-3]
	\arrow["{A_{B_2}\circ 1}", from=1-3, to=1-5]
	\arrow["{F^{a'}_{B_2,B_1}\circ1}"', from=1-1, to=2-1]
	\arrow[""{name=0, anchor=center, inner sep=0}, "{A_{(B_2B_1)}}"{description}, from=2-1, to=2-5]
	\arrow["{1\circ F^{a}_{B_2,B_1}}", from=1-5, to=2-5]
	\arrow["{F^{a'}_{\beta\circ B_1}\circ1}"', from=2-1, to=3-1]
	\arrow["{1\circ F^{a}_{\beta\circ B_1}}", from=2-5, to=3-5]
	\arrow[""{name=1, anchor=center, inner sep=0}, "{A_{(B_2'B_1)}}"', from=3-1, to=3-5]
	\arrow["{F^A_{B_2,B_1}}", shorten <=6pt, shorten >=6pt, Rightarrow, from=1-3, to=0]
	\arrow["{A_{\beta\circ B_1}}", shorten <=9pt, shorten >=9pt, Rightarrow, from=0, to=1]
\end{tikzcd}
} \\
= 
\adjustbox{scale=.85}{
% https://q.uiver.app/?q=WzAsOSxbMCwwLCJGXnthJ31fe0JfMn1GXnthJ31fe0JfMX1GX2JeQSJdLFs0LDAsIkZfe2InJ31eQUZee2F9X3tCXzJ9Rl57YX1fe0JfMX0iXSxbMiwwLCJGXnthJ31fe0JfMn1GX3tiJ31eQUZee2F9X3tCXzF9Il0sWzAsMSwiRl57YSd9X3tCXzInfUZee2EnfV97Ql8xfUZfYl5BIl0sWzAsMiwiRl57YSd9X3tCXzInQl8xfUZfYl5BIl0sWzQsMiwiRl97YicnfV5BRl57YX1fe0JfMidCXzF9Il0sWzIsMSwiRl57YSd9X3tCXzInfUZfe2InfV5BRl57YX1fe0JfMX0iXSxbNCwxLCJGX3tiJyd9XkFGXnthfV97Ql8yJ31GXnthfV97Ql8xfSJdLFsyLDJdLFszLDYsIjFcXGNpcmMgQV97Ql8xfSIsMl0sWzIsNiwiRl57YSd9X1xcYmV0YVxcY2lyYzEiLDFdLFswLDIsIjFcXGNpcmMgQV97Ql8xfSJdLFswLDMsIkZee2EnfV9cXGJldGFcXGNpcmMxIiwyXSxbMyw0LCJGXnthJ31fe0JfMicsQl8xfVxcY2lyYzEiLDJdLFs0LDUsIkFfeyhCXzInQl8xKX0iLDJdLFsyLDEsIkFfe0JfMn1cXGNpcmMgMSJdLFs2LDcsIkFfe0JfMid9XFxjaXJjIDEiLDJdLFsxLDcsIjFcXGNpcmMgRl57YX1fXFxiZXRhXFxjaXJjMSJdLFs3LDUsIjFcXGNpcmMgRl57YX1fe0JfMicsQl8xfSJdLFs2LDgsIkZeQV97Ql8yJyxCXzF9IiwwLHsic2hvcnRlbiI6eyJzb3VyY2UiOjQwLCJ0YXJnZXQiOjQwfSwibGV2ZWwiOjJ9XSxbMTUsMTYsIkFfXFxiZXRhXFxjaXJjMSIsMCx7InNob3J0ZW4iOnsic291cmNlIjo0MCwidGFyZ2V0Ijo0MH19XSxbMTEsOSwiIiwwLHsic2hvcnRlbiI6eyJzb3VyY2UiOjQwLCJ0YXJnZXQiOjQwfSwic3R5bGUiOnsiYm9keSI6eyJuYW1lIjoiZGFzaGVkIn19fV1d
\begin{tikzcd}[ampersand replacement=\&]
	{F^{a'}_{B_2}F^{a'}_{B_1}F_b^A} \&\& {F^{a'}_{B_2}F_{b'}^AF^{a}_{B_1}} \&\& {F_{b''}^AF^{a}_{B_2}F^{a}_{B_1}} \\
	{F^{a'}_{B_2'}F^{a'}_{B_1}F_b^A} \&\& {F^{a'}_{B_2'}F_{b'}^AF^{a}_{B_1}} \&\& {F_{b''}^AF^{a}_{B_2'}F^{a}_{B_1}} \\
	{F^{a'}_{B_2'B_1}F_b^A} \&\& {} \&\& {F_{b''}^AF^{a}_{B_2'B_1}}
	\arrow[""{name=0, anchor=center, inner sep=0}, "{1\circ A_{B_1}}"', from=2-1, to=2-3]
	\arrow["{F^{a'}_\beta\circ1}"{description}, from=1-3, to=2-3]
	\arrow[""{name=1, anchor=center, inner sep=0}, "{1\circ A_{B_1}}", from=1-1, to=1-3]
	\arrow["{F^{a'}_\beta\circ1}"', from=1-1, to=2-1]
	\arrow["{F^{a'}_{B_2',B_1}\circ1}"', from=2-1, to=3-1]
	\arrow["{A_{(B_2'B_1)}}"', from=3-1, to=3-5]
	\arrow[""{name=2, anchor=center, inner sep=0}, "{A_{B_2}\circ 1}", from=1-3, to=1-5]
	\arrow[""{name=3, anchor=center, inner sep=0}, "{A_{B_2'}\circ 1}"', from=2-3, to=2-5]
	\arrow["{1\circ F^{a}_\beta\circ1}", from=1-5, to=2-5]
	\arrow["{1\circ F^{a}_{B_2',B_1}}", from=2-5, to=3-5]
	\arrow["{F^A_{B_2',B_1}}", shorten <=4pt, shorten >=4pt, Rightarrow, from=2-3, to=3-3]
	\arrow["{A_\beta\circ1}", shorten <=9pt, shorten >=9pt, Rightarrow, from=2, to=3]
	\arrow[shorten <=9pt, shorten >=9pt, Rightarrow, dashed, from=1, to=0]
\end{tikzcd}
}
\end{split}
\end{equation}
We note that the left and right side of the two diagrams in the equation are equal by Axiom (\ref{ax:psfct.F2-whisk-2}.L) for the pseudomaps $F^a$ and $F^{a'}$. 

\emph{Right whiskering for $F^A_{B_2,B_1}$.}
\begin{equation}\label{ax:bin_A:betB2}
\tag{$A:\beta,B_2$}
\begin{split}
\adjustbox{scale=.85}{
% https://q.uiver.app/?q=WzAsNyxbMiwwLCJGXnthJ31fe0JfMn1GX3tiJ31eQUZee2F9X3tCXzF9Il0sWzAsMCwiRl57YSd9X3tCXzJ9Rl57YSd9X3tCXzF9Rl9iXkEiXSxbNCwwLCJGX3tiJyd9XkFGXnthfV97Ql8yfUZee2F9X3tCXzF9Il0sWzAsMSwiRl57YSd9X3tCXzJCXzF9Rl9iXkEiXSxbNCwxLCJGX3tiJyd9XkFGXnthfV97Ql8yQl8xfSJdLFs0LDIsIkZfe2InJ31eQUZee2F9X3tCXzJCXzEnfSJdLFswLDIsIkZee2EnfV97Ql8yQl8xJ31GX2JeQSJdLFsxLDAsIjFcXGNpcmMgQV97Ql8xfSJdLFswLDIsIkFfe0JfMn1cXGNpcmMgMSJdLFsxLDMsIkZee2EnfV97Ql8yLEJfMX1cXGNpcmMxIiwyXSxbMyw0LCJBX3soQl8yQl8xKX0iLDFdLFszLDYsIkZee2EnfV97Ql8yXFxjaXJjXFxiZXRhfVxcY2lyYzEiLDJdLFs0LDUsIjFcXGNpcmMgRl57YX1fe0JfMlxcY2lyY1xcYmV0YX0iXSxbNiw1LCJBX3soQl8yQl8xJyl9IiwyXSxbMiw0LCIxXFxjaXJjIEZee2F9X3tCXzIsQl8xfSJdLFswLDEwLCJGXkFfe0JfMixCXzF9IiwwLHsic2hvcnRlbiI6eyJzb3VyY2UiOjQwLCJ0YXJnZXQiOjQwfX1dLFsxMCwxMywiQV97Ql8yXFxjaXJjXFxiZXRhfSIsMCx7InNob3J0ZW4iOnsic291cmNlIjo0MCwidGFyZ2V0Ijo0MH19XV0=
\begin{tikzcd}[ampersand replacement=\&]
	{F^{a'}_{B_2}F^{a'}_{B_1}F_b^A} \&\& {F^{a'}_{B_2}F_{b'}^AF^{a}_{B_1}} \&\& {F_{b''}^AF^{a}_{B_2}F^{a}_{B_1}} \\
	{F^{a'}_{B_2B_1}F_b^A} \&\&\&\& {F_{b''}^AF^{a}_{B_2B_1}} \\
	{F^{a'}_{B_2B_1'}F_b^A} \&\&\&\& {F_{b''}^AF^{a}_{B_2B_1'}}
	\arrow["{1\circ A_{B_1}}", from=1-1, to=1-3]
	\arrow["{A_{B_2}\circ 1}", from=1-3, to=1-5]
	\arrow["{F^{a'}_{B_2,B_1}\circ1}"', from=1-1, to=2-1]
	\arrow[""{name=0, anchor=center, inner sep=0}, "{A_{(B_2B_1)}}"{description}, from=2-1, to=2-5]
	\arrow["{F^{a'}_{B_2\circ\beta}\circ1}"', from=2-1, to=3-1]
	\arrow["{1\circ F^{a}_{B_2\circ\beta}}", from=2-5, to=3-5]
	\arrow[""{name=1, anchor=center, inner sep=0}, "{A_{(B_2B_1')}}"', from=3-1, to=3-5]
	\arrow["{1\circ F^{a}_{B_2,B_1}}", from=1-5, to=2-5]
	\arrow["{F^A_{B_2,B_1}}", shorten <=6pt, shorten >=6pt, Rightarrow, from=1-3, to=0]
	\arrow["{A_{B_2\circ\beta}}", shorten <=9pt, shorten >=9pt, Rightarrow, from=0, to=1]
\end{tikzcd}
} \\
= 
\adjustbox{scale=.85}{
% https://q.uiver.app/?q=WzAsOSxbMCwwLCJGXnthJ31fe0JfMn1GXnthJ31fe0JfMX1GX2JeQSJdLFs0LDAsIkZfe2InJ31eQUZee2F9X3tCXzJ9Rl57YX1fe0JfMX0iXSxbMiwwLCJGXnthJ31fe0JfMn1GX3tiJ31eQUZee2F9X3tCXzF9Il0sWzAsMSwiRl57YSd9X3tCXzJ9Rl57YSd9X3tCXzEnfUZfYl5BIl0sWzAsMiwiRl57YSd9X3tCXzJCXzEnfUZfYl5BIl0sWzQsMiwiRl97YicnfV5BRl57YX1fe0JfMkJfMSd9Il0sWzIsMSwiRl57YSd9X3tCXzJ9Rl97Yid9XkFGXnthfV97Ql8xJ30iXSxbNCwxLCJGX3tiJyd9XkFGXnthfV97Ql8yfUZee2F9X3tCXzEnfSJdLFsyLDJdLFszLDYsIjFcXGNpcmMgQV97Ql8xJ30iLDJdLFsyLDYsIjFcXGNpcmMgRl57YX1fXFxiZXRhIiwxXSxbMCwyLCIxXFxjaXJjIEFfe0JfMX0iXSxbMCwzLCIxXFxjaXJjIEZee2EnfV9cXGJldGFcXGNpcmMxIiwyXSxbNCw1LCJBX3soQl8yQl8xJyl9IiwyXSxbMiwxLCJBX3tCXzJ9XFxjaXJjIDEiXSxbNiw3LCJBX3tCXzJ9XFxjaXJjIDEiLDJdLFsxLDcsIjFcXGNpcmMgRl57YX1fXFxiZXRhIl0sWzYsOCwiRl5BX3tCXzIsQl8xJ30iLDAseyJzaG9ydGVuIjp7InNvdXJjZSI6NDAsInRhcmdldCI6NDB9LCJsZXZlbCI6Mn1dLFszLDQsIkZee2EnfV97Ql8yLEJfMSd9XFxjaXJjMSIsMl0sWzcsNSwiMVxcY2lyYyBGXnthfV97Ql8yLEJfMSd9Il0sWzE0LDE1LCIiLDIseyJzaG9ydGVuIjp7InNvdXJjZSI6NDAsInRhcmdldCI6NDB9LCJzdHlsZSI6eyJib2R5Ijp7Im5hbWUiOiJkYXNoZWQifX19XSxbMTEsOSwiMVxcY2lyYyBBX1xcYmV0YSIsMix7InNob3J0ZW4iOnsic291cmNlIjo0MCwidGFyZ2V0Ijo0MH19XV0=
\begin{tikzcd}[ampersand replacement=\&]
	{F^{a'}_{B_2}F^{a'}_{B_1}F_b^A} \&\& {F^{a'}_{B_2}F_{b'}^AF^{a}_{B_1}} \&\& {F_{b''}^AF^{a}_{B_2}F^{a}_{B_1}} \\
	{F^{a'}_{B_2}F^{a'}_{B_1'}F_b^A} \&\& {F^{a'}_{B_2}F_{b'}^AF^{a}_{B_1'}} \&\& {F_{b''}^AF^{a}_{B_2}F^{a}_{B_1'}} \\
	{F^{a'}_{B_2B_1'}F_b^A} \&\& {} \&\& {F_{b''}^AF^{a}_{B_2B_1'}}
	\arrow[""{name=0, anchor=center, inner sep=0}, "{1\circ A_{B_1'}}"', from=2-1, to=2-3]
	\arrow["{1\circ F^{a}_\beta}"{description}, from=1-3, to=2-3]
	\arrow[""{name=1, anchor=center, inner sep=0}, "{1\circ A_{B_1}}", from=1-1, to=1-3]
	\arrow["{1\circ F^{a'}_\beta\circ1}"', from=1-1, to=2-1]
	\arrow["{A_{(B_2B_1')}}"', from=3-1, to=3-5]
	\arrow[""{name=2, anchor=center, inner sep=0}, "{A_{B_2}\circ 1}", from=1-3, to=1-5]
	\arrow[""{name=3, anchor=center, inner sep=0}, "{A_{B_2}\circ 1}"', from=2-3, to=2-5]
	\arrow["{1\circ F^{a}_\beta}", from=1-5, to=2-5]
	\arrow["{F^A_{B_2,B_1'}}", shorten <=4pt, shorten >=4pt, Rightarrow, from=2-3, to=3-3]
	\arrow["{F^{a'}_{B_2,B_1'}\circ1}"', from=2-1, to=3-1]
	\arrow["{1\circ F^{a}_{B_2,B_1'}}", from=2-5, to=3-5]
	\arrow[shorten <=9pt, shorten >=9pt, Rightarrow, dashed, from=2, to=3]
	\arrow["{1\circ A_\beta}"', shorten <=9pt, shorten >=9pt, Rightarrow, from=1, to=0]
\end{tikzcd}
}
\end{split}
\end{equation}
We note that the left and right sides of the two diagrams in the equation are equal by Axiom (\ref{ax:psfct.F2-whisk-2}.R) for the pseudomaps $F^a$ and $F^{a'}$. 

\emph{Left whiskering for $F^{A_2,A_1}_B$:}
\begin{equation}\label{ax:bin_A1alp:B}
\tag{$A_1,\alpha:B$}
%\begin{split}
\adjustbox{scale=.85}{
% https://q.uiver.app/?q=WzAsNyxbMiwwLCJGXnthJyd9X0JGX2Jee0FfMkFfMX0iXSxbMCwxLCJGX3tiJ31ee0FfMn1GXnthJ31fQkZfe2J9XntBXzF9Il0sWzAsMiwiRl97Yid9XntBXzJ9Rl97Yid9XntBXzF9Rl57YSd9X0IiXSxbMiwyLCJGX3tiJ31ee0FfMkFfMX1GXnthfV9CIl0sWzAsMCwiRl57YScnfV9CRl9iXntBXzJ9Rl9iXntBXzF9Il0sWzQsMCwiRl57YScnfV9CRl9iXntBXzInQV8xfSJdLFs0LDIsIkZfe2InfV57QV8yJ0FfMX1GXnthfV9CIl0sWzEsMiwiMVxcY2lyYyB7QV8xfV9CIiwyXSxbMCwzLCIoQV8yQV8xKV9CIiwxXSxbMiwzLCJGXntBXzIsQV8xfV97Yid9XFxjaXJjMSIsMl0sWzQsMCwiMVxcY2lyYyBGXntBXzIsQV8xfV97Yn0iLDAseyJvZmZzZXQiOjF9XSxbMCw1LCIxXFxjaXJjIEZfYl57XFxhbHBoYVxcY2lyYyBBXzF9Il0sWzMsNiwiRl97Yid9XntcXGFscGhhXFxjaXJjIEFfMX1cXGNpcmMxIiwyXSxbNCwxLCJ7QV8yfV9CXFxjaXJjMSIsMl0sWzUsNiwiKEFfMidBXzEpX0IiXSxbMTAsOSwiRl57QV8yLEFfMX1fQiIsMCx7InNob3J0ZW4iOnsic291cmNlIjo0MCwidGFyZ2V0Ijo0MH19XSxbMTEsMTIsIihcXGFscGhhXFxjaXJjIEFfMSlfQiIsMCx7InNob3J0ZW4iOnsic291cmNlIjo0MCwidGFyZ2V0Ijo0MH19XV0=
\begin{tikzcd}[ampersand replacement=\&]
	{F^{a''}_BF_b^{A_2}F_b^{A_1}} \&\& {F^{a''}_BF_b^{A_2A_1}} \&\& {F^{a''}_BF_b^{A_2'A_1}} \\
	{F_{b'}^{A_2}F^{a'}_BF_{b}^{A_1}} \\
	{F_{b'}^{A_2}F_{b'}^{A_1}F^{a'}_B} \&\& {F_{b'}^{A_2A_1}F^{a}_B} \&\& {F_{b'}^{A_2'A_1}F^{a}_B}
	\arrow["{1\circ {A_1}_B}"', from=2-1, to=3-1]
	\arrow["{(A_2A_1)_B}"{description}, from=1-3, to=3-3]
	\arrow[""{name=0, anchor=center, inner sep=0}, "{F^{A_2,A_1}_{b'}\circ1}"', from=3-1, to=3-3]
	\arrow[""{name=1, anchor=center, inner sep=0}, "{1\circ F^{A_2,A_1}_{b}}", shift right=1, from=1-1, to=1-3]
	\arrow[""{name=2, anchor=center, inner sep=0}, "{1\circ F_b^{\alpha\circ A_1}}", from=1-3, to=1-5]
	\arrow[""{name=3, anchor=center, inner sep=0}, "{F_{b'}^{\alpha\circ A_1}\circ1}"', from=3-3, to=3-5]
	\arrow["{{A_2}_B\circ1}"', from=1-1, to=2-1]
	\arrow["{(A_2'A_1)_B}", from=1-5, to=3-5]
	\arrow["{F^{A_2,A_1}_B}", shorten <=17pt, shorten >=17pt, Rightarrow, from=1, to=0]
	\arrow["{(\alpha\circ A_1)_B}", shorten <=17pt, shorten >=17pt, Rightarrow, from=2, to=3]
\end{tikzcd}
} 
\end{equation}
\begin{equation*}
= 
\adjustbox{scale=.85}{
% https://q.uiver.app/?q=WzAsOCxbMiwxLCJGX3tiJ31ee0FfMid9Rl57YSd9X0JGX3tifV57QV8xfSJdLFs0LDAsIkZee2EnJ31fQkZfYl57QV8yJ0FfMX0iXSxbNCwyLCJGX3tiJ31ee0FfMidBXzF9Rl57YX1fQiJdLFsyLDIsIkZfe2InfV57QV8yJ31GX3tiJ31ee0FfMX1GXnthJ31fQiJdLFsyLDAsIkZee2EnJ31fQkZfYl57QV8yJ31GX2Jee0FfMX0iXSxbMCwwLCJGXnthJyd9X0JGX2Jee0FfMn1GX2Jee0FfMX0iXSxbMCwxLCJGX3tiJ31ee0FfMn1GXnthJ31fQkZfe2J9XntBXzF9Il0sWzAsMiwiRl97Yid9XntBXzJ9Rl97Yid9XntBXzF9Rl57YSd9X0IiXSxbMCwzLCIxXFxjaXJjIHtBXzF9X0IiXSxbMSwyLCIoQV8yJ0FfMSlfQiJdLFs0LDAsIntBXzInfV9CXFxjaXJjMSJdLFs0LDEsIjFcXGNpcmMgRl57QV8yJyxBXzF9X3tifSJdLFszLDIsIkZee0FfMicsQV8xfV97Yid9XFxjaXJjMSIsMl0sWzUsNiwie0FfMn1fQlxcY2lyYzEiLDJdLFs2LDAsIkZfe2InfV5cXGFscGhhXFxjaXJjMSIsMV0sWzUsNCwiMVxcY2lyYyBGX3tiJ31eXFxhbHBoYVxcY2lyYzEiXSxbNywzLCJGX3tiJ31eXFxhbHBoYVxcY2lyYzEiLDJdLFs2LDcsIjFcXGNpcmMge0FfMX1fQiIsMl0sWzExLDEyLCJGXntBXzInLEFfMX1fQiIsMCx7InNob3J0ZW4iOnsic291cmNlIjo0MCwidGFyZ2V0Ijo0MH19XSxbMTQsMTYsIiIsMix7InNob3J0ZW4iOnsic291cmNlIjo0MCwidGFyZ2V0Ijo0MH0sInN0eWxlIjp7ImJvZHkiOnsibmFtZSI6ImRhc2hlZCJ9fX1dLFsxNSwxNCwiXFxhbHBoYV9CXFxjaXJjMSIsMCx7InNob3J0ZW4iOnsic291cmNlIjo0MCwidGFyZ2V0Ijo0MH19XV0=
\begin{tikzcd}[ampersand replacement=\&]
	{F^{a''}_BF_b^{A_2}F_b^{A_1}} \&\& {F^{a''}_BF_b^{A_2'}F_b^{A_1}} \&\& {F^{a''}_BF_b^{A_2'A_1}} \\
	{F_{b'}^{A_2}F^{a'}_BF_{b}^{A_1}} \&\& {F_{b'}^{A_2'}F^{a'}_BF_{b}^{A_1}} \\
	{F_{b'}^{A_2}F_{b'}^{A_1}F^{a'}_B} \&\& {F_{b'}^{A_2'}F_{b'}^{A_1}F^{a'}_B} \&\& {F_{b'}^{A_2'A_1}F^{a}_B}
	\arrow["{1\circ {A_1}_B}", from=2-3, to=3-3]
	\arrow["{(A_2'A_1)_B}", from=1-5, to=3-5]
	\arrow["{{A_2'}_B\circ1}", from=1-3, to=2-3]
	\arrow[""{name=0, anchor=center, inner sep=0}, "{1\circ F^{A_2',A_1}_{b}}", from=1-3, to=1-5]
	\arrow[""{name=1, anchor=center, inner sep=0}, "{F^{A_2',A_1}_{b'}\circ1}"', from=3-3, to=3-5]
	\arrow["{{A_2}_B\circ1}"', from=1-1, to=2-1]
	\arrow[""{name=2, anchor=center, inner sep=0}, "{F_{b'}^\alpha\circ1}"{description}, from=2-1, to=2-3]
	\arrow[""{name=3, anchor=center, inner sep=0}, "{1\circ F_{b'}^\alpha\circ1}", from=1-1, to=1-3]
	\arrow[""{name=4, anchor=center, inner sep=0}, "{F_{b'}^\alpha\circ1}"', from=3-1, to=3-3]
	\arrow["{1\circ {A_1}_B}"', from=2-1, to=3-1]
	\arrow["{F^{A_2',A_1}_B}", shorten <=17pt, shorten >=17pt, Rightarrow, from=0, to=1]
	\arrow[shorten <=9pt, shorten >=9pt, Rightarrow, dashed, from=2, to=4]
	\arrow["{\alpha_B\circ1}", shorten <=9pt, shorten >=9pt, Rightarrow, from=3, to=2]
\end{tikzcd}
}
%\end{split}
\end{equation*}
We note that the top and bottom of the two diagrams coincide by Axiom (\ref{ax:psfct.F2-whisk-2}.L) for the pseudomaps $F_b$ and $F_{b'}$. 

\emph{Right whiskering for $F^{A_2,A_1}_B$:}
\begin{equation}\label{ax:bin_alpA2:B}
\tag{$\alpha,A_2:B$}
\begin{split}
\adjustbox{scale=.85}{
% https://q.uiver.app/?q=WzAsNyxbMiwwLCJGXnthJyd9X0JGX2Jee0FfMkFfMX0iXSxbMCwxLCJGX3tiJ31ee0FfMn1GXnthJ31fQkZfe2J9XntBXzF9Il0sWzAsMiwiRl97Yid9XntBXzJ9Rl97Yid9XntBXzF9Rl57YSd9X0IiXSxbMiwyLCJGX3tiJ31ee0FfMkFfMX1GXnthfV9CIl0sWzAsMCwiRl57YScnfV9CRl9iXntBXzJ9Rl9iXntBXzF9Il0sWzQsMCwiRl57YScnfV9CRl9iXntBXzJBXzEnfSJdLFs0LDIsIkZfe2InfV57QV8yQV8xJ31GXnthfV9CIl0sWzEsMiwiMVxcY2lyYyB7QV8xfV9CIiwyXSxbMCwzLCIoQV8yQV8xKV9CIiwxXSxbMiwzLCJGXntBXzIsQV8xfV97Yid9XFxjaXJjMSIsMl0sWzQsMSwie0FfMn1fQlxcY2lyYzEiLDJdLFs0LDAsIjFcXGNpcmMgRl57QV8yLEFfMX1fe2J9IiwwLHsib2Zmc2V0IjoxfV0sWzAsNSwiMVxcY2lyYyBGX2Jee0FfMlxcY2lyY1xcYWxwaGF9Il0sWzMsNiwiRl97Yid9XntBXzJcXGNpcmNcXGFscGhhfVxcY2lyYzEiLDJdLFs1LDYsIihBXzJBXzEnKV9CIl0sWzExLDksIkZee0FfMixBXzF9X0IiLDAseyJzaG9ydGVuIjp7InNvdXJjZSI6NDAsInRhcmdldCI6NDB9fV0sWzEyLDEzLCIoQV8yXFxjaXJjXFxhbHBoYSlfQiIsMCx7InNob3J0ZW4iOnsic291cmNlIjo0MCwidGFyZ2V0Ijo0MH19XV0=
\begin{tikzcd}[ampersand replacement=\&]
	{F^{a''}_BF_b^{A_2}F_b^{A_1}} \&\& {F^{a''}_BF_b^{A_2A_1}} \&\& {F^{a''}_BF_b^{A_2A_1'}} \\
	{F_{b'}^{A_2}F^{a'}_BF_{b}^{A_1}} \\
	{F_{b'}^{A_2}F_{b'}^{A_1}F^{a'}_B} \&\& {F_{b'}^{A_2A_1}F^{a}_B} \&\& {F_{b'}^{A_2A_1'}F^{a}_B}
	\arrow["{1\circ {A_1}_B}"', from=2-1, to=3-1]
	\arrow["{(A_2A_1)_B}"{description}, from=1-3, to=3-3]
	\arrow[""{name=0, anchor=center, inner sep=0}, "{F^{A_2,A_1}_{b'}\circ1}"', from=3-1, to=3-3]
	\arrow["{{A_2}_B\circ1}"', from=1-1, to=2-1]
	\arrow[""{name=1, anchor=center, inner sep=0}, "{1\circ F^{A_2,A_1}_{b}}", shift right=1, from=1-1, to=1-3]
	\arrow[""{name=2, anchor=center, inner sep=0}, "{1\circ F_b^{A_2\circ\alpha}}", from=1-3, to=1-5]
	\arrow[""{name=3, anchor=center, inner sep=0}, "{F_{b'}^{A_2\circ\alpha}\circ1}"', from=3-3, to=3-5]
	\arrow["{(A_2A_1')_B}", from=1-5, to=3-5]
	\arrow["{F^{A_2,A_1}_B}", shorten <=17pt, shorten >=17pt, Rightarrow, from=1, to=0]
	\arrow["{(A_2\circ\alpha)_B}", shorten <=17pt, shorten >=17pt, Rightarrow, from=2, to=3]
\end{tikzcd}
} \\
= 
\adjustbox{scale=.85}{
% https://q.uiver.app/?q=WzAsOCxbMiwxLCJGX3tiJ31ee0FfMn1GXnthJ31fQkZfe2J9XntBXzEnfSJdLFs0LDAsIkZee2EnJ31fQkZfYl57QV8yQV8xJ30iXSxbNCwyLCJGX3tiJ31ee0FfMkFfMSd9Rl57YX1fQiJdLFsyLDIsIkZfe2InfV57QV8yfUZfe2InfV57QV8xJ31GXnthJ31fQiJdLFsyLDAsIkZee2EnJ31fQkZfYl57QV8yfUZfYl57QV8xJ30iXSxbMCwwLCJGXnthJyd9X0JGX2Jee0FfMn1GX2Jee0FfMX0iXSxbMCwxLCJGX3tiJ31ee0FfMn1GXnthJ31fQkZfe2J9XntBXzF9Il0sWzAsMiwiRl97Yid9XntBXzJ9Rl97Yid9XntBXzF9Rl57YSd9X0IiXSxbMSwyLCIoQV8yQV8xJylfQiJdLFs0LDAsIntBXzJ9X0JcXGNpcmMxIl0sWzQsMSwiMVxcY2lyYyBGXntBXzIsQV8xJ31fe2J9Il0sWzMsMiwiRl57QV8yLEFfMSd9X3tiJ31cXGNpcmMxIiwyXSxbNSw2LCJ7QV8yfV9CXFxjaXJjMSIsMl0sWzYsMCwiMVxcY2lyYyBGX2JeXFxhbHBoYSIsMV0sWzUsNCwiMVxcY2lyYyBGX3tiJ31eXFxhbHBoYSJdLFs3LDMsIjFcXGNpcmMgRl9iXlxcYWxwaGFcXGNpcmMxIiwyXSxbNiw3LCIxXFxjaXJjIHtBXzF9X0IiLDJdLFswLDMsIjFcXGNpcmMge0FfMSd9X0IiXSxbMTQsMTMsIiIsMCx7InNob3J0ZW4iOnsic291cmNlIjo0MCwidGFyZ2V0Ijo0MH0sInN0eWxlIjp7ImJvZHkiOnsibmFtZSI6ImRhc2hlZCJ9fX1dLFsxMCwxMSwiRl57QV8yLEFfMSd9X0IiLDAseyJzaG9ydGVuIjp7InNvdXJjZSI6NDAsInRhcmdldCI6NDB9fV0sWzEzLDE1LCIxXFxjaXJjXFxhbHBoYV9CIiwwLHsic2hvcnRlbiI6eyJzb3VyY2UiOjQwLCJ0YXJnZXQiOjQwfX1dXQ==
\begin{tikzcd}[ampersand replacement=\&]
	{F^{a''}_BF_b^{A_2}F_b^{A_1}} \&\& {F^{a''}_BF_b^{A_2}F_b^{A_1'}} \&\& {F^{a''}_BF_b^{A_2A_1'}} \\
	{F_{b'}^{A_2}F^{a'}_BF_{b}^{A_1}} \&\& {F_{b'}^{A_2}F^{a'}_BF_{b}^{A_1'}} \\
	{F_{b'}^{A_2}F_{b'}^{A_1}F^{a'}_B} \&\& {F_{b'}^{A_2}F_{b'}^{A_1'}F^{a'}_B} \&\& {F_{b'}^{A_2A_1'}F^{a}_B}
	\arrow["{(A_2A_1')_B}", from=1-5, to=3-5]
	\arrow["{{A_2}_B\circ1}", from=1-3, to=2-3]
	\arrow[""{name=0, anchor=center, inner sep=0}, "{1\circ F^{A_2,A_1'}_{b}}", from=1-3, to=1-5]
	\arrow[""{name=1, anchor=center, inner sep=0}, "{F^{A_2,A_1'}_{b'}\circ1}"', from=3-3, to=3-5]
	\arrow["{{A_2}_B\circ1}"', from=1-1, to=2-1]
	\arrow[""{name=2, anchor=center, inner sep=0}, "{1\circ F_b^\alpha}"{description}, from=2-1, to=2-3]
	\arrow[""{name=3, anchor=center, inner sep=0}, "{1\circ F_{b'}^\alpha}", from=1-1, to=1-3]
	\arrow[""{name=4, anchor=center, inner sep=0}, "{1\circ F_b^\alpha\circ1}"', from=3-1, to=3-3]
	\arrow["{1\circ {A_1}_B}"', from=2-1, to=3-1]
	\arrow["{1\circ {A_1'}_B}", from=2-3, to=3-3]
	\arrow[shorten <=9pt, shorten >=9pt, Rightarrow, dashed, from=3, to=2]
	\arrow["{F^{A_2,A_1'}_B}", shorten <=17pt, shorten >=17pt, Rightarrow, from=0, to=1]
	\arrow["{1\circ\alpha_B}", shorten <=9pt, shorten >=9pt, Rightarrow, from=2, to=4]
\end{tikzcd}
}
\end{split}
\end{equation}
We note that the top and bottom of the two diagrams coincide by Axiom (\ref{ax:psfct.F2-whisk-2}.R) for the pseudomaps $F_b$ and $F_{b'}$. 

\emph{Compatibility of $F^{A_2,A_1}_{B_1}$, $F^{A_2,A_1}_{B_2}$, $F^{A_1}_{B_2,B_1}$ and $F^{A_2}_{B_2,B_1}$:}
\begin{equation}\label{ax:bin_A1A2:B1B2}
\tag{$A_1,A_2:B_1,B_2$}
%\begin{split}
\adjustbox{scale=0.7}{
\begin{tikzcd}[ampersand replacement=\&]
	{F^{a''}_{B_2}F^{a''}_{B_1}F_b^{A_2}F_b^{A_1}} \&\& {F^{a''}_{B_2}F^{a''}_{B_1}F_b^{A_2A_1}} \\
	\& {F^{a''}_{B_2}F_{b'}^{A_2}F^{a'}_{B_1}F_b^{A_1}} \&\& \textcolor{white}{F^{a''}_{B_2}F_{b'}^{A_2}F^{a'}_{B_1}F_b^{A_1}} \\
	\&\& {F^{a''}_{B_2}F_{b'}^{A_2}F_{b'}^{A_1}F^{a}_{B_1}} \&\& {F^{a''}_{B_2}F_{b'}^{A_2A_1}F^{a}_{B_1}} \\
	\& {F_{b''}^{A_2}F^{a'}_{B_2}F^{a'}_{B_1}F_b^{A_1}} \\
	\&\& {F_{b''}^{A_2}F^{a'}_{B_2}F_{b'}^{A_1}F^{a}_{B_1}} \\
	\\
	{F^{a''}_{B_2B_1}F_{b}^{A_2}F_{b}^{A_1}} \&\& {F_{b''}^{A_2}F_{b''}^{A_1}F^{a}_{B_2}F^{a}_{B_1}} \&\& {F_{b''}^{A_2A_1}F^{a}_{B_2}F^{a}_{B_1}} \\
	\& {F_{b''}^{A_2}F^{a'}_{B_2B_1}F_{b}^{A_1}} \\
	\&\& {F_{b''}^{A_2}F_{b''}^{A_1}F^{a}_{B_2B_1}} \&\& {F_{b''}^{A_2A_1}F^{a}_{B_2B_1}}
	\arrow[""{name=0, anchor=center, inner sep=0}, "{1\circ {A_1}_{B_1}}"{description}, from=2-2, to=3-3]
	\arrow["{1\circ(A_2A_1)_{B_1}}", from=1-3, to=3-5]
	\arrow[""{name=1, anchor=center, inner sep=0}, "{1\circ F^{A_2,A_1}_{b'}\circ1}"', from=3-3, to=3-5]
	\arrow[""{name=2, anchor=center, inner sep=0}, "{1\circ F^{A_2,A_1}_{b}}", shift right=1, from=1-1, to=1-3]
	\arrow[""{name=3, anchor=center, inner sep=0}, "{{A_2}_{B_1}\circ1}"{description}, shift left=1, from=1-1, to=2-2]
	\arrow["{{A_2}_{B_2}\circ1}"{description}, from=3-3, to=5-3]
	\arrow["{1\circ {A_1}_{B_2}\circ1}"{description}, from=5-3, to=7-3]
	\arrow["{1\circ F^{a}_{B_2,B_1}}"{description}, from=7-3, to=9-3]
	\arrow["{{A_2}_{B_2}\circ1}"{description}, from=2-2, to=4-2]
	\arrow[""{name=4, anchor=center, inner sep=0}, "{1\circ {A_1}_{B_1}}"{description}, from=4-2, to=5-3]
	\arrow["{1\circ F^{a'}_{B_2,B_1}\circ1}"{description}, from=4-2, to=8-2]
	\arrow[""{name=5, anchor=center, inner sep=0}, "{1\circ {A_1}_{(B_2B_1)}}"'{pos=0.3}, from=8-2, to=9-3]
	\arrow["{F^{a''}_{B_2,B_1}\circ1}"', from=1-1, to=7-1]
	\arrow[""{name=6, anchor=center, inner sep=0}, "{{A_2}_{(B_2B_1)}\circ1}"'{pos=0.3}, from=7-1, to=8-2]
	\arrow[""{name=7, anchor=center, inner sep=0}, "{F^{A_2,A_1}_{b''}\circ1}"', from=9-3, to=9-5]
	\arrow[""{name=8, anchor=center, inner sep=0}, "{F^{A_2,A_1}_{b''}\circ1}", from=7-3, to=7-5]
	\arrow["{1\circ F^{a}_{B_2,B_1}}", tail reversed, from=7-5, to=9-5]
	\arrow["{(A_2A_1)_{B_2}\circ1}", from=3-5, to=7-5]
	\arrow["{1\circ F^{A_2,A_1}_{B_1}}", shorten <=52pt, shorten >=52pt, Rightarrow, from=2, to=1]
	\arrow["{F^{A_2,A_1}_{B_2}\circ1}", shorten <=34pt, shorten >=34pt, Rightarrow, from=1, to=8]
	\arrow[shorten <=17pt, shorten >=17pt, Rightarrow, dashed, from=8, to=7]
	\arrow["{1\circ F^{A_1}_{B_2,B_1}}"', shorten <=34pt, shorten >=34pt, Rightarrow, from=4, to=5]
	\arrow["{F^{A_2}_{B_2,B_1}\circ1}"', shorten <=52pt, shorten >=52pt, Rightarrow, from=3, to=6]
	\arrow[shorten <=17pt, shorten >=17pt, Rightarrow, dashed, from=0, to=4]
\end{tikzcd}
}
\end{equation}
\begin{equation*}
= 
\adjustbox{scale=0.7}{
% https://q.uiver.app/?q=WzAsMTAsWzIsMCwiRl57YScnfV97Ql8yfUZee2EnJ31fe0JfMX1GX2Jee0FfMkFfMX0iXSxbNCwyLCJGXnthJyd9X3tCXzJ9Rl97Yid9XntBXzJBXzF9Rl57YX1fe0JfMX0iXSxbMCwwLCJGXnthJyd9X3tCXzJ9Rl57YScnfV97Ql8xfUZfYl57QV8yfUZfYl57QV8xfSJdLFsyLDgsIkZfe2InJ31ee0FfMn1GX3tiJyd9XntBXzF9Rl57YX1fe0JfMkJfMX0iXSxbMSw3LCJGX3tiJyd9XntBXzJ9Rl57YSd9X3tCXzJCXzF9Rl97Yn1ee0FfMX0iXSxbMCw2LCJGXnthJyd9X3tCXzJCXzF9Rl97Yn1ee0FfMn1GX3tifV57QV8xfSJdLFs0LDgsIkZfe2InJ31ee0FfMkFfMX1GXnthfV97Ql8yQl8xfSJdLFs0LDYsIkZfe2InJ31ee0FfMkFfMX1GXnthfV97Ql8yfUZee2F9X3tCXzF9Il0sWzIsNiwiRl57YScnfV97Ql8yQl8xfUZfe2J9XntBXzJBXzF9Il0sWzMsMiwiRl97YicnfV57QV8yfUZee2EnfV97Ql8yQl8xfUZfe2J9XntBXzF9IixbMCwwLDEwMCwxXV0sWzAsMSwiMVxcY2lyYyhBXzJBXzEpX3tCXzF9Il0sWzIsMCwiMVxcY2lyYyBGXntBXzIsQV8xfV97Yn0iLDAseyJvZmZzZXQiOjF9XSxbNCwzLCIxXFxjaXJjIHtBXzF9X3soQl8yQl8xKX0iLDIseyJsYWJlbF9wb3NpdGlvbiI6MjB9XSxbMiw1LCJGXnthJyd9X3tCXzIsQl8xfVxcY2lyYzEiLDJdLFs1LDQsIntBXzJ9X3soQl8yQl8xKX1cXGNpcmMxIiwyLHsibGFiZWxfcG9zaXRpb24iOjIwfV0sWzMsNiwiRl57QV8yLEFfMX1fe2InJ31cXGNpcmMxIiwyXSxbNyw2LCIxXFxjaXJjIEZee2F9X3trLGd9Il0sWzAsOCwiRl57YScnfV97Ql8yLEJfMX1cXGNpcmMxIiwyXSxbOCw2LCIoQV8yQV8xKV97KEJfMkJfMSl9Il0sWzUsOCwiMVxcY2lyYyBGXntBXzIsQV8xfV97Yn0iXSxbMSw3LCIoQV8yQV8xKV97Ql8yfVxcY2lyYzEiXSxbMTksMTUsIkZee0FfMixBXzF9X3tCXzJCXzF9IiwwLHsic2hvcnRlbiI6eyJzb3VyY2UiOjQwLCJ0YXJnZXQiOjQwfX1dLFsxMSwxOSwiIiwwLHsic2hvcnRlbiI6eyJzb3VyY2UiOjQwLCJ0YXJnZXQiOjQwfSwic3R5bGUiOnsiYm9keSI6eyJuYW1lIjoiZGFzaGVkIn19fV0sWzEwLDE4LCJGXntBXzJBXzF9X3tCXzIsQl8xfSIsMCx7InNob3J0ZW4iOnsic291cmNlIjo0MCwidGFyZ2V0Ijo0MH19XV0=
\begin{tikzcd}[ampersand replacement=\&]
	{F^{a''}_{B_2}F^{a''}_{B_1}F_b^{A_2}F_b^{A_1}} \&\& {F^{a''}_{B_2}F^{a''}_{B_1}F_b^{A_2A_1}} \\
	\\
	\&\&\& \textcolor{white}{F_{b''}^{A_2}F^{a'}_{B_2B_1}F_{b}^{A_1}} \& {F^{a''}_{B_2}F_{b'}^{A_2A_1}F^{a}_{B_1}} \\
	\\
	\\
	\\
	{F^{a''}_{B_2B_1}F_{b}^{A_2}F_{b}^{A_1}} \&\& {F^{a''}_{B_2B_1}F_{b}^{A_2A_1}} \&\& {F_{b''}^{A_2A_1}F^{a}_{B_2}F^{a}_{B_1}} \\
	\& {F_{b''}^{A_2}F^{a'}_{B_2B_1}F_{b}^{A_1}} \\
	\&\& {F_{b''}^{A_2}F_{b''}^{A_1}F^{a}_{B_2B_1}} \&\& {F_{b''}^{A_2A_1}F^{a}_{B_2B_1}}
	\arrow[""{name=0, anchor=center, inner sep=0}, "{1\circ(A_2A_1)_{B_1}}", from=1-3, to=3-5]
	\arrow[""{name=1, anchor=center, inner sep=0}, "{1\circ F^{A_2,A_1}_{b}}", shift right=1, from=1-1, to=1-3]
	\arrow["{1\circ {A_1}_{(B_2B_1)}}"'{pos=0.2}, from=8-2, to=9-3]
	\arrow["{F^{a''}_{B_2,B_1}\circ1}"', from=1-1, to=7-1]
	\arrow["{{A_2}_{(B_2B_1)}\circ1}"'{pos=0.2}, from=7-1, to=8-2]
	\arrow[""{name=2, anchor=center, inner sep=0}, "{F^{A_2,A_1}_{b''}\circ1}"', from=9-3, to=9-5]
	\arrow["{1\circ F^{a}_{ B_2,B_1}}", from=7-5, to=9-5]
	\arrow["{F^{a''}_{B_2,B_1}\circ1}"', from=1-3, to=7-3]
	\arrow[""{name=3, anchor=center, inner sep=0}, "{(A_2A_1)_{(B_2B_1)}}", from=7-3, to=9-5]
	\arrow[""{name=4, anchor=center, inner sep=0}, "{1\circ F^{A_2,A_1}_{b}}", from=7-1, to=7-3]
	\arrow["{(A_2A_1)_{B_2}\circ1}", from=3-5, to=7-5]
	\arrow["{F^{A_2,A_1}_{B_2B_1}}", shorten <=49pt, shorten >=49pt, Rightarrow, from=4, to=2]
	\arrow[shorten <=51pt, shorten >=51pt, Rightarrow, dashed, from=1, to=4]
	\arrow["{F^{A_2A_1}_{B_2,B_1}}", shorten <=51pt, shorten >=51pt, Rightarrow, from=0, to=3]
\end{tikzcd}
}
%\end{split}
\end{equation*}
%\newpage
\emph{Degeneracy equations: }

\begin{minipage}{0.5\textwidth}
\begin{equation}\label{ax:bin_D_A1A2:b}
\tag{D-$A_1,A_2:b$}
F_b^{1_{a'},A}=F_b^{A,1_a}=1_{F^A_b}
\end{equation} 
\begin{equation}\label{ax:bin_D_1a:B}
\tag{D-$1_a:B$} (1_a)_B=1_{F^a_B}
\end{equation}
\begin{equation}\label{ax:bin_D_1A:B}
\tag{D-$1_A:B$} (1_A)_B=1_{A_B}
\end{equation}
\begin{equation}\label{ax:bin_D_1a:bet}
\tag{D-$1_a:\beta$} (1_a)_\beta=1_{F^a_\beta}
\end{equation}
\begin{equation}\label{ax:bin_D_A1A2:B}
\tag{D-$A_1,A_2:B$}
F_B^{1_{a'},A}=F_B^{A,1_a}=1_{A_B}
\end{equation} 
\begin{equation}\label{ax:bin_D_A1A2:1b}
\tag{D-$A_1,A_2:1_b$}
F_{1_b}^{A_2,A_1}=1_{F_{b}^{A_2,A_1}}
\end{equation} 
\end{minipage}
%%%%%%%
\begin{minipage}{0.5\textwidth}
\begin{equation}\label{ax:bin_D_a:B1B2}
\tag{D-$a:B_1,B_2$}
F^a_{1_{b'},B}=F^a_{B,1_b}=1_{F^a_B}
\end{equation}
\begin{equation}\label{ax:bin_D_A:1b}
\tag{D-$A:1_b$} A_{1_b}=1_{F^A_b}
\end{equation}
\begin{equation}\label{ax:bin_D_alph:1b}
\tag{D-$\alpha:1_b$} \alpha_{1_b}=1_{F^\alpha_b}
\end{equation}
\begin{equation}\label{ax:bin_D_A:1B}
\tag{D-$A:1_B$} A_{1_B}=1_{A_B}
\end{equation}
\begin{equation}\label{ax:bin_D_A:B1B2}
\tag{D-$A:B_1,B_2$}
F^A_{1_{b'},B}=F^A_{B,1_b}=1_{A_B}
\end{equation}
\begin{equation}\label{ax:bin_D_1a:B1B2}
\tag{D-$1_a:B_1,B_2$}
F^{1_a}_{B_2,B_1}=1_{F^{a}_{B_2,B_1}}
\end{equation}
\end{minipage}

%\newpage

\subsection{Axioms for ternary maps}
\label{app:ax_tern}

\emph{Compatibility of $(A\mid B\mid C)$ with composition of 1-cells $A_2\circ A_1$:}
\begin{equation}\label{ax:tern_A1A2:B:C}
\tag{$A_1,A_2:B:C$}\hspace*{-2cm}
\resizebox{19cm}{4.5cm}{
\begin{tikzcd}[ampersand replacement=\&]
	{F^{a''}_{b'}(C)F^{a''}_{c}(B)F^{b}_{c}(A_2)F^{b}_{c}(A_1)} \&\& {F^{a''}_{b'}(C)F^{a''}_{c}(B)F^{b}_{c}(A_2A_1)} \\
	\\
	{F^{a''}_{c'}(B)F^{a''}_{b}(C)F^{b}_{c}(A_2)F^{b}_{c}(A_1)} \& {F^{a''}_{b'}(C)F^{b'}_{c}(A_2)F^{a'}_{c}(B)F^{b}_{c}(A_1)} \&\& \textcolor{white}{F^{a''}_{b'}(C)F^{b'}_{c}(A_2)F^{a'}_{c}(B)F^{b}_{c}(A_1)} \\
	\\
	{F^{a''}_{c'}(B)F^{b}_{c'}(A_2)F^{a'}_{b}(C)F^{b}_{c}(A_1)} \& {F^{b'}_{c'}(A_2)F^{a'}_{b'}(C)F^{a'}_{c}(B)F^{b}_{c}(A_1)} \& {F^{a''}_{b'}(C)F^{b'}_{c}(A_2)F^{b'}_{c}(A_1)F^{a}_{c}(B)} \&\& {F^{a''}_{b'}(C)F^{b'}_{c}(A_2A_1)F^{a}_{c}(B)} \\
	\\
	{F^{a''}_{c'}(B)F^{b}_{c'}(A_2)F^{b}_{c'}(A_2)F^{a}_{b}(C)} \& {F^{b'}_{c'}(A_2)F^{a'}_{c'}(B)F^{a'}_{b}(C)F^{b}_{c}(A_1)} \& {F^{b'}_{c'}(A_2)F^{a'}_{b'}(C)F^{b'}_{c}(A_1)F^{a}_{c}(B)} \\
	\\
	\& {F^{b'}_{c'}(A_2)F^{a'}_{c'}(B)F^{b}_{c'}(A_1)F^{a}_{b}(C)} \& {F^{b'}_{c'}(A_2)F^{b'}_{c'}(A_1)F^{a}_{b'}(C)F^{a}_{c}(B)} \&\& {F^{b'}_{c'}(A_2A_1)F^{a}_{b'}(C)F^{a}_{c}(B)} \\
	\\
	\&\& {F^{b'}_{c'}(A_2)F^{b'}_{c'}(A_1)F^{a}_{c'}(B)F^{a}_{b}(C)} \&\& {F^{b'}_{c'}(A_2A_1)F^{a}_{c'}(B)F^{a}_{b}(C)}
	\arrow[""{name=0, anchor=center, inner sep=0}, "{1\circ (F_c)^{A_2,A_1}_b}", from=1-1, to=1-3]
	\arrow[""{name=1, anchor=center, inner sep=0}, "{1\circ {{A_1}_B}^c}"{description}, from=3-2, to=5-3]
	\arrow[""{name=2, anchor=center, inner sep=0}, "{1\circ(F_c)^{A_2,A_1}_{b'}\circ1}", from=5-3, to=5-5]
	\arrow[""{name=3, anchor=center, inner sep=0}, "{1\circ {{A_2}_B}^{c}\circ1}"{description}, from=1-1, to=3-2]
	\arrow["{{{A_2}_C}^{b'}\circ1}"{description}, from=5-3, to=7-3]
	\arrow["{1\circ {{A_1}_C}^{b'}\circ1}"{description}, from=7-3, to=9-3]
	\arrow["{1\circ{B_C}^{a}}"{description}, from=9-3, to=11-3]
	\arrow["{1\circ{B_C}^{a}}", from=9-5, to=11-5]
	\arrow[""{name=4, anchor=center, inner sep=0}, "{(F\midscript{b'})^{A_2,A_1}_{c'}\circ1}"', from=11-3, to=11-5]
	\arrow["{{{A_2}_C}^{b'}\circ1}"{description}, from=3-2, to=5-2]
	\arrow["{1\circ{B_C}^{a'}}"{description}, from=5-2, to=7-2]
	\arrow[""{name=5, anchor=center, inner sep=0}, "{{{A_2}_B}^{c'}\circ1}"{description}, from=5-1, to=7-2]
	\arrow[""{name=6, anchor=center, inner sep=0}, "{1\circ {{A_1}_B}^c}"{description}, from=5-2, to=7-3]
	\arrow[""{name=7, anchor=center, inner sep=0}, "{(F\midscript{b'})^{A_2,A_1}_{c'}\circ1}", from=9-3, to=9-5]
	\arrow["{1\circ {(A_2A_1)_B}^c}", from=1-3, to=5-5]
	\arrow["{{B_C}^{a''}\circ1}"', from=1-1, to=3-1]
	\arrow["{1\circ {{A_2}_C}^{b}\circ1}"', from=3-1, to=5-1]
	\arrow["{(A_2A_1)^{b'}_C\circ1}", from=5-5, to=9-5]
	\arrow["{1\circ{{A_1}_C}^{b}}"', from=7-2, to=9-2]
	\arrow[""{name=8, anchor=center, inner sep=0}, "{1\circ {{A_1}_B}^{c'}\circ1}"', from=9-2, to=11-3]
	\arrow["{1\circ{{A_1}_C}^{b}}"', from=5-1, to=7-1]
	\arrow[""{name=9, anchor=center, inner sep=0}, "{{{A_2}_B}^{c'}\circ1}"', from=7-1, to=9-2]
	\arrow["{1\circ(F\midscript{b'})^{A_2,A_1}_C\circ1}"{description}, shorten <=34pt, shorten >=34pt, Rightarrow, from=2, to=7]
	\arrow[shorten <=17pt, shorten >=17pt, Rightarrow, dashed, from=7, to=4]
	\arrow["{1\circ (F_c)^{A_2,A_1}_B}"{description}, shorten <=90pt, shorten >=90pt, Rightarrow, from=0, to=2]
	\arrow["{1\circ(A_1\mid B\mid C)}"{description}, shorten <=34pt, shorten >=34pt, Rightarrow, from=6, to=8]
	\arrow["{(A_2\mid B\mid C)\circ1}"{description}, shorten <=26pt, shorten >=26pt, Rightarrow, from=3, to=5]
	\arrow[shorten <=9pt, shorten >=9pt, Rightarrow, dashed, from=5, to=9]
	\arrow[shorten <=17pt, shorten >=17pt, Rightarrow, dashed, from=1, to=6]
\end{tikzcd}
}\end{equation}
\begin{equation*}\hspace*{-1.5cm}
=
\resizebox{19cm}{4.5cm}{
\begin{tikzcd}[ampersand replacement=\&]
	{F^{a''}_{b'}(C)F^{a''}_{c}(B)F^{b}_{c}(A_2)F^{b}_{c}(A_1)} \&\& {F^{a''}_{b'}(C)F^{a''}_{c}(B)F^{b}_{c}(A_2A_1)} \\
	\\
	{F^{a''}_{c'}(B)F^{a''}_{b}(C)F^{b}_{c}(A_2)F^{b}_{c}(A_1)} \&\& {F^{a''}_{c'}(B)F^{a''}_{b}(C)F^{b}_{c}(A_2A_1)} \\
	\\
	{F^{a''}_{c'}(B)F^{b}_{c'}(A_2)F^{a'}_{b}(C)F^{b}_{c}(A_1)} \&\&\& \textcolor{white}{F^{b'}_{c'}(A_2)F^{a'}_{c'}(B)F^{b}_{c'}(A_1)F^{a}_{b}(C)} \& {F^{a''}_{b'}(C)F^{b'}_{c}(A_2A_1)F^{a}_{c}(B)} \\
	\\
	{F^{a''}_{c'}(B)F^{b}_{c'}(A_2)F^{b}_{c'}(A_2)F^{a}_{b}(C)} \&\& {F^{a''}_{c'}(B)F^{b}_{c'}(A_2A_1)F^{a}_{b}(C)} \\
	\\
	\& {F^{b'}_{c'}(A_2)F^{a'}_{c'}(B)F^{b}_{c'}(A_1)F^{a}_{b}(C)} \&\&\& {F^{b'}_{c'}(A_2A_1)F^{a}_{b'}(C)F^{a}_{c}(B)} \\
	\\
	\&\& {F^{b'}_{c'}(A_2)F^{b'}_{c'}(A_1)F^{a}_{c'}(B)F^{a}_{b}(C)} \&\& {F^{b'}_{c'}(A_2A_1)F^{a}_{c'}(B)F^{a}_{b}(C)}
	\arrow[""{name=0, anchor=center, inner sep=0}, "{1\circ (F\midscript{b})^{A_2,A_1}_c}", from=1-1, to=1-3]
	\arrow["{1\circ{B_C}^{a}}", from=9-5, to=11-5]
	\arrow[""{name=1, anchor=center, inner sep=0}, "{(F_{c'})^{A_2,A_1}_{b'}\circ1}"', from=11-3, to=11-5]
	\arrow["{{B_C}^{a''}\circ1}"', from=1-1, to=3-1]
	\arrow["{1\circ {{A_2}_C}^{b}\circ1}"', from=3-1, to=5-1]
	\arrow["{(A_2A_1)^{b'}_C\circ1}", from=5-5, to=9-5]
	\arrow["{1\circ {{A_1}_B}^{c'}\circ1}"', from=9-2, to=11-3]
	\arrow["{1\circ{{A_1}_C}^{b}}"', from=5-1, to=7-1]
	\arrow["{{{A_2}_B}^{c'}\circ1}"', from=7-1, to=9-2]
	\arrow["{{B_C}^{a''}\circ1}"{description}, from=1-3, to=3-3]
	\arrow[""{name=2, anchor=center, inner sep=0}, "{1\circ (F\midscript{b})^{A_2,A_1}_c}"{description}, from=3-1, to=3-3]
	\arrow[""{name=3, anchor=center, inner sep=0}, "{1\circ (F\midscript{b})^{A_2,A_1}_{c'}\circ1 =\\1\circ (F_{c'})^{A_2,A_1}_{b}\circ1}"{description}, from=7-1, to=7-3]
	\arrow[""{name=4, anchor=center, inner sep=0}, "{{(A_2A_1)_B}^{c'}\circ1}"{description}, from=7-3, to=11-5]
	\arrow[""{name=5, anchor=center, inner sep=0}, "{1\circ {(A_2A_1)_B}^c}", from=1-3, to=5-5]
	\arrow["{1\circ{(A_2A_1)_C}^{c}}"{description}, from=3-3, to=7-3]
	\arrow["{(F_{c'})^{A_2,A_1}_B\circ1}"{description}, shorten <=88pt, shorten >=88pt, Rightarrow, from=3, to=1]
	\arrow["{1\circ(F\midscript{b})^{A_2,A_1}_C}"{description}, shorten <=17pt, shorten >=17pt, Rightarrow, from=2, to=3]
	\arrow["{(A_2A_1\mid B\mid C)}"{description}, shorten <=51pt, shorten >=51pt, Rightarrow, from=5, to=4]
	\arrow[shorten <=9pt, shorten >=9pt, Rightarrow, dashed, from=0, to=2]
\end{tikzcd}
}
\end{equation*}

\emph{Compatibility of $(A\mid B\mid C)$ with composition of 1-cells $B_2\circ B_1$:}
\begin{equation}\label{ax:tern_A:B1B2:C}
\tag{$A:B_1,B_2:C$}
%\begin{split}
\hspace*{-2cm}\resizebox{18.5cm}{4.5cm}{
\begin{tikzcd}[ampersand replacement=\&]
	{F^{a'}_{b''}(C)F^{a'}_{c}(B_2)F^{a'}_{c}(B_1)F^{b}_{c}(A)} \& \textcolor{white}{F^{a'}_{b''}(C)F^{a'}_{c}(B_2)F^{b'}_{c}(A)F^{a}_{c}(B_1)} \& {F^{a'}_{b''}(C)F^{a'}_{c}(B_2)F^{b'}_{c}(A)F^{a}_{c}(B_1)} \\
	\&\&\& {F^{a'}_{b''}(C)F^{b''}_{c}(A)F^{a}_{c}(B_2)F^{a}_{c}(B_1)} \\
	{F^{a'}_{c'}(B_2)F^{a'}_{b'}(C)F^{a'}_{c}(B_1)F^{b}_{c}(A)} \&\& {F^{a'}_{b''}(C)F^{a'}_{c}(B_2B_1)F^{b}_{c}(A)} \&\& {F^{a'}_{b''}(C)F^{b''}_{c}(A)F^{a}_{c}(B_2B_1)} \\
	\\
	{F^{a'}_{c'}(B_2)F^{a'}_{c'}(B_1)F^{a'}_{b}(C)F^{b}_{c}(A)} \\
	\\
	{F^{a'}_{c'}(B_2)F^{a'}_{c'}(B_1)F^{b}_{c'}(A)F^{a}_{b}(C)} \&\& {F^{a'}_{c'}(B_2B_1)F^{a'}_{b}(C)F^{b}_{c}(A)} \&\& {F^{b''}_{c'}(A)F^{a}_{b''}(C)F^{a}_{c}(B_2B_1)} \\
	\\
	\&\& {F^{a'}_{c'}(B_2B_1)F^{b}_{c'}(A)F^{a}_{b}(C)} \&\& {F^{b''}_{c'}(A)F^{a}_{c'}(B_2B_1)F^{a}_{b}(C)}
	\arrow[""{name=0, anchor=center, inner sep=0}, "{1\circ {A_{B_1}}^c}", from=1-1, to=1-3]
	\arrow["{1\circ{A_{B_2}}^c\circ1}"{pos=0.7}, from=1-3, to=2-4]
	\arrow["{1\circ(F_c)^a_{B_2,B_1}}"{pos=0.7}, from=2-4, to=3-5]
	\arrow[""{name=1, anchor=center, inner sep=0}, "{1\circ(F_c)^{a'}_{B_2,B_1}\circ1=1\circ(F^{a'})_c^{B_2,B_1}\circ1}"{description}, from=1-1, to=3-3]
	\arrow[""{name=2, anchor=center, inner sep=0}, "{1\circ {A_{(B_2B_1)}}^c}"{description}, from=3-3, to=3-5]
	\arrow["{{A_C}^{b'}\circ1}", from=3-5, to=7-5]
	\arrow["{1\circ{(B_2B_1)_C}^a}", from=7-5, to=9-5]
	\arrow["{{(B_2B_1)_C}^{a'}\circ1}"{description}, from=3-3, to=7-3]
	\arrow["{1\circ{A_C}^{b}}"{description}, from=7-3, to=9-3]
	\arrow[""{name=3, anchor=center, inner sep=0}, "{{A_{(B_2B_1)}}^{c'}\circ1}"', from=9-3, to=9-5]
	\arrow["{{{B_2}_C}^{a'}\circ1}"', from=1-1, to=3-1]
	\arrow["{1\circ {{B_1}_C}^{a'}\circ1}"{description}, from=3-1, to=5-1]
	\arrow[""{name=4, anchor=center, inner sep=0}, "{(F^{a'})_c^{B_2,B_1}\circ1}"{description}, from=5-1, to=7-3]
	\arrow["{1\circ{A_C}^{b}}"{description}, from=5-1, to=7-1]
	\arrow[""{name=5, anchor=center, inner sep=0}, "{(F^{a'})_c^{B_2,B_1}\circ1}"{description}, from=7-1, to=9-3]
	\arrow["{1\circ (F_c)^A_{B_2,B_1}}"{description}, shorten <=66pt, shorten >=66pt, Rightarrow, from=0, to=2]
	\arrow["{(A\mid B_2B_1\mid C)}"{description}, shorten <=51pt, shorten >=51pt, Rightarrow, from=2, to=3]
	\arrow[shorten <=17pt, shorten >=17pt, Rightarrow, dashed, from=4, to=5]
	\arrow["{(F^{a'})^{B_2,B_1}_C\circ1}"{description}, shorten <=26pt, shorten >=26pt, Rightarrow, from=1, to=4]
\end{tikzcd}
} = 
\end{equation}
\begin{equation*}
\hspace*{-2cm}
\resizebox{19cm}{4.5cm}{
\begin{tikzcd}[ampersand replacement=\&]
	{F^{a'}_{b''}(C)F^{a'}_{c}(B_2)F^{a'}_{c}(B_1)F^{b}_{c}(A)} \&\&\& {F^{a'}_{b''}(C)F^{a'}_{c}(B_2)F^{b'}_{c}(A)F^{a}_{c}(B_1)} \\
	\&\&\&\& {F^{a'}_{b''}(C)F^{b''}_{c}(A)F^{a}_{c}(B_2)F^{a}_{c}(B_1)} \\
	{F^{a'}_{c'}(B_2)F^{a'}_{b'}(C)F^{a'}_{c}(B_1)F^{b}_{c}(A)} \&\&\& {F^{a'}_{c'}(B_2)F^{a'}_{b'}(C)F^{b'}_{c}(A)F^{a}_{c}(B_1)} \&\& {F^{a'}_{b''}(C)F^{b''}_{c}(A)F^{a}_{c}(B_2B_1)} \\
	\&\&\&\& {F^{b''}_{c'}(A)F^{a}_{b''}(C)F^{a}_{c}(B_2)F^{a}_{c}(B_1)} \\
	{F^{a'}_{c'}(B_2)F^{a'}_{c'}(B_1)F^{a'}_{b}(C)F^{b}_{c}(A)} \&\&\& {F^{a'}_{c'}(B_2)F^{b'}_{c'}(A)F^{a}_{b'}(C)F^{a}_{c}(B_1)} \&\& {F^{b''}_{c'}(A)F^{a}_{b''}(C)F^{a}_{c}(B_2B_1)} \\
	\&\&\&\& {F^{b''}_{c'}(A)F^{a}_{b''}(C)F^{a}_{c}(B_2)F^{a}_{c}(B_1)} \\
	{F^{a'}_{c'}(B_2)F^{a'}_{c'}(B_1)F^{b}_{c'}(A)F^{a}_{b}(C)} \&\&\& {F^{a'}_{c'}(B_2)F^{b'}_{c'}(A)F^{a}_{c'}(B_1)F^{a}_{b}(C)} \\
	\&\&\&\& {F^{b''}_{c'}(A)F^{a}_{c'}(B_2)F^{a}_{c'}(B_1)F^{a}_{b}(C)} \\
	\&\& {F^{a'}_{c'}(B_2B_1)F^{b}_{c'}(A)F^{a}_{b}(C)} \&\&\& {F^{b''}_{c'}(A)F^{a}_{c'}(B_2B_1)F^{a}_{b}(C)}
	\arrow[""{name=0, anchor=center, inner sep=0}, "{1\circ {A_{B_1}}^c}", from=1-1, to=1-4]
	\arrow[""{name=1, anchor=center, inner sep=0}, "{1\circ{A_{B_2}}^c\circ1}"{pos=0.7}, from=1-4, to=2-5]
	\arrow[""{name=2, anchor=center, inner sep=0}, "{1\circ(F_c)^a_{B_2,B_1}}"{pos=0.7}, from=2-5, to=3-6]
	\arrow["{{A_C}^{b'}\circ1}", from=3-6, to=5-6]
	\arrow["{1\circ{(B_2B_1)_C}^a}", from=5-6, to=9-6]
	\arrow[""{name=3, anchor=center, inner sep=0}, "{{A_{(B_2B_1)}}^{c'}\circ1}"', from=9-3, to=9-6]
	\arrow["{{{B_2}_C}^{a'}\circ1}"', from=1-1, to=3-1]
	\arrow["{1\circ {{B_1}_C}^{a'}\circ1}"{description}, from=3-1, to=5-1]
	\arrow["{1\circ{A_C}^{b}}"{description}, from=5-1, to=7-1]
	\arrow["{(F^{a'})_c^{B_2,B_1}\circ1}"{description}, from=7-1, to=9-3]
	\arrow[""{name=4, anchor=center, inner sep=0}, "{1\circ {A_{B_1}}^c}"{description}, from=3-1, to=3-4]
	\arrow["{{{B_2}_C}^{a'}\circ1}"', from=1-4, to=3-4]
	\arrow[""{name=5, anchor=center, inner sep=0}, "{1\circ {A_{B_1}}^{c'}\circ1}"{description}, from=7-1, to=7-4]
	\arrow["{1\circ{A_C}^{b'}\circ1}"{description}, from=3-4, to=5-4]
	\arrow["{1\circ {{B_1}_C}^{a}}"{description}, from=5-4, to=7-4]
	\arrow["{{A_C}^{b'}\circ1}"{description}, from=2-5, to=4-5]
	\arrow["{1\circ{{B_2}_C}^{a}\circ1}"{description}, from=4-5, to=6-5]
	\arrow[""{name=6, anchor=center, inner sep=0}, "{{A_{B_2}}^{c'}\circ1}"{description}, from=5-4, to=6-5]
	\arrow[""{name=7, anchor=center, inner sep=0}, "{{A_{B_2}}^{c'}\circ1}"{description}, from=7-4, to=8-5]
	\arrow["{1\circ {{B_1}_C}^{a}}"{description}, from=6-5, to=8-5]
	\arrow[""{name=8, anchor=center, inner sep=0}, "{1\circ (F^a)_{c'}^{B_2,B_1}\circ1}"{description}, from=8-5, to=9-6]
	\arrow[""{name=9, anchor=center, inner sep=0}, "{1\circ(F_c)^a_{B_2,B_1}}"{description}, from=4-5, to=5-6]
	\arrow[shorten <=9pt, shorten >=9pt, Rightarrow, dashed, from=0, to=4]
	\arrow["{1\circ(A\mid B_1\mid C)}"{description}, shorten <=26pt, shorten >=26pt, Rightarrow, from=4, to=5]
	\arrow["{(A\mid B_2\mid C)\circ1}"{description}, shorten <=26pt, shorten >=26pt, Rightarrow, from=1, to=6]
	\arrow["{1\circ(F^a)^{B_2,B_1}_C}"{description}, shorten <=26pt, shorten >=26pt, Rightarrow, from=9, to=8]
	\arrow[shorten <=9pt, shorten >=9pt, Rightarrow, dashed, from=6, to=7]
	\arrow[shorten <=9pt, shorten >=9pt, Rightarrow, dashed, from=2, to=9]
	\arrow["{(F_{c'})^A_{B_2,B_1}}"{description}, shorten <=52pt, shorten >=52pt, Rightarrow, from=5, to=3]
\end{tikzcd}
}
%\end{split}
\end{equation*}

\emph{Compatibility of $(A\mid B\mid C)$ with composition of 1-cells $C_2\circ C_1$:}
\begin{equation}\label{ax:tern_A:B:C1C2}
\tag{$A:B:C_1,C_2$}
%\begin{split}
\hspace*{-2.5cm}\resizebox{19.5cm}{4cm}{
\begin{tikzcd}[ampersand replacement=\&]
	{F^{a'}_{b'}(C_2)F^{a'}_{b'}(C_1)F^{a'}_{c}(B)F^{b}_{c}(A)} \&\& {F^{a'}_{b'}(C_2)F^{a'}_{b'}(C_1)F^{b'}_{c}(A)F^{a}_{c}(B)} \\
	\& {F^{a'}_{b'}(C_2)F^{a'}_{c'}(B)F^{a'}_{b}(C_1)F^{b}_{c}(A)} \&\& {F^{a'}_{b'}(C_2)F^{b'}_{c'}(A)F^{a}_{b'}(C_1)F^{a}_{c}(B)} \\
	\&\& {F^{a'}_{b'}(C_2)F^{a'}_{c'}(B)F^{b}_{c'}(A)F^{a}_{b}(C_1)} \&\& {F^{a'}_{b'}(C_2)F^{b'}_{c'}(A)F^{a}_{c'}(B)F^{a}_{b}(C_1)} \\
	\& {F^{a'}_{c''}(B)F^{a'}_{b}(C_2)F^{a'}_{b}(C_1)F^{b}_{c}(A)} \\
	\&\& {F^{a'}_{c''}(B)F^{a'}_{b}(C_2)F^{b}_{c'}(A)F^{a}_{b}(C_1)} \&\& {F^{b'}_{c''}(A)F^{a}_{b'}(C_2)F^{a}_{c'}(B)F^{a}_{b}(C_1)} \\
	\\
	{F^{a'}_{b'}(C_2C_1)F^{a'}_{c}(B)F^{b}_{c}(A)} \&\& {F^{a'}_{c''}(B)F^{b}_{c''}(A)F^{a}_{b}(C_2)F^{a}_{b}(C_1)} \&\& {F^{b'}_{c''}(A)F^{a}_{c''}(B)F^{a}_{b}(C_2)F^{a}_{b}(C_1)} \\
	\& {F^{a'}_{c''}(B)F^{a'}_{b}(C_2C_1)F^{b}_{c}(A)} \\
	\&\& {F^{a'}_{c''}(B)F^{b}_{c''}(A)F^{a}_{b}(C_2C_1)} \&\& {F^{b'}_{c''}(A)F^{a}_{c''}(B)F^{a}_{b}(C_2C_1)}
	\arrow[""{name=0, anchor=center, inner sep=0}, "{1\circ {A_B}^c}", from=1-1, to=1-3]
	\arrow["{1\circ{A_{C_1}}^{b'}\circ1}", from=1-3, to=2-4]
	\arrow["{1\circ {B_{C_1}}^{a}}", from=2-4, to=3-5]
	\arrow[""{name=1, anchor=center, inner sep=0}, "{1\circ {A_{C_1}}^b}"{description}, from=2-2, to=3-3]
	\arrow[""{name=2, anchor=center, inner sep=0}, "{1\circ{A_B}^{c'}\circ1}"{description}, from=3-3, to=3-5]
	\arrow[""{name=3, anchor=center, inner sep=0}, "{1\circ {B_{C_1}}^{a'}\circ1}"{description}, from=1-1, to=2-2]
	\arrow["{{B_{C_2}}^{a'}\circ1}"{description}, from=3-3, to=5-3]
	\arrow["{1\circ{A_{C_2}}^b\circ1}"{description}, from=5-3, to=7-3]
	\arrow["{1\circ(F\midscript{b})^a_{C_2,C_1}}"{description}, from=7-3, to=9-3]
	\arrow["{{A_{C_2}}^{b'}\circ1}", from=3-5, to=5-5]
	\arrow["{1\circ{B_{C_2}}^a\circ1}", from=5-5, to=7-5]
	\arrow["{1\circ(F\midscript{b})^a_{C_2,C_1}}", from=7-5, to=9-5]
	\arrow[""{name=4, anchor=center, inner sep=0}, "{{A_B}^{c''}\circ1}"', from=9-3, to=9-5]
	\arrow["{(F^{a'})^{b}_{C_2,C_1}\circ1}"', from=1-1, to=7-1]
	\arrow["{{B_{C_2}}^{a'}\circ1}"{description}, from=2-2, to=4-2]
	\arrow["{1\circ (F^{a'})^b_{C_2,C_1}\circ1}"', from=4-2, to=8-2]
	\arrow[""{name=5, anchor=center, inner sep=0}, "{{B_{(C_2C_1)}}^{a'}\circ1}"', from=7-1, to=8-2]
	\arrow[""{name=6, anchor=center, inner sep=0}, "{1\circ{A_{(C_2C_1)}}^{b}}"', from=8-2, to=9-3]
	\arrow[""{name=7, anchor=center, inner sep=0}, "{1\circ {A_{C_1}}^b}"', from=4-2, to=5-3]
	\arrow[""{name=8, anchor=center, inner sep=0}, "{{A_B}^{c''}\circ1}"{description}, from=7-3, to=7-5]
	\arrow["{(A\mid B\mid C_2)\circ1}", shorten <=34pt, shorten >=34pt, Rightarrow, from=2, to=8]
	\arrow[shorten <=17pt, shorten >=17pt, Rightarrow, dashed, from=1, to=7]
	\arrow[shorten <=17pt, shorten >=17pt, Rightarrow, dashed, from=8, to=4]
	\arrow["{(F^{a'})^B_{C_2,C_1}\circ1}"', shorten <=51pt, shorten >=51pt, Rightarrow, from=3, to=5]
	\arrow["{1\circ(A\mid B\mid C_1)}", shorten <=89pt, shorten >=89pt, Rightarrow, from=0, to=2]
	\arrow["{1\circ(F\midscript{b})^A_{C_2,C_1}}"', shorten <=34pt, shorten >=34pt, Rightarrow, from=7, to=6]
\end{tikzcd}
}
= 
\end{equation}
\begin{equation*}
\hspace*{-2cm}\resizebox{19.5cm}{4cm}{
\begin{tikzcd}[ampersand replacement=\&]
	{F^{a'}_{b'}(C_2)F^{a'}_{b'}(C_1)F^{a'}_{c}(B)F^{b}_{c}(A)} \&\& {F^{a'}_{b'}(C_2)F^{a'}_{b'}(C_1)F^{b'}_{c}(A)F^{a}_{c}(B)} \\
	\&\&\& {F^{a'}_{b'}(C_2)F^{b'}_{c'}(A)F^{a}_{b'}(C_1)F^{a}_{c}(B)} \\
	\&\&\&\& {F^{a'}_{b'}(C_2)F^{b'}_{c'}(A)F^{a}_{c'}(B)F^{a}_{b}(C_1)} \\
	\&\&\& {F^{b'}_{c''}(A)F^{a}_{b'}(C_2)F^{a}_{b'}(C_1)F^{a}_{c}(B)} \\
	\&\&\&\& {F^{b'}_{c''}(A)F^{a}_{b'}(C_2)F^{a}_{c'}(B)F^{a}_{b}(C_1)} \\
	\\
	{F^{a'}_{b'}(C_2C_1)F^{a'}_{c}(B)F^{b}_{c}(A)} \&\& {F^{a'}_{b'}(C_2C_1)F^{b'}_{c}(A)F^{a}_{c}(B)} \&\& {F^{b'}_{c''}(A)F^{a}_{c''}(B)F^{a}_{b}(C_2)F^{a}_{b}(C_1)} \\
	\& {F^{a'}_{c''}(B)F^{a'}_{b}(C_2C_1)F^{b}_{c}(A)} \&\& {F^{b'}_{c''}(A)F^{a}_{b'}(C_2C_1)F^{a}_{c}(B)} \\
	\&\& {F^{a'}_{c''}(B)F^{b}_{c''}(A)F^{a}_{b}(C_2C_1)} \&\& {F^{b'}_{c''}(A)F^{a}_{c''}(B)F^{a}_{b}(C_2C_1)}
	\arrow[""{name=0, anchor=center, inner sep=0}, "{1\circ {A_B}^c}", from=1-1, to=1-3]
	\arrow[""{name=1, anchor=center, inner sep=0}, "{1\circ{A_{C_1}}^{b'}\circ1}", from=1-3, to=2-4]
	\arrow[""{name=2, anchor=center, inner sep=0}, "{1\circ {B_{C_1}}^{a}}", from=2-4, to=3-5]
	\arrow["{{A_{C_2}}^{b'}\circ1}", from=3-5, to=5-5]
	\arrow["{1\circ(F\midscript{b})^a_{C_2,C_1}}", from=7-5, to=9-5]
	\arrow[""{name=3, anchor=center, inner sep=0}, "{{ A_B}^{c''}\circ1}"', from=9-3, to=9-5]
	\arrow["{(F^{a'})^{b}_{C_2,C_1}\circ1=(F\midscript{b})^{a'}_{C_2,C_1}\circ1}"{description}, from=1-1, to=7-1]
	\arrow["{{B_{(C_2C_1)}}^{a'}\circ1}"', from=7-1, to=8-2]
	\arrow["{1\circ{A_{(C_2C_1)}}^{b}}"', from=8-2, to=9-3]
	\arrow["{1\circ{B_{C_2}}^a\circ1}", from=5-5, to=7-5]
	\arrow["{(F\midscript{b})^{a'}_{C_2,C_1}\circ1}"', from=1-3, to=7-3]
	\arrow[""{name=4, anchor=center, inner sep=0}, "{1\circ {A_B}^c}", from=7-1, to=7-3]
	\arrow["{{A_{C_2}}^{b'}\circ1}"', from=2-4, to=4-4]
	\arrow["{1\circ(F^a)^{b'}_{C_2,C_1}}"{description}, from=4-4, to=8-4]
	\arrow[""{name=5, anchor=center, inner sep=0}, "{{A_{(C_2C_1)}}^{b'}\circ1}"{description}, from=7-3, to=8-4]
	\arrow[""{name=6, anchor=center, inner sep=0}, "{1\circ {B_{(C_2C_1)}}^{a}}"{description}, from=8-4, to=9-5]
	\arrow[""{name=7, anchor=center, inner sep=0}, "{1\circ {B_{C_1}}^{a}}"{description}, from=4-4, to=5-5]
	\arrow["{(A\mid B\mid C_2C_1)}", shorten <=85pt, shorten >=85pt, Rightarrow, from=4, to=3]
	\arrow["{(F^a)^B_{C_2,C_1}}"', shorten <=34pt, shorten >=34pt, Rightarrow, from=7, to=6]
	\arrow[shorten <=51pt, shorten >=51pt, Rightarrow, dashed, from=0, to=4]
	\arrow[shorten <=17pt, shorten >=17pt, Rightarrow, dashed, from=2, to=7]
	\arrow["{(F\midscript{b'})^A_{C_2,C_1}}"', shorten <=51pt, shorten >=51pt, Rightarrow, from=1, to=5]
\end{tikzcd}
}
%\end{split}
\end{equation*}

\emph{Compatibility of $(A\mid B\mid C)$ with 2-cells $\alpha\colon A\to A'$:}
\begin{equation}\label{ax:tern_alp:B:C}
\tag{$\alpha:B:C$}
-\hspace*{-2cm}\adjustbox{scale=0.65}{
\begin{tikzcd}[ampersand replacement=\&]
	\& {F^{a'}_{b'}(C)F^{a'}_{c}(B)F^{b}_{c}(A')} \\
	{F^{a'}_{b'}(C)F^{a'}_{c}(B)F^{b}_{c}(A)} \&\& {F^{a'}_{b'}(C)F^{b'}_{c}(A')F^{a}_{c}(B)} \\
	\& {F^{a'}_{b'}(C)F^{b'}_{c}(A)F^{a}_{c}(B)} \\
	{F^{a'}_{c'}(B)F^{a'}_{b}(C)F^{b}_{c}(A)} \&\& {F^{b'}_{c'}(A')F^{a}_{b'}(C)F^{a}_{c}(B)} \\
	\& {F^{b'}_{c'}(A)F^{a}_{b'}(C)F^{a}_{c}(B)} \\
	{F^{a'}_{c'}(B)F^{b}_{c'}(A)F^{a}_{b}(C)} \&\& {F^{b'}_{c'}(A')F^{a}_{c'}(B)F^{a}_{b}(C)} \\
	\& {F^{b'}_{c'}(A)F^{a}_{c'}(B)F^{a}_{b}(C)}
	\arrow[""{name=0, anchor=center, inner sep=0}, "{1\circ{A_B}^c}"{description}, from=2-1, to=3-2]
	\arrow["{{A_C}^{b'}\circ1}"{description}, from=3-2, to=5-2]
	\arrow["{1\circ {B_C}^{a}}"{description}, from=5-2, to=7-2]
	\arrow["{1\circ{A_C}^b}"{description}, from=4-1, to=6-1]
	\arrow[""{name=1, anchor=center, inner sep=0}, "{{A_B}^{c'}\circ1}"', from=6-1, to=7-2]
	\arrow["{{B_C}^{a'}\circ1}"{description}, from=2-1, to=4-1]
	\arrow[""{name=2, anchor=center, inner sep=0}, "{1\circ F^{b'}_{c}(\alpha)\circ1}"{description}, from=3-2, to=2-3]
	\arrow["{{{A'}_C}^{b'}\circ1}"{description}, from=2-3, to=4-3]
	\arrow["{1\circ {B_C}^{a}}"{description}, from=4-3, to=6-3]
	\arrow[""{name=3, anchor=center, inner sep=0}, "{F^{b'}_{c}(\alpha)\circ1}"', from=7-2, to=6-3]
	\arrow[""{name=4, anchor=center, inner sep=0}, "{F^{b'}_{c}(\alpha)\circ1}"{description}, from=5-2, to=4-3]
	\arrow["{1\circ F^{b'}_{c}(\alpha)}", from=2-1, to=1-2]
	\arrow["{1\circ{{A'}_B}^c}", from=1-2, to=2-3]
	\arrow["{1\circ{\alpha_B}^c}", shorten <=10pt, shorten >=10pt, Rightarrow, from=1-2, to=3-2]
	\arrow["{{\alpha_C}^{b'}\circ1}", shorten <=17pt, shorten >=17pt, Rightarrow, from=2, to=4]
	\arrow[shorten <=17pt, shorten >=17pt, Rightarrow, dashed, from=4, to=3]
	\arrow["{(A\mid B\mid C)}"{description}, shorten <=34pt, shorten >=34pt, Rightarrow, from=0, to=1]
\end{tikzcd}
=
\begin{tikzcd}[ampersand replacement=\&]
	\& {F^{a'}_{b'}(C)F^{a'}_{c}(B)F^{b}_{c}(A')} \\
	{F^{a'}_{b'}(C)F^{a'}_{c}(B)F^{b}_{c}(A)} \&\& {F^{a'}_{b'}(C)F^{b'}_{c}(A')F^{a}_{c}(B)} \\
	\& {F^{a'}_{c'}(B)F^{a'}_{b}(C)F^{b}_{c}(A')} \\
	{F^{a'}_{c'}(B)F^{a'}_{b}(C)F^{b}_{c}(A)} \&\& {F^{b'}_{c'}(A')F^{a}_{b'}(C)F^{a}_{c}(B)} \\
	\& {F^{a'}_{c'}(B)F^{b}_{c'}(A')F^{a}_{b}(C)} \\
	{F^{a'}_{c'}(B)F^{b}_{c'}(A)F^{a}_{b}(C)} \&\& {F^{b'}_{c'}(A')F^{a}_{c'}(B)F^{a}_{b}(C)} \\
	\& {F^{b'}_{c'}(A)F^{a}_{c'}(B)F^{a}_{b}(C)}
	\arrow["{1\circ{A_C}^b}"{description}, from=4-1, to=6-1]
	\arrow["{{A_B}^{c'}\circ1}"', from=6-1, to=7-2]
	\arrow["{{B_C}^{a'}\circ1}"{description}, from=2-1, to=4-1]
	\arrow["{{{A'}_C}^{b'}\circ1}"{description}, from=2-3, to=4-3]
	\arrow["{1\circ {B_C}^{a}}"{description}, from=4-3, to=6-3]
	\arrow["{F^{b'}_{c}(\alpha)\circ1}"', from=7-2, to=6-3]
	\arrow[""{name=0, anchor=center, inner sep=0}, "{1\circ F^{b'}_{c}(\alpha)}", from=2-1, to=1-2]
	\arrow[""{name=1, anchor=center, inner sep=0}, "{1\circ{{A'}_B}^c}", from=1-2, to=2-3]
	\arrow["{{B_C}^{a'}\circ1}"{description}, from=1-2, to=3-2]
	\arrow["{1\circ{{A'}_C}^b}"{description}, from=3-2, to=5-2]
	\arrow[""{name=2, anchor=center, inner sep=0}, "{{{A'}_B}^c\circ1}"{description}, from=5-2, to=6-3]
	\arrow[""{name=3, anchor=center, inner sep=0}, "{1\circ F^{b'}_{c}(\alpha)\circ1}"{description}, from=6-1, to=5-2]
	\arrow[""{name=4, anchor=center, inner sep=0}, "{1\circ F^{b'}_{c}(\alpha)}"{description}, from=4-1, to=3-2]
	\arrow["{{\alpha_B}^{c'}\circ1}", shorten <=13pt, shorten >=13pt, Rightarrow, from=5-2, to=7-2]
	\arrow[shorten <=17pt, shorten >=17pt, Rightarrow, dashed, from=0, to=4]
	\arrow["{1\circ{\alpha_C}^b}", shorten <=17pt, shorten >=17pt, Rightarrow, from=4, to=3]
	\arrow["{(A'\mid B\mid C)}"{description}, shorten <=34pt, shorten >=34pt, Rightarrow, from=1, to=2]
\end{tikzcd}
}
\end{equation}

\emph{Compatibility of $(A\mid B\mid C)$ with 2-cells $\beta\colon B\to B'$:}
\begin{equation}\label{ax:tern_A:bet:C}
\tag{$A:\beta:C$}
-\hspace*{-2cm}\adjustbox{scale=0.65}{
\begin{tikzcd}[ampersand replacement=\&]
	\& {F^{a'}_{b'}(C)F^{b'}_{c}(A)F^{a}_{c}(B)} \\
	{F^{a'}_{b'}(C)F^{a'}_{c}(B)F^{b}_{c}(A)} \&\& {F^{a'}_{b'}(C)F^{b'}_{c}(A)F^{a}_{c}(B')} \\
	\& {F^{a'}_{b'}(C)F^{a'}_{c}(B')F^{b}_{c}(A)} \\
	{F^{a'}_{c'}(B)F^{a'}_{b}(C)F^{b}_{c}(A)} \&\& {F^{b'}_{c'}(A)F^{a}_{b'}(C)F^{a}_{c}(B')} \\
	\& {F^{a'}_{c'}(B')F^{a'}_{b}(C)F^{b}_{c}(A)} \\
	{F^{a'}_{c'}(B)F^{b}_{c'}(A)F^{a}_{b}(C)} \&\& {F^{b'}_{c'}(A)F^{a}_{c'}(B')F^{a}_{b}(C)} \\
	\& {F^{a'}_{c'}(B')F^{b}_{c'}(A)F^{a}_{b}(C)}
	\arrow["{1\circ{A_B}^c}"{pos=0.3}, from=2-1, to=1-2]
	\arrow["{1\circ{A_C}^b}"', from=4-1, to=6-1]
	\arrow["{{B_C}^{a'}\circ1}"', from=2-1, to=4-1]
	\arrow[""{name=0, anchor=center, inner sep=0}, "{F^{a}_{c'}(\beta)\circ1}"'{pos=0.4}, from=6-1, to=7-2]
	\arrow[""{name=1, anchor=center, inner sep=0}, "{{A_{B'}}^{c'}\circ1}"', from=7-2, to=6-3]
	\arrow["{1\circ F^{a}_{c'}(\beta)}"{pos=0.7}, from=1-2, to=2-3]
	\arrow["{1\circ{B'_C}^{a}}", from=4-3, to=6-3]
	\arrow[""{name=2, anchor=center, inner sep=0}, "{1\circ F^{a'}_{c}(\beta)\circ1}"{description}, from=2-1, to=3-2]
	\arrow[""{name=3, anchor=center, inner sep=0}, "{1\circ{A_{B'}}^c}"{description}, from=3-2, to=2-3]
	\arrow["{{B'_C}^{a'}\circ1}"{description}, from=3-2, to=5-2]
	\arrow["{1\circ{A_C}^b}"{description}, from=5-2, to=7-2]
	\arrow[""{name=4, anchor=center, inner sep=0}, "{F^{a'}_{c'}(\beta)\circ1}"{description}, from=4-1, to=5-2]
	\arrow["{1\circ{A_\beta}^c}", shorten <=13pt, shorten >=13pt, Rightarrow, from=1-2, to=3-2]
	\arrow["{{A_C}^{b'}\circ1}", from=2-3, to=4-3]
	\arrow[shorten <=17pt, shorten >=17pt, Rightarrow, dashed, from=4, to=0]
	\arrow["{1\circ{\beta_C}^{a'}}", shorten <=17pt, shorten >=17pt, Rightarrow, from=2, to=4]
	\arrow["{(A\mid B'\mid C)}"{description}, shorten <=34pt, shorten >=34pt, Rightarrow, from=3, to=1]
\end{tikzcd}
=
\begin{tikzcd}[ampersand replacement=\&]
	\& {F^{a'}_{b'}(C)F^{b'}_{c}(A)F^{a}_{c}(B)} \\
	{F^{a'}_{b'}(C)F^{a'}_{c}(B)F^{b}_{c}(A)} \&\& {F^{a'}_{b'}(C)F^{b'}_{c}(A)F^{a}_{c}(B')} \\
	\& {F^{b'}_{c'}(A)F^{a}_{b'}(C)F^{a}_{c}(B)} \\
	{F^{a'}_{c'}(B)F^{a'}_{b}(C)F^{b}_{c}(A)} \&\& {F^{b'}_{c'}(A)F^{a}_{b'}(C)F^{a}_{c}(B')} \\
	\& {F^{b'}_{c'}(A)F^{a}_{c'}(B)F^{a}_{b}(C)} \\
	{F^{a'}_{c'}(B)F^{b}_{c'}(A)F^{a}_{b}(C)} \&\& {F^{b'}_{c'}(A)F^{a}_{c'}(B')F^{a}_{b}(C)} \\
	\& {F^{a'}_{c'}(B')F^{b}_{c'}(A)F^{a}_{b}(C)}
	\arrow[""{name=0, anchor=center, inner sep=0}, "{1\circ{A_B}^c}"{pos=0.3}, from=2-1, to=1-2]
	\arrow["{{A_C}^{b'}\circ1}"{description}, from=1-2, to=3-2]
	\arrow["{1\circ {B_C}^{a}}"{description}, from=3-2, to=5-2]
	\arrow["{1\circ{A_C}^b}"', from=4-1, to=6-1]
	\arrow[""{name=1, anchor=center, inner sep=0}, "{{A_B}^{c'}\circ1}"{description}, from=6-1, to=5-2]
	\arrow["{{B_C}^{a'}\circ1}"', from=2-1, to=4-1]
	\arrow["{F^{a'}_{c'}(\beta)\circ1}"'{pos=0.4}, from=6-1, to=7-2]
	\arrow[""{name=2, anchor=center, inner sep=0}, "{1\circ F^{a}_{c'}(\beta)\circ1}"{description}, from=5-2, to=6-3]
	\arrow["{{A_{B'}}^{c'}\circ1}"', from=7-2, to=6-3]
	\arrow["{{A_\beta}^{c'}\circ1}", shorten <=13pt, shorten >=13pt, Rightarrow, from=5-2, to=7-2]
	\arrow[""{name=3, anchor=center, inner sep=0}, "{1\circ F^{a}_{c'}(\beta)}"{pos=0.7}, from=1-2, to=2-3]
	\arrow["{{A_C}^{b'}\circ1}", from=2-3, to=4-3]
	\arrow[""{name=4, anchor=center, inner sep=0}, "{1\circ F^{a}_{c}(\beta)}"{description}, from=3-2, to=4-3]
	\arrow["{1\circ{B'_C}^{a}}", from=4-3, to=6-3]
	\arrow["{(A\mid B\mid C)}"{description}, shorten <=34pt, shorten >=34pt, Rightarrow, from=0, to=1]
	\arrow[shorten <=17pt, shorten >=17pt, Rightarrow, dashed, from=3, to=4]
	\arrow["{1\circ{\beta_C}^{a}}", shorten <=17pt, shorten >=17pt, Rightarrow, from=4, to=2]
\end{tikzcd}
}
\end{equation}

\emph{Compatibility of $(A\mid B\mid C)$ with 2-cells $\gamma\colon C\to C'$:}
\begin{equation}\label{ax:tern_A:B:gam}
\tag{$A:B:\gamma$}
\begin{split}
\hspace*{-2cm}\adjustbox{scale=0.65}{
\begin{tikzcd}[ampersand replacement=\&]
	{F^{a'}_{b'}(C)F^{a'}_{c}(B)F^{b}_{c}(A)} \&\& {F^{a'}_{b'}(C)F^{b'}_{c}(A)F^{a}_{c}(B)} \\
	\& {F^{a'}_{c'}(B)F^{a'}_{b}(C)F^{b}_{c}(A)} \&\& {F^{b'}_{c'}(A)F^{a}_{b'}(C)F^{a}_{c}(B)} \\
	{F^{a'}_{b'}( C')F^{a'}_{c}(B)F^{b}_{c}(A)} \&\& {F^{a'}_{c'}(B)F^{b}_{c'}(A)F^{a}_{b}(C)} \&\& {F^{b'}_{c'}(A)F^{a}_{c'}(B)F^{a}_{b}(C)} \\
	\& {F^{a'}_{c'}(B)F^{a'}_{b}(C')F^{b}_{c}(A)} \\
	\&\& {F^{a'}_{c'}(B)F^{b}_{c'}(A)F^{a}_{b}(C')} \&\& {F^{b'}_{c'}(A)F^{a}_{c'}(B)F^{a}_{b}(C')}
	\arrow[""{name=0, anchor=center, inner sep=0}, "{1\circ{A_B}^c}", from=1-1, to=1-3]
	\arrow["{{A_C}^{b'}\circ1}", from=1-3, to=2-4]
	\arrow["{1\circ {B_C}^{a}}", from=2-4, to=3-5]
	\arrow[""{name=1, anchor=center, inner sep=0}, "{1\circ{A_C}^b}"{description}, from=2-2, to=3-3]
	\arrow[""{name=2, anchor=center, inner sep=0}, "{{A_B}^{c'}\circ1}"{description}, from=3-3, to=3-5]
	\arrow[""{name=3, anchor=center, inner sep=0}, "{{B_C}^{a'}\circ1}"{description}, from=1-1, to=2-2]
	\arrow["{1\circ F^{a}_{b}(\gamma)}"{description}, from=3-3, to=5-3]
	\arrow["{1\circ F^{a}_{b}(\gamma)}"{description}, from=3-5, to=5-5]
	\arrow[""{name=4, anchor=center, inner sep=0}, "{{A_B}^{c'}\circ1}"', from=5-3, to=5-5]
	\arrow["{F^{a'}_{b'}(\gamma)\circ1 }"', from=1-1, to=3-1]
	\arrow["{1\circ F^{a'}_{b}(\gamma)\circ1 }"{description}, from=2-2, to=4-2]
	\arrow[""{name=5, anchor=center, inner sep=0}, "{{B_{ C'}}^{a'}\circ1}"', from=3-1, to=4-2]
	\arrow[""{name=6, anchor=center, inner sep=0}, "{{A_{C'}}^b\circ1}"', from=4-2, to=5-3]
	\arrow["{(A\mid B\mid C)}", shorten <=67pt, shorten >=67pt, Rightarrow, from=0, to=2]
	\arrow[shorten <=17pt, shorten >=17pt, Rightarrow, dashed, from=2, to=4]
	\arrow["{1\circ {A_{\gamma}}^b}", shorten <=17pt, shorten >=17pt, Rightarrow, from=1, to=6]
	\arrow["{{B_\gamma}^{a'}\circ1}"', shorten <=17pt, shorten >=17pt, Rightarrow, from=3, to=5]
\end{tikzcd}
}\\
\hspace*{-1.5cm}\adjustbox{scale=0.65}{
\begin{tikzcd}[ampersand replacement=\&]
	\& {F^{a'}_{b'}(C)F^{a'}_{c}(B)F^{b}_{c}(A)} \&\& {F^{a'}_{b'}(C)F^{b'}_{c}(A)F^{a}_{c}(B)} \\
	\&\&\&\& {F^{b'}_{c'}(A)F^{a}_{b'}(C)F^{a}_{c}(B)} \\
	{=} \& {F^{a'}_{b'}( C')F^{a'}_{c}(B)F^{b}_{c}(A)} \&\& {F^{a'}_{b'}(C')F^{b'}_{c}(A)F^{a}_{c}(B)} \&\& {F^{b'}_{c'}(A)F^{a}_{c'}(B)F^{a}_{b}(C)} \\
	\&\& {F^{a'}_{c'}(B)F^{a'}_{b}(C')F^{b}_{c}(A)} \&\& {F^{b'}_{c'}(A)F^{a}_{b'}(C')F^{a}_{c}(B)} \\
	\&\&\& {F^{a'}_{c'}(B)F^{b}_{c'}(A)F^{a}_{b}(C')} \&\& {F^{b'}_{c'}(A)F^{a}_{c'}(B)F^{a}_{b}(C')}
	\arrow[""{name=0, anchor=center, inner sep=0}, "{1\circ{A_B}^c}", from=1-2, to=1-4]
	\arrow[""{name=1, anchor=center, inner sep=0}, "{{A_C}^{b'}\circ1}", from=1-4, to=2-5]
	\arrow[""{name=2, anchor=center, inner sep=0}, "{1\circ {B_C}^{a}}", from=2-5, to=3-6]
	\arrow["{1\circ F^{a}_{b}(\gamma)}", from=3-6, to=5-6]
	\arrow[""{name=3, anchor=center, inner sep=0}, "{{A_B}^{c'}\circ1}"', from=5-4, to=5-6]
	\arrow["{F^{a'}_{b'}(\gamma)\circ1 }"', from=1-2, to=3-2]
	\arrow["{{B_{ C'}}^{a'}\circ1}"', from=3-2, to=4-3]
	\arrow["{{A_{C'}}^b\circ1}"', from=4-3, to=5-4]
	\arrow["{F^{a'}_{b'}(\gamma)\circ1 }"', from=1-4, to=3-4]
	\arrow[""{name=4, anchor=center, inner sep=0}, "{1\circ{A_B}^c}", from=3-2, to=3-4]
	\arrow[""{name=5, anchor=center, inner sep=0}, "{{A_{C'}}^{b'}\circ1}"{description}, from=3-4, to=4-5]
	\arrow[""{name=6, anchor=center, inner sep=0}, "{1\circ {B_{C'}}^{a}}"{description}, from=4-5, to=5-6]
	\arrow["{1\circ F^{a}_{b'}(\gamma)\circ1}"{description}, from=2-5, to=4-5]
	\arrow[shorten <=17pt, shorten >=17pt, Rightarrow, dashed, from=0, to=4]
	\arrow["{1\circ{B_\gamma}^{a}}", shorten <=17pt, shorten >=17pt, Rightarrow, from=2, to=6]
	\arrow["{(A\mid B\mid C')}", shorten <=68pt, shorten >=68pt, Rightarrow, from=4, to=3]
	\arrow["{{A_\gamma}^{b'}\circ1}"', shorten <=17pt, shorten >=17pt, Rightarrow, from=1, to=5]
\end{tikzcd}
}
\end{split}
\end{equation}

\emph{Degeneracy equations:}

%\begin{minipage}{0.5\textwidth}
\begin{equation}\label{ax:tern_D_1a:B:C}
\tag{$1_a:B:C$}
(1_a\mid B\mid C)=1_{{B_C}^a}
%\end{minipage}
\end{equation}
%\end{minipage}
%\begin{minipage}{0.5\textwidth}
\begin{equation}\label{ax:tern_D_A:1b:C}
\tag{$A:1_b:C$}
(A\mid 1_b\mid C)=1_{{A_C}^b}
\end{equation} 
%\end{minipage}
\begin{equation}\label{ax:tern_D_A:B:1c}
\tag{$A:B:1_c$}
(A\mid B\mid 1_c)=1_{{A_B}^c}
\end{equation}  

\subsection{Axiom for 4-ary maps}
\label{app:ax_4-ary}

There is a single axiom for $4$-ary maps, which we call the \emph{mecon axiom}, since its shape is a truncated octahedron, also known as a mecon.  Each of the nodes of the diagram consists of a composite of four $1$-cells, and we encode these using sequences of $A,B,C$ and $D$, which uniquely encode the composite as follows.  Firstly, let us write $F(a,-,-,-)$ for the ternary map which we usually call $F^a$ and so on; then, $F(a,b,-,d)$ represents the associated pseudomap.  Each appearance of $C$ on a node represents a $1$-cell of the form $F(x,y,C,z)$ where $x\in\lbrace a,a'\rbrace$, $y\in\lbrace b,b'\rbrace$, $z\in\lbrace d,d'\rbrace$. and similarly for $A,B$ and $D$.  The only question is which of the two values we choose for each of $x,y$ and $z$.  This is determined by the ordering.  For instance, consider $DBAC$.  Since $C$ appears at the end, before $D,B,A$, none of $d,b$ nor $a$ are primed --- thus $C$ is replaced by $F(a,b,C,d)$; as $A$ appears after $C$, but before $D$ and $B$, we prime $c$ but neither $d$ nor $b$ --- thus $A$ is replaced by $F(A,b,c',d)$.  Continuing in this way, priming lowercase letters that appear before the uppercase letter in question, we see that $DBAC$ encodes the composite $1$-cell $F(a',b',c',D)F(a',B,c',d)F(A,b,c',D)F(a,b,C,d)$.

\begin{equation}\label{ax:4-ary_mec}
\tag{Mcn}
\hspace*{-2cm}\adjustbox{scale=0.65}{
\begin{tikzcd}[ampersand replacement=\&]
	\&\& DCAB \&\& DACB \\
	DCBA \&\&\&\&\&\& DABC \\
	\&\& DBCA \&\& DBAC \\
	CDBA \&\&\&\&\&\& ADBC \\
	\&\& BDCA \&\& BDAC \\
	CBDA \&\&\&\&\&\& ABDC \\
	\&\& BCDA \&\& BADC \\
	CBAD \&\&\&\&\&\& ABCD \\
	\&\& BCAD \&\& BACD
	\arrow[""{name=0, anchor=center, inner sep=0}, "{1\circ {A_C}^{b,d}}"{description}, from=3-3, to=3-5]
	\arrow["{1\circ{B_C}^{a',c}\circ1}"', from=4-1, to=6-1]
	\arrow["{{B_D}^{a',c'}\circ1}"{description}, from=3-3, to=5-3]
	\arrow["{1\circ {C_D}^{a',b}\circ1}"{description}, from=5-3, to=7-3]
	\arrow[""{name=1, anchor=center, inner sep=0}, "{{B_C}^{a',d'}\circ1}"{description}, from=6-1, to=7-3]
	\arrow["{1\circ{A_D}^{b,c}}"', from=6-1, to=8-1]
	\arrow[""{name=2, anchor=center, inner sep=0}, "{{B_C}^{a',d'}\circ1}"', from=8-1, to=9-3]
	\arrow["{1\circ{A_D}^{b,c}}"{description}, from=7-3, to=9-3]
	\arrow[""{name=3, anchor=center, inner sep=0}, "{1\circ {A_C}^{b,d}}"{description}, from=5-3, to=5-5]
	\arrow["{{B_D}^{a',c'}\circ1}"{description}, from=3-5, to=5-5]
	\arrow["{1\circ{B_D}^{a,c'}\circ1}", from=4-7, to=6-7]
	\arrow["{1\circ{A_D}^{b,c'}\circ1}"{description}, from=5-5, to=7-5]
	\arrow["{1\circ {C_D}^{a,b}}"{description}, from=7-5, to=9-5]
	\arrow[""{name=4, anchor=center, inner sep=0}, "{1\circ {A_C}^{b,d'}\circ1}"', from=9-3, to=9-5]
	\arrow[""{name=5, anchor=center, inner sep=0}, "{{A_B}^{c',d'}\circ1}"', from=9-5, to=8-7]
	\arrow["{1\circ {C_D}^{a,b}}", from=6-7, to=8-7]
	\arrow[""{name=6, anchor=center, inner sep=0}, "{{A_B}^{c',d'}\circ1}"{description}, from=7-5, to=6-7]
	\arrow[""{name=7, anchor=center, inner sep=0}, "{1\circ{B_C}^{a',d}\circ1}"{description}, from=2-1, to=3-3]
	\arrow["{{C_D}^{a',b'}\circ1}"', from=2-1, to=4-1]
	\arrow["{1\circ {A_B}^{c,d}}", from=2-1, to=1-3]
	\arrow["{1\circ{B_C}^{a,d}}", from=1-5, to=2-7]
	\arrow[""{name=8, anchor=center, inner sep=0}, "{1\circ {A_B}^{c',d}\circ1}"{description}, from=3-5, to=2-7]
	\arrow["{{A_D}^{b',c'}\circ1}", from=2-7, to=4-7]
	\arrow[""{name=9, anchor=center, inner sep=0}, "{1\circ {A_C}^{b',d}\circ1}", from=1-3, to=1-5]
	\arrow[shorten <=17pt, shorten >=17pt, Rightarrow, dashed, from=0, to=3]
	\arrow[shorten <=17pt, shorten >=17pt, Rightarrow, dashed, from=1, to=2]
	\arrow[shorten <=17pt, shorten >=17pt, Rightarrow, dashed, from=6, to=5]
	\arrow["{(B\mid C\mid D)^{a'}\circ1}"{description}, shorten <=26pt, shorten >=26pt, Rightarrow, from=7, to=1]
	\arrow["{(A\mid B\mid D)^{c'}\circ1}"{description}, shorten <=26pt, shorten >=26pt, Rightarrow, from=8, to=6]
	\arrow["{1\circ(A\mid B\mid C)^{d}}"{description}, shorten <=13pt, shorten >=13pt, Rightarrow, from=9, to=0]
	\arrow["{1\circ(A\mid C\mid D)^{b}}"{description, pos=0.6}, shorten <=34pt, shorten >=17pt, Rightarrow, from=3, to=4]
\end{tikzcd}
=
\begin{tikzcd}[ampersand replacement=\&]
	\&\& DCAB \&\& DACB \\
	DCBA \&\&\&\&\&\& DABC \\
	\&\& CDAB \&\& ADCB \\
	CDBA \&\&\&\&\&\& ADBC \\
	\&\& CADB \&\& ACDB \\
	CBDA \&\&\&\&\&\& ABDC \\
	\&\& CABD \&\& ACBD \\
	CBAD \&\&\&\&\&\& ABCD \\
	\&\& BCAD \&\& BACD
	\arrow["{1\circ{B_C}^{a',c}\circ1}"', from=4-1, to=6-1]
	\arrow["{1\circ{A_D}^{b,c}}"', from=6-1, to=8-1]
	\arrow["{{B_C}^{a',d'}\circ1}"', from=8-1, to=9-3]
	\arrow["{1\circ{B_D}^{a,c'}\circ1}", from=4-7, to=6-7]
	\arrow[""{name=0, anchor=center, inner sep=0}, "{1\circ {A_C}^{b,d'}\circ1}"', from=9-3, to=9-5]
	\arrow["{{A_B}^{c',d'}\circ1}"', from=9-5, to=8-7]
	\arrow["{1\circ {C_D}^{a,b}}", from=6-7, to=8-7]
	\arrow["{{C_D}^{a',b'}\circ1}"', from=2-1, to=4-1]
	\arrow[""{name=1, anchor=center, inner sep=0}, "{1\circ {A_B}^{c,d}}", from=2-1, to=1-3]
	\arrow[""{name=2, anchor=center, inner sep=0}, "{1\circ{B_C}^{a,d}}", from=1-5, to=2-7]
	\arrow["{{A_D}^{b',c'}\circ1}", from=2-7, to=4-7]
	\arrow[""{name=3, anchor=center, inner sep=0}, "{1\circ{A_B}^{c,d'}\circ1}"{description}, from=8-1, to=7-3]
	\arrow[""{name=4, anchor=center, inner sep=0}, "{{A_C}^{b',d'}\circ1}", from=7-3, to=7-5]
	\arrow[""{name=5, anchor=center, inner sep=0}, "{1\circ{B_C}^{a,d'}\circ1}"{description}, from=7-5, to=8-7]
	\arrow["{1\circ{B_D}^{a,c}}"{description}, from=5-3, to=7-3]
	\arrow["{1\circ{A_D}^{b',c}\circ1}"{description}, from=3-3, to=5-3]
	\arrow[""{name=6, anchor=center, inner sep=0}, "{1\circ {A_B}^{c,d}}"{description}, from=4-1, to=3-3]
	\arrow["{{C_D}^{a',b'}\circ1}"{description}, from=1-3, to=3-3]
	\arrow[""{name=7, anchor=center, inner sep=0}, "{{A_C}^{b',d'}\circ1}"{description}, from=5-3, to=5-5]
	\arrow["{1\circ{C_D}^{a,b'}\circ1}"{description}, from=3-5, to=5-5]
	\arrow["{{A_D}^{b',c'}\circ1}"{description}, from=1-5, to=3-5]
	\arrow[""{name=8, anchor=center, inner sep=0}, "{1\circ{B_C}^{a,d}}"{description}, from=3-5, to=4-7]
	\arrow["{1\circ{B_D}^{a,c}}"{description}, from=5-5, to=7-5]
	\arrow[""{name=9, anchor=center, inner sep=0}, "{1\circ {A_C}^{b',d}\circ1}", from=1-3, to=1-5]
	\arrow[shorten <=17pt, shorten >=17pt, Rightarrow, dashed, from=2, to=8]
	\arrow[shorten <=17pt, shorten >=17pt, Rightarrow, dashed, from=7, to=4]
	\arrow["{(A\mid B \mid C)^{d'}\circ1}"{description}, shorten <=9pt, shorten >=9pt, Rightarrow, from=4, to=0]
	\arrow["{1\circ(A\mid B\mid D)^{c}}"{description}, shorten <=26pt, shorten >=26pt, Rightarrow, from=6, to=3]
	\arrow[shorten <=17pt, shorten >=17pt, Rightarrow, dashed, from=1, to=6]
	\arrow["{(A\mid C\mid D)^{b'}\circ1}"{description, pos=0.6}, shorten <=34pt, shorten >=17pt, Rightarrow, from=9, to=7]
	\arrow["{1\circ (B\mid C\mid D)^{a}}"{description}, shorten <=26pt, shorten >=26pt, Rightarrow, from=8, to=5]
\end{tikzcd}
}
\end{equation}

\section{Incubator for substitution of binary into binary}

\subsection{First variable}
\label{app:inc-sub-b-in-b-1st}
\begin{center}
\begin{tikzpicture}[triangle/.style = {fill=yellow!50, regular polygon, regular polygon sides=3,rounded corners}]

\path
	(6.5,1.5) node [triangle,draw,shape border rotate=-90,label=135:$\ca$,label=230:$\cb$,inner sep=2pt] (b') {$F$}
	(8.5,1) node [triangle,draw,shape border rotate=-90,label=135:$\cx$,label=230:$\cc$,inner sep=2pt] (c') {$G$};

\draw [-] (7.5,0.65) .. controls +(right:0.25cm) and +(left:0.25cm).. (c'.223);
\draw [] (b') .. controls (7.5,1.5) and +(left:0.8cm).. (c'.137);
\draw [] (c') to node[fill=white] {$\ce$} (10,1);
\draw [-] (5.5,1.15) .. controls +(right:0.25cm) and +(left:0.25cm).. (b'.223);
\draw [-] (5.5,1.85) .. controls +(right:0.25cm) and +(left:0.25cm).. (b'.137);
\end{tikzpicture}
\end{center}

\[\hspace{-1.5cm}\begin{tikzcd}[ampersand replacement=\&]
	{G^{F^{a'}_{b'}}_{C}G_{c}^{F^{a'}_B}G^{F^A_{b}}_{c}} \&\&\& {G^{F^{a'}_{b'}}_{C}G^{F^{a'}_BF^A_{b}}_{c}} \&\&\& {G^{F^{a'}_{b'}}_{C}G^{F^A_{b'}F^{a'}_B}_{c}} \&\&\& {G^{F^{a'}_{b'}}_{C}G_{c}^{F^A_{b'}}G^{F^{a'}_B}_{c}} \\
	{G_{c'}^{F^{a'}_B}G^{F^{a'}_{b}}_{C}G^{F^A_{b}}_{c}} \&\&\&\&\&\&\&\&\& {G^{F^A_{b'}}_{c'}G^{F^{a}_{b'}}_{C}G^{F^{a'}_B}_{c}} \\
	{G_{c'}^{F^{a'}_B}G^{F^A_{b}}_{c'}G^{F^{a}_{b}}_{C}} \&\&\& {G^{F^{a'}_BF^A_{b}}_{c'}G^{F^{a}_{b}}_{C}} \&\&\& {G^{F^A_{b'}F^{a'}_B}_{c'}G^{F^{a}_{b}}_{C}} \&\&\& {G^{F^A_{b'}}_{c'}G^{F^{a'}_B}_{c'}G^{F^{a}_{b}}_{C}}
	\arrow[""{name=0, anchor=center, inner sep=0}, "{1\circ G^{F^{a'}_B,F^A_{b}}_{c}}", from=1-1, to=1-4]
	\arrow[""{name=1, anchor=center, inner sep=0}, "{1\circ G_c(A_B)}", from=1-4, to=1-7]
	\arrow["{{(F^{a'}_B)}_C\circ1}"', from=1-1, to=2-1]
	\arrow["{1\circ {(F^A_{b})}_C}"', from=2-1, to=3-1]
	\arrow[""{name=2, anchor=center, inner sep=0}, "{G^{F^{a'}_B,F^A_{b}}_{c'}\circ1}"', from=3-1, to=3-4]
	\arrow[""{name=3, anchor=center, inner sep=0}, "{(G^{F^A_{b'},F^{a}_B}_{c'})^{-1}\circ1}"', from=3-7, to=3-10]
	\arrow["{{(F^{A}_{b'})}_C\circ1}", from=1-10, to=2-10]
	\arrow["{1\circ {(F^a_{B})}_C}", from=2-10, to=3-10]
	\arrow["{(F^{a'}_BF^A_{b})_C}"{description}, from=1-4, to=3-4]
	\arrow[""{name=4, anchor=center, inner sep=0}, "{G_{c'}(A_B)\circ1}"', from=3-4, to=3-7]
	\arrow[""{name=5, anchor=center, inner sep=0}, "{1\circ (G^{F^A_{b'},F^{a}_B}_{c})^{-1}}", from=1-7, to=1-10]
	\arrow["{(F^A_{b'}F^{a'}_B)_C}"{description}, from=1-7, to=3-7]
	\arrow["{{(A_B)}_C}"{description}, shorten <=20pt, shorten >=20pt, Rightarrow, from=1, to=4]
	\arrow["{G_C^{F^{a'}_B,F^A_{b}}}"{description}, shorten <=20pt, shorten >=20pt, Rightarrow, from=0, to=2]
	\arrow["{(G_C^{F^A_{b'},F^{a}_B})\sharp}"{description}, shorten <=20pt, shorten >=20pt, Rightarrow, from=5, to=3]
\end{tikzcd}\]

From the diagram above, we can see that if $G_c$ is a Gray-functor, then the incubator is just $(A_B)_C$.  

\subsection{Second variable}
\label{app:inc-sub-b-in-b-2nd}
\begin{center}
\begin{tikzpicture}[triangle/.style = {fill=yellow!50, regular polygon, regular polygon sides=3,rounded corners}]

\path
        (6.5,0.5) node [triangle,draw,shape border rotate=-90,label=135:$\cb$,label=230:$\cc$,inner sep=2pt] (q) {$F$} 
	(8.5,1) node [triangle,draw,shape border rotate=-90,label=135:$\ca$,label=230:$\cx$,inner sep=2pt] (c') {$G$};

\draw [-] (q) .. controls (7.5,0.5) and +(left:0.8cm).. (c'.223);
\draw [] (7.5,1.35) .. controls +(right:0.25cm) and +(left:0.25cm).. (c'.137);
\draw [] (c') to node[fill=white] {$\ce$} (10,1);
\draw [-] (5.5,0.15) .. controls +(right:0.25cm) and +(left:0.25cm).. (q.223);
\draw [-] (5.5,0.85) .. controls +(right:0.25cm) and +(left:0.25cm).. (q.137);
\end{tikzpicture}
\end{center}
\[\begin{tikzcd}[ampersand replacement=\&]
	{G_{F^{b'}_{C}}^{a'}G_{F^{B}_{c}}^{a'}G_{F^{b}_{c}}^{A}} \&\& {G_{F^{b'}_{C}}^{a'}G_{F^{b'}_{c}}^{A}G_{F^{B}_{c}}^{a}} \&\& {G_{F^{b'}_{c'}}^{A}G_{F^{b'}_{C}}^{a}G_{F^{B}_{c}}^{a}} \\
	{G_{F^{b'}_{C}F^{B}_{c}}^{a'}G_{F^{b}_{c}}^{A}} \&\&\&\& {G_{F^{b'}_{c'}}^{A}G_{F^{b'}_{C}F^{B}_{c}}^{a}} \\
	{G_{F^{B}_{c'}F^{b}_{C}}^{a'}G_{F^{b}_{c}}^{A}} \&\&\&\& {G_{F^{b'}_{c'}}^{A}G_{F^{B}_{c'}F^{b}_{C}}^{a}} \\
	{G_{F^{B}_{c'}}^{a'}G_{F^{b}_{C}}^{a'}G_{F^{b}_{c}}^{A}} \&\& {G_{F^{B}_{c'}}^{a'}G_{F^{b}_{c'}}^{A}G_{F^{b}_{C}}^{a}} \&\& {G_{F^{b'}_{c'}}^{A}G^{a}_{F^{B}_{c'}}G_{F^{b}_{C}}^{a}}
	\arrow["{G_{F^{b'}_{C},F^{B}_c}^{a'}\circ1}"', from=1-1, to=2-1]
	\arrow["{G^{a'}(B_C)\circ1}"', from=2-1, to=3-1]
	\arrow["{1\circ A_{(F^{B}_c)}}", from=1-1, to=1-3]
	\arrow["{A_{(F^{b'}_{C})}\circ1}", from=1-3, to=1-5]
	\arrow["{1\circ G_{F^{b'}_{C},F^{B}_c}^{a}}", from=1-5, to=2-5]
	\arrow["{1\circ A_{(F^{b}_C)}}"', from=4-1, to=4-3]
	\arrow["{A_{(F^{B}_{c'})}\circ1}"', from=4-3, to=4-5]
	\arrow["{(G_{F^B_{c'},F^{b}_C}^{c})^{-1}\circ1}"', from=3-1, to=4-1]
	\arrow[""{name=0, anchor=center, inner sep=0}, "{A_{(F^{B}_{c'}F^{b}_{C})}}"{description}, from=3-1, to=3-5]
	\arrow[""{name=1, anchor=center, inner sep=0}, "{A_{(F^{b'}_{C}F^{B}_c)}}"{description}, from=2-1, to=2-5]
	\arrow["{1\circ (G_{F^B_{c'},F^{b}_C}^{c})^{-1}}", from=3-5, to=4-5]
	\arrow["{1\circ G^{a}(B_C)}", from=2-5, to=3-5]
	\arrow["{G_{F^{b'}_{C},F^{B}_c}^{A}}"{xshift=0.1cm,pos=0.3}, shorten <=4pt, shorten >=12pt, Rightarrow, from=1-3, to=1]
	\arrow["{A_{(B_C)}}"{xshift=0.1cm}, shorten <=12pt, shorten >=12pt, Rightarrow, from=1, to=0]
	\arrow["{(G_{F^{B}_{c'},F^{b}_{C}}^{A})\flat}"{xshift=0.1cm,pos=0.6}, shorten <=12pt, shorten >=4pt, Rightarrow, from=0, to=4-3]
\end{tikzcd}\]
%where $(G_{F^{B}_{c'},F^{b}_{C}}^{A})\flat$ is obtained by pre- and post-composing the invertible 3-cell $(G_{F^{B}_{c'},F^{b}_{C}}^{A})^{-1}$ by the isomorphisms $(G_{F^B_{c'},F^{b}_C}^{c})^{-1}\circ1$ and $1\circ (G_{F^B_{c'},F^{b}_C}^{c})^{-1}$.

\bibliographystyle{plain}

\begin{thebibliography}{10}

\bibitem{Adamek1994Locally}
{\sc Ad{\'a}mek, J., and Rosick{\'y}, J.}
\newblock Locally presentable and accessible categories.
\newblock {\em London Mathematical Society Lecture Notes 189}
\newblock xiv+316 pp., Cambridge University Press (1994).

\bibitem{Altenkirch}
{\sc Altenkirch, T., Chapman, J., and Uustalu, T.} 
\newblock Monads need not be endofunctors. 
\newblock{\em Logical Methods in Computer Science} 11(1:3) (2015), 1--40.

\bibitem{Baez1996}
{\sc Baez, J. and Neuchl, M.}
\newblock Higher dimensional algebra I: Braided monoidal 2-categories.
\newblock{\em Advances in Mathematics} 121(2) (1996), 196--244.

\bibitem{Bar2017}
{\sc Bar, K. and Vicary, J.}
\newblock Data structures for quasistrict higher categories.
\newblock {\em 32nd Annual ACM/IEEE Symposium on Logic in Computer Science (LICS)}, 12 pp., IEEE, Piscataway, NJ, 2017.

\bibitem{Batanin}
{\sc Batanin, M., Cisinski, D.C. and Weber, M.}
\newblock Multitensor lifting and strictly unital higher category theory.
\newblock{\em Theory and Applications of Categories} 28 (2013), No. 25, 804--856.

\bibitem{Bourke2015A-cocategorical}
{\sc Bourke, J., and Gurski, N.}
\newblock A cocategorical obstruction to tensor products of Gray-categories.
\newblock {\em Theory and Applications of Categories}, 11 (2015), 387--409.

\bibitem{Bourke2016Skew}
{\sc Bourke, J.}
\newblock Skew structures in 2-category theory and homotopy theory.
\newblock {\em Journal of Homotopy and Related Structures}, 12(1):31--81, 2017.

\bibitem{Bourke2016Accessible}
{\sc Bourke, J. and Garner, R.}
\newblock Algebraic weak factorisation systems I: Accessible awfs.
\newblock {\em Journal of Pure and Applied Algebra 220} (2016), 108--147.

\bibitem{Bourke2016Weak}
{\sc Bourke, J. and Garner, R.}
\newblock Algebraic weak factorisation systems II: Categories of weak maps.
\newblock {\em Journal of Pure and Applied Algebra 220} (2016), 148--174.

\bibitem{BourkeLack:skew}
{\sc Bourke, J. and Lack, S.}
\newblock Skew monoidal categories and skew multicategories.
\newblock {\em Journal of Algebra}, volume 56, pages 237--266, 2018.

\bibitem{BourkeLack:symmetric}
{\sc Bourke, J. and Lack, S.}
\newblock Braided skew monoidal categories.
\newblock {\em Theory and Applications of Categories}, Vol. 35 (2020) 19--63.

\bibitem{Crans1998Generalized}
{\sc Crans, S.}
\newblock Generalized centers of braided and sylleptic monoidal 2-categories.
\newblock{\em Advances in Mathematics 136}, 2 (1998), 183--223.

\bibitem{Crans1999A-tensor}
{\sc Crans, S.}
\newblock A tensor product for {G}ray-categories.
\newblock {\em Theory and Applications of Categories 5}, 2 (1999), 12--69.

\bibitem{Crans2000On}
{\sc Crans, S.}
\newblock On braidings, syllapses and symmetries.
\newblock{\em Cahiers de Topologie G\'eom\'etrie Diff\'erentielle Cat\'egoriques 41},1 (2000) 2--74.

\bibitem{Day1997Monoidal}
{\sc Day, B., and Street, R.}
\newblock Monoidal bicategories and {H}opf algebroids.
\newblock {\em Advances in Mathematics 129}, 1 (1997), 99--157.

\bibitem{EilenbergS:cloc}
{\sc Eilenberg, S. and Kelly, G.M.}
\newblock Closed categories.
\newblock In {\em Proceedings of La Jolla Conference on Categorical Algebra}, pages 421--562. Springer-Verlag, 1966.

\bibitem{GambLob:psm}
{\sc Gambino, N. and Lobbia, G.}
\newblock On the formal theory of pseudomonads and pseudodistributive laws.
\newblock {\em Theory and Applications of Categories} 37(2) (2021), 14--56.

\bibitem{Garner2011}
{\sc Garner, R.}
\newblock Understanding the small object argument.
\newblock {\em Applied Categorical Structures 17}, 3 (2009), 247--285.

\bibitem{Garner2021Cartesian}
{\sc Garner, R., Lemay, J-S.P.}
\newblock Cartesian Differential Categories as Skew Enriched Categories. 
\newblock {\em Applied Categorical Structures} 29 (2021) 1099--1150.

\bibitem{GohlaB:mapp}
{\sc Gohla, B.}
\newblock Mapping Spaces of Gray-Categories.
\newblock {\em Theory and Applications of Categories}, 29(5) (2014), 100-187.

\bibitem{GohlaB:incomplete}
{\sc Gohla, B.}
\newblock Skew closed structure of Gray-categories.
\newblock {\em Arxiv Preprint:1610.03802}, (2016).

\bibitem{Gordon1995Coherence}
{\sc Gordon, R., Power, J., and Street, R.}
\newblock Coherence for tricategories.
\newblock {\em Memoirs of the American Mathematical Society 117\/} (1995).

\bibitem{Gray1974Formal}
{\sc Gray, J.}
\newblock Formal category theory: adjointness for 2-categories.
\newblock vol.~391 {\em Lecture Notes in Mathematics. Springer}, 1974.

 \bibitem{Gurski2011}
{\sc Gurski, N.}
\newblock Loop spaces, and coherence for monoidal and braided monoidal bicategories.
\newblock {\em Advances in Mathematics}, 226(5) (2011), 4225--4265.

\bibitem{Gurskibook}
{\sc Gurski, N.}
\newblock {\em Coherence in Three-Dimensional Category Theory}
\newblock Cambridge University Press, 2013.


\bibitem{Hermida2000Representable}
{\sc Hermida, C.}
\newblock Representable multicategories.
\newblock {\em Advances in Mathematics}, 151(2):164--225, 2000.

%\bibitem{Hovey1999Model}
%{\sc Hovey, M.}
%\newblock {\em Model categories}, vol.~63 of {\em Mathematical Surveys and
%  Monographs}.
%\newblock American Mathematical Society, 1999.

\bibitem{Kapranov}
{\sc Kapranov, M., Voevodsky, V.}
\newblock 2-Categories and Zamolodchikov tetrahedra equations.
\newblock {\em Proceedings of Symposia in Pure Mathematics} 56(2), American Mathematical Society, 1994, 177--259.

%\bibitem{Kelly1974Doctrinal}
%{\sc Kelly, G.~M.}
%\newblock Doctrinal adjunction.
%\newblock In {\em Category Seminar (Sydney, 1972/1973)}, vol.~420 of {\em
%  Lecture Notes in Mathematics}. Springer, 1974, pp. 257--280.
  
%\bibitem{KellyG:bascec}
%{\sc Kelly, G.~M.}
%\newblock {\em Basic {C}oncepts of {E}nriched {C}ategory {T}heory}.
%\newblock Cambridge University Press, 1982.

\bibitem{Kelly2001VCat}
{\sc Kelly, G.~M. and Lack, S.}
\newblock{$V$-$\Cat$ is locally presentable or locally bounded if $V$ is so.}
\newblock{\em Theory and Applications of Categories} 8(23) (2001), 555--575.

\bibitem{Lack2002A-quillen}
{\sc Lack, S.}
\newblock A {Q}uillen model structure for 2-categories.
\newblock {\em K-Theory 26}, 2 (2002), 171--205.

\bibitem{Lack2011A-quillen}
{\sc Lack, S.}
\newblock A {Q}uillen model structure for {G}ray-categories.
\newblock {\em Journal of K-Theory 8}, 2 (2011), 183--221.

\bibitem{Lack2018Operadic}
{\sc Lack, S.}
\newblock Operadic categories and their skew monoidal categories of collections.
\newblock {\em Higher Structures} 2 (2018), 1--29.

\bibitem{Lambek:multicategories}
{\sc Lambek, J.}
\newblock Deductive systems and categories. {II}. {S}tandard constructions and
  closed categories.
\newblock In {\em Category {T}heory, {H}omology {T}heory and their
  {A}pplications, {I} ({B}attelle {I}nstitute {C}onference, {S}eattle, {W}ash.,
  1968, {V}ol. {O}ne)}, pages 76--122. Springer, Berlin, 1969.

\bibitem{LeinsterBook}
{\sc Leinster, T.}
\newblock {\em Higher operads, higher categories.}
\newblock vol. 298 of {\em London Mathematical Society Lecture Note Series}. Cambridge University Press,  2004.
 
\bibitem{Levy}
{\sc Levy, P. and Staton, S.}
\newblock{\em Universal properties of impure programming languages.}
\newblock {\em SIGPLAN} Not. 48 (2013), no. 1, 179--192.

\bibitem{LobbiaThesis}
{\sc Lobbia, G.}
\newblock {\em Pseudomonads, relative monads and strongly finitary notions of multicategory}.
\newblock PhD thesis, University of Leeds, 2021.  
\newblock Available at \url{https://etheses.whiterose.ac.uk/30578/}.

\bibitem{Shoikhet2018}
{\sc Shoikhet, B.}
\newblock The twisted tensor product of dg categories and a contractible 2-operad.
\newblock{Arxiv preprint:1807.04305} (2018).

\bibitem{Street2013Skew}
{\sc Street, R.}
\newblock Skew-closed categories.
\newblock {\em Journal of Pure and Applied Algebra 217}, 6 (2013), 973--988.

\bibitem{Szlachanyi2012Skew}
{\sc Szlach\'{a}nyi, K.}
\newblock Skew-monoidal categories and bialgebroids.
\newblock {\em Advances in Mathematics 231}, 3-4 (2012), 1694--1730.



\end{thebibliography}

\end{document}